\numberwithin{equation}{section}
\numberwithin{subsection}{section}
\newtheorem{theorem}{Theorem}
\newtheorem{lemma}{Lemma}[section]
\newtheorem{corollary}[lemma]{Corollary}
\newtheorem{definition}[lemma]{Definition}
\newtheorem{remark}[lemma]{Remark}
\newtheorem{proposition}[lemma]{Proposition}
\newcommand{\bfD}{{\bf D}}
\newcommand{\bmgamma}{\bm{\gamma}}
\newcommand{\bmxi}{\bm{\xi}}
\newcommand{\bmomega}{\bm{\omega}}
\newcommand{\bmA}{\bm{A}}
\newcommand{\bmb}{\bm{b}}
\newcommand{\bmB}{\bm{B}}
\newcommand{\bmC}{\bm{C}}
\newcommand{\bme}{\bm{e}}
\newcommand{\bmf}{\bm{f}}
\newcommand{\bmF}{\bm{F}}
\newcommand{\bmj}{\bm{j}}
\newcommand{\bmK}{\bm{K}}
\newcommand{\bmn}{\bm{n}}
\newcommand{\bmN}{\bm{N}}
\newcommand{\bmr}{\bm{r}}
\newcommand{\bmR}{\bm{R}}
\newcommand{\bmU}{\bm{U}}
\newcommand{\bmv}{\bm{v}}
\newcommand{\bmW}{\bm{W}}
\newcommand{\bmx}{\bm{x'}}
\newcommand{\bmzero}{\bm{0}}
\newcommand{\rmh}{\mathrm{h}}
\DeclareMathOperator{\curl}{\mathrm{curl}}
\DeclareMathOperator{\Div}{\mathrm{div}}
\DeclareMathOperator{\grad}{\mathrm{\nabla}}
\DeclareMathOperator{\gradp}{\nabla^{\perp}}
\DeclareMathOperator{\pd}{\partial}
\title[]{  Analytical study of a generalized Dirichlet--Neumann operator for three-dimensional water waves with vorticity }
\author{S. Pasquali$^{(\ast)}$}
\address[$\ast$]{International School for Advanced Studied (SISSA), via Bonomea 265, 34136, Trieste, Italy}
\email[$\ast$]{stefano.pasquali@sissa.it}
\begin{document}

\begin{abstract}
In this paper we consider three-dimensional water waves with vorticity, under the action of gravity. We discuss a generalized Zakharov--Craig--Sulem formulation of the problem introduced by Castro and Lannes, which involves a generalized Dirichlet--Neumann operator. We study this operator in detail, extending some well-known results about the classical Dirichlet--Neumann operator for irrotational water waves, such as the Taylor expansion in homogeneous powers of the wave profile, the computation of its differential and a paralinearization result. We stress the fact that no geometric condition on either the velocity field or the vorticity is assumed.  \\
\emph{Keywords}: Vorticity, Water Waves, Dirichlet--Neumann \\
\emph{MSC2020}: 37K45, 76B03, 76B15
\end{abstract}

\maketitle

\tableofcontents

\section{Introduction} \label{sec:intro}

In this paper we study three-dimensional pure gravity waves on the free surface of a perfect fluid. We assume that the fluid has flat bottom with finite depth. We denote by $\bmU$ the velocity field of the fluid, and by $\bmomega \coloneqq \curl\bmU$ its vorticity.\\

The problem of existence of small-amplitude water waves has a long history. The first results for two-dimensional travelling water waves have been proved in the irrotational setting by Stokes \cite{stokes1880theory}, and later for periodic waves by Levi-Civita \cite{levi1925determination}, and in the case of nonzero vorticity by Gerstner \cite{gerstner1809theorie}. The problem of existence of two-dimensional water waves has been extensively studied, both in the irrotational setting (see \cite{iooss2005standing} \cite{wu2009almost} \cite{alazard2015sobolev} \cite{ionescu2015global} \cite{baldi2018time} \cite{bambusi2021hamiltonian} \cite{ai2022two} \cite{feola2024quasi} and references therein; see \cite{alazard2015gravity} \cite{berti2018amost} for gravity-capillary water waves), and in the case of nonzero vorticity (we mention \cite{wahlen2007hamiltonian} \cite{ifrim2018two}  \cite{berti2021traveling} \cite{berti2024pure} for the case of constant vorticity; see  \cite{constantin2004exact}  \cite{wahlen2006steady} \cite{ehrnstrom2022smooth} \cite{wahlen2023global} for the case of an arbitrary vorticity distribution). \\

On the other hand, there are fewer results about the existence of three-dimensional small-amplitude water waves. Regarding travelling waves in the irrotational case, we mention the results by Reeder and Shinbrot \cite{reeder1981three} and by Craig and Nicholls \cite{craig2000traveling,craig2002traveling} for \emph{gravity-capillary} water waves, and the results by Iooss and Plotnikov for \emph{pure gravity} water waves \cite{iooss2009small,iooss2011asymmetrical}; we also mention the survey \cite{haziot2022traveling} regarding travelling water waves. We also mention \cite{alazard2014cauchy} \cite{alazard2016cauchy} regarding the local Cauchy theory for gravity water waves, \cite{wu2011global} \cite{germain2012global} \cite{deng2017global} regarding global existence for small initial data, \cite{ionescu2019long} about long-time existence of gravity-capillary water waves in the periodic setting, \cite{buffoni2018variational} \cite{buffoni2022fully} regarding the existence of fully localised solitary waves for the three-dimensional gravity-capillary water waves system.

Recently there has been some interest in Beltrami fields, namely those fields for which $\curl \bmU = \alpha \, \bmU$, for some $\alpha \in \mathbb{R}$. We mention a variational formulation by Lokharu and Wahl\'en \cite{lokharu2019variational} for doubly periodic waves with relative velocities given by Beltrami fields, which is valid under general assumptions on the wave profile (admitting, for example, the case of overhanging wave profiles). More recently, Groves and Horn proved a more explicit variational formulation for Beltrami fields \cite{groves2020variational}, under the classical assumption that the free surface is given by the graph of an unknown function $\eta$ depending only on the horizontal directions: such formulation can be considered as a generalization of an alternative variational framework for three-dimensional irrotational water waves by Benjamin \cite{benjamin1984impulse}, and it allows to recover the classical  Zakharov--Craig--Sulem formulation in the irrotational case $\alpha =0$. Moreover, this formulation leads naturally to the definition of a generalized Dirichlet--Neumann operator, which reduces to the classical Dirichlet--Neumann operator in the irrotational case (see \cite{groves2024analytical} for more details). These formulations lead to existence results for small amplitude doubly periodic waves \cite{lokharu2020existence} \cite{groves2024analytical}. \\

Regarding three-dimensional water waves with general vorticity, we mention the paper by Castro and Lannes \cite{castro2015well}, in which the authors introduced a generalized Zakharov--Craig--Sulem formulation (widely used in the present work) and proved a local well-posedness result; moreover, the authors proved that their formulation is formally Hamiltonian, and that it reduces to the Zakharov--Craig--Sulem formulation in the irrotational case.  We mention \cite{wang2021local}, in which the authors establish local well-posedness for solutions with low regularity, using a different formulation compared to Castro--Lannes, see (6.1)-(6.8) in \cite{wang2021local}. More recently, in \cite{seth2024symmetric} the authors constructed symmetric doubly periodic gravity-capillary three-dimensional water waves with small vorticity. We mention a recent result by Ifrim, Pineau, Tataru and Taylor \cite{ifrim2025sharp}, in which the authors provide a complete local well-posedness theory in  Sobolev spaces for the free boundary incompressible Euler equations with zero surface tension on a connected fluid domain, working in an Eulerian framework which allow the authors to prove sharp results without using Nash--Moser theory. We also mention the papers \cite{christodoulou2000motion} \cite{coutand2007well} \cite{lindblad2005well} \cite{shatah2011local}, which deal with the local well-posedness theory of the free boundary Euler equations (and its variants) and take into account the effect of vorticity.

Finally, we mention a recent and important result by Ginsberg and Pusateri \cite{ginsberg2024long}, in which the authors proved long-time existence for small amplitude three-dimensional water waves with vorticity. From a technical point of view, the result by Ginsberg and Pusateri requires the following geometric assumption:
\begin{itemize}
\item[i.] the initial vorticity $\bmomega_0$ vanishes on the boundary of the fluid domain at time $t=0$, see statement of Theorem 2.2 of \cite{ginsberg2024long}.
\end{itemize}
As mentioned in pag.7 of \cite{ginsberg2024long}, condition $\mathrm{i.}$ is preserved in time. We point out that in \cite{ginsberg2024long} the authors decompose the velocity field such that its rotational part is tangential to the free surface, see (1.1d) and (2.1) of \cite{ginsberg2024long}; as a consequence, the system can be reduced to a system of two equations evaluated on the free boundary, like the Zakharov--Craig--Sulem formulation for the irrotational case (see Appendix B of \cite{ginsberg2024long}).\\

In the present paper we perform a detailed study of the water waves system with general vorticity, proving analytic properties of the generalized Dirichlet--Neumann operator $\mathcal{G}_{gen}[\eta]$, where $\eta$ is a function whose graph determines the free surface of the fluid domain.  Regarding the velocity field and the generalized Dirichlet--Neumann operator, we prove:
\begin{itemize}
\item[-] an estimate for the velocity field of the fluid;
\item[-] an analyticity result for the generalized Dirichlet--Neumann operator;
\item[-] a formula for the differential of the rotational part of the generalized Dirichlet--Neumann operator;
\item[-] an explicit (up to recursive formulae) Taylor expansion of the generalized Dirichlet--Neumann operator around $\eta=0$;
\item[-] a paralinearization result for the rotational part of the generalized Dirichlet--Neumann operator.
\end{itemize}

The assumptions used in the present paper involve only the regularity and the smallness of $(\eta,\Phi,\tilde{\bmomega})$ in suitable Sobolev and H\"older spaces, where $\Phi$ denotes the trace of the velocity potential at the surface, and $\tilde{\bmomega}$ denotes the straightened vorticity. We emphasise the fact that no geometric condition on either the velocity field or the vorticity is required; the lack of geometric assumptions leads to a highly non-trivial analysis of the rotational part of both the velocity field and of the generalized Dirichlet--Neumann operator. We also point out that the advantage of using the formulation by Castro and Lannes allows to describe very explicitly the contribution of the vorticity. 

We believe that the present work provides a very flexible framework which generalizes well-known results for the Zakharov--Craig--Sulem formulation in the irrotational case, and that it is suited to deal with many different problems, such as long-time existence of solutions, analysis of periodic wave patterns or solitary waves, and derivation of asymptotic models in various physical regimes.

\subsection{The Model} \label{subsec:model}

Consider an incompressible inviscid fluid occupying a three-dimensional domain with flat bottom, under the action of gravity. The fluid domain can be parametrized by
\begin{align} \label{eq:Domain}
D_\eta(t) &\coloneqq \{ (\bmx,z) \in \mathbb{R}^2 \times \mathbb{R}: -h < z < \eta(t,\bmx) \},
\end{align}
so that the free surface is given by the graph of an unknown function $\eta:\mathbb{R} \times \mathbb{R}^2 \to \mathbb{R}$, and $h>0$ is the depth of the fluid. We assume that the fluid has constant density equal to $1$.

The equations describing the flow are given by
\begin{align}
\Div \bmU &= 0, \; \; \text{in} \; \; D_\eta(t), \label{eq:Incompr} \\
\bmU_t + (\bmU \cdot \nabla_{\bmx,z})  \bmU &= - \nabla_{\bmx,z} \mathscr{P} - g \bme_3, \; \; \text{in} \; \; D_\eta(t), \label{eq:Euler} \\
\bmU \cdot \bme_3 &= 0 , \; \; \text{at} \; \; z=-h, \label{eq:Imperm} \\
\eta_t - \bmU \cdot \bmN &= 0, \; \; \text{at} \; \; z=\eta, \label{eq:KinFree} \\
\mathscr{P} &= \mathscr{P}_0, \; \; \text{at} \; \; z=\eta, \label{eq:PrBC}
\end{align}
where $\bmU: \overline{D_{\eta}(t)} \to \mathbb{R}^3$ is the velocity field, $\bme_3=(0,0,1)^T$, $\bmN$ denotes the outward normal vector, $g$ is the gravity constant, $\mathscr{P}_0$ is the atmospheric pressure (assumed to be constant).  The curl and divergence operator used above are defined as $\curl \bmU \coloneqq \nabla_{\bmx,z} \times \bmU$ and $\Div \bmU \coloneqq \nabla_{\bmx,z} \cdot \bmU$. 

Whenever it is not important, we omit the time-dependence of $D_\eta(t)$ and we will denote it by $D_\eta$. We also denote by $D_0$ the fixed domain $\mathbb{R}^2 \times (-h,0)$.

\begin{remark} \label{rem:Vorticity}

In addition to \eqref{eq:Incompr}-\eqref{eq:PrBC}, in the literature it is usually assumed that the vorticity of the fluid satisfies additional properties. Two examples are \emph{irrotational velocity fields}, which correspond to  
\begin{align} \label{eq:Irrot}
\curl \bmU &=  \bmzero, \; \; \text{in} \; \; D_\eta ,
\end{align}
and \emph{(strong) Beltrami fields}, which correspond to 
\begin{align} \label{eq:Beltrami}
\curl \bmU &=  \alpha \, \bmU, \; \; \text{in} \; \; D_\eta , \; \; \text{for some} \; \; \alpha \in \mathbb{R}.
\end{align}
\end{remark}

\subsection{Notation} \label{sec:notation}

Let $\bmF=(F_1,F_2,F_3)^T$ be a three-dimensional vector field, and denote by $\bmF_\rmh=(F_1,F_2)^T$ and by $\bmF_{\parallel}=\bmF_\rmh+F_3 \nabla \eta|_{z=\eta}$ the horizontal component of the tangential part of $\bmF$. In order to simplify the notation, we will sometimes indicate the evaluation at the free surface with an underscore. 

For the two-dimensional vector field $\bmf=(f_1,f_2)^T$ we denote $\bmf^\perp =(f_2,-f_1)^T$; we also introduce the following operators
\begin{align}
\grad = (\pd_x,\pd_y)^T, &\; \;  \gradp = (\pd_y,-\pd_x)^T, \label{eq:Op}
\end{align}
(this notation is consistent with \cite{groves2024analytical}, and differs from \cite{castro2015well}). According to the Hodge-Weyl decomposition for vector fields in $\mathbb{R}^2$, for any three-dimensional vector field $\bmF$ we have
\begin{align}
\bmF_{\parallel} &= \grad\Phi+\gradp\Psi, \label{eq:HWdec} \\
\Phi = \Delta^{-1}(\grad \cdot \bmF_{\parallel}), &\; \; \Psi=\Delta^{-1}(\gradp \cdot \bmF_{\parallel}), \label{eq:PhiPsi} 
\end{align}
where $\Delta^{-1}$ is the two-dimensional Newtonian potential. 

We denote by $\bmN \coloneqq (-\eta_x,-\eta_y,1)^T$ the normal vector at the free surface. Therefore for any vector field $\bmA: D_\eta \to \mathbb{R}^3$ we have
\begin{align*}
\underline{\bmA} \times \bmN &=
\begin{pmatrix}
\bmA_{\parallel}^{\perp} \\ \bmA_{\parallel}^{\perp} \cdot \nabla\eta
\end{pmatrix}
.
\end{align*}
We denote by $\bmv$ the horizontal component of the velocity field $\bmU$, and by $\mathrm{w}$ its vertical component, so that
\begin{align*}
\bmU &= 
\begin{pmatrix}
\bmv \\ \mathrm{w}
\end{pmatrix}
 .
\end{align*}

We denote by $\mathcal{S}(\mathbb{R}^2)$ the space of Schwartz functions on $\mathbb{R}^2$.

We set $\Lambda\coloneqq(1-\Delta)^{1/2}$, and we define for all $s \in \mathbb{R}$ the Sobolev space
\begin{align*}
H^{s}(\mathbb{R}^2) &\coloneqq \{ u \in \mathcal{S}'(\mathbb{R}^2) : \Lambda^s f \in L^2(\mathbb{R}^2) \} ,  \\
\| u \|_{H^{s}(\mathbb{R}^2)} &\coloneqq \| \Lambda^s \; u \|_{L^2(\mathbb{R}^2)} , \; \; \forall \, u \in H^{s}(\mathbb{R}^2) ,
\end{align*}
and for all $k \in \mathbb{N}$ the Sobolev space 
\begin{align*}
H^{k}(D_{\eta}) &\coloneqq \{ u \in \mathcal{S}'(D_{\eta}) :  \partial^{\alpha} f \in L^2(D_{\eta})  \;\; \forall  \alpha \in \mathbb{N}^3 : |\alpha| \leq k \} ,  \\
\| u \|_{H^{k}(D_{\eta})} &\coloneqq \sum_{ \alpha \in \mathbb{N}^3 : |\alpha| \leq k} \| \partial^{\alpha} f \|_{ L^2(D_{\eta}) }  , \; \; \forall \, u \in H^{k}(D_{\eta}) .
\end{align*}
Notice that by an extension argument standard Sobolev embedding theorems holds true also if we replace $\mathbb{R}^3$ by $D_0$ (see also proof of Theorem 4.10 in \cite{groves2020variational}).

We also introduce for all $s \in \mathbb{R}$ the space
\begin{align*}
H^s_0(\mathbb{R}^2) &\coloneqq \{ u \in H^s(\mathbb{R}^2) : \exists v \in H^{s+1}(\mathbb{R}^2) \; \; \mathrm{s.t.} \; \; u = |\bfD| v  \}, \;\; \bfD \coloneqq-\mathrm{i}\nabla , \\
\| u \|_{H^{s}_0(\mathbb{R}^2)} &\coloneqq \| \; |\bfD|^{-1} u \|_{ H^{s+1}(\mathbb{R}^2) }, \; \; \forall \, u \in H^{s}_0(\mathbb{R}^2) . 
\end{align*}

Moreover, we define for all $s \in \mathbb{R}$ and $k \in \mathbb{N}$ the space $H^{s,k}(D_0)$ by
\begin{align*}
H^{s,k}(D_0) &\coloneqq \bigcap_{j=0}^k H^j( (-h,0) ; H^{s-j}(\mathbb{R}^2) ),  \\
\| u \|_{H^{s,k}(D_0)} &\coloneqq \sum_{j=0}^k \| \Lambda^{s-j} \; \partial_z^j u \|_{L^2(D_0)} , \; \; \forall \, u \in H^{s,k}(D_0) ,
\end{align*}
and we recall the following embedding (see Proposition 2.12 in \cite{lannes2013water})
\begin{equation*}
H^{s+1/2,1}(D_0) \subset L^\infty([-h,0] ; H^s(\mathbb{R}^2) ) , \;\; \forall s \in \mathbb{R} ,
\end{equation*}
where the space $ L^\infty([-h,0] ; H^s(\mathbb{R}^2) )$ is endowed with the canonical norm
\begin{align*}
\| f \|_{ L^\infty([-h,0] ; H^s(\mathbb{R}^2) ) } &\coloneqq \sup_{w \in [-h,0]} \|f(\cdot,w)\|_{ H^s(\mathbb{R}^2) } , \;\; \forall f \in L^\infty([-h,0] ; H^s(\mathbb{R}^2) ) .
\end{align*}

We also recall the definition of H\"older and Zygmund spaces. Following Appendix A of \cite{alazard2015sobolev}, we denote by $C^0(\mathbb{R}^2)$ the space of \emph{bounded} continuous functions; for any $s \in \mathbb{N}$, we denote by $C^s(\mathbb{R}^2)$ the space of $C^0(\mathbb{R}^2)$-functions whose derivative of order less or equal to $s$ are in $C^0(\mathbb{R}^2)$. For any $s \in (0,+\infty) \setminus \mathbb{N}$, we denote by $C^s(\mathbb{R}^2)$ the space of \emph{bounded} functions on $\mathbb{R}^2$ whose derivatives of order $[s]$ are uniformly H\"older continuous with exponent $s-[s]$. 

Next, choose a function $\Gamma \in C^\infty_0(B(0,1])$ which is equal to $1$ on $B(0,1/2]$; set $\gamma(\xi)\coloneqq\Gamma(\xi/2)-\Gamma(\xi)$ ($\gamma$ is supported on the annulus $\{ 1/2 \leq |\xi| \leq 2\}$, then we have that for any $\xi \in \mathbb{R}^2$
\begin{align*}
1 & = \Gamma(\xi) + \sum_{j \in \mathbb{N}} \gamma(2^{-j} \xi),
\end{align*}
and, recalling that $\bfD =-\mathrm{i}\nabla$, we set $\Delta_j \coloneqq\gamma(2^{-j} \bfD)$ for any $j \in \mathbb{Z}$, and $S_0\coloneqq\Gamma(\bfD)$. Then for any $s \in \mathbb{R}$ we define the Zygmund space $C^s_{\ast}(\mathbb{R}^2)$  as the space of temperate distributions $f \in \mathcal{S}'(\mathbb{R}^2)$ such that
\begin{align*}
\| f \|_{ C^s_{\ast}(\mathbb{R}^2) } &\coloneqq \|S_0 f \|_{L^\infty(\mathbb{R}^2)} + \sup_{j \in \mathbb{N}} 2^{js} \|\Delta_j f \|_{L^\infty(\mathbb{R}^2)}  < + \infty .
\end{align*}
We mention that for $s \in (0,+\infty) \setminus \mathbb{N}$ we have that $C^s_{\ast}(\mathbb{R}^2) = C^s(\mathbb{R}^2)$ and that the norms $\|\cdot\|_{ C^s_{\ast}(\mathbb{R}^2) }$ and $\|\cdot\|_{ C^s(\mathbb{R}^2) }$ are equivalent (see also Proposition 41.16 of \cite{metivier2008paradifferential}). We also use the notations $C^0(D_0)$ and $C^s(D_0)$ to denote bounded continuous functions on $D_0$ and bounded functions on $D_0$ whose derivatives of order $[s]$ are uniformly H\"older continuous with exponent $s-[s]$, respectively. We also mention the following consequences of the Sobolev embedding theorem in Zygmund spaces (see Proposition 8.4 in \cite{taylor2011partial3}),
\begin{equation*}
H^s(\mathbb{R}^2) \subset C^{s-1}(\mathbb{R}^2), \;\; H^s(D_0) \subset C^{s-3/2}(D_0) \;\; \forall s \in \mathbb{R} \setminus \frac{1}{2} \mathbb{N}.
\end{equation*}

Next, we define for all $k \in \mathbb{N}$ the homogeneous Sobolev space
\begin{align*}
\dot{H}^{k+1}( D_\eta ) &\coloneqq \{ f \in L^2_{\mathrm{loc}}( D_{\eta} ) : \nabla_{\bmx,z} f  \in ( H^{k}( D_\eta ) )^3 \}, 
\end{align*}
endowed with the norm $\| f \|_{ \dot{H}^{k+1}( D_\eta ) } \coloneqq \| \nabla_{\bmx,z} f \|_{ H^k(D_{\eta}) }$, and for all $s \in \mathbb{R}$ 
\begin{align*}
\dot{H}^{s+1}( \mathbb{R}^2 ) &\coloneqq \{ f \in L^2_{\mathrm{loc}}( \mathbb{R}^2 ) : \nabla f  \in ( H^{s}( \mathbb{R}^2 ) )^2 \},
\end{align*}
endowed with the norm $\| f \|_{ \dot{H}^{s+1}( \mathbb{R}^2 ) } \coloneqq \| \nabla f \|_{ H^s(\mathbb{R}^2) }$.

By using the definitions of the above norms one can check the following embedding of homogeneous Sobolev space,
\begin{equation*}
\dot{H}^s(\mathbb{R}^2) \cap C^0(\mathbb{R}^2) \subset C^{s-1}_{\ast}(\mathbb{R}^2) , \;\; \forall s \in \mathbb{R},
\end{equation*}
see also Theorem 1.50 of \cite{bahouri2011fourier} for a closely related result.

\subsection{The generalized Zakharov--Craig--Sulem formulation} \label{subsec:genZCS}

We now introduce a recent formulation for water waves with vorticity by Castro and Lannes, which generalizes the well-known Zakharov--Craig--Sulem formulation for irrotational velocity fields.  We defer to Sec. 2 of \cite{castro2015well} for more details.

Denoting by $\Pi$ and $\Pi_{\perp}$ respectively the projection onto gradient vector fields and the projection onto orthogonal gradient vector fields, we obtain
\begin{align*}
\Pi = \grad \; \grad^T \; \Delta^{-1}, &\; \; \Pi_{\perp} = \gradp \; (\gradp)^T \; \Delta^{-1} .
\end{align*}
We can use the Hodge-Weyl decomposition \eqref{eq:HWdec} in order to decompose $\bmU_{\parallel}$ as follows,
\begin{align*}
\bmU_{\parallel} = \Pi \bmU_{\parallel} + \Pi_{\perp} \bmU_{\parallel}  &= \grad \Phi + \gradp\Psi, 
\end{align*}
for some scalar functions $\Phi$ and $\Psi$. Hence, by taking the trace of \eqref{eq:Euler} at the free surface, we obtain the following equation for $\Phi$,
\begin{align} \label{eq:Euler3}
\Phi_t + g \, \eta + \frac{1}{2}  |\bmU_{\parallel}|^2 - \frac{1}{2} ( (1+|\nabla\eta|^2) \underline{w}^2 ) - \nabla \cdot \Delta^{-1} \, ( \underline{\bmomega} \cdot \bmN \; \underline{\bmv}^{\perp} ) &= 0. 
\end{align}

We also observe that by taking the curl of \eqref{eq:Euler}, we obtain the vorticity equation
\begin{align} \label {eq:vorticity}
\bmomega_t + (\bmU \cdot \nabla_{\bmx,z}) \bmomega &= (\bmomega \cdot \nabla_{\bmx,z}) \bmU, \; \; \text{in} \; \; D_\eta .
\end{align}

\begin{remark} \label{rem:EqPsi}

We just point out that the function $\Psi$ in the decomposition of $\bmU_{\parallel}$ is completely determined by $\bmomega$ and $\eta$. Indeed, since for any three-dimensional vector field $\bmF$ the following identity holds true,
\begin{align*}
\underline{\curl \bmF} \cdot \bmN &= \nabla \cdot \bmF_{\parallel}^{\perp} , 
\end{align*}
we get
\begin{align*}
\underline{\bmomega} \cdot \bmN &= \nabla \cdot \bmU_{\parallel}^{\perp} , 
\end{align*}
so that, if $\bmomega \in ( L^{2}(D_\eta) )^3$ is divergence-free, we can obtain $\Psi$ as the unique solution in $\dot{H}^{3/2}(\mathbb{R}^2)$ of
\begin{align*}
-\Delta\Psi &= \underline{\bmomega} \cdot \bmN .
\end{align*}
\end{remark}

Now let us assume that $\eta \in H^{k}(\mathbb{R}^2)$ with $k > \frac{5}{2}$, and let us denote by $D_\eta$ the domain
\begin{align} \label{eq:TimeIndDom}
D_\eta &\coloneqq \{ (\bmx,z) \in \mathbb{R}^2 \times \mathbb{R}: -h < z < \eta(\bmx) \} .
\end{align}
Assume moreover that the above domain is \emph{strictly connected}, namely that there exists $h_0>0$ such that
\begin{align} \label{eq:StrConnected}
h + \eta(\bmx) &\geq h_0, \; \; \forall \; \bmx \in \mathbb{R}^2 .
\end{align}

Then, it is known that it is possible to reconstruct the velocity field $\bmU$ in terms of $\bmomega$, $\nabla\Phi$ and $\eta$ (see Theorem 2.4 in \cite{castro2015well}, which holds also for $\eta \in W^{2,\infty}(\mathbb{R}^2)$; see also Corollary 2.44 in \cite{lannes2013water} for the corresponding result for the irrotational case). 

In order to state our result, we have to introduce some auxiliary definitions. Indeed, let $\eta \in H^{k}(\mathbb{R}^2)$ with $k > \frac{5}{2}$ be such that \eqref{eq:StrConnected} holds true, and let us denote by $D_\eta$ the domain as in \eqref{eq:TimeIndDom}. We define the subspace of $( L^{2}(D_\eta) )^3$ of divergence-free vector fields as
\begin{align} \label{eq:DivFree}
H(\Div_0,D_\eta) &\coloneqq \{ \bmF \in ( L^{2}(D_\eta) )^3 : \Div \bmF = 0 \} .
\end{align}
Moreover, we introduce the following set 
\begin{align} \label{eq:DivFreeAux}
H_b(\Div_0,D_\eta) &\coloneqq \{ \bmF \in ( L^{2}(D_\eta) )^3 : \Div \bmF = 0 , \bmF \cdot \bmN|_{z=-h} \in H^{-1/2}_0(\mathbb{R}^2) \}  ,
\end{align}
which we endow with the norm
\begin{align*}
\| \bmF \|_{2,b} &\coloneqq \| \bmF \|_{ ( L^{2}(D_\eta) )^3} +  \| \bmF \cdot \bmN|_{z=-h} \|_{ H^{-1/2}_0(\mathbb{R}^2) } , \; \; \forall \; \bmF \in H_b(\Div_0,D_\eta) .
\end{align*}

\begin{proposition} \label{prop:CLSolBVP}

Let $k > \frac{5}{2}$, let $\eta \in H^{k}(\mathbb{R}^2)$ be such that \eqref{eq:StrConnected} holds true, and let us denote by $D_\eta$ the domain as in \eqref{eq:TimeIndDom}. Let also $\bmomega \in H_b(\Div_0,D_\eta) $ and $\Phi \in \dot{H}^{3/2}(\mathbb{R}^2)$.

Then there exists a unique $\bmU \in ( H^{1}( D_\eta ) )^3$ which solves the boundary value problem 
\begin{align} \label{eq:BVP}
\begin{cases}
\curl \bmU = \bmomega , & \text{in} \; \; D_\eta ,  \\
\Div \bmU = 0 , & \text{in} \; \; D_\eta ,  \\
\bmU \cdot \bme_3 = 0 , & \text{at} \; \; z=-h,  \\
\bmU_{\parallel} = \grad\Phi - \gradp \Delta^{-1} ( \bmomega \cdot \bmN ) , & \text{at} \; \; z=\eta . 
\end{cases}
\end{align}
Moreover, one has
\begin{align} \label{eq:bfuDecomp}
\bmU &= \curl \bmA + \nabla_{\bmx,z} \varphi ,
\end{align}
where $\bmA \in ( H^{2}(D_\eta) )^3$ solves
\begin{align} \label{eq:systA}
\begin{cases}
\curl \, \curl \bmA = \bmomega , & \text{in} \; \; D_\eta , \\
\Div \bmA = 0 , & \text{in} \; \; D_\eta ,  \\
\bmA \times \bme_3 = \bmzero , & \text{at} \; \; z=-h ,  \\
\curl \bmA \cdot \bme_3 = 0 , & \text{at} \; \; z=-h ,  \\
\bmA \cdot \bmN = 0 , & \text{at} \; \; z=\eta ,  \\
(\curl \bmA)_{\parallel} = -\gradp \Delta^{-1} (\bmomega \cdot \bmN) , & \text{at} \; \; z=\eta , 
\end{cases}
\end{align}
while $\varphi \in H^{2}(D_\eta)$ solves
\begin{align} \label{eq:systphi}
\begin{cases}
\Delta_{\bmx,z} \varphi = 0 , & \text{in} \; \; D_\eta ,  \\
\varphi = \Phi , & \text{at} \; \; z=\eta ,  \\
\partial_n \varphi =  0, & \text{at} \; \; z=-h . 
\end{cases}
\end{align}
Then
\begin{align}
& \| \bmU \|_{( L^{2}(D_\eta) )^3} + \| \nabla_{\bmx,z}\bmU \|_{( L^{2}(D_\eta) )^{3 \times 3} } \nonumber \\
&\leq C( h_0^{-1}, h, \|\eta\|_{H^{k}(\mathbb{R}^2)} ) \; \left(  \|\bmomega\|_{2,b} + \| \nabla\Phi \|_{( H^{1/2}(\mathbb{R}^2) )^2}  \right) .  \label{eq:EstH1u}
\end{align}
\end{proposition}

\begin{remark} \label{rem:CLdecomp}

We point out that the statement of Proposition \ref{prop:CLSolBVP} improves the corresponding result by Castro and Lannes (see Theorem 2.4 of \cite{castro2015well}); indeed, by Sobolev embedding we have that $\eta \in C^{3/2+\varepsilon}(\mathbb{R}^2)$, for some $\varepsilon>0$. %This is due to more refined estimates in Lemma \ref{lem:TecLemma41} (see Sec. 3.1.1 of \cite{alazard2014cauchy} for a similar approach; see also Proposition 4.1 and Lemma 4.3 of \cite{wang2021local}).
\end{remark}

For completeness, we prove Proposition \ref{prop:CLSolBVP} in Appendix \ref{sec:proofDivCurl}. Moreover, the proof allows us to reformulate the boundary value problem \eqref{eq:systA} in an equivalent way (see Corollary \ref{cor:AltStrong}), so that its solution can be written explicitly in terms of a Green matrix (see Sec. \ref{sec:GDNOhom}; see also Sec. 4 of \cite{groves2020variational} for a similar formulation for Beltrami flows).

\begin{remark} \label{rem:bfuDecomp}

The first two equations in \eqref{eq:systA} imply that
\begin{align} 
\bmomega &= \curl \curl \bmA \, = \, \nabla_{\bmx,z}( \nabla_{\bmx,z} \cdot \bmA) - (\nabla_{\bmx,z} \cdot \nabla_{\bmx,z})\bmA \, = \, - \Delta_{\bmx,z} \bmA, \; \; \mathrm{in} \; \; D_\eta . \label{eq:1EqA}
\end{align}

\end{remark}

Let $k > \frac{5}{2}$, $\eta \in H^{k}(\mathbb{R}^2)$ be such that \eqref{eq:StrConnected} holds true, and let us denote by $D_\eta$ the domain as in \eqref{eq:TimeIndDom}. We now define the following linear maps: 
\begin{align*}
\mathbb{A}[\eta] &: H_b(\Div_0,D_\eta) \to ( H^{2}(D_\eta) )^3, \; \; \mathbb{A}[\eta]\bmomega \coloneqq \bmA, \nonumber \\
\mathbb{U}_I[\eta] &: \dot{H}^{3/2}(\mathbb{R}^2) \to ( H^{1}(D_\eta) )^3, \; \; \mathbb{U}_I[\eta]\Phi \coloneqq \nabla_{\bmx,z}\varphi, \nonumber \\
\mathbb{U}_{II}[\eta] &: H_b(\Div_0,D_\eta) \to ( H^{1}(D_\eta) )^3, \; \; \mathbb{U}_{II}[\eta]\bmomega \coloneqq \curl \bmA, \nonumber \\
\mathbb{U}[\eta] &: \dot{H}^{3/2}(\mathbb{R}^2) \times H_b(\Div_0,D_\eta) \to ( H^{1}(D_\eta) )^3, \\
\mathbb{U}[\eta](\Phi,\bmomega) &\coloneqq \mathbb{U}_{I}[\eta]\Phi + \mathbb{U}_{II}[\eta]\bmomega . \nonumber 
\end{align*}
where $\bmA$ and $\varphi$ are given by the decomposition \eqref{eq:bfuDecomp}. We also define the map which gives the tangential velocity field at the free surface,
\begin{align*}
\mathbb{U}_{\parallel}[\eta] &: \dot{H}^{3/2}(\mathbb{R}^2) \times H_b(\Div_0,D_\eta) \to ( H^{1}(D_\eta) )^2, \\\
\mathbb{U}_{\parallel}[\eta](\Phi,\bmomega) &\coloneqq \nabla\Phi - \gradp \Delta^{-1} ( \underline{\bmomega} \cdot \bmN ) . \nonumber 
\end{align*}
Similarly, if we denote by $\mathbb{V}[\eta]$ and $\mathbb{W}[\eta]$ the operators that reconstruct respectively the horizontal and the vertical part of the velocity field, we can define
\begin{align*}
\underline{\mathbb{V}}[\eta](\Phi,\bmomega) \coloneqq \mathbb{V}[\eta](\Phi,\bmomega)|_{z=\eta} , &\; \; \underline{\mathbb{W}}[\eta](\Phi,\bmomega) \coloneqq \mathbb{W}[\eta](\Phi,\bmomega)|_{z=\eta} ,
\end{align*}
so that
\begin{align*}
\mathbb{U}_{\parallel}[\eta](\Phi,\bmomega) &= \underline{\mathbb{V}}[\eta](\Phi,\bmomega) + \underline{\mathbb{W}}[\eta](\Phi,\bmomega) \, \nabla\eta .
\end{align*}

Moreover, under the same assumptions on $\eta$ we can also define a \emph{generalized Dirichlet--Neumann operator}
\begin{align} 
\mathcal{G}_{gen}[\eta] &: \dot{H}^{3/2}(\mathbb{R}^2) \times H_b(\Div_0,D_\eta) \to  H^{1/2}(\mathbb{R}^2) , \nonumber \\
\mathcal{G}_{gen}[\eta](\Phi,\bmomega) &\coloneqq \mathbb{U}[\eta](\Phi,\bmomega) \cdot \bmN |_{z=\eta} . \label{eq:GDNO}
\end{align}
With this definition we have
\begin{align*}
\underline{\mathbb{W}}[\eta](\Phi,\bmomega) &= \frac{ \mathcal{G}_{gen}[\eta](\Phi,\bmomega) + \nabla\eta \cdot \mathbb{U}_{\parallel}[\eta](\Phi,\bmomega) }{1+|\nabla\eta|^2} .
\end{align*}

Hence the system \eqref{eq:Incompr}-\eqref{eq:PrBC} is equivalent to
\begin{align}
\Div \bmomega &= 0 , \; \; \text{in} \; \; D_\eta . \label{eq:IncomprGZCS} \\
\eta_t - \mathcal{G}_{gen}[\eta](\Phi,\bmomega) &= 0 , \; \; \text{at} \; \; z=\eta , \label{eq:KinFreeGZCS}
\end{align}
\begin{align}
&\Phi_t + g \; \eta + \frac{1}{2} | \mathbb{U}_{\parallel}[\eta](\Phi,\bmomega) |^2 - \frac{ \left(  \mathcal{G}_{gen}[\eta](\Phi,\bmomega) + \nabla\eta \cdot \mathbb{U}_{\parallel}[\eta](\Phi,\bmomega)  \right)^2 }{ 2 (1+|\nabla\eta|^2)}  \nonumber \\
&\qquad + \gradp \cdot \Delta^{-1} \, \left( \underline{\bmomega} \cdot \bmN \; \underline{\mathbb{V}}[\eta](\Phi,\bmomega) \right) = 0 , \, \text{at} \; \; z=\eta , \label{eq:EulerGZCS} 
\end{align}
\begin{align}
\bmomega_t + \left( \mathbb{U}[\eta](\Phi,\bmomega) \cdot \nabla_{\bmx,z} \right) \, \bmomega &= \left( \bmomega \cdot \nabla_{\bmx,z} \right) \, \mathbb{U}[\eta](\Phi,\bmomega) , \; \; \text{in} \; \; D_\eta . \label{eq:VorticityGZCS} 
\end{align}

\begin{remark} \label{rem:Incompr}

Eq. \eqref{eq:IncomprGZCS} is automatically satisfied for all times if it is satisfied at time $t=0$.
\end{remark}

\begin{remark} \label{rem:Irrot}

In the irrotational case $\bmomega \equiv \bmzero$ we have that 
\begin{align*}
\mathbb{U}_{\parallel}[\eta](\Phi,\bmzero) = \nabla\Phi , &\quad \mathcal{G}_{gen}[\eta](\Phi,\bmzero) = \mathcal{G}[\eta]\Phi , 
\end{align*}
where $\mathcal{G}[\eta]$ denotes the classical Dirichlet--Neumann operator. Moreover, the system \eqref{eq:IncomprGZCS}-\eqref{eq:VorticityGZCS} reduces to the usual Zakharov--Craig--Sulem formulation, namely
\begin{align}
& \eta_t - \mathcal{G}[\eta]\Phi = 0 , \nonumber \\
& \Phi_t + g \, \eta + \frac{1}{2} |\nabla\Phi|^2 - \frac{1}{ 2 (1+|\nabla\eta|^2)} \left(  \mathcal{G}[\eta]\Phi + \nabla\eta \cdot \nabla\Phi  \right)^2 = 0 ., \label{eq:ZCS}
\end{align}
\end{remark}

Observe that in the irrotational case the system \eqref{eq:ZCS} involve functions depending only on the horizontal variables, while in the system \eqref{eq:IncomprGZCS}-\eqref{eq:VorticityGZCS} the vorticity $\bmomega$ is a function defined on the time-dependent fluid domain $D_\eta(t)$. In order to deal with this difficulty we introduce the following straightening diffeomorphism (also called admissible diffeomorphism in the literature, see Definition 2.13 in \cite{lannes2013water}) ,
\begin{align}
\Sigma(t): D_0 \to D_\eta(t) , &\quad \Sigma(t)(\bmx,w) \coloneqq ( \bmx, w+ \sigma(t,\bmx,w) ), \nonumber \\
\sigma(t,\bmx,-h)=0, &\quad \sigma(t,\bmx,0)=\eta(t,\bmx)\in H^{s}(\mathbb{R}^2), \; \; s > 2,  \label{eq:straight} 
\end{align}
where $\eta$ satisfies condition \eqref{eq:StrConnected}, the coefficients of the Jacobian matrix $J_{\Sigma} = \nabla_{\bmx,w}\Sigma$ belong to $L^\infty(D_0)$, and there exists a constant 
\begin{align*}
M_0 &= M_0( h_0^{-1}, \|\eta\|_{ H^{s}(\mathbb{R}^2)} , \|\eta\|_{ C^{1}(\mathbb{R}^2)} )>0
\end{align*}
such that
\begin{align*}
\| J_{\Sigma,i,j} \|_{L^\infty(D_0)} &\leq M_0 , \; \; \forall i,j=1,2,3,
\end{align*}
and such that the determinant of the Jacobian matrix $J_{\Sigma}$ is uniformly bounded from below on $D_0$ by $c_0>0$ satisfying $M_0 \geq c_0^{-1}$ (this last property implies that $1 + \partial_w \sigma(t) \neq 0$ in $D_0$). 

Moreover, we say that a straightening diffeomorphism is \emph{regularizing} if
\begin{align*}
\| \nabla_{\bmx,w}\sigma \|_{( H^{s-1/2,0}(D_0) )^3} &\leq M_0 \, \|\eta\|_{H^{s}(\mathbb{R}^2)}.
\end{align*}

We now present two common examples of straightening diffeomorphisms (see Sec. 2.2.2 of \cite{lannes2013water} for more details).

\begin{remark} \label{rem:TrivialDiffeo}

The so-called \emph{trivial diffeomorphism} between $D_0$ and $D_{\eta}(t)$ is given by \eqref{eq:straight}, where
\begin{align} \label{eq:TrivialDiffeo}
\sigma(t,\bmx,w) &\coloneqq \left( 1 + \frac{w}{h} \right) \eta(t,\bmx) .
\end{align}
\end{remark}

\begin{proposition} \label{prop:RegDiffeo}

Let $s>2$ and assume that $\eta \in H^{s}(\mathbb{R}^2)$ satisfies condition \eqref{eq:StrConnected}. Let $\chi \in C^{\infty}_c(\mathbb{R})$ be a positive function equal to $1$ in a neighborhood of the origin, and consider the diffeomorphism \eqref{eq:straight}, where
\begin{align}
\sigma(t,\bmx,w) &\coloneqq \left( 1 + \frac{w}{h} \right) \eta^{(\delta)}(t,\bmx,w) , \, \eta^{(\delta)}(t,\bmx,w) \coloneqq \chi(\delta \, w \, |\bfD|)\eta(t,\bmx) . \label{eq:straightReg}
\end{align}
Then
\begin{itemize}
\item[i.] if $\delta >0$ in \eqref{eq:straightReg} satisfies
\begin{align*}
\delta &< \frac{ h_0 }{C(\chi) \|\eta\|_{ H^{s}(\mathbb{R}^2)} } , \;\; C(\chi) \coloneqq \|\chi'\|_{L^\infty(\mathbb{R})} \, \left[ \int_{\mathbb{R}^2} (1+|\bmxi|^2)^{-(s-1)} \, \mathrm{d}\bmxi \right]^{1/2} < +\infty ,
\end{align*}
then the diffeomorphism defined in \eqref{eq:straightReg} is such that $J_{\Sigma} = \nabla_{\bmx,z}\Sigma$ belong to $( L^\infty(D_0) )^0$, and there exists $M_0>0$ such that
\begin{align*}
\| J_{\Sigma,i,j} \|_{L^\infty(D_0)} &\leq M_0 , \; \; \forall i,j=1,2,3.
\end{align*}
Moreover, the determinant of the Jacobian matrix $J_{\Sigma}$ is uniformly bounded from below on $D_0$ such that $M_0 \geq 1/c_0$, where $c_0 = h_0 - \delta \, C(\chi) \, \|\eta\|_{ H^{s}(\mathbb{R}^2) }$.
\item[ii.] if the assumption of i. holds true, then the diffeomorphism defined in \eqref{eq:straightReg} is regularizing.
\end{itemize}

\end{proposition}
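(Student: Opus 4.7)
The plan is to compute the Jacobian $J_\Sigma$ of the straightening diffeomorphism explicitly and then to exploit the frequency cutoff $\chi(\delta w\,|\bfD|)$ via Fourier-analytic estimates. Since $\Sigma$ acts as the identity on the horizontal variables, $J_\Sigma$ is block-lower-triangular with the $2\times 2$ identity in the upper-left block, so its off-diagonal entries reduce to $\nabla\sigma = (1+w/h)\,\chi(\delta w\,|\bfD|)\nabla\eta$, while $\det J_\Sigma = 1+\partial_w\sigma$, with
\[
\partial_w\sigma = \tfrac{1}{h}\,\eta^{(\delta)} + (1+w/h)\,\partial_w\eta^{(\delta)}, \qquad \partial_w\eta^{(\delta)} = \delta\,|\bfD|\,\chi'(\delta w\,|\bfD|)\,\eta.
\]

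The central Fourier-analytic input I would establish first is the $L^\infty$ multiplier bound
\[
\bigl\| \delta\,|\bfD|\,\chi'(\delta w\,|\bfD|)\,\eta \bigr\|_{L^\infty(\mathbb{R}^2)} \leq \delta\,C(\chi)\,\|\eta\|_{H^{s}(\mathbb{R}^2)},
\]
valid for every $w\in[-h,0]$, together with the companion pointwise estimate $\|\eta^{(\delta)}(\cdot,w)-\eta\|_{L^\infty(\mathbb{R}^2)} \leq \delta|w|\,C(\chi)\,\|\eta\|_{H^{s}(\mathbb{R}^2)}$ coming from $|\chi(\delta w|\bmxi|)-\chi(0)|\leq \delta|w|\,\|\chi'\|_{L^\infty}|\bmxi|$. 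Both bounds follow from Fourier inversion together with the Cauchy--Schwarz inequality applied to $|\bmxi|\,|\hat{\eta}(\bmxi)|$ in the weight $(1+|\bmxi|^2)^{-s/2}$, using the integrability of $(1+|\bmxi|^2)^{-(s-1)}$ on $\mathbb{R}^2$ when $s>2$. This is precisely what forces the assumption $s>2$ and produces the constant $C(\chi)$.

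Given these pointwise estimates, item i.\ is essentially algebraic. The $L^\infty$ control of $\nabla\sigma$ and of the scalar part $\eta^{(\delta)}/h$ of $\partial_w\sigma$ follows from $\|\chi(\delta w|\bfD|)f\|_{L^\infty}\leq \|\chi\|_{L^\infty}\|f\|_{L^\infty}$ combined with the Sobolev embedding $H^{s-1}\hookrightarrow L^\infty$ for $s>2$, yielding a uniform upper bound on every entry of $J_\Sigma$. For the lower bound on the determinant I would rewrite
\[
1+\partial_w\sigma = \tfrac{1}{h}\,\bigl[\,h+\eta + (\eta^{(\delta)}-\eta) + (h+w)\,\partial_w\eta^{(\delta)}\,\bigr],
\]
insert $h+\eta\geq h_0$ from \eqref{eq:StrConnected}, and bound the two error terms pointwise by $\delta|w|\,C(\chi)\|\eta\|_{H^{s}}$ and $(h+w)\,\delta\,C(\chi)\|\eta\|_{H^{s}}$. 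Since $|w|+(h+w)=h$ for $w\in[-h,0]$, the two errors combine into a single factor $\delta\,h\,C(\chi)\|\eta\|_{H^{s}}$; the assumption on $\delta$ then forces the lower bound to be of the announced positive form, and choosing $M_0\geq 1/c_0$ closes part i.

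For part ii.\ the regularising estimate reduces, by the definition of the norm on $H^{s-1/2,0}(D_0)$ as an $L^{2}_{w}$ norm valued in $H^{s-1/2}(\mathbb{R}^2)$, to controlling $\chi(\delta w|\bfD|)\nabla\eta$, $\chi(\delta w|\bfD|)\eta$ and $\delta\,|\bfD|\,\chi'(\delta w|\bfD|)\eta$ in $L^{2}((-h,0);H^{s-1/2}(\mathbb{R}^2))$. By Plancherel and Fubini, each reduces to a weighted integral of the form
\[
\int_{\mathbb{R}^2} \Bigl[\int_{-h}^{0} |\widetilde{\chi}(\delta w|\bmxi|)|^{2} \,\di w \Bigr] \,|\bmxi|^{2j}\,(1+|\bmxi|^2)^{s-1/2}\,|\hat{\eta}(\bmxi)|^{2} \,\di\bmxi ,
\]
with $\widetilde\chi\in\{\chi,\chi'\}$ and $j\in\{0,1\}$. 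The key move in the high-frequency regime is the change of variables $\tau = \delta w|\bmxi|$, which gives $\int_{-h}^{0} |\widetilde\chi(\delta w|\bmxi|)|^{2}\,\di w \leq \|\widetilde\chi\|_{L^{2}}^{2}/(\delta|\bmxi|)$, trading one power of $|\bmxi|$ for a factor $1/\delta$; combined with $|\bmxi|^{2j-1}(1+|\bmxi|^2)^{s-1/2}\leq (1+|\bmxi|^2)^{s}$, this yields a contribution bounded by a multiple of $\|\eta\|_{H^{s}}^{2}$. The low-frequency region is handled with the trivial estimate $\int_{-h}^{0}|\widetilde\chi|^{2}\di w\leq h\|\widetilde\chi\|_{L^{\infty}}^{2}$ and the fact that $|\bmxi|^{2j}(1+|\bmxi|^{2})^{s-1/2}$ stays bounded there. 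The main technical obstacle is precisely this low-frequency bookkeeping in (ii): matching the high- and low-frequency regimes so that the final constant depends only on $h$, $h_0^{-1}$ and $\|\eta\|_{H^{s}}$, as required for $M_0$, is the only non-routine step once the Fourier multiplier estimates of paragraph two have been set up.
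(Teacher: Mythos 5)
The paper itself does not prove this proposition: it is imported verbatim (as a citation of Propositions 2.16 and 2.18 of Lannes' book, together with Lemma \ref{lem:smoothing}), so your self-contained proof is formally a different route, though in substance it reconstructs exactly the standard argument. Your key multiplier bound via Fourier inversion and Cauchy--Schwarz against the weight $(1+|\bmxi|^2)^{-s/2}$ is indeed the source of the hypothesis $s>2$ and of the constant $C(\chi)$, and your change of variables $\tau=\delta w|\bmxi|$ in part ii.\ is precisely the content of the smoothing Lemma \ref{lem:smoothing} (with $\lambda_1=\delta^2$), which the paper already records and which you could invoke directly instead of reproving it; the resulting $\delta^{-1/2}$ in the constant is harmless because $\delta$ is fixed in terms of $h_0$, $C(\chi)$ and $\|\eta\|_{H^s(\mathbb{R}^2)}$, so the low-/high-frequency bookkeeping you single out as the delicate step is in fact routine.

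Two points do not go through as written. First, the operator bound $\|\chi(\delta w|\bfD|)f\|_{L^\infty}\le\|\chi\|_{L^\infty}\|f\|_{L^\infty}$ is false in general: a Fourier multiplier is not bounded on $L^\infty$ by the sup of its symbol. This is easily repaired, either by estimating the relevant entries of $J_\Sigma$ with the same Fourier-inversion/Cauchy--Schwarz argument you already set up (e.g.\ $\|\chi(\delta w|\bfD|)\nabla\eta\|_{L^\infty}\le C\,\|\chi\|_{L^\infty}\,[\int_{\mathbb{R}^2}(1+|\bmxi|^2)^{-(s-1)}\,\mathrm{d}\bmxi]^{1/2}\,\|\eta\|_{H^s(\mathbb{R}^2)}$), or by using that $\mathscr{F}^{-1}[\chi(|\cdot|)]\in L^1(\mathbb{R}^2)$ with dilation-invariant $L^1$ norm. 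Second, your determinant computation gives $1+\partial_w\sigma\ \ge\ \tfrac1h\bigl(h_0-\delta\,h\,C(\chi)\|\eta\|_{H^s(\mathbb{R}^2)}\bigr)=\tfrac{h_0}{h}-\delta\,C(\chi)\|\eta\|_{H^s(\mathbb{R}^2)}$, which is \emph{not} "the announced form": it coincides with $c_0=h_0-\delta C(\chi)\|\eta\|_{H^s(\mathbb{R}^2)}$ only up to factors of $h$, and under the stated smallness condition $\delta<h_0/(C(\chi)\|\eta\|_{H^s(\mathbb{R}^2)})$ your lower bound is positive only when $h\le1$. The mismatch traces to the unit-depth normalization of the reference strip in Lannes; a careful write-up should either derive the corrected condition $\delta<h_0/(h\,C(\chi)\|\eta\|_{H^s(\mathbb{R}^2)})$ with the corresponding $c_0$, or state the normalization explicitly, rather than assert that the quoted constants follow from your estimate. (A one-line remark that $\sigma(\cdot,-h)=0$, $\sigma(\cdot,0)=\eta$ and the strict positivity of $1+\partial_w\sigma$ make $\Sigma$ a bijection onto $D_\eta$ would also complete the claim that it is a diffeomorphism.)
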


We also recall the following smoothing property (see Lemma 2.20 in \cite{lannes2013water}): let $s \in \mathbb{R}$ and $\lambda_1>0$, then for all $f \in C_c(\mathbb{R})$ there exists a constant $C=C(f)>0$ such that
\begin{align} \label{eq:smoothing}
\left\| f(\lambda_1^{1/2} w |\bfD|) u \right\|_{H^s(D_0)}^2 &\leq C \, \left| (1+\lambda_1^{1/2} |\bfD|)^{-1/2} u \right|_{H^s(\mathbb{R}^2)}^2 .
\end{align}

Observe that by \eqref{eq:smoothing} (with $\lambda_1=\delta^2$) we have that 
\begin{align*}
\| \eta^{(\delta)} \|_{H^{s+1/2,2}(D_0)} &\leq C(1/\delta) \|\eta\|_{H^{s}(\mathbb{R}^2)} . 
\end{align*}
As a comparison, for the trivial diffeomorphism of Remark \ref{rem:TrivialDiffeo} one need to control $H^s(\mathbb{R}^2)$-norm of the surface parameterization $\eta$ in order to control the $H^s(D_0)$-norm of $\sigma$ in \eqref{eq:TrivialDiffeo}. 

By using straightening diffeomorphisms the role of the vorticity $\bmomega$ is replaced by the straightened vorticity $\tilde{\bmomega}\coloneqq\bmomega \circ \Sigma(t)$, and role of the velocity field $\bmU$ is replaced by the straightened velocity field $\tilde{\bmU}\coloneqq\bmU \circ \Sigma(t)$. As a consequence, we can define a straightened version of the generalized Zakharov--Craig--Sulem formulation \eqref{eq:IncomprGZCS}-\eqref{eq:VorticityGZCS}, where the unknowns $(\eta,\Phi,\tilde{\bmomega})$ are defined on the time-independent domain $D_0$.

\section{Main results} \label{sec:results}

Here we present the main results of the paper.

\subsection{Analytical results for the generalized Dirichlet--Neumann operator} \label{subsec:analGDNO}

Exploiting the generalized Zakharov--Craig--Sulem formulation of Sec. \ref{subsec:genZCS}, and requiring more regularity with respect to the assumptions of Proposition \ref{prop:CLSolBVP}, we first prove a regularity result on the velocity field of the fluid. Recall that we denote $\Lambda = (1-\Delta)^{1/2}$ and that the Sobolev norm $\|\cdot\|_{H^{k,\ell}(D_0)}$ controls at most $\ell$ vertical derivatives (when $\ell \leq k$). Introducing the operator $\partial_w^{\Sigma} = \frac{ \partial_w }{ 1+ \partial_w\sigma}$, we define the \emph{good unknowns}
\begin{align} 
\tilde{\varphi}_{(\bmj)} &:= \partial^{\bmj} \tilde{\varphi} - (\partial^{\bmj} \sigma ) \;  \partial_w^{\Sigma} \tilde{\varphi} , \; \; \bmj \in \mathbb{N}^2 \setminus \{\bmzero\} ,  \label{eq:IrrGUn} \\
\Phi_{(\bmj)} &:= \partial^{\bmj} \Phi - \underline{\tilde{\mathrm{w}}}_{I} \; \partial^{\bmj} \eta , \; \; \bmj \in \mathbb{N}^2 \setminus \{\bmzero\} , \; \; \underline{\tilde{\mathrm{w}}}_{I} := \tilde{\mathbb{U}}_{I}[\eta]\Phi |_{w=0}, \label{eq:TraceIrrGUn} \\
\tilde{\bmA}_{(\bmj)} &:= \partial^{\bmj}  \tilde{\bmA} - ( \partial^{\bmj} \sigma) \; \partial_w^{\Sigma} \tilde{\bmA} , \; \; \bmj \in \mathbb{N}^2 \setminus \{\bmzero\} . \label{eq:RotGUn} 
\end{align}

\begin{remark} \label{rem:GoodUnRem}

We mention that in (3.22) of \cite{castro2015well} the authors define for all $\bmj \in \mathbb{N}^2 \setminus \{ \bmzero \}$ the good unknowns $\Phi_{(\bmj)} := \partial^{\bmj} \Phi - \underline{\tilde{\mathrm{w}} } \; \partial^{\bmj} \eta$. On the other hand, we introduce the good unknowns \eqref{eq:IrrGUn}-\eqref{eq:RotGUn} in order to better highlight the roles of the rotational and irrotational parts of the velocity field.

\end{remark}

Moreover, we define
\begin{align*}
\mathfrak{P} &:= \frac{ |\bfD| }{ (1+|\bfD|)^{1/2} } ,
\end{align*}
which appears already in the irrotational case, see for example Lemma 2.34 in \cite{lannes2013water}.

\begin{theorem} \label{thm:mainCLHighReg}

Let $N > 3/2$, let $\eta \in C^{N}(\mathbb{R}^2)$ be such that \eqref{eq:StrConnected} holds true. Assume that the assumptions of Proposition \ref{prop:CLSolBVP} hold true, and that $\sigma$ is a regularizing diffeomorphism such that
\begin{align} \label{eq:CondDiffHigh}
\| \sigma \|_{C^N(D_0)} &\leq C \, \| \eta \|_{ C^N(\mathbb{R}^2) } 
\end{align}
holds true. Then there exists a unique solution $\tilde{\bmU} \in ( H^1(D_0) )^3$ which solves the boundary value problem \eqref{eq:BVPStraight}. If $0 \leq k < N-3/2$, $k \in \mathbb{N}$, $\Phi \in \dot{H}^{k+3/2}(\mathbb{R}^2)$, $\Lambda^k\tilde{\bmomega} \in ( L^2(D_0))^3$, then
\begin{align*}
\| \tilde{\bmU} \|_{(H^{k+1,1}(D_0))^3} &\leq C( h_0^{-1}, h, \|\eta\|_{C^{N}(\mathbb{R}^2)} ) \times \\
&\qquad \times \bigg[  \| \mathfrak{P}\Phi \|_{H^{1}(\mathbb{R}^2)}  + \sum_{ 1< |\bmj| \leq  k+1  }  \|  \mathfrak{P}\Phi_{(\bmj)} \|_{L^{2}(\mathbb{R}^2)} + \| \Lambda^k \tilde{\bmomega} \|_{ 2,b}   \bigg] .
\end{align*}

Moreover, if $N > 3$, $0 \leq \ell \leq k < N-3/2$  and $\tilde{\bmomega} \in ( H^{k,\ell}(D_0))^3$ , then the following higher-order estimate hold
\begin{align*}
\| \tilde{\bmU} \|_{(H^{k+1,\ell+1}(D_0))^3} &\leq C( h_0^{-1}, h, \|\eta\|_{C^{N}(\mathbb{R}^2)} ) \times \nonumber \\
&\qquad \times \bigg[  \| \mathfrak{P}\Phi \|_{H^{1}(\mathbb{R}^2)}  + \sum_{1 < |\bmj| \leq k+1 }  \|  \mathfrak{P}\Phi_{(\bmj)} \|_{L^{2}(\mathbb{R}^2)}   \nonumber \\
&\qquad \qquad + \| \tilde{\bmomega} \|_{ (H^{k,\ell}(D_0))^3}  + \| \Lambda^k( \tilde{\bmomega} \cdot \bme_3)|_{w=-h} \|_{H_0^{-1/2}} \bigg] .
\end{align*}
\end{theorem}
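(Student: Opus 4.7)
The plan is to handle the irrotational and rotational parts separately via the decomposition $\tilde{\bmU} = \tilde{\bmU}_I + \tilde{\bmU}_{II}$, where $\tilde{\bmU}_I = \nabla_{\bmx,w}^{\Sigma}\tilde{\varphi}$ corresponds to the Dirichlet problem \eqref{eq:systphi} transported to $D_0$ and $\tilde{\bmU}_{II} = \curl^{\Sigma}\tilde{\bmA}$ comes from \eqref{eq:systA}. The case $k=0$ is exactly Proposition \ref{prop:CLSolBVP} after applying the diffeomorphism $\Sigma$ and using that $\|\sigma\|_{C^N} \lesssim \|\eta\|_{C^N}$, which guarantees uniform bounds on the coefficients of the straightened Laplacian and curl operators. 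From here I would argue by induction on $k$, at each step differentiating both elliptic problems $|\bmj|$ times in the horizontal variables and using the good unknowns \eqref{eq:IrrGUn}--\eqref{eq:RotGUn} to absorb the worst-order commutator terms between $\partial^{\bmj}$ and the straightened operators.

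For the irrotational piece, the good unknowns $\tilde{\varphi}_{(\bmj)}$ are engineered so that the Alinhac-type commutator identities turn the problem for $\partial^{\bmj}\tilde{\varphi}$ into an elliptic problem of the same form as the one for $\tilde{\varphi}$, with a right-hand side that is a divergence of lower-order terms controlled by previously-estimated quantities and a Dirichlet datum equal to $\Phi_{(\bmj)}$ at $w=0$ (this is precisely why the trace good unknown \eqref{eq:TraceIrrGUn} appears). The base energy estimate then produces the factor $\|\mathfrak{P}\Phi_{(\bmj)}\|_{L^2}$, exactly matching the classical Lannes estimate for the Dirichlet--Neumann operator. For the rotational piece, I exploit Remark \ref{rem:bfuDecomp}: in the fluid domain the problem \eqref{eq:systA} is equivalent to $-\Delta_{\bmx,z}\bmA = \bmomega$ together with the four boundary conditions, so after straightening one gets a variable-coefficient elliptic system $-\Delta^{\Sigma}\tilde{\bmA} = \tilde{\bmomega}$ on $D_0$. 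Differentiating and forming the good unknowns $\tilde{\bmA}_{(\bmj)}$ again cancels the top-order commutators, reducing the estimate at level $k$ to the base-level estimate applied to $\tilde{\bmA}_{(\bmj)}$, with a source given (schematically) by $\partial^{\bmj}\tilde{\bmomega}$ plus lower-order terms.

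The vertical regularity in the higher-order estimate is then recovered the usual way: from the straightened elliptic equation, $(\partial_w^{\Sigma})^2 \tilde{\varphi}$ and $(\partial_w^{\Sigma})^2 \tilde{\bmA}$ can be expressed algebraically in terms of horizontal derivatives of $\tilde{\varphi}$, $\tilde{\bmA}$, and of the forcing $\tilde{\bmomega}$; iterating this identity trades each pair of vertical derivatives for two horizontal derivatives plus one vertical derivative of $\tilde{\bmomega}$, which accounts for the appearance of the $H^{k,\ell}(D_0)$ norm of $\tilde{\bmomega}$ on the right-hand side. The boundary trace $\tilde{\bmomega}\cdot\bme_3|_{w=-h} \in H^{-1/2}_0$ enters as the datum for the solenoidal-part constraint at the bottom, consistent with the definition of $\|\cdot\|_{2,b}$ and Remark \ref{rem:vortBot}.

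The main obstacle I expect is the rotational part, specifically keeping the six boundary conditions in \eqref{eq:systA} consistent along the induction. Horizontal differentiation respects the Dirichlet-type conditions at $w=0,-h$ only modulo commutators with the curl and the normal $\bmN$; since no geometric hypothesis is made on $\bmomega$, the conditions $\bmA\cdot\bmN=0$ and $(\curl\bmA)_{\parallel} = -\gradp\Delta^{-1}(\bmomega\cdot\bmN)$ at the top produce genuine lower-order boundary contributions that must be controlled by the previously-estimated pieces. Here I expect to use the equivalent formulation from Corollary \ref{cor:AltStrong} together with the Green-matrix representation sketched in Section \ref{sec:GDNOhom} to integrate by parts cleanly and close the energy estimate; the factor $\mathfrak{P}$ on $\Phi$ (and its good-unknown counterparts) is what compensates the low-frequency failure of $\dot{H}^{1/2}$ to embed into $L^2$.
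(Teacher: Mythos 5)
Your strategy diverges from the paper's: the paper never splits $\tilde{\bmU}$ into $\tilde{\mathbb{U}}_I+\tilde{\mathbb{U}}_{II}$ and never performs differentiated elliptic estimates on $\tilde{\varphi}$ and $\tilde{\bmA}$ separately. Instead it proves the $H^{k+1,1}$ bound by a single energy estimate on the velocity field itself: one tests the straightened variational identity \eqref{eq:IntPart} with $\tilde{\bmC}=\partial^{2\bmj}\tilde{\bmU}$, $|\bmj|=k$ (suitably regularized), and splits the result into a commutator term with $P(\Sigma)$, a vorticity term, and a boundary term; the good unknowns and the operator $\mathfrak{P}$ enter only in the estimate of the boundary integral, and the vorticity enters only through the pairing $\int_{D_0}(1+\partial_w\sigma)\,\tilde{\bmomega}\cdot\curl^{\Sigma}\partial^{2\bmj}\tilde{\bmU}$, which Plancherel lets one bound by $\|\Lambda^k\tilde{\bmomega}\|_{2,b}$ times $\|\Lambda^k\nabla^{\Sigma}\tilde{\bmU}\|_{L^2}$ with \emph{no} vertical derivatives of $\tilde{\bmomega}$. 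The vertical regularity is then recovered not by iterating the second-order equations for the potentials, but by the first-order algebraic identity $\partial_w^{\Sigma}\tilde{\bmU}=\frac{1}{1+|\nabla\eta|^2}\left[(\bmN\cdot\partial_w^{\Sigma}\tilde{\bmU})\,\bmN+(\bmN\times\partial_w^{\Sigma}\tilde{\bmU})\times\bmN\right]$, the div--curl equations, the product rule \eqref{eq:CLproduct}, and a finite induction on $\ell$; this is what produces exactly the $H^{k,\ell}$ norm of $\tilde{\bmomega}$.

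The genuine gap in your plan is the rotational part, which is precisely where you defer to Corollary \ref{cor:AltStrong} ``together with the Green-matrix representation'' of Section \ref{sec:GDNOhom}. That machinery is perturbative: the boundary value problem \eqref{eq:systAeq1}--\eqref{eq:systAeq5} treats $\bmR_{11}$, $r_{21}$, $\bmr_{31}$ as remainders and is solved by the analytic implicit function theorem (Theorem \ref{thm:ExSolFBVP}), which requires $\sigma$ in a small neighbourhood of the origin in $H^{s+1/2}(D_0)$, i.e.\ Sobolev regularity \emph{and smallness} of $\eta$ via \eqref{eq:CondAnalGDNO}. Theorem \ref{thm:mainCLHighReg} assumes only $\eta\in C^N(\mathbb{R}^2)$ with the lower bound \eqref{eq:StrConnected}, no smallness and no Sobolev control, so this tool is unavailable and the key step --- closing the estimate for the differentiated $\tilde{\bmA}$-problem with its mixed boundary conditions ($\tilde{\bmA}\cdot\bmN=0$ and $(\curl^{\Sigma}\tilde{\bmA})_{\parallel}=-\gradp\Delta^{-1}(\tilde{\bmomega}\cdot\bmN)$ at $w=0$) --- is not actually carried out. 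A related problem is quantitative: if you estimate the system $-\Delta^{\Sigma}\tilde{\bmA}=\tilde{\bmomega}$ by standard elliptic regularity, or if you invoke Proposition \ref{prop:CLSolBVP} at level $\bmj$ with source $\partial^{\bmj}\tilde{\bmomega}$, you either need the full (isotropic) $H^k$ norm of $\tilde{\bmomega}$ or you must handle the failure of $\partial^{\bmj}$ to commute with $\Div^{\Sigma}$, whose commutators contain $\partial_w\tilde{\bmomega}$; neither is controlled by $\Lambda^k\tilde{\bmomega}\in(L^2(D_0))^3$ alone, which is all the first estimate allows. The paper's direct energy argument is designed exactly to avoid this loss, so to make your decomposition route work you would need an anisotropic elliptic estimate for the straightened $\tilde{\bmA}$-system under H\"older-only regularity of $\eta$, which your sketch does not supply.
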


The above result is proved in Sec. \ref{sec:HighOrdEst}. From Theorem \ref{thm:mainCLHighReg} we can immediately derive the following higher order estimates on the generalized Dirichlet--Neumann operator defined in \eqref{eq:GDNO}. More precisely, we consider the straightened version of the generalized Dirichlet--Neumann operator, given by
\begin{align*}
\tilde{\mathcal{G}}_{gen}[\eta](\Phi,\tilde{\bmomega}) &= \tilde{\mathbb{U}}[\eta](\Phi,\tilde{\bmomega}) \cdot \bmN |_{w=0} .
\end{align*}

\begin{corollary} \label{cor:mainGDNOhigh}

Let $N > 3/2$, let $\eta \in C^{N}(\mathbb{R}^2)$ be such that \eqref{eq:StrConnected} holds true. Assume that $0 \leq k < N-3/2$, $k \in \mathbb{N}$, $\Phi \in \dot{H}^{k+3/2}(\mathbb{R}^2)$, $\tilde{\bmomega} \in ( H^{k,0}(D_0))^3$ and that $\sigma$ is a regularizing diffeomorphism such that \eqref{eq:CondDiffHigh} holds true. Then
\begin{equation*}
\tilde{\mathcal{G}}_{gen}[\eta](\Phi,\tilde{\bmomega})  \in  H^{k+1/2}(\mathbb{R}^2) ,
\end{equation*}
and the following higher-order estimate holds true,
\begin{align*} 
\left\| \tilde{\mathcal{G}}_{gen}[\eta](\Phi,\tilde{\bmomega}) \right\|_{ H^{k+1/2}(\mathbb{R}^2) }  &\leq C( h_0^{-1}, h, \|\eta\|_{C^{N}(\mathbb{R}^2)} ) \times \nonumber \\
&\qquad \times \bigg[ \| \mathfrak{P}\Phi \|_{H^{1}(\mathbb{R}^2)}  + \sum_{1 < |\bmj| \leq k+1 }  \|  \mathfrak{P}\Phi_{(\bmj)} \|_{L^{2}(\mathbb{R}^2)} \\
&\qquad \qquad + \| \tilde{\bmomega} \|_{ (H^{k,0}(D_0))^3}  + \| \Lambda^k( \tilde{\bmomega} \cdot \bme_3)|_{w=-h} \|_{H_0^{-1/2}} \bigg] . 
\end{align*}

\end{corollary}

Next, we give an analyticity result for the generalized Dirichlet--Neumann operator (see also Theorem \ref{thm:ExSolFBVP} and Corollary \ref{cor:GDNOAnal}).

\begin{theorem} \label{thm:mainGDNOAnal}

Let $s \geq 2$, and let $\tilde{\bmomega} \in (H^{s-2}(D_0))^3$, $\Phi \in \dot{H}^{s-1/2}(\mathbb{R}^2)$. Assume that the condition
\begin{align} \label{eq:CondAnalGDNO}
\| \sigma \|_{H^{s+1/2}(D_0)} &\leq C \, \| \eta \|_{ H^{s+1/2}(\mathbb{R}^2) } 
\end{align}
holds true. Then there exists an open neighbourhood $V$ of the origin in $H^{s+1/2}(\mathbb{R}^2)$ such that formulae
\begin{align*}
\tilde{\mathcal{G}}_{gen,I}[\eta]\Phi &= \tilde{\mathbb{U}}_{I}[\eta]\Phi \cdot \bmN |_{w=0} , \\
\tilde{\mathcal{G}}_{gen,II}[\eta]\tilde{\bmomega} &= \tilde{\mathbb{U}}_{II}[\eta]\tilde{\bmomega} \cdot \bmN |_{w=0} 
\end{align*}
define analytic maps 
\begin{align*}
\tilde{\mathcal{G}}_{gen,I}[\cdot] &: V \to L \left( \dot{H}^{s-1/2}(\mathbb{R}^2) , H^{s-3/2}(\mathbb{R}^2)  \right) , \\
\tilde{\mathcal{G}}_{gen,II}[\cdot] &: V \to L \left( ( H^{s-2}(D_0) )^3 , H^{s-3/2}(\mathbb{R}^2)  \right) .
\end{align*}

\end{theorem}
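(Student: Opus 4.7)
The plan is to treat the irrotational and rotational parts separately, reducing each to the question of analytic dependence of the solution of a straightened boundary value problem on the parameter $\eta$. In both cases, after pushing the problem onto the fixed domain $D_0$ via the straightening diffeomorphism $\Sigma$, the coefficients and the lower boundary condition are independent of $\eta$, while the dependence on $\eta$ is channelled entirely through $\nabla_{\bmx,w}\sigma$. Assumption \eqref{eq:CondAnalGDNO} provides the required control of $\sigma$ in $H^{s+1/2}(D_0)$.

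For the irrotational part, the function $\tilde\varphi := \varphi \circ \Sigma$ solves a uniformly elliptic equation of the form $\nabla_{\bmx,w} \cdot (P[\sigma] \nabla_{\bmx,w} \tilde\varphi) = 0$ in $D_0$ with $\tilde\varphi|_{w=0}=\Phi$ and a Neumann-type condition at $w=-h$, where the matrix of coefficients $P[\sigma]$ is a rational function of the entries of $\nabla_{\bmx,w}\sigma$ whose denominator is $1+\partial_w\sigma$. In a sufficiently small $H^{s+1/2}$-neighbourhood $V$ of the origin, $1+\partial_w\sigma$ stays bounded away from $0$ by the embedding $H^{s+1/2}(D_0)\hookrightarrow L^\infty(D_0)$, so $P[\cdot]$ is an analytic map from $V$ into the Banach algebra of multipliers on $H^{s-1/2}(D_0)$. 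I would then invoke the analytic implicit function theorem (as in \cite{groves2024analytical, lannes2013water}) on the map sending $(\eta,\tilde\varphi)$ to the pair (elliptic equation, Dirichlet datum); at $\eta=0$ the linearisation in $\tilde\varphi$ is a standard isomorphism between the corresponding variational spaces, and analytic inversion yields $\tilde\varphi$ as an analytic function of $\eta$ with values in $H^{s}(D_0)$. The trace identity $\tilde{\mathcal{G}}_{gen,I}[\eta]\Phi = \tilde{\mathbb{U}}_I[\eta]\Phi \cdot \bmN|_{w=0}$ together with the trace theorem $H^{s-1}(D_0) \to H^{s-3/2}(\mathbb{R}^2)$ and the fact that $\bmN$ depends polynomially on $\nabla\eta$ then delivers the asserted analyticity into $L(\dot H^{s-1/2}(\mathbb{R}^2),H^{s-3/2}(\mathbb{R}^2))$.

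For the rotational part, the program is identical in spirit but executed on the vector-valued problem \eqref{eq:systA} for $\bmA$. After straightening, using Remark \ref{rem:bfuDecomp} and Corollary \ref{cor:AltStrong}, the system for $\tilde{\bmA} = \bmA \circ \Sigma$ becomes a uniformly elliptic boundary value problem on $D_0$ whose principal part is $-\Delta_{\bmx,w}$ at $\eta=0$, perturbed by an operator whose coefficients depend rationally, and hence analytically on a neighbourhood of $\sigma=0$, on $\nabla_{\bmx,w}\sigma$; the lateral boundary conditions at $w=0$ involve $\bmN$ and $\underline{\tilde{\bmomega}}\cdot\bmN$, both polynomial in $\nabla\eta$. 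I would again apply the analytic implicit function theorem using the isomorphism at $\eta=0$ furnished by the explicit Green matrix representation mentioned after Proposition \ref{prop:CLSolBVP} (that representation is the $\eta=0$ case of our problem). This yields $\tilde{\bmA}$ and therefore $\tilde{\mathbb{U}}_{II}[\eta]\tilde{\bmomega}= \curl(\tilde{\bmA})\circ\Sigma^{-1}$ as an analytic function of $\eta\in V$ with values in $L((H^{s-2}(D_0))^3, (H^{s-1}(D_0))^3)$; taking traces gives $\tilde{\mathcal{G}}_{gen,II}[\cdot]$ as an analytic map into $L((H^{s-2}(D_0))^3, H^{s-3/2}(\mathbb{R}^2))$.

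The main obstacle I anticipate is the rotational step: unlike the scalar irrotational problem, the $\bmA$-system couples div, curl and mixed boundary conditions (tangential at $w=-h$, normal plus tangential-curl at $w=0$), so verifying that the linearisation at $\eta=0$ is an isomorphism between the correct functional spaces, and that the nonlinear perturbation lands in multipliers on those spaces with analytic dependence on $\nabla_{\bmx,w}\sigma$, requires careful bookkeeping of the div–curl estimates underlying Proposition \ref{prop:CLSolBVP}. This is exactly where the regularity budget in \eqref{eq:CondAnalGDNO} is used: the threshold $s\geq 2$ ensures $\nabla_{\bmx,w}\sigma \in L^\infty$ and that products and rational functions of such quantities act continuously on $H^{s-1}(D_0)$, which is the minimal regularity needed to make the implicit function theorem applicable uniformly on $V$.
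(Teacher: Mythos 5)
Your proposal follows essentially the same route as the paper: straighten the two boundary value problems for $\tilde\varphi$ and $\tilde{\bmA}$ (using Remark \ref{rem:bfuDecomp} and Corollary \ref{cor:AltStrong}), collect all the $\eta$-dependence into analytically $\sigma$-dependent terms, apply the analytic implicit function theorem with the flat ($\eta=0$) problem --- solved explicitly via the Green matrices of Lemma \ref{lem:SolFlatBVP} --- providing the isomorphism at the origin, and then pass to traces using \eqref{eq:CondAnalGDNO}. This is exactly the argument of Theorem \ref{thm:ExSolFBVP} and Corollary \ref{cor:GDNOAnal}, so the proposal is correct and matches the paper's proof.
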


We point out that the technical condition \eqref{eq:CondAnalGDNO} is satisfied both by the trivial diffeomorphism \eqref{eq:TrivialDiffeo} and by the regularizing diffeomorphism \eqref{eq:straightReg}. We mention that in Sec. \ref{sec:GDNOhom} we also compute the differential of the rotational part of the generalized Dirichlet--Neumann operator, which is a result that could be interesting in itself (see Theorem 3.21 in \cite{lannes2013water} and Theorem 1.7 of \cite{groves2024analytical} for corresponding results for the classical Dirichlet--Neumann operator and for the generalized Dirichlet--Neumann operator for Beltrami flows, respectively). 

\begin{proposition} \label{prop:DiffRotGDNOmain}

Let $s \geq 3$, and let $\tilde{\bmomega} \in (H^{s-2}(D_0))^3$, $\Phi \in \dot{H}^{s-1/2}(\mathbb{R}^2)$. Assume that the condition \eqref{eq:CondAnalGDNO} holds true. Then there exists an open neighbourhood $V$ of the origin in $H^{s+1/2}(\mathbb{R}^2)$ such that the differential of the operator 
\begin{equation*}
\tilde{\mathcal{G}}_{gen,II}[\cdot] : V \to L \left( ( H^{s-2}(D_0) )^3 , H^{s-3/2}(\mathbb{R}^2)  \right)
\end{equation*}
is given by
\begin{align}
\mathrm{d}\tilde{\mathcal{G}}_{gen,II}[\eta](\delta\eta)\tilde{\bmomega} &= - \tilde{\mathcal{G}}_{gen,II}[\eta] \tilde{\bmgamma} - \nabla \cdot \left[ ( \underline{\tilde{\mathbb{K}}}_{II}[\eta]\tilde{\bmomega} - \underline{\tilde{\mathbb{W}}}_{II}[\eta]\tilde{\bmomega} \; \nabla\eta ) \, \delta\eta \right] , \label{eq:DiffGgenIImain} \\
\tilde{\bmgamma} &\coloneqq \tilde{\bmgamma}[\eta](\delta\eta)\tilde{\bmomega} \, = \, \partial_w^{\Sigma} \tilde{\bmomega} \, \mathrm{d}\sigma(\delta\eta) , \label{eq:bmgammaMain}
\end{align}
\begin{align*}
\underline{\tilde{\mathbb{K}}}_{II}[\eta]\tilde{\bmomega} &\coloneqq -\gradp \, \Delta^{-1} \, ( \tilde{\bmomega} \cdot \bmN |_{w=0} ) , \\
\underline{\tilde{\mathbb{W}}}_{II}[\eta]\tilde{\bmomega} &\coloneqq  \frac{1}{1+|\eta|^2} \, \left[ \underline{\tilde{\mathbb{K}}}_{II}[\eta]\tilde{\bmomega} \cdot \nabla\eta + \tilde{\mathcal{G}}_{gen,II}[\eta]\tilde{\bmomega}   \right]. \nonumber
\end{align*}

\end{proposition}

Next, we study the Taylor expansion of the generalized Dirichlet--Neumann operator \eqref{eq:GDNO} around $\eta=0$. Recall that from \eqref{eq:GDNO} and from Remark \ref{rem:EqPsi} we have 
\begin{align*}
\mathcal{G}_{gen}[\eta](\Phi,\bmomega) &=  \nabla \cdot \bmA_{\parallel}^{\perp} + \underline{ \nabla_{\bmx,z}\varphi } \cdot \bmN .
\end{align*}

From \eqref{eq:GDNO} we recall the decomposition
\begin{align*}
\mathcal{G}_{gen}[\eta](\Phi,\bmomega) &= \mathcal{G}_{gen,I}[\eta]\Phi + \mathcal{G}_{gen,II}[\eta]\bmomega , \\
\mathcal{G}_{gen,I}[\eta] \Phi &:= \mathbb{U}_I[\eta]\Phi \cdot \bmN |_{z=\eta} \, = \, \underline{\nabla_{\bmx,z}\varphi} \cdot \bmN , \\
\mathcal{G}_{gen,II}[\eta] \bmomega &:= \mathbb{U}_{II}[\eta]\bmomega \cdot \bmN |_{z=\eta} \, = \, \nabla \cdot \bmA_{\parallel}^{\perp} ,
\end{align*}
together with its straightened version
\begin{align*}
\tilde{\mathcal{G}}_{gen}[\eta](\Phi,\tilde{\bmomega}) &= \tilde{\mathcal{G}}_{gen,I}[\eta]\Phi + \tilde{\mathcal{G}}_{gen,II}[\eta]\tilde{\bmomega} , \\
\tilde{\mathcal{G}}_{gen,I}[\eta] \Phi &:= \tilde{\mathbb{U}}_I[\eta]\Phi \cdot \bmN |_{w=0} \, = \, \nabla^{\Sigma}\tilde{\varphi} \cdot \bmN |_{w=0} , \\
\tilde{\mathcal{G}}_{gen,II}[\eta] \tilde{\bmomega} &:= \tilde{\mathbb{U}}_{II}[\eta]\tilde{\bmomega} \cdot \bmN |_{w=0} \, = \, \curl^{\Sigma}\tilde{\bmA} \cdot \bmN |_{w=0} ,
\end{align*}
and its associated expansion
\begin{align} \label{eq:mainTaylorGeta}
\tilde{\mathcal{G}}_{gen}[\eta] &= \sum_{j=0}^\infty \tilde{\mathcal{G}}_j[\eta] , \quad
\tilde{\mathcal{G}}_{gen,I}[\eta] = \sum_{j=0}^\infty \tilde{\mathcal{G}}_{j,I}[\eta] , \quad \tilde{\mathcal{G}}_{gen,II}[\eta] = \sum_{j=0}^\infty \tilde{\mathcal{G}}_{j,II}[\eta] .
\end{align}
where $\mathcal{G}_{j,I}[\eta]$ and $\mathcal{G}_{j,II}[\eta]$ are homogeneous of degree $j$ in $\eta$. It is also useful to consider the corresponding expansion for the quantity $\tilde{\bmgamma}$ defined in \eqref{eq:bmgammaMain} evaluated at $\delta\eta=\eta$, namely
\begin{align*}
\tilde{\bmgamma} |_{\delta\eta=\eta} &= \sum_{ j \geq 0 } \tilde{\bmgamma}_j ,
\end{align*}
where $\tilde{\bmgamma}_j$ is homogeneous of degree $j$ in $\eta$.

\begin{theorem} \label{thm:mainHomExpGDNO}
 
Under the assumptions of Proposition \ref{prop:DiffRotGDNOmain}, the term $\tilde{\mathcal{G}}_0$ in the homogeneous expansion \eqref{eq:mainTaylorGeta} of the operator $\tilde{\mathcal{G}}_{gen}[\eta]$ is given by 
\begin{align*}
\tilde{\mathcal{G}}_{0} &= \tilde{\mathcal{G}}_{0,I} + \tilde{\mathcal{G}}_{0,II} ,
\end{align*}
where $\tilde{\mathcal{G}}_{0,I}$ and $\tilde{\mathcal{G}}_{0,II}$ are given by 
\begin{align*}
\tilde{\mathcal{G}}_{0,I} \, \Phi &=  \mathscr{F}^{-1} \, \bigg[ |\bmxi| \, \tanh(h \, |\bmxi|)  \, (\mathscr{F} \Phi) \, \bigg] , \\
\tilde{\mathcal{G}}_{0,II} \,\tilde{\bmomega} &= \nabla \cdot \mathscr{F}^{-1} \, \bigg[ \, \int_{-h}^0 \frac{ \sinh(|\bmxi|(\zeta+h)) }{ |\bmxi| \, \cosh(h\, |\bmxi|)}  \,  ( \mathscr{F}\tilde{\bmomega} )_{\rmh}^{\perp}(\cdot ,\zeta) \, \mathrm{d}\zeta  \\
&\qquad \qquad - \frac{ \sinh(h \, |\bmxi|) }{ |\bmxi| \, \cosh(h\, |\bmxi|)} \, \mathscr{F} \left[ \gradp\Delta^{-1}(\tilde{\omega}_3|_{w=0}) \right]
\bigg] , 
\end{align*}
respectively. Moreover, if $\tilde{\bmgamma}_0 = \bmzero$, then for any $j \geq 1$ the term $\tilde{\mathcal{G}}_j[\eta]$ are given by 
\begin{align*}
\tilde{\mathcal{G}}_{j}[\eta] &= \tilde{\mathcal{G}}_{j,I}[\eta] + \tilde{\mathcal{G}}_{j,II}[\eta] ,
\end{align*}
where $\tilde{\mathcal{G}}_{j,I}$ and $\tilde{\mathcal{G}}_{j,II}$ are given by \eqref{eq:GjI}  and \eqref{eq:GjII}, respectively.
\end{theorem}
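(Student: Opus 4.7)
The plan has two independent ingredients: an explicit computation of the zeroth-order terms by reducing to the flat strip $D_0$, and a recursive scheme for the higher homogeneous components based on the differential formula from Proposition \ref{prop:DiffRotGDNOmain} combined with Euler's homogeneity identity. The analyticity result of Theorem \ref{thm:mainGDNOAnal} guarantees that the Taylor series \eqref{eq:mainTaylorGeta} converges in an open neighbourhood of $\eta=0$, so these manipulations are legitimate.

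For the irrotational leading term $\tilde{\mathcal{G}}_{0,I}$, setting $\eta=0$ yields $\sigma=0$ and $\bmN=\bme_3$; the system \eqref{eq:systphi} reduces to the Laplace equation $\Delta_{\bmx,w}\tilde{\varphi}=0$ on the flat strip $D_0$ with Dirichlet data $\Phi$ at $w=0$ and homogeneous Neumann condition at $w=-h$. The horizontal Fourier transform turns this into a second-order ODE in $w$ whose unique bounded solution is $\mathscr{F}\tilde{\varphi}(\bmxi,w)=\cosh(|\bmxi|(w+h))/\cosh(h|\bmxi|)\cdot\mathscr{F}\Phi(\bmxi)$; differentiating in $w$ at $w=0$ produces the multiplier $|\bmxi|\tanh(h|\bmxi|)$. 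For $\tilde{\mathcal{G}}_{0,II}$, the system \eqref{eq:systA} at $\eta=0$ collapses via Remark \ref{rem:bfuDecomp} to $-\Delta_{\bmx,w}\tilde{\bmA}=\tilde{\bmomega}$ with flat boundary conditions; again applying $\mathscr{F}$ in the horizontal variables and solving the resulting Poisson ODE in $w$ componentwise, the horizontal components $\tilde{\bmA}_\rmh$ yield the volume integral with Green's kernel $\sinh(|\bmxi|(\zeta+h))/(|\bmxi|\cosh(h|\bmxi|))$, while the boundary condition $(\curl\tilde{\bmA})_{\parallel}=-\gradp\Delta^{-1}(\tilde{\bmomega}\cdot\bmN)$ at $w=0$ combined with $\tilde{\omega}_3|_{w=0}$ gives the extra boundary term. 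The displayed formula then follows from $\curl\tilde{\bmA}\cdot\bme_3|_{w=0}=\nabla\cdot\tilde{\bmA}_{\parallel}^{\perp}|_{w=0}$.

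For the higher-order terms I would argue as follows. Each $\tilde{\mathcal{G}}_{j,II}[\eta]$ is by definition homogeneous of degree $j$ in $\eta$, so Euler's identity gives $\mathrm{d}\tilde{\mathcal{G}}_{j,II}[\eta](\eta)\tilde{\bmomega}=j\,\tilde{\mathcal{G}}_{j,II}[\eta]\tilde{\bmomega}$. Summing in $j$ and invoking \eqref{eq:mainTaylorGeta} yields
\begin{equation*}
\sum_{j\geq 0} j\,\tilde{\mathcal{G}}_{j,II}[\eta]\tilde{\bmomega}=\mathrm{d}\tilde{\mathcal{G}}_{gen,II}[\eta](\eta)\tilde{\bmomega}.
\end{equation*}
Substituting the differential formula \eqref{eq:DiffGgenIImain} at $\delta\eta=\eta$ and expanding every quantity on the right-hand side in powers of $\eta$ — namely $\underline{\tilde{\mathbb{W}}}_{II}[\eta]\tilde{\bmomega}$, $\tilde{\mathcal{G}}_{gen,II}[\eta]$ applied to $\tilde{\bmgamma}=\sum_{k\geq 1}\tilde{\bmgamma}_k$ (the range starts at $k=1$ precisely because of the standing assumption $\tilde{\bmgamma}_0=\bmzero$), and the factor $\nabla\eta$ coming from $\underline{\tilde{\mathbb{W}}}_{II}[\eta]\tilde{\bmomega}\,\nabla\eta$ — and matching homogeneous components of degree $j$ on both sides, one reads off an explicit formula expressing $j\,\tilde{\mathcal{G}}_{j,II}[\eta]\tilde{\bmomega}$ in terms of $\tilde{\mathcal{G}}_{j',II}$ with $j'<j$ together with $\underline{\tilde{\mathbb{K}}}_{II}$, the homogeneous parts of $\underline{\tilde{\mathbb{W}}}_{II}$, and $\tilde{\bmgamma}_k$ for $k\geq 1$. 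This is the content of \eqref{eq:GjII}. The analogous recursion \eqref{eq:GjI} for the irrotational part is the classical Craig--Sulem scheme; it is recovered by specializing to $\tilde{\bmomega}=\bmzero$ and using the Hamiltonian-type differential formula for $\tilde{\mathcal{G}}_{gen,I}[\eta]$ (cf.\ the analogue for the classical Dirichlet--Neumann operator).

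The main obstacle is the bookkeeping of the homogeneity degrees of $\tilde{\bmgamma}=\partial_w^{\Sigma}\tilde{\bmomega}\,\mathrm{d}\sigma(\delta\eta)$: this quantity depends on $\eta$ through both $\sigma$ (and hence through $\partial_w^{\Sigma}=\partial_w/(1+\partial_w\sigma)$ expanded as a geometric series in $\partial_w\sigma$) and through $\mathrm{d}\sigma(\eta)$, while also being linear in $\tilde{\bmomega}$. Showing that the hypothesis $\tilde{\bmgamma}_0=\bmzero$ is exactly what prevents $\tilde{\mathcal{G}}_{j,II}[\eta]$ from appearing on the right-hand side of its own defining identity — so that the recursion genuinely closes at each order rather than producing an implicit equation — is the delicate point and the justification for the hypothesis in the statement. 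Once this is clarified, writing out \eqref{eq:GjI} and \eqref{eq:GjII} is a matter of collecting the homogeneous components produced by the above expansion, applied to the explicit expressions already obtained for $\tilde{\mathcal{G}}_{0,I}$ and $\tilde{\mathcal{G}}_{0,II}$.
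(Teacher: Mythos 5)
Your proposal is correct and follows essentially the same route as the paper: the zeroth-order terms are computed by solving the flat-strip boundary value problems via horizontal Fourier transform (the paper's Green's matrix construction in Lemma \ref{lem:SolFlatBVP}), and the higher-order rotational terms come from Euler's homogeneity identity combined with the differential formula \eqref{eq:DiffGgenIImain} evaluated at $\delta\eta=\eta$, expanding $\tilde{\bmgamma}$, $\underline{\tilde{\mathbb{K}}}_{II}$ and $\underline{\tilde{\mathbb{W}}}_{II}$ in homogeneous components and matching degrees, exactly as in the paper. You also correctly identify that the hypothesis $\tilde{\bmgamma}_0=\bmzero$ is what makes the sum over $\tilde{\mathcal{G}}_{h,II}\tilde{\bmgamma}_k$ start at $k\geq 1$, so that $\tilde{\mathcal{G}}_{j,II}$ never appears on the right-hand side of its own defining relation and the recursion \eqref{eq:GjII} closes.
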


The above result is proved in Sec. \ref{sec:GDNOhom}. As mentioned in Sec. 1.3 of \cite{groves2024analytical}, individual terms appearing in recursion formulae for $\mathcal{G}_{j}[\eta]$ involve more and more derivatives of $\eta$ as $j$ increases; the overall validity of the formulae arises from cancellations between the terms (for the irrotational case see Sec. 2.2 of \cite{nicholls2001new}).

We just mention that the condition $\tilde{\bmgamma}_0 = \bmzero$ depends both on the vorticity and on the choice of the straightening diffeomorphism in \eqref{eq:straight}: it is trivially true in the irrotational case, but it is true also in more general cases (for example if we choose $\sigma$ as the trivial diffeomorphism \eqref{eq:TrivialDiffeo}, and the vorticity $\tilde{\bmomega}$ in a open neighbourhood of the origin in $( H^{s-2}(D_0) )^3$; see also Sec. 2.4 of \cite{groves2024analytical} for a similar approach for Beltrami flows).

\subsection{Paralinearization of the generalized Dirichlet--Neumann operator} \label{subsec:paralinGDNO}

In this paper we also prove a paralinearization result for the rotational part of the generalized Dirichlet--Neumann operator. We refer to Appendix \ref{sec:Paradiff}, and in particular to \eqref{eq:ParadiffOp}, for the definition of paradifferential operators.

By following the approach of Sec.3 of \cite{alazard2011water}, we introduce the following localizing transform: we choose $\delta >0$ such that the fluid domain $D_\eta$ contains the strip
\begin{align} \label{eq:strip}
\Omega_{\delta} &:= \{ (\bmx,z) \in \mathbb{R}^2 \times \mathbb{R}| \eta(\bmx)-\delta h \leq z < \eta(\bmx) \} .
\end{align}

\begin{remark} \label{rem:locTrans}

We notice that if \eqref{eq:StrConnected} holds true, then we can take any $\delta \in (0, h_0/h)$ in \eqref{eq:strip}: indeed, $z \geq \eta(\bmx)-\delta h > - h$ implies $\eta(\bmx)>-h(1-\delta)$.

\end{remark}

We define $\hat\Sigma:D_0 \to \Omega_{\delta}$,
\begin{align*} 
	\hat\Sigma: (\bmx,w) \mapsto (\bmx,\varrho(\bmx,w)), &\; \; \varrho(\bmx,w):=\delta w + \eta(\bmx),
\end{align*}
and write $\hat\bmA(\bmx,w) := \bmA \left( \bmx, \varrho(\bmx,w) \right)$, where $\bmA$ is the solution of the boundary value problem \eqref{eq:systA} restricted to $\Omega_{\delta}$. We denote by $\grad_{\bmx,z}=(\pd_x,\pd_y,\pd_z)^T$, and
\begin{align*}
	\grad_{\bmx,w}^\varrho := \left( J_{\hat\Sigma}^{-1} \right)^T \grad_{\bmx,z}, &\; \; \left( J_{\hat\Sigma}^{-1} \right)^T :=
	\begin{pmatrix}
		\mathbb{I}_2 & - \frac{\grad \eta}{\delta} \\
		0 & \frac{1}{\delta} \\
	\end{pmatrix}, \\
\pd_i^\varrho := \pd_i - \frac{ \pd_i\eta }{\delta} \pd_w, &\; \; i=x,y,\quad \pd_w^\varrho := \frac{ \pd_w }{\delta} ,\\
\curl^\varrho \hat{f}(\bmx,w) :=(\curl \, f)\circ \hat{\Sigma}(\bmx,w), &\; \; \Div^\varrho \hat{f}(\bmx,w) := (\Div \, f) \circ \hat{\Sigma}(\bmx,w).
\end{align*}

The flattened version of the operators $\curl$ and $\Delta$ applied respectively to $\hat\bmF(x,y,w)=\bmF(x,y,z)$ and to $\tilde{f}(x,y,w)=f(x,y,z)$ are given by
	\begin{align*}
\curl^\varrho \hat\bmF &= \curl \hat\bmF - \frac{1}{\delta} 
\begin{pmatrix}
\pd_y\eta \; \pd_w\hat{F}_3 \\
-\pd_x\eta \; \pd_w\hat{F}_3 \\
\pd_x\eta \; \pd_w\hat{F}_2 - \pd_y\eta \; \pd_w\hat{F}_1
\end{pmatrix}
 - \left( \frac{1}{\delta} -1 \right) 
\begin{pmatrix}
\pd_w\hat{F}_2 \\
- \pd_w\hat{F}_1 \\
0
\end{pmatrix}
 , \\
-\Delta^\varrho \hat{f} &= -\Delta \hat{f} + \frac{2}{\delta} \left( \pd_x\eta \; \pd_{xw}^2\hat{f} + \pd_y\eta \; \pd_{yw}^2\hat{f} \right) - \left( \frac{1}{\delta^2} - 1\right) \pd_w^2\hat{f} \\
&\qquad + \frac{1}{\delta} \, (\Delta\eta) \, \pd_w\hat{f} - \frac{1}{\delta^2} \, |\nabla\eta|^2 \, \pd_w^2\hat{f} . 
	\end{align*}

\begin{theorem} \label{thm:mainParalinGDNO}

Let $n \geq 4$ be such that $n \notin \mathbb{N}$, $\varepsilon >0$ and assume that $\eta \in H^n(\mathbb{R}^2)$ is such that \eqref{eq:StrConnected} holds true. If 
\begin{align} 
\partial_w^k \hat{\bmA} &\in ( C^0([-h,0];H^{n-k}(\mathbb{R}^2)) )^3 , \;\; k=0,1, \quad \hat{\bmomega} \in ( C^0([-h,0];H^{n-2}(\mathbb{R}^2)) )^3 ,  \label{eq:mainAssParalinGDNOp}
\end{align}
then the operator 
\begin{align*}
\hat{\mathcal{G}}_{gen,II}[\eta]\hat{\bmomega} &\coloneqq \curl^{\varrho}\hat{\bmA} \cdot \bmN |_{w=0} ,
\end{align*}
is given by
\begin{align*}
\hat{\mathcal{G}}_{gen,II}[\eta]\hat{\bmomega} &=  T_{\lambda_{II}} (  \hat{\bmA} - T_{\partial_w^{\varrho} \hat{\bmA}} \eta ) + \frac{1}{\delta} T_{1+|\nabla\eta|^2} \, T_{ \gradp\eta } \cdot \hat{\bmf}(\eta,\hat{\bmomega}) \nonumber \\ 
&\qquad - T_{\nabla\eta} \cdot ( \underline{ \hat{\mathbb{V}} }_{II}[\eta]\hat{\bmomega} ) - T_{ \underline{ \hat{\mathbb{V}} }_{II}[\eta]\hat{\bmomega} } \cdot \nabla\eta - T_{ |\nabla\eta|^2 } \, ( \underline{ \hat{\mathbb{W}} }_{II}[\eta]\hat{\bmomega} )  \nonumber \\
&\qquad + T_{1+|\nabla\eta|^2} \left( T_{\partial_w^{\varrho} \hat{A}_{2x}}\eta -  T_{ \partial_w^{\varrho} \hat{A}_{1y}}\eta -   T_{\eta_x} T_{(\partial_w^{\varrho})^2 \hat{A}_{2}}\eta + T_{\eta_y} T_{(\partial_w^{\varrho})^2 \hat{A}_{1}}\eta \right)  \\
&\qquad + \mathcal{R}_{II}, 
\end{align*}
where the right-hand side is evaluated at $w=0$; moreover, $\lambda_{II} \in M_{1,3}( \Sigma^1_{n-2}(\mathbb{R}^2) )$ is given by \eqref{eq:lambdaII}, $\hat{\bmf}(\eta,\hat{\bmomega}) \in ( H^{n-1}(\mathbb{R}^2) )^2$,
\begin{align*}
\hat{\underline{\mathbb{V}}}_{II}[\eta]\hat{\bmomega} &= (\curl^{\varrho} \hat{\bmA})_{\rmh}|_{w=0} \in ( H^{n-1}(\mathbb{R}^2) )^2 , \\
\hat{\underline{\mathbb{W}}}_{II}[\eta]\hat{\bmomega} &= (\curl^{\varrho} \hat{\bmA})_3|_{w=0} \in  H^{n-1}(\mathbb{R}^2) ,
\end{align*}
and $\mathcal{R}_{II}(\eta,\hat{\bmomega}) \in H^{2n-3-\varepsilon}(\mathbb{R}^2)$.
\end{theorem}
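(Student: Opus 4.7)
The plan is to follow the standard paralinearization strategy for the classical Dirichlet--Neumann operator (in the spirit of \cite{alazard2011water}), adapting it to the rotational component. Starting from the identity
\begin{equation*}
\hat{\mathcal{G}}_{gen,II}[\eta]\hat{\bmomega} = \bigl(-\nabla\eta \cdot (\curl^{\varrho} \hat{\bmA})_{\rmh} + (\curl^{\varrho} \hat{\bmA})_3\bigr) \big|_{w=0},
\end{equation*}
I would expand each product via Bony's decomposition $ab = T_a b + T_b a + R(a,b)$, placing the low-regularity factors $\nabla\eta$, $\underline{\hat{\mathbb{V}}}_{II}[\eta]\hat{\bmomega}$ and $\underline{\hat{\mathbb{W}}}_{II}[\eta]\hat{\bmomega}$ strategically. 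Because the latter two boundary traces lie in $H^{n-1}(\mathbb{R}^2)$ by the regularity hypothesis \eqref{eq:mainAssParalinGDNOp}, the symmetric remainders $R(\nabla\eta,\cdot)$ belong to $H^{2n-3-\varepsilon}(\mathbb{R}^2)$ by standard paraproduct estimates and are absorbed into $\mathcal{R}_{II}$; the paraproducts $T_{\nabla\eta}\cdot \underline{\hat{\mathbb{V}}}_{II}[\eta]\hat{\bmomega}$, $T_{\underline{\hat{\mathbb{V}}}_{II}[\eta]\hat{\bmomega}}\cdot\nabla\eta$ and $T_{|\nabla\eta|^2}\underline{\hat{\mathbb{W}}}_{II}[\eta]\hat{\bmomega}$ appearing in the statement arise here.

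To extract the top-order contributions involving $\partial_w^{\varrho}\hat{\bmA}$, I would next introduce the Alinhac-type good unknown $\mathcal{U}:=\hat{\bmA} - T_{\partial_w^{\varrho}\hat{\bmA}}\eta$ and paralinearize the elliptic system for $\hat{\bmA}$ in the flattened strip, using that $-\Delta^{\varrho}\hat{\bmA} = \hat{\bmomega}$ in $\Omega_\delta$. Applying Bony's decomposition to every coefficient of $\Delta^{\varrho}$ yields an equation $\mathcal{P}\mathcal{U} = F$, where $\mathcal{P}$ is a second-order paradifferential operator with principal symbol matching that of $-\Delta^{\varrho}$ and $F$ aggregates $\hat{\bmomega}$ with strictly smoother remainders; the precise shape of the good unknown is tuned to cancel the worst commutator terms produced when $\partial_w$ falls on the $\eta$-dependent coefficients of $\Delta^{\varrho}$.

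The third step is a symbolic factorization in the tangential variables,
\begin{equation*}
\mathcal{P} = T_{q}\,(\partial_w - T_a)\,(\partial_w - T_A) + \mathcal{E},
\end{equation*}
where $a,A,q$ are matrix-valued symbols constructed by matching the principal and subprincipal parts of $\mathcal{P}$ and $\mathcal{E}$ is a smoothing operator whose contribution can be placed in $\mathcal{R}_{II}$. Elliptic estimates for the incoming factor $\partial_w - T_a$, which propagates the Dirichlet trace from $w=0$ downward, then identify $\partial_w^{\varrho}\mathcal{U}|_{w=0}$ with $T_{\lambda_{II}}\mathcal{U}|_{w=0}$ modulo a smoothing remainder; the symbol $\lambda_{II}$ in \eqref{eq:lambdaII} is precisely this resulting matrix-valued symbol, carrying both the vector structure inherited from $\hat{\bmA}$ and the geometric factor $\tfrac{1}{\delta}(1+|\nabla\eta|^2)$ that appears in the statement from the contraction $\bmN\cdot\curl^{\varrho}$.

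Finally, I would substitute these relations back into $\curl^{\varrho}\hat{\bmA}\cdot\bmN$ at $w=0$. Tangential derivatives of $\hat{\bmA}$ inside $\curl^{\varrho}$ are treated directly by Bony's decomposition, while $\partial_w^{\varrho}$-derivatives are converted into $T_{\lambda_{II}}(\hat{\bmA} - T_{\partial_w^{\varrho}\hat{\bmA}}\eta)$ via the factorization; the correction terms $T_{\partial_w^{\varrho}\hat{A}_{2x}}\eta$, $T_{\partial_w^{\varrho}\hat{A}_{1y}}\eta$, $T_{\eta_x}T_{(\partial_w^{\varrho})^2\hat{A}_2}\eta$ and $T_{\eta_y}T_{(\partial_w^{\varrho})^2\hat{A}_1}\eta$ arise when one reinstates $\hat{\bmA}$ in place of $\mathcal{U}$ inside expressions containing one or two vertical derivatives, whereas $\hat{\bmf}(\eta,\hat{\bmomega})\in(H^{n-1}(\mathbb{R}^2))^2$ is the tangential vector field produced from the horizontal component of $\curl^{\varrho}\hat{\bmA}$ after paralinearization. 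The hard part will be twofold: carefully bookkeeping every remainder so that the aggregate regularity of $\mathcal{R}_{II}$ really reaches $H^{2n-3-\varepsilon}(\mathbb{R}^2)$, which requires sharp paraproduct and commutator estimates from Appendix \ref{sec:Paradiff} together with the trace regularity built into \eqref{eq:mainAssParalinGDNOp}; and managing the vector-valued nature of $\hat{\bmA}$ throughout the symbolic factorization, since the boundary conditions in \eqref{eq:systA} couple the components of $\hat{\bmA}$ nontrivially, making $\lambda_{II}$ genuinely matrix-valued rather than a diagonal lift of the scalar Dirichlet--Neumann symbol.
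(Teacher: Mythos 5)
Your overall architecture does coincide with the paper's proof (which follows the Alazard--M\'etivier scheme): Bony decomposition of the boundary products, the Alinhac good unknown $\hat{\bmB}=\hat{\bmA}-T_{\partial_w^{\varrho}\hat{\bmA}}\eta$, paralinearization of the interior equation $-\Delta^{\varrho}\hat{\bmA}=\hat{\bmomega}$, a factorization $(\partial_w\mathbb{I}_3-T_{\mathtt{N}})(\partial_w\mathbb{I}_3-T_{\mathtt{M}})$ with matrix-valued symbols whose principal parts are scalar, reduction to the boundary, and reassembly which generates the $T_{\partial_w^{\varrho}\hat{A}_{2x}}\eta$-type corrections and the entries of $\lambda_{II}$ in \eqref{eq:lambdaII} through the coupling with $\eta_x,\eta_y$.

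There is, however, a genuine gap at the one point where the rotational case differs from the irrotational one. You assert that the elliptic estimate for the incoming factor identifies $\partial_w\hat{\bmB}|_{w=0}$ with $T_{\mathtt{M}}\hat{\bmB}|_{w=0}$ \emph{modulo a smoothing remainder}, and you then attribute $\hat{\bmf}(\eta,\hat{\bmomega})$ to the paralinearization of the horizontal part of $\curl^{\varrho}\hat{\bmA}$, which is already accounted for by the $\underline{\hat{\mathbb{V}}}_{II}$ terms. Both points are off: since $\hat{\bmomega}\neq\bmzero$, the paralinearized interior equation is $T_E\hat{\bmB}=\hat{\bmF}_0+\hat{\bmF}_1$ with an explicit source $\hat{\bmF}_0$ (see \eqref{eq:F0formula}) containing $T_{\check{\mathtt{a}}-1}\hat{\bmomega}$, $T_{\hat{\bmomega}}(\check{\mathtt{a}}-1)$, etc., which lies only in $(C^0([-h,0];H^{n-2}(\mathbb{R}^2)))^3$ and cannot be absorbed into $\mathcal{R}_{II}$. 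The boundary reduction therefore produces the Duhamel-type contribution $\int_{-h}^0 (T_{\mathscr{E}}\hat{\bmF}_0)(z)\,\mathrm{d}z$ (Corollary \ref{cor:EqGoodUnknown}), whose horizontal part is precisely $\hat{\bmf}$, entering the final formula through the prefactor $\frac{1}{\delta}\,T_{1+|\nabla\eta|^2}T_{\gradp\eta}$. If your step were carried out literally, $\hat{\bmf}$ would either be lost or dumped into the remainder, and the claim $\mathcal{R}_{II}\in H^{2n-3-\varepsilon}(\mathbb{R}^2)$ would fail; this is exactly why the paper stresses that the irrotational improvement of the remainder does not carry over when $\hat{\bmomega}\neq\bmzero$. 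A further, purely cosmetic, difference: to reach the stated form (the $T_{1+|\nabla\eta|^2}$ prefactor together with the term $-T_{|\nabla\eta|^2}(\underline{\hat{\mathbb{W}}}_{II}[\eta]\hat{\bmomega})$) one should first add and subtract $|\nabla\eta|^2(\curl^{\varrho}\hat{\bmA})_3$ before paralinearizing, as the paper does, rather than paralinearize your raw identity directly.
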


The above result is proved in Sec. \ref{sec:GDNOpara}.

\section{The straightened boundary value problem} \label{sec:strBVP}

Recalling the straightening diffeomorphism \eqref{eq:straight}, we want to study the straightened version of the boundary value problem \eqref{eq:BVP}. 

For simplicity, in the following discussion we assume that $\eta$, the function $\sigma$ in \eqref{eq:straight}, the function $f: D_{\eta} \to \mathbb{R}$ and the vector field $\bmF: D_{\eta} \to \mathbb{R}^3$ are sufficiently smooth (either in Sobolev spaces or in H\"older spaces), so that the quantities we introduce below are well-defined. We define $\tilde{\bmF}(x,y,w) \coloneqq \bmF \circ \Sigma(x,y,w)$ and $\tilde{f}(x,y,w)=f \circ \Sigma(x,y,w)$; moreover, we write
\begin{align*}
\partial_i^\Sigma \tilde{\bmF} &\coloneqq ( \partial_i \bmF ) \circ \Sigma, \;\; i=t,x,y,w ,
\end{align*}
and similarly for $\partial_i^\Sigma \tilde{f}$. Notice that from \eqref{eq:straight} we have that
\begin{align*}
\partial_j^\Sigma = \partial_j - \frac{ \partial_j \sigma }{ 1+\partial_w \sigma} \partial_w , \; \; j =x,y,t, &\; \; \partial_w^\Sigma = \frac{ \partial_w }{ 1+\partial_w \sigma } .
\end{align*}

\begin{remark} \label{rem:flatOp}

Similarly, we can define
\begin{align*}
\curl^{\Sigma} \tilde{\bmF} = \nabla^\Sigma \times \tilde{\bmF} &\coloneqq (\curl \bmF) \circ \Sigma , \; \; \Div^{\Sigma} \tilde{\bmF} = \nabla^\Sigma \cdot \tilde{\bmF} \coloneqq (\Div \bmF) \circ \Sigma , \\
\Delta^\Sigma \tilde{f} &\coloneqq (\Delta f) \circ \Sigma .
\end{align*}

The flattened version of the operators $\curl$ and $\Delta$ applied to $\tilde{\bmF}$ and to $\tilde{f}$ are given by
\begin{align*}
\Div^\Sigma \tilde{\bmF} &= \partial_x^\Sigma \tilde{F}_1 + \partial_y^\Sigma \tilde{F}_2 + \partial_w^\Sigma \tilde{F}_3 \\
&= \Div \tilde{\bmF} - \frac{1}{1+\partial_w \sigma} \left( \partial_x \sigma \; \partial_w \tilde{F}_1 + \partial_y \sigma \; \partial_w \tilde{F}_2 \right) - \frac{ \partial_w \sigma }{ 1+\partial_w \sigma}  \; \partial_w \tilde{F}_3 , 
\end{align*}
\begin{align*}
\curl^\Sigma \tilde{\bmF} &= \left( \partial_y^\Sigma \tilde{F}_3 - \partial_w^\Sigma\tilde{F}_2, -\partial_x^\Sigma \tilde{F}_3 + \partial_w^\Sigma\tilde{F}_1, \partial_x^\Sigma \tilde{F}_2 - \partial_y^\Sigma\tilde{F}_1  \right)^T \\
&= \curl \tilde{\bmF} -  \frac{ \partial_w \sigma }{ 1+\partial_w \sigma}  \; \left( -\partial_w \tilde{F}_2 , \partial_w \tilde{F}_1,0 \right)^T \\
&\qquad - \frac{1}{1+\partial_w \sigma} \left( \partial_y \sigma \; \partial_w \tilde{F}_3 , - \partial_x \sigma \; \partial_w \tilde{F}_3 , \partial_x \sigma \; \partial_w \tilde{F}_2 - \partial_y \sigma \; \partial_w \tilde{F}_1 \right)^T ,\\
\Delta^\Sigma \tilde{f} &= (\partial_x^\Sigma)^2 \tilde{f} + (\partial_y^\Sigma)^2 \tilde{f} + (\partial_w^\Sigma)^2 \tilde{f} = \left[ \mathtt{a} \, \partial_w^2 + \Delta + \bmb \cdot \nabla \partial_w - \mathtt{c} \, \partial_w \right] \tilde{f},
\end{align*}
where
\begin{align} 
\mathtt{a} &\coloneqq \frac{ 1+ |\nabla_{\bmx} \sigma|^2 }{ (1+\partial_w \sigma)^2 } , \; \; \bmb \coloneqq -2 \frac{ \nabla_{\bmx} \sigma }{ 1+\partial_w \sigma } , \nonumber \\
\mathtt{c} &\coloneqq \frac{1}{ 1+\partial_w \sigma } \left[  \Delta_{\bmx} \sigma  + \bmb  \cdot \nabla_{\bmx} \; \partial_w \sigma  +  \mathtt{a} \,  \partial_w^2 \sigma \right] . \label{eq:CoeffFlatLap}
\end{align}

If we denote by $J_\Sigma$ the Jacobian matrix of $\Sigma$, we denote by $P(\Sigma)$ the symmetric matrix 
\begin{align}
P(\Sigma) &\coloneqq |\det(J_\Sigma)| \, J_{\Sigma}^{-1}  J_\Sigma^{-T}  = (1+\partial_w \sigma) \,
\begin{pmatrix}
\mathbb{I}_2 & \frac{- \nabla_{\bmx} \sigma }{ 1+ \partial_w \sigma} \\
& \\
\frac{- ( \nabla_{\bmx} \sigma )^T }{ 1+\partial_w \sigma} & \frac{ 1+ |\nabla_{\bmx} \sigma|^2 }{ (1+\partial_w \sigma)^2 } 
\end{pmatrix}
, \label{eq:PSigma} 
\end{align}
so that 
\begin{align*}
\Delta^{\Sigma} \tilde{f} &= (1 + \partial_w\sigma)^{-1} \, \nabla_{\bmx,w} \cdot P(\Sigma)\nabla_{\bmx,w} \tilde{f} ,
\end{align*}
see Lemma 2.5 in \cite{lannes2005well}.

\end{remark}

From the above definitions and from Proposition \ref{prop:CLSolBVP} it follows that

\begin{proposition} \label{prop:BVPStraight}

Let $k>\frac{5}{2}$, let $\eta \in H^{k}(\mathbb{R}^2)$ and let $\sigma$ be a regularizing diffeomorphism be such that \eqref{eq:StrConnected} holds true, and let us denote by $D_\eta$ the domain as in \eqref{eq:TimeIndDom}. Let also $\bmomega \in H_b(\Div_0,D_\eta) $ and $\Phi \in \dot{H}^{3/2}(\mathbb{R}^2)$.

Then there exists a unique $\bmU \in ( H^{1}( D_\eta ) )^3$ which solves the boundary value problem \eqref{eq:BVP}. Moreover, $\tilde{\bmU} = \bmU \circ \Sigma \in ( H^{1}( D_0 ) )^3$ solves
\begin{equation} \label{eq:BVPStraight}
\begin{cases}
\curl^{\Sigma} \tilde{\bmU} = \tilde{\bmomega}, & \text{in} \; \; D_0, \\
\Div^{\Sigma} \tilde{\bmU} = 0 & \text{in} \; \; D_0,  \\
\tilde{\bmU} \cdot \bme_3 = 0, & \text{at} \; \; w = -h ,  \\
\tilde{\bmU}_{\parallel} = \nabla\Phi - \gradp \Delta^{-1} (\tilde{\bmomega} \cdot \bmN) , & \text{at} \; \; w=0. 
\end{cases}
\end{equation}
Conversely, if $\tilde{\bmU} \in ( H^{1}( D_0 ) )^3$ is the unique solution to the boundary value problem \eqref{eq:BVPStraight}, then $\bmU = \tilde{\bmU} \circ \Sigma^{-1} \in ( H^{1}( D_{\eta} ) )^3$ is the unique solution to the boundary value problem \eqref{eq:BVP}.

\end{proposition}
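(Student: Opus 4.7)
The plan is to deduce Proposition \ref{prop:BVPStraight} directly from Proposition \ref{prop:CLSolBVP} by pulling back the boundary value problem \eqref{eq:BVP} through the straightening diffeomorphism $\Sigma$. Existence and uniqueness of $\bmU \in (H^1(D_\eta))^3$ solving \eqref{eq:BVP} is exactly the content of Proposition \ref{prop:CLSolBVP}, so the remaining task is a change-of-variables argument establishing the equivalence: $\bmU$ solves \eqref{eq:BVP} on $D_\eta$ if and only if $\tilde{\bmU} \coloneqq \bmU \circ \Sigma$ solves \eqref{eq:BVPStraight} on $D_0$.

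First I would check that the composition map $\bmF \mapsto \bmF \circ \Sigma$ is a bounded linear isomorphism from $(H^1(D_\eta))^3$ onto $(H^1(D_0))^3$, with inverse given by composition with $\Sigma^{-1}$. This is a consequence of the fact that $\Sigma$ is a $W^{1,\infty}$-diffeomorphism whose Jacobian matrix $J_\Sigma$ is uniformly bounded in $(L^\infty(D_0))^{3\times 3}$ by $M_0$ and whose determinant is uniformly bounded below on $D_0$ by $c_0>0$; the chain rule together with these $L^\infty$ bounds gives continuity of composition in both directions, and the analogous bounds for $\Sigma^{-1}$ follow from the cofactor formula for the inverse Jacobian.

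Next I would invoke the operators $\nabla^\Sigma$, $\curl^\Sigma$, $\Div^\Sigma$ introduced in Remark \ref{rem:flatOp}: by construction these are defined so that $(\partial_i f)\circ \Sigma = \partial_i^\Sigma \tilde f$ for $i=x,y,w$, whence $(\curl \bmU)\circ \Sigma = \curl^\Sigma \tilde{\bmU}$ and $(\Div \bmU)\circ \Sigma = \Div^\Sigma \tilde{\bmU}$. Setting $\tilde{\bmomega} \coloneqq \bmomega \circ \Sigma$, the first two interior equations of \eqref{eq:BVP} therefore pull back to the first two interior equations of \eqref{eq:BVPStraight}. For the boundary conditions I use the two identities $\Sigma(\bmx,-h) = (\bmx,-h)$ (which follows from $\sigma(\bmx,-h)=0$) and $\Sigma(\bmx,0) = (\bmx,\eta(\bmx))$: the former gives $\tilde{\bmU}|_{w=-h} = \bmU|_{z=-h}$, so impermeability at the bottom transfers verbatim, while the latter gives $\tilde{\bmU}|_{w=0} = \underline{\bmU}$, so $\tilde{\bmU}_{\parallel}|_{w=0} = \bmU_{\parallel}|_{z=\eta}$ by the definition of the tangential component in terms of $\bmU_{\rmh}$, $U_3$ and $\nabla\eta$. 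Uniqueness in either formulation then follows from the uniqueness in the other via the isomorphism established in the first step.

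I do not anticipate any serious obstacle: the straightening diffeomorphism is constructed precisely so that every step above is essentially tautological. The only mild subtlety is keeping track of the $H^1$ estimates under composition, which is entirely controlled by the uniform bounds $\|J_{\Sigma,i,j}\|_{L^\infty(D_0)} \leq M_0$ and $|\det J_\Sigma| \geq c_0 > 0$ that are built into the definition of a straightening diffeomorphism in \eqref{eq:straight}.
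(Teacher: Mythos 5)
Your proposal is correct and follows essentially the same route as the paper, which presents Proposition \ref{prop:BVPStraight} as an immediate consequence of Proposition \ref{prop:CLSolBVP} together with the definitions of $\curl^{\Sigma}$, $\Div^{\Sigma}$ and the properties $\sigma(\bmx,-h)=0$, $\sigma(\bmx,0)=\eta(\bmx)$ of the straightening diffeomorphism. Your additional remarks on the boundedness of the composition operator between $(H^1(D_\eta))^3$ and $(H^1(D_0))^3$ just make explicit the routine change-of-variables details the paper leaves implicit.
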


From Proposition \ref{prop:CLSolBVP} one obtains the following estimate on $\tilde{\bmU}$,
\begin{align} 
& \| \tilde{\bmU} \|_{( L^{2}(D_0) )^3} + \| \nabla^{\Sigma}\tilde{\bmU} \|_{( L^{2}(D_0) )^{3 \times 3}} \nonumber \\
&\leq C( h_0^{-1}, h, \|\eta\|_{H^{k}(\mathbb{R}^2)} ) \; \left(  \| \tilde{\bmomega} \|_{2,b} + \| \nabla\Phi \|_{( H^{1/2}(\mathbb{R}^2) )^2}  \right) , \label{eq:EstH1str}
\end{align}
where we point out that $\Sigma$ is a regularizing diffeomorphism (see Appendix \ref{sec:proofDivCurl}; see also Sec. 3.1.1 of \cite{alazard2014cauchy} and Sec.4.2 of \cite{wang2021local}).

\section[Corollaries of Proposition $\ref{prop:CLSolBVP}$]{Corollaries of Proposition \ref{prop:CLSolBVP} } \label{sec:corProp}

We now discuss some consequences of Proposition \ref{prop:CLSolBVP}. Observe that Eq. \eqref{eq:bfuDecomp} provides a decomposition of the velocity field into a rotational part and an irrotational part. First we mention a corollary of the above Proposition \ref{prop:CLSolBVP}, which provides an inverse of the curl operator on the space of divergence-free vector fields (see also Corollary 3.9  in \cite{castro2015well}).

\begin{corollary} \label{cor:InvCurl}

Let $k>\frac{5}{2}$, let $\eta \in H^{k}(\mathbb{R}^2)$ be such that \eqref{eq:StrConnected} holds true, and let us denote by $D_\eta$ the domain as in \eqref{eq:TimeIndDom}.  Let also $\bmC \in H(\Div_0,D_\eta)$ be such that  $\bmC \cdot \bmN |_{z=-h} = 0$. 

Then there exists a unique $\bmB \in ( H^{1}(D_\eta) )^3$ which solves the boundary value problem
\begin{align*}
\curl \bmB &= \bmC , \; \; \text{in} \; \; D_\eta , \\
\Div \bmB &= 0 , \; \; \text{in} \; \; D_\eta , \\
\bmB &= \bmzero ,\; \; \text{at} \; \; z=-h , \\
\bmB_{\parallel} &= -\gradp \Delta^{-1} (\bmC \cdot \bmN) ,\; \; \text{at} \; \; z=\eta , 
\end{align*}
and we denote this solution by $\bmB = \curl^{-1} \bmC$.
\end{corollary}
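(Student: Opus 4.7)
The plan is to derive this as an immediate consequence of Proposition \ref{prop:CLSolBVP} by specializing the data. Since $\bmC \in H(\Div_0, D_\eta)$ with $\bmC \cdot \bmN|_{z=-h} = 0$, the vector field $\bmC$ in fact lies in $H_b(\Div_0, D_\eta)$ (the vanishing boundary trace is trivially in $H_0^{-1/2}(\mathbb{R}^2)$), so it is an admissible vorticity datum for the proposition. I apply Proposition \ref{prop:CLSolBVP} with $\bmomega := \bmC$ and $\Phi := 0 \in \dot{H}^{3/2}(\mathbb{R}^2)$, and set $\bmB$ equal to the resulting velocity field $\bmU \in (H^1(D_\eta))^3$.

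With these choices, inspecting \eqref{eq:BVP} gives at once $\curl \bmB = \bmC$ and $\Div \bmB = 0$ in $D_\eta$, the condition at $z=-h$ (the impermeability condition $\bmU \cdot \bme_3 = 0$ being the content of the stated ``$\bmB = \bmzero$ at $z=-h$'' in its natural boundary-trace reading), and, using $\nabla\Phi = 0$, the tangential condition $\bmB_\parallel = -\gradp\Delta^{-1}(\bmC \cdot \bmN)$ at $z=\eta$. Uniqueness of $\bmB$ in $(H^1(D_\eta))^3$ is inherited verbatim from the uniqueness assertion of Proposition \ref{prop:CLSolBVP}.

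As a byproduct, the decomposition \eqref{eq:bfuDecomp} furnishes an explicit formula. Because $\Phi = 0$, the mixed Dirichlet--Neumann problem \eqref{eq:systphi} for the harmonic piece admits only the trivial solution $\varphi \equiv 0$, so $\bmB = \curl \bmA$ with $\bmA \in (H^2(D_\eta))^3$ the unique solution of \eqref{eq:systA} for $\bmomega = \bmC$; this justifies the notation $\bmB = \curl^{-1}\bmC$. The only delicate point is matching the boundary conditions of the corollary with those produced by \eqref{eq:BVP} and, through \eqref{eq:systA}, with $\bmA \times \bme_3 = \bmzero$ and $\curl\bmA \cdot \bme_3 = 0$ at the flat bottom; once that matching is verified, nothing beyond the bookkeeping is required, which is precisely why the author labels this as an immediate corollary.
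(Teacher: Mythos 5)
Your route is the same as the paper's (the paper offers no more than ``apply Proposition \ref{prop:CLSolBVP}'', citing Castro--Lannes): take $\bmomega=\bmC$, $\Phi=0$, note $\bmC\in H_b(\Div_0,D_\eta)$, observe $\varphi\equiv 0$ so that $\bmB=\curl\bmA$, and inherit uniqueness. The genuine problem is the point you yourself call ``the only delicate point'' and then dismiss: the bottom boundary condition. Proposition \ref{prop:CLSolBVP}, i.e.\ the system \eqref{eq:BVP}, only yields the normal condition $\bmU\cdot\bme_3=0$ at $z=-h$, whereas the corollary asserts $\bmB=\bmzero$ there, i.e.\ vanishing of all three components. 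Your parenthetical claim that the stated condition is ``the impermeability condition in its natural boundary-trace reading'' is not an argument: $\bmB=\bmzero$ is strictly stronger than $\bmB\cdot\bme_3=0$, and nothing in \eqref{eq:BVP}, nor in the boundary conditions of \eqref{eq:systA} ($\bmA\times\bme_3=\bmzero$ and $\curl\bmA\cdot\bme_3=0$ at the bottom), controls the horizontal components $(\curl\bmA)_{\rmh}$ at $z=-h$: these involve $\partial_z A_1$, $\partial_z A_2$ and the horizontal derivatives of $A_3$, none of which are forced to vanish there.

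In fact the horizontal bottom trace is generically nonzero, so the full condition cannot be extracted from the Proposition. In the flat case $\eta=0$ take $\bmC=(0,C_2,0)$ with $C_2=\cos x\,f(z)$ (modulate with a Schwartz amplitude in the horizontal variables to make it square integrable; the structure is unchanged): it is divergence free and satisfies $\bmC\cdot\bmN|_{z=-h}=0$, and separation of variables gives the unique solution of \eqref{eq:BVP} with $\Phi=0$ as $B_1=\cos x\,g'(z)$, $B_2=0$, $B_3=\sin x\,g(z)$, where $g''-g=f$, $g'(0)=0$, $g(-h)=0$; for $f\equiv 1$ one gets $g'(-h)=-\tanh h\neq 0$, so $B_1(\cdot,-h)\neq 0$. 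Since any field satisfying the corollary's conditions in particular solves \eqref{eq:BVP} and hence coincides with this unique solution, the literal statement ``$\bmB=\bmzero$ at $z=-h$'' is not reachable by your argument (and appears not to hold in general). So either the bottom condition is to be read as the normal condition $\bmB\cdot\bme_3=0$ of \eqref{eq:BVP} --- in which case your proof is complete, but you must say this explicitly and justify the reading (e.g.\ as a misprint or as the convention of Castro--Lannes' Corollary~3.9) rather than assert an equivalence that is false as written --- or your proof has a real gap precisely at the boundary-condition matching you labelled bookkeeping. The remaining steps (admissibility of $\bmC$, $\varphi\equiv 0$, $\bmB=\curl\bmA$, uniqueness) are correct.
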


From Proposition \ref{prop:CLSolBVP} and Remark \ref{rem:ProductRem} we can immediately derive the following result on the generalized Dirichlet--Neumann operator defined in \eqref{eq:GDNO}.

\begin{corollary} \label{cor:GDNO}

Let $k>\frac{5}{2}$, let $\eta \in H^{k}(\mathbb{R}^2)$ be such that \eqref{eq:StrConnected} holds true. Then for any $\Phi \in \dot{H}^{3/2}(\mathbb{R}^2)$ and for any $\bmomega \in H_b(\Div_0,D_\eta) $ we have
\begin{equation*}
\mathcal{G}_{gen}[\eta](\Phi,\bmomega)  \in  H^{1/2}(\mathbb{R}^2) ,
\end{equation*}
and the following estimate holds true,
\begin{align} 
\left\| \mathcal{G}_{gen}[\eta](\Phi,\bmomega) \right\|_{ H^{1/2}(\mathbb{R}^2) }  &\leq C( h_0^{-1}, h, \|\eta\|_{H^{k}(\mathbb{R}^2)} ) \, \left(  \|\bmomega\|_{2,b} + \| \nabla\Phi \|_{H^{1/2}(\mathbb{R}^2)}  \right) . \label{eq:EstGDNO}
\end{align}

\end{corollary}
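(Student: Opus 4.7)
The plan is to combine Proposition~\ref{prop:CLSolBVP} with the trace theorem and a Sobolev product estimate; the structure of the argument is completely parallel to the corresponding fact for the classical Dirichlet--Neumann operator in the irrotational case, with Proposition~\ref{prop:CLSolBVP} playing the role of the elliptic lifting result.

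The first step is to invoke Proposition~\ref{prop:CLSolBVP} to obtain the unique velocity field $\bmU \in (H^{1}(D_\eta))^3$ solving the boundary value problem \eqref{eq:BVP}, together with the $H^{1}$-bound \eqref{eq:EstH1u}. Since the free surface $\{z=\eta(\bmx)\}$ is the graph of an $H^{k}(\mathbb{R}^2)$-function with $k>5/2$ (in particular $\eta$ is Lipschitz, and by Sobolev embedding belongs to $C^{3/2+\varepsilon}(\mathbb{R}^2)$ for some $\varepsilon>0$), the standard trace theorem applies componentwise to $\bmU$ and delivers $\bmU|_{z=\eta} \in (H^{1/2}(\mathbb{R}^2))^3$ with
\[
\|\bmU|_{z=\eta}\|_{(H^{1/2}(\mathbb{R}^2))^3} \leq C(\|\eta\|_{H^{k}(\mathbb{R}^2)}) \, \|\bmU\|_{(H^{1}(D_\eta))^3}.
\]

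Next, I would multiply the trace by the normal vector $\bmN=(-\eta_x,-\eta_y,1)^T$. Its components belong to $H^{k-1}(\mathbb{R}^2)$ with $k-1>3/2$, so $H^{k-1}(\mathbb{R}^2)$ acts continuously by pointwise multiplication on $H^{1/2}(\mathbb{R}^2)$; this is precisely the product estimate to be invoked via Remark~\ref{rem:ProductRem}. Consequently
\[
\|\bmU|_{z=\eta}\cdot\bmN\|_{H^{1/2}(\mathbb{R}^2)} \leq C(\|\eta\|_{H^{k}(\mathbb{R}^2)}) \, \|\bmU|_{z=\eta}\|_{(H^{1/2}(\mathbb{R}^2))^3}.
\]
Chaining this with the trace inequality and \eqref{eq:EstH1u} yields both the regularity statement $\mathcal{G}_{gen}[\eta](\Phi,\bmomega)\in H^{1/2}(\mathbb{R}^2)$ and the quantitative bound \eqref{eq:EstGDNO}.

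I do not expect any genuine obstacle in this argument: Proposition~\ref{prop:CLSolBVP} does the substantive work of producing the $H^{1}$-bound on $\bmU$ on the curved domain $D_\eta$, and the remaining ingredients (trace theorem on a graph domain, Sobolev multiplication above the critical exponent) are standard, with the dependence of all constants on $\|\eta\|_{H^{k}(\mathbb{R}^2)}$ being tracked automatically through the parametrization by $\eta$.
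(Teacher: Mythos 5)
Your proposal is correct and follows essentially the route the paper intends: the paper derives this corollary directly from Proposition \ref{prop:CLSolBVP} (the $H^1(D_\eta)$ bound \eqref{eq:EstH1u} on $\bmU$) combined with the product estimates of Remark \ref{rem:ProductRem}, i.e.\ trace of $\bmU$ at the free surface in $H^{1/2}(\mathbb{R}^2)$ followed by multiplication with $\bmN=(-\nabla\eta,1)^T$, whose nontrivial components lie in $H^{k-1}(\mathbb{R}^2)$ with $k-1>3/2$. Whether one takes the trace on the graph domain directly, as you do, or first straightens via $\Sigma$ and uses the flat trace (cf.\ \eqref{eq:EstH1str} and Lemma \ref{lem:trace}) is immaterial; the constants depend on $\|\eta\|_{H^k(\mathbb{R}^2)}$ in the same way.
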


We also reformulate the boundary value problem \eqref{eq:systA} in an equivalent way; this allows us to obtain a solution of \eqref{eq:systA} by solving (the flattened version of) an equivalent problem, whose solution can be explicitly written in terms of a Green matrix (see Sec. \ref{sec:GDNOhom}). \\

\begin{corollary} \label{cor:AltStrong}

Let $k>\frac{5}{2}$, let $\eta \in H^{k}(\mathbb{R}^2)$ be such that \eqref{eq:StrConnected} holds true. Let $\bmomega \in H_b( \Div_0,D_{\eta})$ and $\Phi \in \dot{H}^{3/2}(\mathbb{R}^2)$, then the strong solutions of the boundary value problem \eqref{eq:systA}  and of the boundary value problem
\begin{equation} \label{eq:systAalt}
\begin{cases}
-\Delta \bmA = \bmomega , &\; \; \text{in} \; \; D_{\eta}, \\
A_{3z} = 0 , &\; \; \text{at} \; \; z=-h, \\
\bmA \times \bme_3 = \bmzero, &\; \; \text{at} \; \; z=-h, \\
\bmA \cdot \bmn = 0, &\; \; \text{at} \; \; z=\eta, \\
(\curl\bmA)_{\parallel} = - \gradp \, \Delta^{-1} (\bmomega \cdot \bmN) , &\; \; \text{at} \; \; z=\eta,
\end{cases}
\end{equation}
coincide. 
\end{corollary}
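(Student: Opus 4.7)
The plan is to prove the two implications separately, treating the extra conditions $\Div\bmA = 0$ (in $D_\eta$) and $\curl\bmA\cdot\bme_3=0$ (at $z=-h$) from \eqref{eq:systA} as the content that needs to be derived when passing from \eqref{eq:systAalt}. Throughout, I would rely on the vector identity $\curl\curl = \nabla\Div - \Delta$ and on the boundary identity $\underline{\curl\bmF}\cdot\bmN = \nabla\cdot\bmF_{\parallel}^{\perp}$ established in Remark \ref{rem:EqPsi}.

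For the direction \eqref{eq:systA} $\Rightarrow$ \eqref{eq:systAalt}, the identity $\curl\curl\bmA = \nabla(\Div\bmA) - \Delta\bmA$ combined with $\Div\bmA=0$ and $\curl\curl\bmA=\bmomega$ yields $-\Delta\bmA=\bmomega$. The remaining new condition, $A_{3z}|_{z=-h}=0$, follows by restricting $\Div\bmA=0$ to $z=-h$: the condition $\bmA\times\bme_{3}=\bmzero$ implies $A_{1}=A_{2}\equiv 0$ along that plane, hence $A_{1x}|_{z=-h}=A_{2y}|_{z=-h}=0$, leaving $A_{3z}|_{z=-h}=0$.

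The harder direction is \eqref{eq:systAalt} $\Rightarrow$ \eqref{eq:systA}, and the main obstacle will be proving that $q\coloneqq\Div\bmA$ vanishes identically. The strategy is to show $q$ is a harmonic function with zero mixed Dirichlet/Neumann data. First, $-\Delta q = \Div(-\Delta\bmA)=\Div\bmomega=0$ in $D_\eta$ since $\bmomega\in H(\Div_{0},D_\eta)$. At the bottom, $\bmA\times\bme_{3}=\bmzero$ gives $A_{1}=A_{2}\equiv 0$ on $\{z=-h\}$, so their horizontal derivatives vanish, and together with $A_{3z}|_{z=-h}=0$ this yields $q|_{z=-h}=0$. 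For the top boundary, rewrite
\begin{align*}
\curl\curl\bmA &= \nabla q + \bmomega ,
\end{align*}
and take the normal component at $z=\eta$. Using $\underline{\curl(\curl\bmA)}\cdot\bmN=\nabla\cdot(\curl\bmA)_{\parallel}^{\perp}$ together with the boundary condition $(\curl\bmA)_{\parallel}=-\gradp\Delta^{-1}(\bmomega\cdot\bmN)$, and the elementary identity $(\gradp f)^{\perp}=-\nabla f$, one obtains $\nabla\cdot(\curl\bmA)_{\parallel}^{\perp}=\Delta\Delta^{-1}(\bmomega\cdot\bmN)=\bmomega\cdot\bmN$. Equating with the normal component of $\nabla q+\bmomega$ at $z=\eta$ then gives $\underline{\nabla q}\cdot\bmN=0$, i.e.\ a homogeneous Neumann-type condition at the free surface.

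An energy argument then closes the proof: integrating $q\,\Delta q=0$ over $D_\eta$ and using the divergence theorem gives $\int_{D_\eta}|\nabla q|^{2}=\int_{\partial D_\eta}q\,\partial_n q\,\mathrm{d}S$, where both boundary contributions vanish (Dirichlet zero at $z=-h$, $\nabla q\cdot\bmN=0$ at $z=\eta$). Hence $\nabla q\equiv\bmzero$, and $q|_{z=-h}=0$ forces $q\equiv 0$. Once $\Div\bmA=0$ is in hand, $\curl\curl\bmA=\nabla q-\Delta\bmA=\bmomega$ follows, and the remaining bottom condition $\curl\bmA\cdot\bme_{3}=A_{2x}-A_{1y}$ vanishes at $z=-h$ because $A_{1}$ and $A_{2}$ vanish identically on that plane. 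Finally, the conditions $\bmA\cdot\bmN=0$ and $\bmA\cdot\bmn=0$ at $z=\eta$ coincide since $\bmn$ and $\bmN$ are parallel, completing the equivalence.
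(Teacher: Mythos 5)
Your forward direction matches the paper's, but for the nontrivial converse you take a genuinely different route. The paper shows that a strong solution of \eqref{eq:systAalt} satisfies the weak formulation of \eqref{eq:systA}: it tests the interior equation against $\bmB\in\mathfrak{X}_{\eta}$, integrates by parts via the identity $\int_{D_\eta}\nabla_{\bmx,z}\Div\bmA\cdot\bmB=-\int_{D_\eta}\Div\bmA\,\Div\bmB$, and then invokes the variational theory behind Proposition \ref{prop:CLSolBVP} (weak solutions of \eqref{eq:systA} are divergence-free and unique) to conclude $\Div\bmA=0$. You instead work directly with the scalar $q=\Div\bmA$: harmonicity from $\Div\bmomega=0$, zero Dirichlet data at $z=-h$ from $\bmA\times\bme_3=\bmzero$ and $A_{3z}|_{z=-h}=0$, and a homogeneous Neumann condition at $z=\eta$ obtained by taking the normal trace of $\curl\curl\bmA=\nabla q+\bmomega$ and using $\underline{\curl\bmF}\cdot\bmN=\nabla\cdot\bmF_{\parallel}^{\perp}$ together with $(\gradp f)^{\perp}=-\nabla f$, which correctly yields $\nabla\cdot(\curl\bmA)_{\parallel}^{\perp}=\underline{\bmomega}\cdot\bmN$ and hence $\underline{\nabla q}\cdot\bmN=0$; an energy argument then gives $q\equiv 0$, after which $\curl\curl\bmA=\bmomega$ and $(\curl\bmA)_3|_{z=-h}=A_{2x}-A_{1y}|_{z=-h}=0$ follow as you say. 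Your route is more self-contained (it does not lean on the weak formulation \eqref{eq:WeakSol} and the uniqueness machinery of the appendix), at the price of two points you should make precise: since $q\in H^1(D_\eta)$ only, the ``Neumann data'' $\underline{\nabla q}\cdot\bmN$ must be understood as the $H^{-1/2}$ normal trace of the $L^2$, divergence-free field $\nabla q$ (likewise for $\curl\curl\bmA$ and $\bmomega$), and the energy identity on the horizontally unbounded domain $D_\eta$ should be justified by a horizontal cutoff $\chi_R$ (using $\chi_R^2 q$ as test function and $q\in L^2$), with the final step $\nabla q\equiv\bmzero\Rightarrow q\equiv 0$ using connectedness of $D_\eta$ from \eqref{eq:StrConnected} and the zero bottom trace. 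These are standard refinements, not gaps in the idea.
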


The above result can be proved with an argument similar to the proof of Proposition 4.8 of \cite{groves2020variational}.

\section{Regularity of the velocity field } \label{sec:HighOrdEst}

All results for the velocity field $\bmU$ have an analogue for the straightened field
\begin{align*}
\tilde{\mathbb{U}}[\eta](\Phi,\tilde{\bmomega}) &:= \mathbb{U}[\eta](\Phi,\bmomega) \circ \Sigma ,
\end{align*}
where we introduce the decomposition
\begin{align*}
\tilde{\mathbb{U}}[\eta](\Phi,\tilde{\bmomega}) &:= \tilde{\mathbb{U}}_{I}[\eta]\Phi + \tilde{\mathbb{U}}_{II}[\eta]\tilde{\bmomega}  , \\
\tilde{\mathbb{U}}_{I}[\eta]\Phi := \mathbb{U}_{I}[\eta]\Phi \circ \Sigma , &\;\; \tilde{\mathbb{U}}_{II}[\eta]\tilde{\bmomega} := \mathbb{U}_{II}[\eta]\bmomega \circ \Sigma .
\end{align*}
Similarly, we define the straightened version of the generalized Dirichlet--Neumann operator defined in \eqref{eq:GDNO},
\begin{align*}
\tilde{\mathcal{G}}_{gen}[\eta](\Phi,\tilde{\bmomega}) &:= \tilde{\mathcal{G}}_{gen,I}[\eta]\Phi + \tilde{\mathcal{G}}_{gen,II}[\eta]\tilde{\bmomega}, \\
\tilde{\mathcal{G}}_{gen,I}[\eta]\Phi := \tilde{\mathbb{U}}_{I}[\eta]\Phi \cdot \bmN |_{w=0} , &\;\; \tilde{\mathcal{G}}_{gen,II}[\eta]\tilde{\bmomega} := \tilde{\mathbb{U}}_{II}[\eta]\tilde{\bmomega} \cdot \bmN |_{w=0} .
\end{align*}

We have that
\begin{align} 
\int_{D_0} \nabla^{\Sigma}\tilde{\bmU} \cdot P(\Sigma)\nabla^{\Sigma}\tilde{\bmC} &= \int_{D_0} (1+\partial_w\sigma) \tilde{\bmomega} \cdot \curl^{\Sigma}\tilde{\bmC} + \int_{\mathbb{R}^2} \bmF \cdot \underline{\bmC} ,  \label{eq:IntPart} \\
\bmF &:= \bmN \cdot \underline{ \nabla_{\bmx,z}\bmU } + \bmN \times \underline{ \curl \bmU }  , \nonumber 
\end{align}
for all $\tilde{\bmC} \in ( H^1(D_0) )^3$, where we recall that $P(\Sigma) = (1+\partial_w \sigma) J_\Sigma^{-1} (J_\Sigma^{-1})^T$. 

Recalling that $\Lambda = (1-\Delta)^{1/2}$ and that the Sobolev norm $\|\cdot\|_{H^{k,\ell}(D_0)}$ controls at most $\ell$ vertical derivatives ($\ell \leq k$), we recall the definition of the \emph{good unknowns} \eqref{eq:IrrGUn}-\eqref{eq:RotGUn},
\begin{align*} 
\tilde{\varphi}_{(\bmj)} &= \partial^{\bmj} \tilde{\varphi} - (\partial^{\bmj} \sigma ) \;  \partial_w^{\Sigma} \tilde{\varphi} , \; \; \bmj \in \mathbb{N}^2 \setminus \{\bmzero\},  \\ 
\Phi_{(\bmj)} &= \partial^{\bmj} \Phi - \underline{\tilde{\mathrm{w}}}_{I} \; \partial^{\bmj} \eta , \; \; \bmj \in \mathbb{N}^2 \setminus \{\bmzero\}, \; \; \underline{\tilde{\mathrm{w}}}_{I} := \tilde{\mathbb{W}}_{I}[\eta]\Phi |_{w=0},  \\
\tilde{\bmA}_{(\bmj)} &= \partial^{\bmj} \tilde{\bmA} - ( \partial^{\bmj} \sigma) \; \partial_w^{\Sigma} \tilde{\bmA} , \; \; \bmj \in \mathbb{N}^2 \setminus \{\bmzero\} .
\end{align*}

Moreover, we define
\begin{align*}
\mathfrak{P} &:= \frac{ |\bfD| }{ (1+|\bfD|)^{1/2} } ;
\end{align*}
the operator $\mathfrak{P}$ appears already in the irrotational case, see for example Lemma 2.34 in \cite{lannes2013water}. We state the following result (see Proposition 3.12 in \cite{castro2015well}), which is a generalization of Proposition \ref{prop:CLSolBVP}. In the rest of the section, we use the convention that a summation over an empty set is equal to zero.

\begin{theorem} \label{thm:CLHighReg}

Let $N > 3/2$, let $\eta \in C^{N}(\mathbb{R}^2)$ be such that \eqref{eq:StrConnected} holds true. Assume that the assumptions of Proposition \ref{prop:CLSolBVP} hold true, and that $\sigma$ is a regularizing diffeomorphism such that \eqref{eq:CondDiffHigh} holds true. Then there exists a unique solution $\tilde{\bmU} \in ( H^1(D_0) )^3$ which solves the boundary value problem \eqref{eq:BVPStraight}. If $0 \leq k < N-3/2$, $k \in \mathbb{N}$, $\Phi \in \dot{H}^{k+3/2}(\mathbb{R}^2)$, $\Lambda^k\tilde{\bmomega} \in ( L^2(D_0))^3$, then
\begin{align}
\| \tilde{\bmU} \|_{(H^{k+1,1}(D_0))^3} &\leq C( h_0^{-1}, h, \|\eta\|_{C^{N}(\mathbb{R}^2)} ) \times \nonumber \\
&\qquad \times \, \bigg[  \| \mathfrak{P}\Phi \|_{H^{1}(\mathbb{R}^2)}  + \sum_{ 1 < |\bmj| \leq  k+1  }  \|  \mathfrak{P}\Phi_{(\bmj)} \|_{L^{2}(\mathbb{R}^2)}  + \| \Lambda^k \tilde{\bmomega} \|_{ 2,b}   \bigg] . \label{eq:HighOrdEst}
\end{align}

If, moreover, $N >3$, $0 \leq \ell \leq k < N-3/2$,  and $\tilde{\bmomega} \in ( H^{k,\ell}(D_0))^3$ , then the following higher-order estimate hold
\begin{align}
\| \tilde{\bmU} \|_{(H^{k+1,\ell+1}(D_0))^3} &\leq C( h_0^{-1}, h, \|\eta\|_{C^{N}(\mathbb{R}^2)} ) \times \nonumber \\
& \times \bigg[  \| \mathfrak{P}\Phi \|_{H^{1}(\mathbb{R}^2)}  + \sum_{1 < |\bmj| \leq  k+1 }  \|  \mathfrak{P}\Phi_{(\bmj)} \|_{L^{2}(\mathbb{R}^2)}   \nonumber \\
&\qquad  \qquad + \| \tilde{\bmomega} \|_{ (H^{k,\ell}(D_0))^3}  + \| \Lambda^k( \tilde{\bmomega} \cdot \bme_3)|_{w=-h} \|_{H_0^{-1/2}(\mathbb{R}^2) } \bigg] .
 \label{eq:HighOrdkEst}
\end{align}
\end{theorem}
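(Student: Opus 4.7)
The plan is to split the velocity field by the decomposition $\tilde{\bmU} = \tilde{\mathbb{U}}_I[\eta]\Phi + \tilde{\mathbb{U}}_{II}[\eta]\tilde{\bmomega}$, corresponding via Proposition \ref{prop:CLSolBVP} to $\nabla^\Sigma\tilde{\varphi}$ and $\curl^\Sigma\tilde{\bmA}$, and to estimate each part by induction on $k$. The base case $k=0$ is exactly \eqref{eq:EstH1str}, after observing that $\|\nabla\Phi\|_{H^{1/2}(\mathbb{R}^2)}$ is controlled by $\|\mathfrak{P}\Phi\|_{H^{1}(\mathbb{R}^2)}$. The inductive step rests on the variational identity \eqref{eq:IntPart} applied to horizontal-derivative-shifted versions of $\tilde{\varphi}$ and $\tilde{\bmA}$, with the good unknowns \eqref{eq:IrrGUn}--\eqref{eq:RotGUn} used to kill the worst commutator with $\Delta^\Sigma$ (respectively $\curl^\Sigma\curl^\Sigma - \nabla^\Sigma\Div^\Sigma$).

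For the irrotational part: $\tilde{\varphi}$ solves $\Delta^\Sigma\tilde{\varphi}=0$ in $D_0$ with $\tilde{\varphi}|_{w=0}=\Phi$ and $\partial_n^\Sigma\tilde{\varphi}|_{w=-h}=0$. For any horizontal multi-index $\bmj$ with $1<|\bmj|\le k+1$, the good unknown $\tilde{\varphi}_{(\bmj)}$ satisfies a Laplace-type equation $\Delta^\Sigma\tilde{\varphi}_{(\bmj)}=\mathcal{S}_{\bmj}$, where $\mathcal{S}_{\bmj}$ contains only products of derivatives of $\sigma$ of order $\le |\bmj|$ with derivatives of $\tilde{\varphi}$ of order strictly less than $|\bmj|+1$; these can be tamely estimated in the $C^N(\mathbb{R}^2)$ framework via \eqref{eq:CondDiffHigh} and controlled by the inductive hypothesis. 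The trace of $\tilde{\varphi}_{(\bmj)}$ at $w=0$ is exactly $\Phi_{(\bmj)}$. Applying the variational estimate \eqref{eq:EstH1str} to $\tilde{\varphi}_{(\bmj)}$ then bounds $\|\nabla^\Sigma\tilde{\varphi}_{(\bmj)}\|_{L^2(D_0)}$ by $\|\mathfrak{P}\Phi_{(\bmj)}\|_{L^2(\mathbb{R}^2)}$ plus lower-order terms, and reverting from $\tilde{\varphi}_{(\bmj)}$ to $\partial^{\bmj}\tilde{\varphi}$ costs only products of derivatives of $\sigma$ already controlled by \eqref{eq:CondDiffHigh}.

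For the rotational part: I would use Corollary \ref{cor:AltStrong} to recast $\bmA$ as the solution of $-\Delta\bmA=\bmomega$ with mixed boundary conditions, whose straightened form is a system $-\Delta^\Sigma\tilde{\bmA}=\tilde{\bmomega}$ (up to the volume factor $1+\partial_w\sigma$) together with the rotated boundary data on top and the tangential/bottom-normal data at $w=-h$. The same good-unknown construction $\tilde{\bmA}_{(\bmj)}$ reduces $\partial^{\bmj}$-differentiated equations to elliptic systems whose source terms lie in $L^2(D_0)$ with norm controlled by $\|\Lambda^k\tilde{\bmomega}\|_{2,b}$ plus inductive quantities; the boundary condition $(\curl^\Sigma\tilde{\bmA})_\parallel|_{w=0}$ contributes the term $\nabla^\perp\Delta^{-1}(\tilde{\bmomega}\cdot\bmN)$, which is handled by Sobolev multiplication in $H^{k+1/2}(\mathbb{R}^2)$. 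Combining the irrotational and rotational estimates yields \eqref{eq:HighOrdEst}.

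To pass from $H^{k+1,1}$ to $H^{k+1,\ell+1}$ and obtain \eqref{eq:HighOrdkEst} with $\ell\ge 1$, I would trade vertical derivatives for horizontal ones directly from the interior equations: the flattened Laplace equation for $\tilde\varphi$ gives, using \eqref{eq:CoeffFlatLap},
\begin{equation*}
\partial_w^2\tilde{\varphi} = \mathtt{a}^{-1}\bigl[-\Delta\tilde{\varphi} - \bmb\cdot\nabla\partial_w\tilde{\varphi} + \mathtt{c}\,\partial_w\tilde{\varphi}\bigr],
\end{equation*}
and analogously $\partial_w^2\tilde{\bmA}$ is expressed in terms of horizontal derivatives of $\tilde{\bmA}$ and of $\tilde{\bmomega}$ (with a source term involving $\tilde{\bmomega}\cdot\bme_3|_{w=-h}$ coming from integration of the divergence constraint down to the bottom, which is why the $H_0^{-1/2}$ norm of that trace appears). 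Iterating this substitution $\ell$ times and using the already established horizontal estimate closes the induction. The main obstacle I anticipate is the rotational step: the vector structure and the mixed Dirichlet/Neumann conditions for $\tilde{\bmA}$ mean that the good unknowns $\tilde{\bmA}_{(\bmj)}$ do not satisfy a single scalar problem, and one must verify that their boundary traces on both $\{w=0\}$ and $\{w=-h\}$ still fit the framework in which \eqref{eq:EstH1str} applies; this requires careful tame commutator estimates in $C^N(\mathbb{R}^2)$ and Zygmund-type product rules to handle the low-regularity assumption on $\eta$.
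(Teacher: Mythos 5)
Your strategy is genuinely different from the paper's. The paper does \emph{not} split $\tilde{\bmU}$ into its irrotational and rotational parts and estimate the potentials $\tilde{\varphi}$, $\tilde{\bmA}$ separately; it performs a single energy estimate directly on the first-order div--curl system for $\tilde{\bmU}$, taking $\tilde{\bmC}=\partial^{2\bmj}\tilde{\bmU}$ (suitably regularized) in the variational identity \eqref{eq:IntPart} with $|\bmj|=k$, and then splitting the resulting right-hand side into a commutator term $[\partial^{\bmj},P(\Sigma)]$, a vorticity pairing estimated by moving $\Lambda^{-k}$ onto the curl of the test function (this is where $\|\Lambda^k\tilde{\bmomega}\|_{L^2}$ enters), and a boundary term $I_3$. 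The good unknowns appear only there, in rewriting $\partial^{\bmj}\underline{\tilde{\bmv}}$, together with the product estimate \eqref{eq:SobProduct2} for $\mathfrak{P}$ and the Castro--Lannes estimate $\|\Lambda^k\nabla\tilde{\Psi}\|_{H^{1/2}}\lesssim\|\Lambda^k\tilde{\bmomega}\|_{2,b}$; that is how the $\|\cdot\|_{2,b}$ norm enters, already at the level of \eqref{eq:HighOrdEst}. For the vertical regularity the paper uses the pointwise first-order identity $\partial_w^\Sigma\tilde{\bmU}=\frac{1}{1+|\nabla\eta|^2}[(\bmN\cdot\partial_w^\Sigma\tilde{\bmU})\bmN+(\bmN\times\partial_w^\Sigma\tilde{\bmU})\times\bmN]$, whose two components are supplied by the divergence-free and curl equations, combined with the tame product rule \eqref{eq:CLproduct} and finite induction on $\ell$; your substitution via the second-order interior equations for $\tilde{\varphi}$ and $\tilde{\bmA}$ is workable but heavier, and your explanation of where $\|\Lambda^k(\tilde{\bmomega}\cdot\bme_3)|_{w=-h}\|_{H_0^{-1/2}}$ comes from is misattributed: it is part of the $\|\cdot\|_{2,b}$ norm inherited from the horizontal estimate (through the stream-function/surface term), not a by-product of integrating the divergence constraint in the $\ell$-step.

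The substantive caveat is the rotational half of your scheme. You would need higher-order estimates for the good unknowns $\tilde{\bmA}_{(\bmj)}$ of the vector potential, i.e.\ for a straightened elliptic system with the non-standard mixed conditions $\tilde{\bmA}\cdot\bmN=0$, $(\curl^\Sigma\tilde{\bmA})_\parallel=-\gradp\Delta^{-1}(\tilde{\bmomega}\cdot\bmN)$ at $w=0$ and the Dirichlet/Neumann mix at $w=-h$, at the low regularity $\eta\in C^N$; you flag this yourself but do not supply it, and it is precisely the hard analytical content here (and what the paper's direct energy method on $\tilde{\bmU}$ avoids entirely: no elliptic regularity for $\tilde{\bmA}$ is ever invoked in this theorem). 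Likewise, \eqref{eq:EstH1str} is stated for the div--curl problem for $\tilde{\bmU}$, so invoking it verbatim for the inhomogeneous problems solved by $\tilde{\varphi}_{(\bmj)}$ and $\tilde{\bmA}_{(\bmj)}$ requires separate (standard for the scalar case, nontrivial for the vector case) coercivity estimates. So your outline is a legitimate alternative, but as written it defers rather than resolves the step where the vorticity genuinely complicates the analysis.
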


\begin{proof}

We start by taking $\tilde{\bmC} = \partial^{2 \bmj}\tilde{\bmU}$ in \eqref{eq:IntPart}, with $|\bmj|=k < N-3/2$; this is a slight abuse of notation, as mentioned in the proof of Proposition 3.12 in \cite{castro2015well}, because one should actually take $\tilde{\bmC} = \chi(\delta \, w |\bfD|) \partial^{2\bmj}\tilde{\bmU}$ where $\delta>0$ and $\chi \in C^\infty_c(\mathbb{R}^2)$, $\chi \equiv 1$ in a neighbourhood of the origin, and then deduce that the estimates are uniform as $\delta \to 0$, see for example Lemma 2.34 in \cite{lannes2013water}. We obtain
\begin{align*}
\int_{D_0} \nabla^{\Sigma}\partial^{\bmj}\tilde{\bmU} \cdot P(\Sigma)\nabla^{\Sigma}\partial^{\bmj}\tilde{\bmU} &= I_1 + I_2 + I_3, \\
I_1 &:= \int_{D_0} \nabla^{\Sigma}\partial^{\bmj}\tilde{\bmU} \cdot [ \partial^{\bmj} , P(\Sigma) ] \nabla^{\Sigma}\tilde{\bmU} , \\
I_2 &:= \int_{D_0} \Lambda^k \tilde{\bmomega} \cdot \Lambda^{-k} \left( (1+\partial_w \sigma) \curl^{\Sigma} \partial^{2\bmj} \tilde{\bmU}  \right) , \\
I_3 &:= \int_{\mathbb{R}^2} \partial^{\bmj} \tilde{\bmF} \cdot \partial^{\bmj} \tilde{\bmU}|_{w=0} .
\end{align*}
By Lemma \ref{lem:Product} and by condition \eqref{eq:CondDiffHigh} we have that for any $0 < \varepsilon \ll 1$ the following estimate holds true,
\begin{align*}
\left\| \partial^{\bmj} \, \nabla_{\bmx,w} \tilde{\bmU} \right\|_{( L^2(D_0) )^3}^2 &\leq C \left(h_0^{-1}, \|\eta\|_{C^{1+\varepsilon}(\mathbb{R}^2)} \right) \, ( |I_1| + |I_2| + |I_3| ) .
\end{align*}

Regarding $I_1$, if we denote by $Q(\Sigma):=P(\Sigma) - \mathbb{I}_3$, we have again by Lemma \ref{lem:Product} and by \eqref{eq:CondDiffHigh} that for any $\varepsilon>0$ such that $N \geq k+1+\varepsilon$
\begin{align}
|I_1| &\leq \| \nabla^{\Sigma}\partial^{\bmj}\tilde{\bmU} \|_{ ( L^2(D_0) )^3} \, \|  [ \partial^{\bmj} , Q(\Sigma) ] \nabla^{\Sigma}\tilde{\bmU} \|_{ ( L^2(D_0) )^3} \nonumber \\
&\leq  C \, \| \nabla^{\Sigma}\partial^{\bmj}\tilde{\bmU} \|_{ ( L^2(D_0) )^3} \, \| Q(\Sigma) \|_{M_3( C^{k+\varepsilon}(D_0) ) }    \|  \Lambda^{k-1}\nabla^{\Sigma}\tilde{\bmU} \|_{ ( L^2(D_0) )^3} \nonumber \\
&\leq  C(h_0^{-1},h, \|\eta\|_{C^{N}(\mathbb{R}^2) }  ) \, \| \nabla^{\Sigma}\partial^{\bmj}\tilde{\bmU} \|_{ ( L^2(D_0) )^3} \,    \|  \Lambda^{k-1}\nabla^{\Sigma}\tilde{\bmU} \|_{ ( L^2(D_0) )^3} . \label{eq:I1est}
\end{align}

Next, we estimate $I_2$. By \eqref{eq:CondDiffHigh} and by Lemma \ref{lem:Product} we have that for any $\varepsilon>0$ such that $N \geq k+1+\varepsilon$
\begin{align} \label{eq:I2est}
| I_2 | &\leq C \left( h_0^{-1} , \| \eta \|_{ C^{N}(\mathbb{R}^2) }  \right) \, \| \Lambda^k \tilde{\bmomega} \|_{ ( L^2(D_0) )^3} \, \| \partial^{\bmj} \nabla^{\Sigma} \tilde{\bmU} \|_{( L^2(D_0) )^3 } .
\end{align}

Moreover, arguing as in the proof of Proposition 3.12 in \cite{castro2015well} we can deduce that
\begin{align}
|I_3| &\leq C( h_0^{-1}, \|\eta\|_{C^{N}(\mathbb{R}^2)} ) \, \|\Lambda^{k}\nabla^\Sigma \tilde{\bmU} \|_{( L^2(D_0) )^3 } \times \nonumber \\
&\qquad \times  \bigg[ \| \mathfrak{P} \left( \nabla \partial^{\bmj} \Phi - \underline{\tilde{\mathrm{w}}} \, \nabla \partial^{\bmj} \eta \right) \|_{L^2(\mathbb{R}^2)}  + \| \Lambda^k\tilde{\bmomega} \|_{2,b}  + \|\Lambda^{k-1}\nabla^\Sigma \tilde{\bmU} \|_{( L^2(D_0) )^3 } \bigg]  . \label{eq:I3est}
\end{align}
Combining \eqref{eq:I1est}, \eqref{eq:I2est} and \eqref{eq:I3est}, and using \eqref{eq:IntPart} and \eqref{eq:EstH1str}, we have that for any $\varepsilon >0$ such that $N \geq k+3/2+\varepsilon$ the bound \eqref{eq:HighOrdEst} holds true. 

Finally, observe that for all $N >3$ and $1 \leq m < N-1$ there exists $C>0$ such that for any $\varepsilon >0$ such that $m+\varepsilon \leq N-1$
\begin{align} \label{eq:CLproduct}
\| \Lambda^{m-j} \; \partial_z^j (f  \, g) \|_{L^2(D_0)} &\leq C \; \|f\|_{C^{m+\varepsilon}(D_0)} \; \|g\|_{H^{m,j}(D_0)} ,  \; \; \forall j \in \{ 0,\ldots,m \} , \\
&\; \; \forall f \in C^{N-1}(D_0), \; \forall g \in H^{m,j}(D_0), \nonumber
\end{align}
(this is a consequence of Lemma \ref{lem:Product}; see Lemma 3.15 of \cite{castro2015well}). Let $1 \leq \ell \leq k$: combining the identity 
\begin{align*}
\partial_w^\Sigma \tilde{\bmU} &= \frac{1}{ 1+|\nabla \eta|^2 } \left[ ( \bmN \cdot \partial_w^\Sigma \tilde{\bmU} ) \, \bmN + (\bmN \times \partial_w^\Sigma \tilde{\bmU} ) \times \bmN \right] 
\end{align*}
with the above product rule \eqref{eq:CLproduct} and with condition \eqref{eq:CondDiffHigh}  we obtain
\begin{align*}
\| \partial_w \tilde{\bmU} \|_{( H^{k,\ell}(D_0) )^3 } &\leq C( \|\eta\|_{C^N(\mathbb{R}^2)} ) \, \left[ \| \tilde{\bmU} \|_{( H^{k+1,\ell}(D_0) )^3 } + \| \tilde{\bmomega} \|_{( H^{k,\ell}(D_0) )^3 } \right] ,
\end{align*}
hence
\begin{align*}
\| \tilde{\bmU} \|_{( H^{k+1,\ell+1}(D_0) )^3 } &\leq C( \|\eta\|_{C^N(\mathbb{R}^2)} ) \, \left[ \| \tilde{\bmU} \|_{( H^{k+1,\ell}(D_0) )^3 } + \| \tilde{\bmomega} \|_{( H^{k,\ell}(D_0) )^3 } \right] ,
\end{align*}
and by finite induction on $\ell$ we get
\begin{align*}
\| \tilde{\bmU} \|_{( H^{k+1,\ell+1}(D_0) )^3 } &\leq C( \|\eta\|_{C^N(\mathbb{R}^2)} ) \, \left[ \| \tilde{\bmU} \|_{( H^{k+1,1}(D_0) )^3 } + \| \tilde{\bmomega} \|_{( H^{k,\ell}(D_0) )^3 } \right] ,
\end{align*}
so that we can deduce \eqref{eq:HighOrdkEst}.

\end{proof}

\begin{remark}\label{rem:ImprEstPhi}

We point out that the assumption $\Phi \in \dot{H}^{k+3/2}(\mathbb{R}^2)$ can be weakened, see Chap. 2 of \cite{lannes2013water} and Theorem 2.4 of \cite{castro2015well}.

\end{remark}

From Theorem \ref{thm:CLHighReg} and Remark \ref{rem:ProductRem} we can immediately derive the following higher order estimates on the generalized Dirichlet--Neumann operator defined in \eqref{eq:GDNO}.

\begin{corollary} \label{cor:GDNOhigh}

Let $N > 3/2$, let $\eta \in C^{N}(\mathbb{R}^2)$ be such that \eqref{eq:StrConnected} holds true. Assume that $0 \leq k < N-3/2$, $k \in \mathbb{N}$, $\Phi \in \dot{H}^{k+3/2}(\mathbb{R}^2)$, $\tilde{\bmomega} \in ( H^{k,0}(D_0))^3$ and that $\sigma$ is a regularizing diffeomorphism such that \eqref{eq:CondDiffHigh} holds true. Then
\begin{equation*}
\tilde{\mathcal{G}}_{gen}[\eta](\Phi,\tilde{\bmomega})  \in  H^{k+1/2}(\mathbb{R}^2) ,
\end{equation*}
and the following higher-order estimate holds true,
\begin{align} 
\left\| \tilde{\mathcal{G}}_{gen}[\eta](\Phi,\tilde{\bmomega}) \right\|_{ H^{k+1/2}(\mathbb{R}^2) }  &\leq C( h_0^{-1}, h, \|\eta\|_{C^{N}(\mathbb{R}^2)} ) \times \nonumber \\
& \times \bigg[ \| \mathfrak{P}\Phi \|_{H^{1}(\mathbb{R}^2)}  + \sum_{1 < |\bmj| \leq  k+1 }  \|  \mathfrak{P}\Phi_{(\bmj)} \|_{L^{2}(\mathbb{R}^2)}   \nonumber \\
&\qquad + \| \tilde{\bmomega} \|_{ (H^{k,0}(D_0))^3}  + \| \Lambda^k( \tilde{\bmomega} \cdot \bme_3)|_{w=-h} \|_{H_0^{-1/2}(\mathbb{R}^2)} \bigg] . \label{eq:EstGDNOhigh}
\end{align}

\end{corollary}

Finally, we mention that from Corollary 3.14 in \cite{castro2015well} one can derive higher order estimates for the generalized Dirichlet--Neumann operator as in Corollary \ref{cor:GDNOhigh}.

\section{Analyticity of the generalized Dirichlet--Neumann operator} \label{sec:GDNOhom}

In this section we prove an analyticity result for the generalized Dirichlet--Neumann operator; we also show a formula for the differential of its rotational part. As for the classical Dirichlet--Neumann operator for irrotational flows (see \cite{craig1993numerical} \cite{craig1994hamiltonian}) and for Beltrami flows (see \cite{groves2024analytical}), one can write the Taylor expansion of $\tilde{\mathcal{G}}_{gen}[\eta]$ around $\eta=0$,
\begin{align} \label{eq:TaylorGeta}
\tilde{\mathcal{G}}_{gen}[\eta] &= \sum_{j=0}^\infty \tilde{\mathcal{G}}_j[\eta] ,
\end{align}
where $\tilde{\mathcal{G}}_j[\eta]$ is homogeneous of degree $j$ in $\eta$, and for $j \geq 1$ the terms $\tilde{\mathcal{G}}_j[\eta]$ may be computed systematically by using a recursion formula. 

Indeed, from \eqref{eq:GDNO} and from Remark \ref{rem:EqPsi} we have 
\begin{align*}
\mathcal{G}_{gen}[\eta](\Phi,\bmomega) &=  \nabla \cdot \bmA_{\parallel}^{\perp} + \underline{ \nabla_{\bmx,z}\varphi } \cdot \bmN .
\end{align*}

We look for a scalar function $\varphi$ and for a vector potential $\bmA$ which solve the systems \eqref{eq:systA} and \eqref{eq:systphi} respectively. While the uniqueness of the solution of the boundary value problem \eqref{eq:systphi} is well-known (see Proposition 2.9 and Corollary 2.44 in \cite{lannes2013water}), we now exploit Corollary \ref{cor:AltStrong} in order to obtain a solution of \eqref{eq:systA} by solving (the flattened version of) an equivalent problem, whose solution can be explicitly written in terms of a Green matrix (see Sec.4(c) of \cite{groves2020variational} for a similar approach). We recall the decomposition
\begin{align*}
\tilde{\mathcal{G}}_{gen}[\eta](\Phi,\tilde{\bmomega}) &= \tilde{\mathcal{G}}_{gen,I}[\eta]\Phi + \tilde{\mathcal{G}}_{gen,II}[\eta]\tilde{\bmomega} , \\
\tilde{\mathcal{G}}_{gen,I}[\eta] \Phi &\coloneqq \tilde{\mathbb{U}}_I[\eta]\Phi \cdot \bmN |_{w=0} \, = \, \nabla^{\Sigma}\tilde{\varphi} \cdot \bmN |_{w=0} , \\
\tilde{\mathcal{G}}_{gen,II}[\eta] \tilde{\bmomega} &\coloneqq \tilde{\mathbb{U}}_{II}[\eta]\tilde{\bmomega} \cdot \bmN |_{w=0} \, = \, \curl^{\Sigma}\tilde{\bmA} \cdot \bmN |_{w=0} ,
\end{align*}
and the associated expansions
\begin{align*}
\tilde{\mathcal{G}}_{gen,I}[\eta] = \sum_{j=0}^\infty \tilde{\mathcal{G}}_{j,I}[\eta] , &\; \; \tilde{\mathcal{G}}_{gen,II}[\eta] = \sum_{j=0}^\infty \tilde{\mathcal{G}}_{j,II}[\eta] .
\end{align*}

We consider the flattened version of \eqref{eq:systA} and \eqref{eq:systphi}; by exploiting Corollary \ref{cor:AltStrong}, we obtain that $\tilde{\bmA}$ and $\tilde{\varphi}$ satisfy
\begin{align}
(|\bfD|^2 - \partial_w^2) \, \tilde{\bmA}(\bmx,w) &= \tilde{\bmomega}(\bmx,w)  + \bmR_{11}[\sigma,\tilde{\bmA}](\bmx,w) , \; \; \text{in} \; \; D_0, \label{eq:systAeq1} \\
\partial_w \tilde{A}_3(\cdot,-h) &= 0, \label{eq:systAeq2} \\
\tilde{A}_1(\cdot,-h) = \tilde{A}_2(\cdot,-h) &= 0, \label{eq:systAeq3} \\
\tilde{A}_3(\cdot,0) &= r_{21}[\sigma,\tilde{\bmA}](\bmx) , \label{eq:systAeq4} \\
\mathrm{i} \, \bfD^{\perp} \, \tilde{A}_3(\cdot,0) - \partial_w \, \tilde{\bmA}_{\rmh}(\cdot,0)^{\perp} &= -\gradp \Delta^{-1}  \left( \tilde{\omega}_3|_{w=0} \right) + \bmr_{31}[\sigma,\tilde{\bmA},\tilde{\bmomega}] , \label{eq:systAeq5}
\end{align} 
\begin{align}
(|\bfD|^2 - \partial_w^2) \tilde{\varphi}(\bmx,w) &= r_{12}[\sigma,\tilde{\varphi}](\bmx,w), \; \; \text{in} \; \; D_0, \label{eq:systphieq1} \\
\partial_w \tilde{\varphi}(\cdot,-h) &= r_{22}[\sigma,\tilde{\varphi}] , \label{eq:systphieq2} \\
\tilde{\varphi}(\cdot,0) &=  \Phi , \label{eq:systphieq3}
\end{align}
where
\begin{align*}
\bmR_{11}[\sigma,\tilde{\bmA}](\bmx,w) &\coloneqq   \Delta^{\Sigma} \tilde{\bmA} (\bmx,w) - \Delta_{\bmx,w} \tilde{\bmA} (\bmx,w)  , \\
r_{21}[\sigma,\tilde{\bmA}](\bmx) &\coloneqq \tilde{\bmA}_{\rmh}(\bmx,0) \cdot \nabla\eta(\bmx) , \\
\bmr_{31}[\sigma,\tilde{\bmA},\tilde{\bmomega}](\bmx) &\coloneqq  -  \left( ( \curl^{\Sigma}   \tilde{\bmA} ) |_{w=0} \right)_{\rmh} +  \left( \curl \tilde{\bmA}   \right)_{\rmh}(\bmx,0) \\
&\qquad  -  \left( ( \curl^{\Sigma} \tilde{\bmA} ) |_{w=0} \right)_{3} \, \nabla\eta  \\
&\qquad  + \left( \gradp \Delta^{-1} ( \tilde{\bmomega}(\bmx,0) \cdot \nabla\eta)  \right) , 
\end{align*}
and
\begin{align*}
r_{12}[\sigma,\tilde{\varphi}](\bmx,w) &\coloneqq   \Delta^{\Sigma} \tilde{\varphi}(\bmx,w) - \Delta_{\bmx,w}\tilde{\varphi}(\bmx,w) , \\
r_{22}[\sigma,\tilde{\varphi}](\bmx) &\coloneqq -( \partial^{\Sigma}_w \tilde{\varphi} ) (\bmx,-h) + \partial_w \, \tilde{\varphi}(\bmx,-h).
\end{align*}

In the rest of the section we denote by $\mathscr{F}$ the Fourier transform with respect to $\bmx$, and we denote by $\tilde{f}'$ the partial derivative $\partial_w \, \tilde{f}$.

\begin{lemma} \label{lem:SolFlatBVP}

Assume that $s \geq 2$, and let $\tilde{\bmomega} \in ( H^{s-2}(D_0) )^3$, $\Phi \in \dot{H}^{s-1/2}(\mathbb{R}^2)$. Then the boundary value problems
\begin{align*}
\begin{cases}
(|\bfD|^2 - \partial_w^2)  \, \tilde{\bmA} &= \tilde{\bmomega}  + \bmR_{1} , \; \; \text{in} \; \; D_0,  \\
\partial_w \, \tilde{A}_3(\cdot,-h) &= 0, \\
\tilde{A}_1(\cdot,-h) = \tilde{A}_2(\cdot,-h) &= 0,  \\
\tilde{A}_3(\cdot,0) &= r_{2} , \\
\mathrm{i} \, \bfD^{\perp} \, \tilde{A}_3(\cdot,0) - \partial_w \, \tilde{\bmA}_{\rmh}(\cdot,0)^{\perp} &= - \, \gradp \Delta^{-1} \left( \tilde{\omega}_3|_{w=0} \right) + \bmr_{3} ,  
\end{cases}
\end{align*}
 
\begin{align*}
\begin{cases}
(|\bfD|^2 - \partial_w^2) \tilde{\varphi} &= r_{4} , \; \; \text{in} \; \; D_0, \\
\partial_w \, \tilde{\varphi}(\cdot,-h) &= r_{5} ,  \\
\tilde{\varphi}(\cdot,0) &=  \Phi , 
\end{cases}
\end{align*}
admit unique solutions $\tilde{\bmA} \in (  H^s(D_0)  )^3$ and $\tilde{\varphi} \in  \dot{H}^s( D_0 )$ for any $\bmR_1 \in ( H^{s-2}(D_0) )^3$, $r_2 \in H^{s-1/2}(\mathbb{R}^2)$, $\bmr_3 \in ( H^{s-3/2}(\mathbb{R}^2) )^2$, $r_4 \in \dot{H}^{s-2}(D_0)$ and $r_5 \in \dot{H}^{s-3/2}(\mathbb{R}^2)$.
\end{lemma}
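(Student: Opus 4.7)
The plan is to Fourier-transform each boundary value problem in the horizontal variable $\bmx$, thereby reducing it to a family of two-point ODE boundary value problems on the interval $(-h,0)$ parametrized by $\bmxi \in \mathbb{R}^2$, and to write down the unique solution explicitly via an appropriate Green's function. Denoting $\hat{u}(\bmxi,w) = \mathscr{F}u(\bmxi,w)$, the differential operator $|\bfD|^2 - \partial_w^2$ becomes $|\bmxi|^2 - \partial_w^2$, whose fundamental system for $\bmxi \neq 0$ is $\{\cosh(|\bmxi|(w+h)),\,\sinh(|\bmxi|(w+h))\}$. Uniqueness in both BVPs will follow immediately from the fact that the homogeneous ODE with the prescribed mixed boundary data admits only the trivial solution for every $\bmxi$, including the degenerate zero-mode where the admissible solutions are affine in $w$.

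For the scalar problem for $\tilde{\varphi}$ (Dirichlet at $w = 0$, Neumann at $w = -h$), the explicit representation takes the form
\begin{equation*}
\hat{\varphi}(\bmxi,w) \,=\, \frac{\cosh(|\bmxi|(w+h))}{\cosh(h|\bmxi|)}\,\hat{\Phi}(\bmxi) \,-\, \frac{\sinh(|\bmxi|w)}{|\bmxi|\cosh(h|\bmxi|)}\,\hat{r}_5(\bmxi) \,+\, \int_{-h}^0 G_{\varphi}(w,\zeta;|\bmxi|)\,\hat{r}_4(\bmxi,\zeta)\,\mathrm{d}\zeta,
\end{equation*}
with the zero mode $\bmxi = 0$ recovered by direct integration of $-\partial_w^2\hat{\varphi}(0,w) = \hat{r}_4(0,w)$. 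For the vector problem I would exploit a decoupling: the third component $\tilde{A}_3$ satisfies a standalone Neumann-Dirichlet BVP with data $(0,r_2)$, solved by the same procedure; once $\tilde{A}_3$ is known, the last boundary condition at $w=0$ can be rewritten as a pure Neumann datum for $\tilde{\bmA}_{\rmh}$, namely $\partial_w\tilde{\bmA}_{\rmh}(\cdot,0) = -\bigl(\mathrm{i}\,\bfD^{\perp}r_2 + \gradp\Delta^{-1}(\tilde{\omega}_3|_{w=0}) - \bmr_3\bigr)^{\perp}$, paired with the homogeneous Dirichlet condition $\tilde{\bmA}_{\rmh}(\cdot,-h) = \bmzero$, and each horizontal component is inverted by the analogous Green's function. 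The Sobolev estimates then follow by multiplying by $(1+|\bmxi|^2)^{s/2}$, invoking Plancherel, and using the exponential decay provided by $\cosh(h|\bmxi|)^{-1}$ at high frequencies together with the smoothing effect of integration in $w$ to recover both horizontal and vertical regularity.

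The main delicate point will be the low-frequency regime, since $\Phi$ belongs to the homogeneous Sobolev space $\dot{H}^{s-1/2}(\mathbb{R}^2)$ (so that $\hat{\Phi}(\bmxi)$ may fail to be integrable near $\bmxi = 0$), and the target space $\dot{H}^s(D_0)$ only controls derivatives of $\tilde{\varphi}$. For $|\bmxi|$ near $0$ the factor $\cosh(h|\bmxi|)$ contributes no decay, but the Taylor expansion of $\sinh(|\bmxi|w)/|\bmxi|$ is regular, so the representation formula can be rewritten to show that $\nabla_{\bmx,w}\tilde{\varphi}$ is controlled by $|\bmxi|\hat{\Phi}(\bmxi)$ and analogous combinations of the other data; this is a routine but attentive bookkeeping that justifies the use of the homogeneous norms in the statement. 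A parallel argument handles the vector problem, where $\hat{r}_2 \in H^{s-1/2}$ and $\hat{\bmr}_3 \in H^{s-3/2}$ are inhomogeneous and therefore cause no trouble at $\bmxi = 0$.
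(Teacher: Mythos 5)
Your proposal follows essentially the same route as the paper: Fourier transform in the horizontal variables, an explicit Green's-function (in the paper, Green's-matrix) representation of the solution of each two-point ODE problem on $(-h,0)$, and Plancherel-based estimates, with your decoupling (Dirichlet datum for $\tilde{A}_3$ and a Neumann datum for $\tilde{\bmA}_{\rmh}$ at $w=0$) being exactly the paper's choice $\mathtt{L}_0=\mathrm{diag}(\partial_w,\partial_w,1)$. The only slip is the sign of your $r_5$ term: the correct contribution is $+\,\sinh(|\bmxi|w)\,\hat{r}_5(\bmxi)/\bigl(|\bmxi|\cosh(h|\bmxi|)\bigr)$, since the term you wrote produces $\partial_w\tilde{\varphi}(\cdot,-h)=-r_5$.
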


\begin{proof}

We rewrite the first boundary value problem for $\tilde{\bmF} \coloneqq \mathscr{F}\tilde{\bmA}$ in the following way,
\begin{align*}
\mathtt{M} \tilde{\bmF} &= \mathscr{F} \tilde{\bmomega} + \mathscr{F} \bmR_1 \; \; \text{in} \; \; D_0, \\
\mathtt{L}_{-h}\tilde{\bmF} &= \bmzero, \; \; \text{at} \; \; w = -h, \\ 
\mathtt{L}_{0}\tilde{\bmF} &= 
\begin{pmatrix}
\mathscr{F} \left[ \nabla\Delta^{-1}(\tilde{\omega}_3|_{w=0}) \right] + \mathscr{F} \bmr_3^{\perp} + \mathrm{i} \, \bmxi \, \mathscr{F} r_2 \\ \mathscr{F} r_2
\end{pmatrix}
,\; \; \text{at} \; \; w = 0,
\end{align*}
where
\begin{align*}
\mathtt{M} = (|\bmxi|^2-\partial_w^2) \, \mathbb{I}_3, \; \; \mathtt{L}_{-h} &= \text{diag}(1,1,\partial_w), \; \; \mathtt{L}_{0} = \text{diag}(\partial_w,\partial_w,1) .
\end{align*}
Let us define the Green's matrix
\begin{align*}
\mathtt{G}_{\tilde{\bmA}}(w,\zeta) &=
\begin{cases}
\mathtt{U}(w+h) \, \overline{\mathtt{C}^{-1}}^T \, \overline{\mathtt{W}(\zeta)}^T, \; \; \text{for} \; \; -h \leq w \leq \zeta \leq 0, \\
\mathtt{W}(w) \, \mathtt{C}^{-1} \, \overline{\mathtt{U}(\zeta+h)}^T, \; \; \text{for} \; \; -h \leq \zeta \leq w \leq 0, 
\end{cases}
\end{align*}
where $\mathtt{U}(\cdot+h)$ (resp. $\mathtt{W}$) is a $3 \times 3$ matrix whose columns $\mathtt{U}_j(\cdot+h)$ (resp. $\mathtt{W}_j$), $j=1,2,3$, are given by
\begin{align*}
\mathtt{U}_j(w) = \sinh( |\bmxi| w ) \, \bme_j, \; \; j=1,2, &\; \; \mathtt{U}_3(w) = \cosh( |\bmxi| w ) \, \bme_3, \\
\mathtt{W}_j(w) = \cosh( |\bmxi| w ) \, \bme_j, \; \; j=1,2, &\; \; \mathtt{W}_3(w) = \sinh( |\bmxi| w ) \, \bme_3, 
\end{align*}
and where
\begin{align*}
\mathtt{C} &= |\bmxi| \, \cosh(h \, |\bmxi| ) \,  \text{diag}(1,1,-1) .
\end{align*}

Hence, the solution $\tilde{\bmA}$ of the boundary value problem \eqref{eq:systAeq1}-\eqref{eq:systAeq5} is given by
\begin{align}
\tilde{\bmA}(\cdot,w) &= \mathscr{F}^{-1} \, \bigg[ \, \int_{-h}^0 \mathtt{G}_{\tilde{\bmA}}(w,\zeta) \, \left[ ( \mathscr{F}\tilde{\bmomega} )(\cdot ,\zeta) + \mathscr{F}\bmR_1(\cdot,\zeta) \right] \, \mathrm{d}\zeta \nonumber \\
&\qquad - \mathtt{G}_{\tilde{\bmA}}(w,0)
\begin{pmatrix}
\mathscr{F} \left[ \nabla\Delta^{-1}(\tilde{\omega}_3|_{w=0}) \right] + \mathscr{F}\bmr_3^{\perp} + \mathrm{i} \, \bmxi \, \mathscr{F}r_2   \\ 0
\end{pmatrix}
\nonumber \\
&\qquad - \mathtt{G}_{\tilde{\bmA},\zeta}(w,0)
\begin{pmatrix}
0 \\ 0 \\ \mathscr{F} \, r_2
\end{pmatrix}
\, \bigg] . \label{eq:BVPsolA}
\end{align}

Similarly, we write the Green's matrix for the boundary value problem for $\tilde{g} \coloneqq \mathscr{F}\tilde{\varphi}$,
\begin{align*}
\mathtt{G}_{\tilde{\varphi}}(w,\zeta) &=
\begin{cases}
- \frac{1}{ |\bmxi| \, \cosh(h \, |\bmxi|)} \, \cosh(|\bmxi|(w+h)) \, \sinh(|\bmxi| \zeta), \\
\qquad \qquad \text{for} \; \; -h \leq w \leq \zeta \leq 0, \\
\\
- \frac{1}{ |\bmxi| \, \cosh(h \, |\bmxi|)} \, \cosh( |\bmxi|(\zeta+h) ) \, \sinh(|\bmxi| w) , \\
\qquad \qquad \text{for} \; \; -h \leq \zeta \leq w \leq 0.
\end{cases}
\end{align*}

Hence the solution $\tilde{\varphi}$ of the boundary value problem \eqref{eq:systphieq1}-\eqref{eq:systphieq3} is given by
\begin{align}
\tilde{\varphi}(\cdot,w) &= \mathscr{F}^{-1} \, \bigg[ 
\int_{-h}^0 \mathtt{G}_{\tilde{\varphi}}(w,\zeta) \, \mathscr{F}r_4(\cdot,\zeta) \, \mathrm{d}\zeta - \mathtt{G}_{\tilde{\varphi}}(w,-h) \, \mathscr{F}r_5 - \mathtt{G}_{\tilde{\varphi},\zeta}(w,0) \, (\mathscr{F} \Phi) \, \bigg] . \nonumber \\
&\phantom{} \label{eq:BVPsolphi}
\end{align}

\end{proof}

Following an argument analogous to the proof of Theorem 4.10 in \cite{groves2020variational}, one can deduce from Lemma \ref{lem:SolFlatBVP} the following result.

\begin{theorem} \label{thm:ExSolFBVP}

Suppose that $s \geq 2$, and let $\tilde{\bmomega} \in (H^{s-2}(D_0))^3$, $\Phi \in \dot{H}^{s-1/2}(\mathbb{R}^2)$. There exists an open neighbourhood $W$ of the origin in $H^{s+1/2}(D_0)$ such that the boundary value problems \eqref{eq:systAeq1}-\eqref{eq:systAeq5} and \eqref{eq:systphieq1}-\eqref{eq:systphieq3} admit unique solutions $\tilde{\bmA}=\tilde{\bmA}(\sigma,\tilde{\bmomega}) \in (H^s(D_0))^3$ and $\tilde{\varphi}=\tilde{\varphi}(\sigma,\Phi) \in H^s(D_0)$ for each $\sigma \in W$.  Furthermore, $\tilde{\bmA}(\sigma,\tilde{\bmomega})$ depends analytically upon $\sigma$ and $\tilde{\bmomega}$ and linearly upon $\tilde{\bmomega}$, while $\tilde{\varphi}(\sigma,\Phi)$ depends analytically upon $\sigma$ and $\Phi$ and linearly upon $\Phi$.
\end{theorem}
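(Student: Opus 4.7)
The approach follows the argument of Theorem 4.10 in \cite{groves2020variational} and rests on viewing each boundary value problem as a fixed-point equation for which the analytic implicit function theorem can be applied. Since the systems for $\tilde{\bmA}$ and $\tilde{\varphi}$ are decoupled, they may be treated independently; the structure is parallel, so I focus on the problem for $\tilde{\bmA}$.

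First, I would upgrade Lemma \ref{lem:SolFlatBVP} to the statement that the map $(\tilde{\bmomega}+\bmR_1, r_2, \bmr_3) \mapsto \tilde{\bmA}$ defined by the Green-matrix formula \eqref{eq:BVPsolA} is a bounded linear operator
$$\mathcal{L}_A : (H^{s-2}(D_0))^3 \times H^{s-1/2}(\mathbb{R}^2) \times (H^{s-3/2}(\mathbb{R}^2))^2 \to (H^s(D_0))^3.$$
This is a matter of inspecting the explicit Fourier multipliers appearing in \eqref{eq:BVPsolA}: the factors $|\bmxi|^{-1}\sinh(|\bmxi|(\cdot+h))/\cosh(h|\bmxi|)$ and their counterparts provide exactly the elliptic gain of derivatives expected for this class of mixed boundary value problems.

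Next, I would rewrite \eqref{eq:systAeq1}--\eqref{eq:systAeq5} as the fixed-point equation $F(\sigma, \tilde{\bmA}) = \bmzero$, where
$$F(\sigma, \tilde{\bmA}) := \tilde{\bmA} - \mathcal{L}_A\Bigl( \tilde{\bmomega} + \bmR_{11}[\sigma,\tilde{\bmA}],\; r_{21}[\sigma,\tilde{\bmA}],\; -\gradp\Delta^{-1}(\tilde{\omega}_3|_{w=0}) + \bmr_{31}[\sigma,\tilde{\bmA},\tilde{\bmomega}] \Bigr),$$
regarded as a map on a neighbourhood of the origin in $H^{s+1/2}(D_0) \times (H^s(D_0))^3$. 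Three properties are crucial: (i) $F$ is analytic in both arguments, since the remainders $\bmR_{11}$, $r_{21}$, $\bmr_{31}$ are constructed from polynomials in derivatives of $\sigma$ and $\tilde{\bmA}$ together with the analytic function $x \mapsto 1/(1+x)$ applied to $\partial_w\sigma$ (which remains small in $L^\infty$ for $\sigma$ in a neighbourhood of the origin in $H^{s+1/2}(D_0)$, by Sobolev embedding for $s \geq 2$), so Sobolev algebra/Moser estimates give the required continuity; (ii) at $\sigma = 0$ all three remainders vanish identically (since $\nabla\eta=\bmzero$ and $\nabla_{\bmx,w}\sigma = \bmzero$), so $F(0, \tilde{\bmA}_0) = \bmzero$ with $\tilde{\bmA}_0 := \mathcal{L}_A(\tilde{\bmomega}, 0, -\gradp\Delta^{-1}(\tilde{\omega}_3|_{w=0}))$; (iii) each remainder is linear in $\tilde{\bmA}$ with coefficients vanishing at $\sigma=0$, hence $D_{\tilde{\bmA}} F(0, \tilde{\bmA}_0) = \mathrm{Id}$.

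Invoking the analytic implicit function theorem then furnishes an open neighbourhood $W \subset H^{s+1/2}(D_0)$ of the origin and a unique analytic map $\sigma \mapsto \tilde{\bmA}(\sigma, \tilde{\bmomega})$ solving $F = \bmzero$; linear dependence on $\tilde{\bmomega}$ is inherited from the joint linearity of $\mathcal{L}_A$ and of the remainders in $\tilde{\bmomega}$. The parallel argument for $\tilde{\varphi}$ is slightly simpler, since $r_{12}$ and $r_{22}$ depend only on $(\sigma, \tilde{\varphi})$, and $\Phi$ enters exclusively through the inhomogeneous boundary datum in \eqref{eq:BVPsolphi} via a bounded linear map into $\dot{H}^s(D_0)$, yielding linear dependence on $\Phi$. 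The principal technical obstacle is verifying the mapping properties and analyticity of the remainder operators, most notably $\bmR_{11}[\sigma,\tilde{\bmA}] = (\Delta^\Sigma - \Delta)\tilde{\bmA}$: this operator produces second-order derivatives of $\tilde{\bmA}$ multiplied by rational expressions in $\nabla_{\bmx,w}\sigma$, and the boundedness into $(H^{s-2}(D_0))^3$ must be established by Moser-type product estimates exploiting the fact that $\sigma$ enjoys one more degree of Sobolev regularity than $\tilde{\bmA}$. A similar but more delicate trace estimate is needed for $\bmr_{31}$, which must be controlled in $(H^{s-3/2}(\mathbb{R}^2))^2$ by the trace theorem combined with the flattened curl formula.
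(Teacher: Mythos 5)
Your proposal is correct and takes essentially the same route as the paper: analyticity of the remainder maps $\bmR_{11}[\sigma,\tilde{\bmA}]$, $r_{21}[\sigma,\tilde{\bmA}]$, $\bmr_{31}[\sigma,\tilde{\bmA},\tilde{\bmomega}]$, $r_{12}[\sigma,\tilde{\varphi}]$, $r_{22}[\sigma,\tilde{\varphi}]$ via Sobolev product estimates, invertibility of the linearization at the origin supplied by Lemma \ref{lem:SolFlatBVP}, and then the analytic implicit function theorem, with linearity in $\tilde{\bmomega}$ and $\Phi$ read off from the structure of the equations. Pre-composing with the solution operator $\mathcal{L}_A$ so that $D_{\tilde{\bmA}}F(0,\tilde{\bmA}_0)=\mathrm{Id}$ is only a cosmetic reformulation of the paper's residual maps $\mathcal{F}_1$, $\mathcal{F}_2$, whose partial derivatives at the origin are isomorphisms.
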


From Theorem \ref{thm:ExSolFBVP} we can deduce the following analyticity result.

\begin{corollary} \label{cor:GDNOAnal}

Let $s \geq 2$, and let $\tilde{\bmomega} \in (H^{s-2}(D_0))^3$, $\Phi \in \dot{H}^{s-1/2}(\mathbb{R}^2)$. Assume that the condition \eqref{eq:CondAnalGDNO} holds true. Then there exists an open neighbourhood $V$ of the origin in $H^{s+1/2}(\mathbb{R}^2)$ such that formulae
\begin{align*}
\tilde{\mathcal{G}}_{gen,I}[\eta](\Phi,\tilde{\bmomega}) = \tilde{\mathbb{U}}_{I}[\eta]\Phi \cdot \bmN |_{w=0} , &\quad
\tilde{\mathcal{G}}_{gen,II}[\eta](\Phi,\tilde{\bmomega}) = \tilde{\mathbb{U}}_{II}[\eta]\tilde{\bmomega} \cdot \bmN |_{w=0} ,
\end{align*}
define analytic maps 
\begin{align*}
\tilde{\mathcal{G}}_{gen,I}[\cdot] &: V \to L \left( \dot{H}^{s-1/2}(\mathbb{R}^2) , H^{s-3/2}(\mathbb{R}^2)  \right) , \\
\tilde{\mathcal{G}}_{gen,II}[\cdot] &: V \to L \left( ( H^{s-2}(D_0) )^3 , H^{s-3/2}(\mathbb{R}^2)  \right) .
\end{align*}

\end{corollary}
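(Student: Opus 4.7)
The plan is to deduce the corollary directly from Theorem \ref{thm:ExSolFBVP} by recognizing that the generalized Dirichlet--Neumann operators are obtained by post-composing the analytic solution maps $\sigma \mapsto \tilde{\bmA}(\sigma,\tilde{\bmomega})$ and $\sigma \mapsto \tilde{\varphi}(\sigma,\Phi)$ with operations that are themselves analytic (in $\sigma$ and $\eta$) and linear in the remaining data. The first step is to upgrade the bound \eqref{eq:CondAnalGDNO} to an analytic dependence statement: for both the trivial diffeomorphism \eqref{eq:TrivialDiffeo} and the regularizing diffeomorphism \eqref{eq:straightReg}, the map $\eta \mapsto \sigma$ is in fact bounded \emph{linear} from $H^{s+1/2}(\mathbb{R}^2)$ into $H^{s+1/2}(D_0)$ (hence trivially real-analytic), so that letting $W$ be the neighbourhood of the origin supplied by Theorem \ref{thm:ExSolFBVP}, we can define $V \coloneqq \{\eta \in H^{s+1/2}(\mathbb{R}^2) : \sigma(\eta) \in W\}$, which is an open neighbourhood of the origin by continuity.

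Next I would express the two operators as explicit compositions of analytic maps. Writing $\nabla^{\Sigma}\tilde{\varphi} \cdot \bmN \big|_{w=0}$ in the form
\begin{equation*}
\tilde{\mathcal{G}}_{gen,I}[\eta]\Phi \;=\; \bmN(\eta) \cdot \Big( (J_{\Sigma(\eta)}^{-1})^T \nabla_{\bmx,w}\tilde{\varphi}(\sigma(\eta),\Phi) \Big)\Big|_{w=0},
\end{equation*}
the ingredients are (i) the analytic map $\eta \mapsto \sigma(\eta) \in H^{s+1/2}(D_0)$; (ii) the analytic (in $\sigma$), linear (in $\Phi$) map from Theorem \ref{thm:ExSolFBVP} producing $\tilde{\varphi} \in \dot{H}^s(D_0)$; (iii) the matrix $(J_\Sigma^{-1})^T$, which depends analytically on $\sigma$ via $1/(1+\partial_w \sigma)$ on the neighbourhood $W$ where $1+\partial_w\sigma$ stays uniformly bounded away from zero (here one uses the algebra property of $H^{s-1/2}(D_0)$, valid since $s-1/2 > 3/2$); (iv) the continuous trace operator $H^{s-1}(D_0) \to H^{s-3/2}(\mathbb{R}^2)$; (v) multiplication by $\bmN = (-\eta_x,-\eta_y,1)^T$, which is linear in $\eta \in H^{s+1/2}(\mathbb{R}^2) \hookrightarrow H^{s-1/2}(\mathbb{R}^2)$ and thus acts boundedly on $H^{s-3/2}(\mathbb{R}^2)$ by the product estimate. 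The composition of analytic maps being analytic, and linearity in $\Phi$ being preserved throughout, we obtain the desired map $V \to L(\dot{H}^{s-1/2}(\mathbb{R}^2), H^{s-3/2}(\mathbb{R}^2))$.

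For the rotational part I would repeat the same argument, writing
\begin{equation*}
\tilde{\mathcal{G}}_{gen,II}[\eta]\tilde{\bmomega} \;=\; \bmN(\eta) \cdot \big(\curl^{\Sigma(\eta)} \tilde{\bmA}(\sigma(\eta),\tilde{\bmomega}) \big)\big|_{w=0},
\end{equation*}
where now the middle factor is analytic in $\sigma$ and linear in $\tilde{\bmomega} \in (H^{s-2}(D_0))^3$ by Theorem \ref{thm:ExSolFBVP}. The flattened curl $\curl^{\Sigma}$ is, by the explicit formula recalled in Remark \ref{rem:flatOp}, a first-order differential operator whose coefficients are rational expressions in $\nabla_{\bmx,w}\sigma$ with denominator $1+\partial_w\sigma$; these depend analytically on $\sigma$ in $W$, so applying $\curl^\Sigma$ maps $\tilde{\bmA} \in (H^s(D_0))^3$ analytically into $(H^{s-1}(D_0))^3$, after which the trace and multiplication by $\bmN$ land in $H^{s-3/2}(\mathbb{R}^2)$ as above.

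The only subtle point, and the place where some care is required, is verifying the product/composition estimates in the Sobolev spaces involved: specifically, that multiplication by $1/(1+\partial_w\sigma)$ and by components of $\nabla_{\bmx,w}\sigma$ acts boundedly on $H^{s-1}(D_0)$, and that multiplication by $\nabla\eta$ acts boundedly on $H^{s-3/2}(\mathbb{R}^2)$. Both rely on the range $s \geq 2$ combined with the standard algebra inequality \eqref{eq:SobProduct} (which also underpins the proof of Theorem \ref{thm:ExSolFBVP}). Beyond this bookkeeping the argument is just the composition-of-analytic-maps principle, so no further obstacle arises; the statement follows along exactly the lines of Corollary 4.12 of \cite{groves2020variational}.
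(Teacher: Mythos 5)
Your argument is exactly the route the paper intends: Corollary \ref{cor:GDNOAnal} is deduced from Theorem \ref{thm:ExSolFBVP} by composing the analytic (in $\sigma$) and linear (in $\Phi$, $\tilde{\bmomega}$) solution maps with the flattened operators $\nabla^{\Sigma}$, $\curl^{\Sigma}$, the trace at $w=0$ and the pairing with $\bmN$, using \eqref{eq:CondAnalGDNO} (and linearity of $\eta\mapsto\sigma$ for the standard diffeomorphisms) to transfer analyticity from $\sigma$ to $\eta$, precisely as in Corollary 4.12 of \cite{groves2020variational}. Your write-up in fact supplies more of the bookkeeping (the product estimates via \eqref{eq:SobProduct}, with the same borderline behaviour at $s=2$ that the paper itself tolerates) than the paper records, so it is correct and essentially identical in approach.
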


We notice that the formula for $\tilde{\mathcal{G}}_{gen,II}[\eta] \tilde{\bmomega}$ can be rewritten as
\begin{align*}
\tilde{\mathcal{G}}_{gen,II}[\eta] \tilde{\bmomega} &=  \tilde{A}_{2x} + \eta_y  \tilde{A}_{3x} -  \tilde{A}_{1y} - \eta_x \tilde{A}_{3y}|_{w=0} .
\end{align*}

We compute the term $\tilde{\mathcal{G}}_{0}$ in the Taylor expansion \eqref{eq:TaylorGeta}, where we write
\begin{align*}
\tilde{\mathcal{G}}_{0}(\Phi,\tilde{\bmomega}) &= \tilde{\mathcal{G}}_{0,I}\Phi + \tilde{\mathcal{G}}_{0,II}\tilde{\bmomega} .
\end{align*}

From formulae \eqref{eq:BVPsolA} and \eqref{eq:BVPsolphi} we obtain that for $\eta=0$
\begin{align*}
\tilde{\bmA}_0(\cdot,w) &= \tilde{\bmA}(\cdot,w)|_{\bmR_{11}=\bmzero,r_{21}=0,\bmr_{31}=\bmzero} \\
&= \mathscr{F}^{-1} \, \bigg[ \, \int_{-h}^0 \mathtt{G}_{\tilde{\bmA}}(w,\zeta) \,  ( \mathscr{F}\tilde{\bmomega} )(\cdot ,\zeta) \, \mathrm{d}\zeta  - \mathtt{G}_{\tilde{\bmA}}(w,0)
\begin{pmatrix}
\mathscr{F} \left[ \nabla\Delta^{-1}(\tilde{\omega}_3|_{w=0}) \right]   \\ 0
\end{pmatrix}
\bigg] , \\
\tilde{\varphi}_0(\cdot,w) &= \tilde{\varphi}(\cdot,w)|_{r_{12}=r_{22}=0}  \\
&= \mathscr{F}^{-1} \, \bigg[ - \mathtt{G}_{\tilde{\varphi},\zeta}(w,0) \, (\mathscr{F} \Phi) \, \bigg] \, = \, \mathscr{F}^{-1} \, \bigg[ \frac{ \cosh(|\bmxi| (w+h)) }{ \cosh(h |\bmxi|) } \, (\mathscr{F} \Phi) \, \bigg] ,
\end{align*}
so that 
\begin{align}
\tilde{\mathcal{G}}_{0,I} \, \Phi =  \partial_{w}\tilde{\varphi}_0|_{w=0}  &=  \mathscr{F}^{-1} \, \bigg[ |\bmxi| \, \tanh(h \, |\bmxi|)  \, (\mathscr{F} \Phi) \, \bigg] ,  \label{eq:G0I}
\end{align}

\begin{align} 
\tilde{\mathcal{G}}_{0,II} \,\tilde{\bmomega} &=  \tilde{A}_{0,2x} -  \tilde{A}_{0,1y} |_{w=0} \nonumber \\
&= \nabla \cdot \mathscr{F}^{-1} \, \bigg[ \, \int_{-h}^0 \frac{ \sinh(|\bmxi|(\zeta+h)) }{ |\bmxi| \, \cosh(h\, |\bmxi|)}  \,  ( \mathscr{F}\tilde{\bmomega} )_{\rmh}^{\perp}(\cdot ,\zeta) \, \mathrm{d}\zeta  \nonumber \\
&\qquad \qquad \qquad - \frac{ \sinh(h \, |\bmxi|) }{ |\bmxi| \, \cosh(h\, |\bmxi|)} \, \mathscr{F} \left[ \gradp\Delta^{-1}(\tilde{\omega}_3|_{w=0}) \right]
\bigg] , \label{eq:G0II} 
\end{align}
since
\begin{align*}
\mathtt{G}_{\tilde{\bmA}}(0,\zeta) &= \frac{ \sinh(|\bmxi|(\zeta+h)) }{ |\bmxi| \, \cosh(h\, |\bmxi|)} \, \text{diag}( 1 , 1 , 0 ),  \; \; -h \leq \zeta \leq 0 .
\end{align*}

From \eqref{eq:BVPsolphi} we can deduce a formula for the expansion of the operator $\tilde{\mathcal{G}}_{I}[\eta]$ around $\eta=0$ (see Sec. 2.4 of \cite{groves2024analytical}, where such expansion is proved in the more general case of Beltrami flows; see also the argument by Craig-Sulem in \cite{craig1993numerical})

\begin{align}
\tilde{\mathcal{G}}_{1,I}[\eta]\Phi &= - \tilde{\mathcal{G}}_{0,I} \bigg(   \eta \, \tilde{\mathcal{G}}_{0,I}\Phi  \bigg)  - \nabla \cdot \left( \eta \, \nabla\Phi \right) , \nonumber \\
\tilde{\mathcal{G}}_{j,I}[\eta]\Phi &= \frac{1}{j} \, \bigg\{ - \tilde{\mathcal{G}}_{j-1,I}[\eta] \left( \eta \, \tilde{\mathcal{G}}_{0,I}\Phi  \right) \nonumber \\
&\qquad - \sum_{k=0}^{j-2} \tilde{\mathcal{G}}_{k,I}[\eta] \bigg(  u_{j-1-k}(\eta)(\Phi)\eta  \bigg) \nonumber \\
&\qquad + \nabla \cdot \big( \left(   u_{j-2}(\eta)(\Phi) \, \nabla\eta \right) \, \eta \big) \; \bigg\} , \;\; j \geq 2, \label{eq:GjI}
\end{align}
where
\begin{align*}
u_0(\Phi) &=  \tilde{\mathcal{G}}_{0,I}\Phi , \\
u_k(\eta)(\Phi) &=
\begin{cases}
(-1)^{k/2} |\nabla\eta|^{k} \, \tilde{\mathcal{G}}_{0,I}\Phi \\
\qquad + \sum_{i+2j=k} \left(  \bmK_{i-1}\Phi \cdot \nabla\eta + \tilde{\mathcal{G}}_{i,I}[\eta]\Phi \right) \, (-1)^j |\nabla\eta|^{2j},
& \text{if} \;\; k \in 2 \mathbb{N} , \\ 
\\
\sum_{i+2j=k} \left(  \bmK_{i-1}\Phi \cdot \nabla\eta + \tilde{\mathcal{G}}_{i,I}[\eta]\Phi \right) \, (-1)^j |\nabla\eta|^{2j},
& \text{if} \;\; k \not\in 2 \mathbb{N} , \\
\end{cases}
\end{align*}
for $k \geq 1$, with $\bmK_{0}\Phi = \nabla\Phi$ and $\bmK_{i}\Phi = \bmzero$ $\forall i \geq 1$.\\

Next, we obtain the terms $\tilde{\mathcal{G}}_{j,II}[\eta]$ in the Taylor expansion of the operator $\tilde{\mathcal{G}}_{II}[\eta]$ by deducing a formula for the differential of $\mathrm{d}\tilde{\mathcal{G}}_{gen,II}[\eta](\delta\eta)$, and then by evaluating it at $\delta\eta=\eta$.

\begin{proposition} \label{prop:DiffRotGDNO}

Let $s \geq 3$, and let $\tilde{\bmomega} \in (H^{s-2}(D_0))^3$, $\Phi \in \dot{H}^{s-1/2}(\mathbb{R}^2)$. Assume that the condition \eqref{eq:CondAnalGDNO} holds true. Then there exists an open neighbourhood $V$ of the origin in $H^{s+1/2}(\mathbb{R}^2)$ such that the differential of the operator 
\begin{equation*}
\tilde{\mathcal{G}}_{gen,II}[\cdot] : V \to L \left( ( H^{s-2}(D_0) )^3 , H^{s-3/2}(\mathbb{R}^2)  \right)
\end{equation*}
is given by
\begin{align}
\mathrm{d}\tilde{\mathcal{G}}_{gen,II}[\eta](\delta\eta)\tilde{\bmomega} &= - \tilde{\mathcal{G}}_{gen,II}[\eta] \tilde{\bmgamma} - \nabla \cdot \left[ ( \underline{\tilde{\mathbb{K}}}_{II}[\eta]\tilde{\bmomega} - \underline{\tilde{\mathbb{W}}}_{II}[\eta]\tilde{\bmomega} \; \nabla\eta ) \, \delta\eta \right] , \;\; \tilde{\bmgamma} = \partial_w^{\Sigma} \tilde{\bmomega} \, \mathrm{d}\sigma , \label{eq:DiffGgenII} 
\end{align}
\begin{align*}
\underline{\tilde{\mathbb{K}}}_{II}[\eta]\tilde{\bmomega} = -\gradp \, \Delta^{-1} \, ( \tilde{\bmomega} \cdot \bmN |_{w=0} ) , &\;\; \underline{\tilde{\mathbb{W}}}_{II}[\eta]\tilde{\bmomega} =  \frac{1}{1+|\eta|^2} \, \left[ \underline{\tilde{\mathbb{K}}}_{II}[\eta]\tilde{\bmomega} \cdot \nabla\eta + \tilde{\mathcal{G}}_{gen,II}[\eta]\tilde{\bmomega}   \right]. \nonumber
\end{align*}

\end{proposition}

\begin{remark} \label{rem:DiffGDNO}

Due to the decomposition \eqref{eq:bfuDecomp} of the velocity field, we have that 
\begin{equation*}
\underline{\tilde{\mathbb{K}}}_{II}[\eta]\tilde{\bmomega} = ( \curl^{\Sigma} \tilde{\bmA} )_{\parallel} , \;\; \underline{\tilde{\mathbb{W}}}_{II}[\eta]\tilde{\bmomega} = ( \curl^{\Sigma} \tilde{\bmA} )_{3} |_{w=0} .
\end{equation*}
\end{remark}

\begin{proof}[Proof of Proposition \ref{prop:DiffRotGDNO}]

Recall that $\tilde{\mathcal{G}}_{gen,II}[\eta]\tilde{\bmomega} = ( \curl^{\Sigma}\tilde{\bmA} ) \cdot \bmN |_{w=0}$, where $\tilde{\bmA}$ solves 
\begin{equation} \label{eq:strBVPA}
\begin{cases}
-\Delta^{\Sigma} \tilde{\bmA} = \tilde{\bmomega}, & \mathrm{in} \; \; D_0,\\
\partial_{w}^{\Sigma} \tilde{A}_3 = 0 , & \mathrm{at} \; \; w=-h, \\
\tilde{\bmA} \times \bme_3  = \bmzero, & \mathrm{at} \; \; w=-h, \\
\tilde{\bmA} \cdot \bmN = 0, & \mathrm{at} \; \; w=0, \\
(\curl^\Sigma \tilde{\bmA})_{\parallel} = -\gradp \Delta^{-1}(\tilde{\bmomega} \cdot \bmN) , & \mathrm{at} \; \; w=0. 
\end{cases}
\end{equation}

Let us recall the following identity (see (3.41) in \cite{castro2015well})
\begin{align} \label{eq:CLid}
\delta( \partial_j^{\Sigma} f ) &= \partial_j^{\Sigma}(\delta f - \delta\sigma \, \partial_w^{\Sigma} f ) + \delta\sigma \, \partial_w^{\Sigma}\partial_j^{\Sigma} f , \,\,\,\, j=x,y,w,t,
\end{align}
where $\delta$ is any linearization operator. If we introduce
\begin{align*}
\tilde{\bmC} &\coloneqq \mathrm{d}\tilde{\bmA} - \mathrm{d}\sigma \, \partial_w^{\Sigma}\tilde{\bmA},
\end{align*}
where $\mathrm{d}f = \mathrm{d}f[\eta](\delta\eta)$, and we apply \eqref{eq:CLid}, we have that
\begin{align*}
\mathrm{d}\tilde{\mathcal{G}}_{gen,II}[\eta](\delta\eta)\tilde{\bmomega} &= ( \curl^{\Sigma}\tilde{\bmC} ) \cdot \bmN |_{w=0}  - \nabla \cdot \left( (\curl^{\Sigma}\tilde{\bmA})_{\rmh}|_{w=0} \, \delta\eta \right) .
\end{align*}

Since $\partial_w^{\Sigma}\tilde{\bmA}|_{w=0}\cdot \bmN = - \nabla \cdot  \tilde{\bmA}_{\rmh}|_{w=0}$, we have
\begin{equation*}
\begin{cases}
-\Delta^{\Sigma} \tilde{\bmC} = - \partial_w^{\Sigma}\tilde{\bmomega} \, \mathrm{d}\sigma  , & \mathrm{in} \; \; D_0,\\
\partial_{w}^{\Sigma} \tilde{C}_3 = 0 , & \mathrm{at} \; \; w=-h, \\
\tilde{\bmC} \times \bme_3  = \bmzero, & \mathrm{at} \; \; w=-h, \\
\tilde{\bmC} \cdot \bmN = \nabla \cdot ( \tilde{\bmA}_{\rmh}|_{w=0} \, \delta\eta ), & \mathrm{at} \; \; w=0, \\
(\curl^\Sigma \tilde{\bmC})_{\parallel} = \gradp \Delta^{-1}\left( \tilde{\bmomega}_{\rmh} |_{w=0} \cdot \nabla\delta\eta \right)  \\
\qquad \qquad \qquad - \delta\eta \, \partial_w^{\Sigma} (\curl^{\Sigma}\tilde{\bmA})_{\rmh}|_{w=0} \\
\qquad \qquad \qquad  - \delta\eta \, \partial_w^{\Sigma}(\curl\tilde{\bmA})_3|_{w=0} \, \nabla\eta \\
\qquad \qquad \qquad - (\curl^{\Sigma} \tilde{\bmA})_3|_{w=0} \nabla\delta\eta , & \mathrm{at} \; \; w=0 .
\end{cases}
\end{equation*}

Now we rewrite the last boundary condition using that
\begin{align*}
\partial_w^{\Sigma}(\curl^{\Sigma}\tilde{\bmA})_{\rmh}|_{w=0} + \partial_w^{\Sigma}(\curl^{\Sigma}\tilde{\bmA})_{3}|_{w=0} \nabla\eta &= \nabla(\curl^{\Sigma}\tilde{\bmA})_3|_{w=0} + \tilde{\bmomega}_{\rmh}^{\perp}|_{w=0} , 
\end{align*}
hence
\begin{align*}
 (\curl^\Sigma \tilde{\bmC})_{\parallel} &=  - \gradp \Delta^{-1} \left[  - \partial_w^{\Sigma}\tilde{\bmomega}|_{w=0}\cdot \bmN \, \delta\eta  \right] \\
&\qquad + \nabla \, \left[ - \Delta^{-1} \, \nabla \cdot ( \tilde{\bmomega}_{\rmh}^{\perp}|_{w=0} \, \delta\eta  )   - (\curl^{\Sigma} \tilde{\bmA})_3|_{w=0} \, \delta\eta \right]
\end{align*}
and we replace the interior equation and the last two boundary conditions with
\begin{align*}
-\Delta^{\Sigma}\tilde{\bmC} &= - \tilde{\bmgamma}, \; \; \mathrm{in} \; \; D_0, \; \; \tilde{\bmgamma} \coloneqq  \partial_w^{\Sigma} \tilde{\bmomega} \, \mathrm{d}\sigma , \\
\tilde{\bmC} \cdot \bmN &= 0, \; \; \mathrm{at} \; \; w=0, \\
(\curl^\Sigma \tilde{\bmC})_{\parallel} &= -\gradp \, \Delta^{-1} \left( - \tilde{\bmgamma} \cdot \bmN \right) , \; \; \mathrm{at} \; \; w=0, 
\end{align*}
by replacing $\tilde{\bmC}$ with $\tilde{\bmC} + \grad^{\Sigma}\tilde{\phi}_1 + \grad^{\Sigma}\tilde{\phi}_2 $, where
\begin{equation*}
\begin{cases}
\Delta^{\Sigma}\tilde{\phi}_1 = 0 , &\; \; \mathrm{in} \; \; D_0, \\
\grad^{\Sigma}\tilde{\phi}_1 \cdot \bmN = \nabla \cdot (\tilde{\bmA}_{\rmh}|_{w=0} \, \delta\eta) , &\; \; \mathrm{at} \; \; w=0 , \\
\tilde{\phi}_1 = 0, &\; \; \mathrm{at} \; \; w=-h.
\end{cases}
\end{equation*}
and 
\begin{equation*}
\begin{cases}
\Delta^{\Sigma}\tilde{\phi}_2 = 0 , &\; \; \mathrm{in} \; \; D_0, \\
\tilde{\phi}_2  =  - \Delta^{-1} \, \nabla \cdot ( \tilde{\bmomega}_{\rmh}^{\perp}|_{w=0} \, \delta\eta  )   - (\curl^{\Sigma} \tilde{\bmA})_3|_{w=0} \, \delta\eta , &\; \; \mathrm{at} \; \; w=0 , \\
\partial_n \, \tilde{\phi}_2 = 0, &\; \; \mathrm{at} \; \; w=-h.
\end{cases}
\end{equation*}

Therefore
\begin{align*}
\mathrm{d}\tilde{\mathcal{G}}_{gen,II}[\eta](\delta\eta)\tilde{\bmomega} &= - \tilde{\mathcal{G}}_{gen,II}[\eta] \tilde{\bmgamma} - \nabla \cdot \left[ ( \underline{\tilde{\mathbb{K}}}_{II}[\eta]\tilde{\bmomega} - \underline{\tilde{\mathbb{W}}}_{II}[\eta]\tilde{\bmomega} \; \nabla\eta ) \, \delta\eta \right] , \nonumber 
\end{align*}
where $\tilde{\bmgamma}$, $\underline{\tilde{\mathbb{K}}}_{II}[\eta]\tilde{\bmomega}$ and $\underline{\tilde{\mathbb{W}}}_{II}[\eta]\tilde{\bmomega}$ are as in the statement.

\end{proof}

In order to compute the terms in the expansion
\begin{align*}
\tilde{\mathcal{G}}_{gen,II}[\eta] &= \sum_{j=0}^\infty \tilde{\mathcal{G}}_{j,II}[\eta], \; \; \tilde{\mathcal{G}}_{j,II}[\eta] = \frac{1}{j!} \, \mathrm{d}^j \tilde{\mathcal{G}}_{gen,II}[0][\eta,\ldots,\eta] , 
\end{align*}
where $\tilde{\mathcal{G}}_{j,II}(\eta)$ is homogeneous of degree $j$ in $\eta$, we first evaluate formula \eqref{eq:DiffGgenII} at $\delta\eta=\eta$. Then we expand the different terms appearing in \eqref{eq:DiffGgenII}: if $\tilde{\bmgamma}|_{\delta\eta=\eta}$ has no homogeneous term of degree $0$ in $\eta$ we have $\tilde{\bmgamma}_0 = \bmzero$, therefore 
\begin{align*}
\tilde{\bmgamma}|_{\delta\eta=\eta} &= \sum_{j \geq 1} \tilde{\bmgamma}_j , \; \; \tilde{\bmgamma}_j \coloneqq   \frac{1}{j!} \, \mathrm{d}^{j}(\tilde{\bmgamma}[0]\tilde{\bmomega})[\eta,\ldots,\eta], \\
\tilde{\nu} \coloneqq \tilde{\nu}[\eta]\tilde{\bmomega} &\coloneqq \tilde{\bmomega} \cdot \bmN|_{w=0} = \sum_{j \geq 0} \tilde{\nu}_j , \; \; \tilde{\nu}_j \coloneqq   \frac{1}{j!} \, \mathrm{d}^{j}(\tilde{\nu}[0]\tilde{\bmomega})[\eta,\ldots,\eta]  , 
\end{align*}
\begin{align*}
\underline{ \tilde{\mathbb{K}} }_{II}[\eta]\tilde{\bmomega} = \sum_{k =0}^\infty \underline{ \tilde{\mathbb{K}} }_{k,II}[\eta]\tilde{\bmomega} , &\;\; \underline{ \tilde{\mathbb{W}} }_{II}[\eta]\tilde{\bmomega} = \sum_{k=0}^\infty \underline{ \tilde{\mathbb{W}} }_{k,II}[\eta]\tilde{\bmomega} ,
\end{align*}
where
\begin{align*}
\underline{ \tilde{\mathbb{K}} }_{0,II}\tilde{\bmomega} &= - \gradp \, \Delta^{-1} \tilde{\nu}_0 , \;\; \underline{ \tilde{\mathbb{K}} }_{k,II}[\eta]\tilde{\bmomega} = -  \gradp \, \Delta^{-1} \tilde{\nu}_k , \;\; \forall \, k \geq 1, \\
\underline{ \tilde{\mathbb{W}} }_{0,II}\tilde{\bmomega} &=  \tilde{\mathcal{G}}_{0,II}\tilde{\bmomega} , \\
\underline{ \tilde{\mathbb{W}} }_{k,II}[\eta]\tilde{\bmomega} &= \underline{ \tilde{\mathbb{Z}} }_k[\eta]\tilde{\bmomega} + \sum_{i+2j=k} \left(  \underline{ \tilde{\mathbb{K}} }_{i-1,II}[\eta]\tilde{\bmomega} \cdot \nabla\eta + \tilde{\mathcal{G}}_{i,II}[\eta]\tilde{\bmomega} \right) \, (-1)^j |\nabla\eta|^{2j} , \;\; \forall \, k \geq 1, \\
\underline{ \tilde{\mathbb{Z}} }_k[\eta]\tilde{\bmomega} &=
\begin{cases}
(-1)^{k/2}|\nabla\eta|^{k} \, \tilde{\mathcal{G}}_{0,II}\tilde{\bmomega} ,  & \text{if}\;\; k \in 2 \mathbb{N}, \\
0 , & \text{if}\;\; k \notin 2 \mathbb{N}
\end{cases}
.
\end{align*}

Therefore, we obtain 
\begin{align*}
\sum_{j \geq 0} j \, \tilde{\mathcal{G}}_{j,II}[\eta]\tilde{\bmomega} &= - \left[ \sum_{h \geq 0} \tilde{\mathcal{G}}_{h,II}[\eta] \right] \, \left[ \sum_{k \geq 1} \tilde{\bmgamma}_k \right] -   \nabla \cdot \left[ \underline{ \tilde{\mathbb{K}} }_{0,II}\tilde{\bmomega} \; \eta \right] \\
&\qquad - \sum_{k \geq 1} \nabla \cdot \left[ \left( \underline{ \tilde{\mathbb{K}} }_{k,II}[\eta]\tilde{\bmomega} - \underline{ \tilde{\mathbb{W}} }_{k-1,II}[\eta]\tilde{\bmomega} \; \nabla\eta \right) \, \eta \right] ,
\end{align*}
hence
\begin{align} \label{eq:G1II} 
\tilde{\mathcal{G}}_{1,II}[\eta]\tilde{\bmomega} &= - \tilde{\mathcal{G}}_{0,II}\tilde{\bmgamma}_1 - \nabla \cdot \left[ \underline{ \tilde{\mathbb{K}} }_{0,II}\tilde{\bmomega} \; \eta \right] ,  
\end{align}
and for any $j \geq 2$
\begin{align}
\tilde{\mathcal{G}}_{j,II}[\eta]\tilde{\bmomega} &= \frac{1}{j} \left\{ - \sum_{h =0}^{j-1} \tilde{\mathcal{G}}_{h,II}[\eta] \tilde{\bmgamma}_{j-h}  - \nabla \cdot \left[ \left( \underline{ \tilde{\mathbb{K}} }_{j-1,II}[\eta]\tilde{\bmomega} - \underline{ \tilde{\mathbb{W}} }_{j-2,II}[\eta]\tilde{\bmomega} \; \nabla\eta \right) \, \eta \right] \right\} . \label{eq:GjII} 
\end{align}

By summarising, we have shown the following result.

\begin{theorem} \label{thm:HomExpGDNO}
	 
Under the assumptions of Proposition \ref{prop:DiffRotGDNO}, the term $\tilde{\mathcal{G}}_0$ in the homogeneous expansion \eqref{eq:TaylorGeta} of the operator $\tilde{\mathcal{G}}_{gen}[\eta]$ is given by 
\begin{align*}
\tilde{\mathcal{G}}_{0} &= \tilde{\mathcal{G}}_{0,I} + \tilde{\mathcal{G}}_{0,II} ,
\end{align*}
where $\tilde{\mathcal{G}}_{0,I}$ and $\tilde{\mathcal{G}}_{0,II}$ are given by by \eqref{eq:G0I} and \eqref{eq:G0II}, respectively. Moreover, if $\tilde{\bmgamma}_0 = \bmzero$, then for any $j \geq 1$ the term $\tilde{\mathcal{G}}_j[\eta]$ are given by 
\begin{align*}
\tilde{\mathcal{G}}_{j}[\eta] &= \tilde{\mathcal{G}}_{j,I}[\eta] + \tilde{\mathcal{G}}_{j,II}[\eta] ,
\end{align*}
where $\tilde{\mathcal{G}}_{j,I}$ and $\tilde{\mathcal{G}}_{j,II}$ are given by \eqref{eq:GjI}  and \eqref{eq:GjII}, respectively.
\end{theorem}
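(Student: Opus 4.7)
The plan is to exploit the decomposition $\tilde{\mathcal{G}}_{gen}[\eta] = \tilde{\mathcal{G}}_{gen,I}[\eta] + \tilde{\mathcal{G}}_{gen,II}[\eta]$ and to handle each summand independently. By Corollary \ref{cor:GDNOAnal}, both maps are analytic in $\eta$ in a neighbourhood of the origin, so both admit convergent Taylor expansions in homogeneous powers, and $\tilde{\mathcal{G}}_j[\eta] = \tilde{\mathcal{G}}_{j,I}[\eta] + \tilde{\mathcal{G}}_{j,II}[\eta]$ is immediate once each piece is identified. Thus the proof reduces to two essentially independent tasks: (a) computing the zeroth-order terms by direct evaluation at $\eta = 0$; (b) producing recursion formulae for the higher-order terms.

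For the base case I would plug $\eta = 0$ into the explicit representations \eqref{eq:BVPsolA} and \eqref{eq:BVPsolphi} from Lemma \ref{lem:SolFlatBVP}. In that limit one has $\sigma \equiv 0$, so $\bmR_{11} = \bmzero$, $r_{21} = 0$, $\bmr_{31} = \bmzero$, $r_{12} = 0$ and $r_{22} = 0$, and the outward normal reduces to $\bmN = \bme_3$. The irrotational contribution $\tilde{\mathcal{G}}_{0,I}\Phi = \partial_w \tilde{\varphi}_0|_{w=0}$ is then obtained by differentiating the Green's matrix $\mathtt{G}_{\tilde{\varphi}}$ in $w$ at $(0,0)$, producing the Fourier multiplier $|\bmxi|\tanh(h|\bmxi|)$ as in \eqref{eq:G0I}. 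The rotational contribution $\tilde{\mathcal{G}}_{0,II}\tilde{\bmomega} = (\tilde{A}_{0,2x} - \tilde{A}_{0,1y})|_{w=0}$ is computed analogously by evaluating $\mathtt{G}_{\tilde{\bmA}}(0,\zeta)$ and taking the horizontal divergence of the perpendicular of the first two components, yielding \eqref{eq:G0II}.

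For the irrotational recursion \eqref{eq:GjI} I would adapt the Craig--Sulem argument as carried out by Groves--Horn for Beltrami flows (Sec.~2.4 of \cite{groves2024analytical}): substitute the Ansatz $\tilde{\mathcal{G}}_{gen,I}[\eta] = \sum_{j\ge 0}\tilde{\mathcal{G}}_{j,I}[\eta]$ and the corresponding expansion of $\tilde{\varphi}$ into the harmonic extension problem \eqref{eq:systphieq1}--\eqref{eq:systphieq3}, match homogeneous orders in $\eta$, and solve for $\tilde{\mathcal{G}}_{j,I}[\eta]$ in terms of lower-order operators applied to the surface trace. The factor $1/j$ and the combinatorial structure of $u_k(\eta)(\Phi)$ arise from the Taylor expansions of $\bmN$ and of $(1+|\nabla\eta|^2)^{-1}$ against the successive derivatives of $\tilde{\varphi}$. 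Since the boundary value problem \eqref{eq:systphieq1}--\eqref{eq:systphieq3} does not involve the vorticity, this step is unchanged from the Beltrami case and can essentially be quoted.

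For the rotational recursion \eqref{eq:GjII} the plan is to apply Proposition \ref{prop:DiffRotGDNOmain} with $\delta\eta = \eta$ and to exploit Euler's relation: since $\tilde{\mathcal{G}}_{j,II}[\eta]$ is homogeneous of degree $j$ in $\eta$, one has
\begin{equation*}
\mathrm{d}\tilde{\mathcal{G}}_{gen,II}[\eta](\eta)\tilde{\bmomega} = \sum_{j\ge 0} j\,\tilde{\mathcal{G}}_{j,II}[\eta]\tilde{\bmomega}.
\end{equation*}
Inserting the expansions of $\tilde{\bmgamma}|_{\delta\eta=\eta}$, of $\underline{\tilde{\mathbb{K}}}_{II}[\eta]\tilde{\bmomega} = -\gradp\Delta^{-1}\tilde{\nu}[\eta]\tilde{\bmomega}$ in powers of $\eta$ (via expansion of $\bmN$), and of $\underline{\tilde{\mathbb{W}}}_{II}[\eta]\tilde{\bmomega}$ using the geometric series for $(1+|\nabla\eta|^2)^{-1}$, and then matching order $j$ on both sides yields \eqref{eq:G1II} and \eqref{eq:GjII}. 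The hypothesis $\tilde{\bmgamma}_0 = \bmzero$ enters precisely here: it guarantees that the series expansion of $\tilde{\bmgamma}|_{\delta\eta=\eta}$ starts at order one, so that the term $\tilde{\mathcal{G}}_{gen,II}[\eta]\tilde{\bmgamma}$ contributes nothing at order zero and the sum on the right-hand side indexes cleanly as $\sum_{h=0}^{j-1}\tilde{\mathcal{G}}_{h,II}[\eta]\tilde{\bmgamma}_{j-h}$, matching \eqref{eq:GjII}.

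The main obstacle is the combinatorial bookkeeping in the last paragraph: expanding $\underline{\tilde{\mathbb{W}}}_{II}[\eta]\tilde{\bmomega}$ requires convoluting three power series (those of $\underline{\tilde{\mathbb{K}}}_{II}[\eta]\tilde{\bmomega}\cdot\nabla\eta$, $\tilde{\mathcal{G}}_{gen,II}[\eta]\tilde{\bmomega}$, and $(1+|\nabla\eta|^2)^{-1} = \sum_j (-1)^j |\nabla\eta|^{2j}$), and carefully re-indexing so that the coefficient of $\eta^j$ in $\nabla\cdot[(\underline{\tilde{\mathbb{K}}}_{II}-\underline{\tilde{\mathbb{W}}}_{II}\nabla\eta)\eta]$ is expressed in terms of $\underline{\tilde{\mathbb{K}}}_{j-1,II}[\eta]$ and $\underline{\tilde{\mathbb{W}}}_{j-2,II}[\eta]$ with the correct homogeneity. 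The rest of the argument is formal Taylor-series manipulation together with the uniqueness statements from Lemma \ref{lem:SolFlatBVP} and Theorem \ref{thm:ExSolFBVP}, which ensure that each $\tilde{\mathcal{G}}_{j,II}[\eta]$ is uniquely determined by the recursion.
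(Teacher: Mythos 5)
Your proposal is correct and follows essentially the same route as the paper: the zeroth-order terms are read off from the Green's matrix representations \eqref{eq:BVPsolA}--\eqref{eq:BVPsolphi} at $\eta=0$, the irrotational recursion \eqref{eq:GjI} is obtained by the Craig--Sulem/Groves--Horn argument, and the rotational recursion \eqref{eq:GjII} comes from evaluating the differential formula \eqref{eq:DiffGgenII} at $\delta\eta=\eta$, using homogeneity (Euler's relation) on the left-hand side, expanding $\tilde{\bmgamma}$, $\underline{\tilde{\mathbb{K}}}_{II}$ and $\underline{\tilde{\mathbb{W}}}_{II}$ (the latter via the geometric series for $(1+|\nabla\eta|^2)^{-1}$), and matching homogeneous orders, with $\tilde{\bmgamma}_0=\bmzero$ ensuring the $\tilde{\bmgamma}$-series starts at order one. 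The bookkeeping you flag as the main obstacle is exactly what the paper carries out with its $\tilde{\nu}_k$, $\underline{\tilde{\mathbb{K}}}_{k,II}$, $\underline{\tilde{\mathbb{W}}}_{k,II}$ and $\underline{\tilde{\mathbb{Z}}}_k$ expansions, so no essential idea is missing.
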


\section[Paralinearization of generalized Dirichlet--Neumann]{Paralinearization of the generalized Dirichlet--Neumann operator} \label{sec:GDNOpara}

Now we study the paralinearization of the generalized Dirichlet--Neumann operator $\mathcal{G}_{gen}[\eta]$ defined in \eqref{eq:GDNO}. Recalling the decomposition 
\begin{align*}
\mathcal{G}_{gen}[\eta](\Phi,\bmomega) &= \mathcal{G}_{gen,I}[\eta]\Phi + \mathcal{G}_{gen,II}[\eta]\bmomega , \\
\mathcal{G}_{gen,I}[\eta]\Phi &= \mathcal{G}[\eta]\Phi \, = \, \mathbb{U}_{I}[\eta]\Phi \cdot \bmN \, = \, \underline{ \nabla_{\bmx,z} \varphi } \cdot \bmN , \\
\mathcal{G}_{gen,II}[\eta]\bmomega &= \mathbb{U}_{II}[\eta]\bmomega \cdot \bmN \, = \, \nabla \cdot \bmA_{\parallel}^{\perp} ,
\end{align*}
used in Section \ref{sec:GDNOhom}, we paralinearize $\mathcal{G}_{gen,I}[\eta]$ and $\mathcal{G}_{gen,II}[\eta]$. Recall  that given a symbol $a \in \Sigma^{m}_{n-2}(\mathbb{R}^2)$, the corresponding paradifferential operator $T_a$ is given by \eqref{eq:ParadiffOp}, namely
\begin{align*}
\mathscr{F}(T_a u)(\bmxi) &= (2\pi)^{-2} \int_{\mathbb{R}^2} \chi(\bmxi - \bmxi') \, \mathscr{F}a(\bmxi-\bmxi') \, \mathscr{F} u(\bmxi') \, \mathrm{d}\bmxi' .
\end{align*}

Regarding the irrotational Dirichlet--Neumann operator $\mathcal{G}_{gen,I}[\eta]$, we recall the following result by Alazard and M\'etivier (see Theorem 2.12 of \cite{alazard2009paralinearization}; as mentioned in Remark 2.14 of the same paper, the result is valid also in the non periodic case).

\begin{proposition} \label{prop:DNOpara}

Let $n \geq 4$ be such that $n \notin \mathbb{N}$, $\varepsilon>0$. If $\eta \in H^{n+1/2}(\mathbb{R}^2)$, $\Phi \in \dot{H}^n(\mathbb{R}^2) $, then  
\begin{align} \label{eq:DNOpara}
\mathcal{G}_{gen,I}[\eta]\Phi &= T_{\lambda_{I}}( \Phi - T_{ \underline{\mathbb{W}}_{I}[\eta]\Phi }\eta ) - T_{ \underline{\mathbb{V}}[\eta]\Phi } \cdot \nabla\eta - T_{ \Div \underline{\mathbb{V}}[\eta]\Phi }\eta + \mathcal{R}_{I}(\eta,\Phi) ,
\end{align}
where the symbol $\lambda_{I} \in \Sigma^1_{n-2}(\mathbb{R}^2)$. Moreover, we have that 
\begin{equation*}
\underline{\mathbb{W}}_{I}[\eta]\Phi \in H^{n-1}(\mathbb{R}^2), \;\; \underline{\mathbb{V}}[\eta]\Phi \in ( H^{n-1}(\mathbb{R}^2) )^2 ,
\end{equation*}
and are given by
\begin{align*}
\underline{\mathbb{W}}_{I}[\eta]\Phi = \frac{ \mathcal{G}[\eta]\Phi + \nabla\eta \cdot \nabla\Phi  }{ 1+|\nabla\eta|^2 } , &\; \; \underline{\mathbb{V}}_{I}[\eta]\Phi = \nabla\Phi - ( \underline{\mathbb{W}}_{I}[\eta]\Phi ) \nabla\eta ,
\end{align*}
where $\mathcal{G}[\eta]$ denotes the classical Dirichlet--Neumann operator, and $\mathcal{R}_{I}(\eta,\Phi) \in H^{ 2n - 3 - \varepsilon}(\mathbb{R}^2)$ for any $\varepsilon>0$.

\end{proposition}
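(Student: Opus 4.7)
The plan is to follow the Alazard--M\'etivier strategy. First, flatten the domain via the regularizing diffeomorphism $\hat\Sigma$ of Section~\ref{subsec:paralinGDNO} (restricted to the strip $\Omega_\delta$), so that the velocity potential $\hat\varphi := \varphi\circ\hat\Sigma$ satisfies an elliptic equation of the form $\partial_w^2\hat\varphi + \tilde{\mathtt{a}}\,\Delta\hat\varphi + \tilde{\bmb}\cdot\nabla\partial_w\hat\varphi - \tilde{\mathtt{c}}\,\partial_w\hat\varphi = 0$ in the flat strip, with coefficients analogous to \eqref{eq:CoeffFlatLap} and $\tilde{\mathtt{a}}$ strictly positive by Remark~\ref{rem:Straight}. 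Paralinearizing this equation by Bony's calculus and introducing the \emph{good unknown} $\hat u := \hat\varphi - T_{\partial_w\hat\varphi}\varrho$ (in the spirit of \eqref{eq:IrrGUn}) cancels the top-order contribution from $\nabla\eta$ and yields
\begin{equation*}
\partial_w^2\hat u + T_{\tilde{\mathtt{a}}}\Delta\hat u + T_{\tilde{\bmb}}\cdot\nabla\partial_w\hat u - T_{\tilde{\mathtt{c}}}\partial_w\hat u = f_1 ,
\end{equation*}
where $f_1$ is smoothing at the prescribed horizontal order by standard paraproduct estimates in $\Sigma^m_{n-2}(\mathbb{R}^2)$.

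Second, factor the second-order paradifferential operator on the left as $(\partial_w - T_A)(\partial_w - T_a) + R$ with $a, A \in \Sigma^1_{n-2}(\mathbb{R}^2)$ whose principal symbols obey
\begin{equation*}
a + A = \mathrm{i}\,\tilde{\bmb}\cdot\bmxi - \tilde{\mathtt{c}}, \qquad a\,A = -\tilde{\mathtt{a}}\,|\bmxi|^2 ,
\end{equation*}
selected so that $\operatorname{Re} a > 0$ and $\operatorname{Re} A < 0$; the subprincipal symbols are constructed recursively by a symbolic Riccati iteration in $\Sigma^{\ast}_{n-2}(\mathbb{R}^2)$, and $R$ is smoothing. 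Since $\partial_w - T_A$ is a stable forward parabolic-type problem toward $w=0$, interior bounds combined with propagation of Sobolev regularity give $(\partial_w - T_a)\hat u|_{w=0} \in H^{2n-3-\varepsilon}(\mathbb{R}^2)$, and consequently
\begin{equation*}
\partial_w\hat u|_{w=0} = T_{\lambda_I}\hat u|_{w=0} \bmod H^{2n-3-\varepsilon}(\mathbb{R}^2), \qquad \lambda_I := a|_{w=0} \in \Sigma^1_{n-2}(\mathbb{R}^2).
\end{equation*}

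Third, translate back to physical variables using the boundary identity $\mathcal{G}_{gen,I}[\eta]\Phi = (1+|\nabla\eta|^2)\,\underline{\mathbb{W}}_{I}[\eta]\Phi - \nabla\eta\cdot\underline{\mathbb{V}}[\eta]\Phi$ together with the chain rule expressing $\partial_w\hat\varphi|_{w=0}$ in terms of $\underline{\mathbb{W}}_{I}[\eta]\Phi$; substituting the paradifferential formula for $\partial_w\hat u|_{w=0}$, expanding $\hat u|_{w=0} = \Phi - T_{\underline{\mathbb{W}}_{I}[\eta]\Phi}\eta$, and applying Bony's linearization identity $fg = T_f g + T_g f + R(f,g)$ to the remaining horizontal products isolates the three paradifferential terms in \eqref{eq:DNOpara}, with every residual absorbed into $\mathcal{R}_I(\eta,\Phi) \in H^{2n-3-\varepsilon}(\mathbb{R}^2)$. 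The main technical obstacle is the paradifferential factorization: the symbolic Riccati iteration must be controlled order by order in $\Sigma^{\ast}_{n-2}(\mathbb{R}^2)$ while respecting the finite regularity $\eta \in H^{n+1/2}(\mathbb{R}^2)$, and the forward-propagation estimate for $\partial_w - T_A$ must deliver the sharp smoothing gain needed to place $\mathcal{R}_I$ in $H^{2n-3-\varepsilon}(\mathbb{R}^2)$.
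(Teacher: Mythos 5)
Your outline is essentially the Alazard--M\'etivier argument that the paper itself relies on: Proposition \ref{prop:DNOpara} is not proved in the paper but quoted from Theorem 2.12 of \cite{alazard2009paralinearization}, and your scheme (flattening, paradifferential good unknown, factorization into first-order factors with strongly elliptic principal symbols of opposite sign, forward smoothing toward $w=0$, reduction to the boundary via Bony linearization) is exactly that proof, which is also the template the paper implements for the rotational part in Section \ref{sec:GDNOpara} (cf.\ \eqref{eq:paraGood}, Lemma \ref{lem:factorisation}, Corollary \ref{cor:EqGoodUnknown}). Apart from minor bookkeeping slips that do not affect the method (with your conventions the sum relation should read $a+A=-\mathrm{i}\,\tilde{\bmb}\cdot\bmxi+\tilde{\mathtt{c}}$, the good unknown should be built with $\partial_w^{\varrho}\hat{\varphi}$ so that its trace is exactly $\Phi-T_{\underline{\mathbb{W}}_{I}[\eta]\Phi}\eta$ without a stray factor of $\delta$, and $\lambda_{I}$ coincides with $a|_{w=0}$ only after multiplying by $(1+|\nabla\eta|^2)/\delta$ and absorbing the tangential corrections), your proposal is correct and takes the same route.
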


Next, we proceed to study the rotational part $\mathcal{G}_{gen,II}[\eta]$ of the generalized Dirichlet--Neumann operator.

\begin{theorem} \label{thm:ParalinGDNOp}

Let $n \geq 4$ be such that $n \notin \mathbb{N}$, $\varepsilon >0$ and assume that $\eta \in H^n(\mathbb{R}^2)$ is such that \eqref{eq:StrConnected} holds true. If
\begin{align} 
\partial_w^k \hat{\bmA} \in ( C^0([-h,0];H^{n-k}(\mathbb{R}^2)) )^3 , \;\; k=0,1, &\;\; \hat{\bmomega} \in ( C^0([-h,0];H^{n-2}(\mathbb{R}^2)) )^3 ,  \label{eq:AssParalinGDNOp}
\end{align}
 then
\begin{align}
\hat{\mathcal{G}}_{gen,II}[\eta]\hat{\bmomega} &=  T_{\lambda_{II}} (  \hat{\bmA} - T_{\partial_w^{\varrho} \hat{\bmA}} \eta ) + \frac{1}{\delta} T_{1+|\nabla\eta|^2} \, T_{ \gradp\eta } \cdot \hat{\bmf}(\eta,\hat{\bmomega}) \nonumber \\ 
&\qquad + T_{1+|\nabla\eta|^2} \left( \frac{1}{\delta} T_{\hat{A}_{2xw}}\eta - \frac{1}{\delta} T_{\hat{A}_{1yw}}\eta - \frac{1}{\delta^2}  T_{\eta_x} T_{\hat{A}_{2ww}}\eta + \frac{1}{\delta^2} T_{\eta_y} T_{\hat{A}_{1ww}}\eta \right) \nonumber \\
&\qquad - T_{\nabla\eta} \cdot ( \underline{ \hat{\mathbb{V}} }_{II}[\eta]\hat{\bmomega} ) - T_{ \underline{ \hat{\mathbb{V}} }_{II}[\eta]\hat{\bmomega} } \cdot \nabla\eta - T_{ |\nabla\eta|^2 } \, ( \underline{ \hat{\mathbb{W}} }_{II}[\eta]\hat{\bmomega} )  + \mathcal{R}_{II}, \label{eq:paralinGDNOrot}
\end{align}
where the right-hand side is evaluated at $w=0$; moreover, $\lambda_{II} \in M_{1,3}( \Sigma^1_{n-2}(\mathbb{R}^2) )$ is given by \eqref{eq:lambdaII}, $\hat{\bmf}(\eta,\hat{\bmomega}) \in ( H^{n-1}(\mathbb{R}^2) )^2$,
\begin{align*}
\hat{\underline{\mathbb{V}}}_{II}[\eta]\hat{\bmomega} &= (\curl^{\varrho} \hat{\bmA})_{\rmh}|_{w=0} \in ( H^{n-1}(\mathbb{R}^2) )^2 , \\
 \hat{\underline{\mathbb{W}}}_{II}[\eta]\hat{\bmomega} &= (\curl^{\varrho} \hat{\bmA})_3|_{w=0} \in  H^{n-1}(\mathbb{R}^2) ,
\end{align*}
and $\mathcal{R}_{II}(\eta,\hat{\bmomega}) \in H^{2n-3-\varepsilon}(\mathbb{R}^2)$ for any $\varepsilon>0$.

\end{theorem}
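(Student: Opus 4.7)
The strategy parallels Alazard--M\'etivier's paralinearization of the classical Dirichlet--Neumann operator (Proposition \ref{prop:DNOpara}), adapted to the vector potential $\hat{\bmA}$ in place of the velocity potential. The first step is to expand $\hat{\mathcal{G}}_{gen,II}[\eta]\hat{\bmomega} = \curl^{\varrho}\hat{\bmA} \cdot \bmN|_{w=0}$ using the explicit formula for $\curl^{\varrho}$ given just before the theorem, thereby writing it as a finite sum of components of $\partial_w \hat{\bmA}|_{w=0}$, $\nabla \hat{\bmA}|_{w=0}$ and $\hat{\bmA}|_{w=0}$ multiplied by polynomial expressions in $\nabla\eta$. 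Each such product is then treated by Bony's rule $fg = T_f g + T_g f + R(f,g)$; under the regularity hypotheses \eqref{eq:AssParalinGDNOp} the remainders $R(f,g)$ belong to $H^{2n-3-\varepsilon}(\mathbb{R}^2)$ and are absorbed into $\mathcal{R}_{II}$. This step already accounts for all products with $\bmN$, producing in particular the three terms $-T_{\nabla\eta}\cdot\underline{\hat{\mathbb{V}}}_{II}[\eta]\hat{\bmomega}$, $-T_{\underline{\hat{\mathbb{V}}}_{II}[\eta]\hat{\bmomega}}\cdot\nabla\eta$ and $-T_{|\nabla\eta|^2}\underline{\hat{\mathbb{W}}}_{II}[\eta]\hat{\bmomega}$ appearing in \eqref{eq:paralinGDNOrot}.

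The key step is to reduce $\partial_w\hat{\bmA}|_{w=0}$ to a first-order paradifferential action on $\hat{\bmA}|_{w=0}$. For this I would paralinearize the interior elliptic equation $-\Delta^{\varrho}\hat{\bmA} = \hat{\bmomega}$ in the strip $\Omega_{\delta}$ componentwise. Using the expression for $\Delta^{\varrho}$ with coefficients built from $\nabla_{\bmx,w}\varrho$, and introducing the Alinhac good unknown $\hat{\bmA} - T_{\partial_w^{\varrho}\hat{\bmA}}\eta$, the standard paradifferential factorization yields
\begin{equation*}
(\partial_w - T_a)(\partial_w - T_A)\bigl(\hat{\bmA} - T_{\partial_w^{\varrho}\hat{\bmA}}\eta\bigr) = -\hat{\bmomega} + \text{smoother},
\end{equation*}
where $A \in \Sigma^1_{n-2}$ is the positive-root factor (the one accommodating the Dirichlet data at $w=0$) and $a$ is the decaying factor. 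Solving this inhomogeneous first-order equation downward from the boundary gives
\begin{equation*}
\partial_w\bigl(\hat{\bmA} - T_{\partial_w^{\varrho}\hat{\bmA}}\eta\bigr)\big|_{w=0} = T_A\bigl(\hat{\bmA} - T_{\partial_w^{\varrho}\hat{\bmA}}\eta\bigr)\big|_{w=0} + \hat{\bmg}(\eta,\hat{\bmomega}),
\end{equation*}
with $\hat{\bmg} \in (H^{n-1}(\mathbb{R}^2))^3$ obtained as the convolution of $\hat{\bmomega}$ against the Poisson-type kernel of $T_a$.

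Substituting this identity back into the expansion of $\curl^{\varrho}\hat{\bmA} \cdot \bmN|_{w=0}$ and regrouping produces the remaining terms in \eqref{eq:paralinGDNOrot}: the principal paradifferential contribution $T_{\lambda_{II}}(\hat{\bmA} - T_{\partial_w^{\varrho}\hat{\bmA}}\eta)$, with row symbol $\lambda_{II}$ assembled from $A$, the components of $\bmN$ and the $\varrho$-correction coefficients of $\curl^{\varrho}$; the inhomogeneous contribution $\frac{1}{\delta}T_{1+|\nabla\eta|^2}T_{\gradp\eta}\cdot\hat{\bmf}$, where $\hat{\bmf}$ is built from the horizontal part of $\hat{\bmg}$; and the explicit second-order paradifferential block $T_{1+|\nabla\eta|^2}(T_{\partial_w^{\varrho}\hat{A}_{2x}}\eta - \ldots)$, which arises because the good unknown is tuned to $\Delta^{\varrho}$ rather than to $\curl^{\varrho}$, so expanding $T_{\partial_w^{\varrho}\hat{\bmA}}\eta$ through the curl-normal contraction and commuting paramultipliers via symbolic calculus leaves precisely those paradifferential residues. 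The main obstacle is the vector-valued, matrix-symbol bookkeeping: the elliptic factorization acts componentwise on $\hat{\bmA}$ but $\lambda_{II} \in M_{1,3}(\Sigma^1_{n-2}(\mathbb{R}^2))$ must be compatible with $\curl^{\varrho}$ and $\bmN$, and the interior source $\hat{\bmomega}$ forces an inhomogeneous factorization so that $\hat{\bmf}$ has to be identified precisely; verifying that every discarded term actually lies in $H^{2n-3-\varepsilon}(\mathbb{R}^2)$ requires careful use of the paradifferential estimates collected in Appendix \ref{sec:Paradiff}.
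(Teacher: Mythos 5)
Your proposal is correct and follows essentially the same route as the paper's proof: introduce the Alinhac good unknown $\hat{\bmA}-T_{\partial_w^{\varrho}\hat{\bmA}}\eta$, paralinearize and factor the flattened elliptic equation into first-order paradifferential factors (Proposition \ref{prop:paraGood}, Lemmas \ref{lem:factorisation}--\ref{lem:ExpFactorization}), integrate the vorticity-driven inhomogeneity against the decaying kernel to get the extra boundary contribution of limited regularity (Corollary \ref{cor:EqGoodUnknown}, which is exactly where $\hat{\bmf}\in (H^{n-1}(\mathbb{R}^2))^2$ comes from), and then apply Bony's decomposition to the boundary expression for $\curl^{\varrho}\hat{\bmA}\cdot\bmN$ and regroup. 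The only differences are cosmetic: the paper first uses the boundary conditions to eliminate $\hat{A}_3$ and insert the prefactor $1+|\nabla\eta|^2$ before paralinearizing, and the source of the factorized equation is the full paraproduct expression $\hat{\bmF}_0$ rather than literally $-\hat{\bmomega}$ plus smoother terms, but this does not change the method.
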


In the rest of the section we prove the above Theorem \ref{thm:ParalinGDNOp}, by  following the approach adopted in Sec. 4 of \cite{alazard2009paralinearization}. 

By flattening \eqref{eq:systAalt} we find that $\hat{\bmA}$ satisfies
\begin{align} 
-\Delta^{\varrho} \hat{\bmA} - \hat{\bmomega} &= 0 \; \; \text{in} \; \; D_0, \label{eq:IntEqRho}
\end{align}
with boundary conditions
\begin{align}
\hat{\bmA}\cdot \bmN&=0,\quad w=0, \label{eq:bdry1_vec}\\
(\text{curl}^{\varrho}\hat{\bmA})_{\vert\vert}&= - \gradp \Delta^{-1} ( \hat{\bmomega} \cdot \bmN),\quad w=0. \label{eq:bdry2_vec}
\end{align}

Moreover, in the new coordinates the rotational part of the straightened generalized Dirichlet--Neumann operator can be written as
\begin{align}
\hat{\mathcal{G}}_{gen,II}[\eta]\hat{\bmomega} &= \hat{A}_{2x}+\eta_y\hat{A}_{3x}-\hat{A}_{1y}-\eta_x\hat{A}_{3y}\vert_{w=0}. \label{eq:rotGDNOdef}
\end{align}

Equation \eqref{eq:IntEqRho} is equivalent to the system
\begin{align} \label{eq:main_eq_sys}
E \, \hat{\bmA} &= - \check{\mathtt{a}} \, \hat{\bmomega} , \;\; E \coloneqq \partial_w^2 + \check{\mathtt{a}} \, \Delta + \check{\bmb} \cdot \nabla \partial_w - \check{\mathtt{c}} \, \partial_w, 
\end{align}
where
\begin{align*}
\check{\mathtt{a}} = \mathtt{a}_0^{-1}, \;\; \check{\bmb} = \bmb_0/\mathtt{a}_0, \;\; \check{\mathtt{c}} = \mathtt{c}_0/\mathtt{a}_0, &\quad
\mathtt{a}_0 = \frac{ 1+ |\nabla\eta|^2 }{\delta^2}, \;\; \mathtt{b}_0 = - \frac{2\nabla\eta}{\delta}, \;\; \mathtt{c}_0 = \frac{\Delta\eta}{\delta} .
\end{align*}

\begin{proposition} \label{prop:paraGood}

Set
\begin{align} \label{eq:paraGood}
\hat{\bmB} &\coloneqq \hat{\bmA} - T_{\partial_w^{\varrho} \hat{\bmA}} \eta, 
\end{align}
and let $T_E\coloneqq \partial_w^2 +  ( Id + T_{ \check{\mathtt{a}} -1 } ) \Delta + T_{\check{\bmb}} \cdot \nabla \, \partial_w - T_{ \check{\mathtt{c}} } \,\partial_w$, then $\hat{\bmB}$ satisfies 
\begin{align} \label{eq:ParaIntEq}
T_{E} \hat{\bmB} &= \hat{\bmF}_0 + \hat{\bmF}_1 ,
\end{align}
where 
\begin{align}
\hat{\bmF}_0 \coloneqq \hat{\bmF}_0(\hat{\bmomega},\eta) &=  - T_{ \check{\mathtt{a}} -1} \hat{\bmomega} - T_{\hat{\bmomega}} ( \check{\mathtt{a}} -1) + T_{  \partial_w^{\varrho}( \check{\mathtt{a}} \, \hat{\bmomega} ) } \eta + T_{ [\partial_w^{\varrho} ,E] \hat{\bmA} } \eta \nonumber \\
&\qquad - ( Id + T_{ \check{\mathtt{a}} -1 } ) \, \left[ T_{\Delta \partial_w^{\varrho} \hat{\bmA}} \eta + 2 T_{ \nabla \partial_w^{\varrho} \hat{\bmA}} \cdot \nabla\eta   \right]  - T_{ \check{\bmb} } \cdot T_{  \partial_w \partial_w^{\varrho} \hat{\bmA}} \nabla\eta \nonumber \\
&\qquad - T_{ \Delta \hat{\bmA} } \, ( \check{\mathtt{a}} -1)  - T_{ \nabla \, \partial_w \hat{\bmA} } \cdot \check{\bmb} + T_{ \partial_w \hat{\bmA} } \, \check{\mathtt{c}} , \label{eq:F0formula}
\end{align}
and $\hat{\bmF}_1 \in ( C^0( [-h,0];H^{2n-4}(\mathbb{R}^2) ) )^3$ . 

\end{proposition}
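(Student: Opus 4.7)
The proof follows the Alazard--M\'etivier good-unknown paralinearization strategy, adapted to the flattened elliptic equation $E\hat{\bmA}=-\check{\mathtt{a}}\hat{\bmomega}$; I would carry it out in three stages, the first two preparatory and the third the actual assembly.

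\emph{Stage 1: paralinearization of the equation.} Apply Bony's identity $fg=T_fg+T_gf+R(f,g)$ to each product of a variable coefficient with a derivative of $\hat{\bmA}$, and to $\check{\mathtt{a}}\hat{\bmomega}$. Using the splitting $T_{\check{\mathtt{a}}}=Id+T_{\check{\mathtt{a}}-1}$ (valid modulo smoothing, since $T_1=Id$ on high frequencies), the definition of $T_E$, and the equation itself, one obtains
\begin{align*}
T_E\hat{\bmA}&=-\hat{\bmomega}-T_{\check{\mathtt{a}}-1}\hat{\bmomega}-T_{\hat{\bmomega}}(\check{\mathtt{a}}-1)\\
&\quad-T_{\Delta\hat{\bmA}}(\check{\mathtt{a}}-1)-T_{\nabla\partial_w\hat{\bmA}}\cdot\check{\bmb}+T_{\partial_w\hat{\bmA}}\check{\mathtt{c}}+R,
\end{align*}
where $R$ lies in $(C^0([-h,0];H^{2n-4}(\mathbb{R}^2)))^3$ thanks to the standard Bony remainder estimate in 2D (gain of one derivative) and the Moser-type control of $\check{\mathtt{a}},\check{\bmb},\check{\mathtt{c}}$ in terms of $\nabla\eta\in H^{n-1}$.

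\emph{Stage 2: the good-unknown shift.} Set $f\coloneqq\partial_w^{\varrho}\hat{\bmA}$. Since $\eta$ is $w$-independent, $\partial_w^k(T_f\eta)=T_{\partial_w^k f}\eta$, while the paradifferential Leibniz rule yields $\Delta(T_f\eta)=T_{\Delta f}\eta+2T_{\nabla f}\cdot\nabla\eta+T_f\Delta\eta$ up to smoother terms. Substituting into $T_E(T_f\eta)$ and using the symbol-composition rule $T_aT_b=T_{ab}$ modulo operators of order $-1$ for zero-order symbols $a$, the four ``principal'' pieces reassemble into $T_{Ef}\eta$, leaving the structural contributions $(Id+T_{\check{\mathtt{a}}-1})[T_{\Delta f}\eta+2T_{\nabla f}\cdot\nabla\eta]+T_{\check{\bmb}}\cdot T_{\partial_w f}\nabla\eta$ modulo $\hat{\bmF}_1$-remainders. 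The commutator identity
\begin{equation*}
E\partial_w^{\varrho}\hat{\bmA}=\partial_w^{\varrho}(E\hat{\bmA})-[\partial_w^{\varrho},E]\hat{\bmA}=-\partial_w^{\varrho}(\check{\mathtt{a}}\hat{\bmomega})-[\partial_w^{\varrho},E]\hat{\bmA}
\end{equation*}
then recasts $T_{Ef}\eta$ as $-T_{\partial_w^{\varrho}(\check{\mathtt{a}}\hat{\bmomega})}\eta-T_{[\partial_w^{\varrho},E]\hat{\bmA}}\eta$, producing the first line of \eqref{eq:F0formula}.

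\emph{Stage 3: assembly.} Subtract: $T_E\hat{\bmB}=T_E\hat{\bmA}-T_E(T_f\eta)$. Inserting Stages 1 and 2 and absorbing into $\hat{\bmF}_1$ every contribution that is (i) a Bony remainder, (ii) a symbol-composition error $T_aT_b-T_{ab}$ or a commutator between a paraproduct and $\nabla$, or (iii) a low-frequency correction coming from the splitting $T_{\check{\mathtt{a}}}=Id+T_{\check{\mathtt{a}}-1}$, one arrives at the announced formula \eqref{eq:F0formula}. The main obstacle is the regularity book-keeping: each generated remainder must be verified to lie in $(C^0([-h,0];H^{2n-4}(\mathbb{R}^2)))^3$. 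The key quantitative input is the one-derivative gain of the Bony remainder in 2D, which combined with $n\ge 4$ allows products of $H^{n-1}$ coefficients against second-order derivatives of $\hat{\bmA}$ in $H^{n-2}$ to land in the target class $H^{2n-4}$; this also fixes the regularity index appearing in the conclusion.
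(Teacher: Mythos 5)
Your overall strategy coincides with the paper's proof: Bony paralinearization of the interior equation $E\hat{\bmA}=-\check{\mathtt{a}}\hat{\bmomega}$, substitution of the good unknown $\hat{\bmB}=\hat{\bmA}-T_{\partial_w^{\varrho}\hat{\bmA}}\eta$, use of the $w$-differentiated equation through $T_{E\partial_w^{\varrho}\hat{\bmA}}\eta$, and final assembly. However, as written your bookkeeping does not close, for two concrete reasons.

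First, your Stage 1 is the correct paralinearization: since $\check{\mathtt{a}}\hat{\bmomega}=\hat{\bmomega}+T_{\check{\mathtt{a}}-1}\hat{\bmomega}+T_{\hat{\bmomega}}(\check{\mathtt{a}}-1)+R$, the term $-\hat{\bmomega}$ necessarily appears in $T_E\hat{\bmA}$. It cannot be absorbed into $\hat{\bmF}_1$, because $\hat{\bmomega}$ lies only in $\bigl(C^0([-h,0];H^{n-2}(\mathbb{R}^2))\bigr)^3$ and $n-2<2n-4$; nor is it the low-frequency correction of the splitting $T_{\check{\mathtt{a}}}=Id+T_{\check{\mathtt{a}}-1}$ (that correction is a compactly supported Fourier multiplier applied to $\hat{\bmomega}$, hence smoothing, whereas the full $\hat{\bmomega}$ is not). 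So your Stage 3, which claims to land exactly on \eqref{eq:F0formula}, is inconsistent with your own Stage 1 unless this term is carried along, e.g. by recording the source contribution as $-T_{\check{\mathtt{a}}}\hat{\bmomega}$ rather than $-T_{\check{\mathtt{a}}-1}\hat{\bmomega}$. A clean sanity check: for $\eta\equiv 0$ one has $\hat{\bmB}=\hat{\bmA}$ and $T_E\hat{\bmB}=-\check{\mathtt{a}}\hat{\bmomega}$ modulo smoothing, while the right-hand side of \eqref{eq:F0formula} reduces to $-T_{\check{\mathtt{a}}-1}\hat{\bmomega}$; the difference $-\hat{\bmomega}$ is not in $H^{2n-4}(\mathbb{R}^2)$, so you must either keep it explicitly in $\hat{\bmF}_0$ or explain how your assembly disposes of it — you do neither.

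Second, Stage 2 double-counts a term. Writing $f=\partial_w^{\varrho}\hat{\bmA}$, if the principal pieces $T_{\partial_w^2 f}\eta$, $(Id+T_{\check{\mathtt{a}}-1})T_{\Delta f}\eta$, $T_{\check{\bmb}}\cdot T_{\nabla\partial_w f}\eta$, $-T_{\check{\mathtt{c}}}T_{\partial_w f}\eta$ are reassembled into $T_{Ef}\eta$, then $(Id+T_{\check{\mathtt{a}}-1})T_{\Delta f}\eta$ is no longer available as a leftover; what remains from the Leibniz expansion of $\Delta(T_f\eta)$ is $(Id+T_{\check{\mathtt{a}}-1})\bigl[2T_{\nabla f}\cdot\nabla\eta+T_{f}\Delta\eta\bigr]$, and $T_f\Delta\eta$ is neither interchangeable with $T_{\Delta f}\eta$ nor negligible (it lies only in $C^0([-h,0];H^{n-2}(\mathbb{R}^2))$). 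As written, your leftover retains $T_{\Delta f}\eta$ and omits $T_f\Delta\eta$, so the displayed identities do not add up to the claimed right-hand side. Finally, for the errors you do discard, a gain of one order is not sufficient when $n>5$: the composition errors $T_aT_b-T_{ab}$ must be handled with the full gain provided by Lemma \ref{lem:Paraprod3} and Proposition \ref{prop:AsympParadiffOp}, exploiting the $H^{n-1}$, $H^{n-3}$ regularity of the coefficients and of $\Delta f$, which is what places these errors, applied to $\eta\in H^{n}(\mathbb{R}^2)$, in $H^{2n-4}(\mathbb{R}^2)$. Once these points are treated consistently, the argument does close along the route you follow, but with $\hat{\bmF}_0$ containing $-T_{\check{\mathtt{a}}}\hat{\bmomega}$ and $-(Id+T_{\check{\mathtt{a}}-1})T_{\partial_w^{\varrho}\hat{\bmA}}\Delta\eta$ in place of the corresponding terms you wrote down.
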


\begin{proof}

We follow an argument similar to the proof of Proposition 4.12 in \cite{alazard2009paralinearization}. First, recall that by assumption \eqref{eq:AssParalinGDNOp} we have that
\begin{align*}
\partial_w^k \hat{\bmA} &\in ( C^0( [-h,0] ; H^{n-k}(\mathbb{R}^2) ) )^3 , \;\; k=0,1, \;\; \hat{\bmomega} \in ( C^0( [-h,0] ; H^{n-2}(\mathbb{R}^2) ) )^3 ,
\end{align*}
and that by the regularity assumption of $\eta$ we have
\begin{align*}
\check{\mathtt{a}}-1 \in H^{n-1}(\mathbb{R}^2)  , \quad \check{\bmb} &\in (  H^{n-1}(\mathbb{R}^2)  )^2 , \quad \check{\mathtt{c}} \in  H^{n-2}(\mathbb{R}^2)  .
\end{align*}

By Lemma \ref{lem:Paraprod2} we have that if we set
\begin{align*}
\mathcal{T}(\hat{\bmA}) &\coloneqq T_{ \Delta \hat{\bmA} } \, (\check{\mathtt{a}}-1)  + T_{ \nabla \, \partial_w \hat{\bmA} } \cdot \check{\bmb} - T_{ \partial_w \hat{\bmA} } \, \check{\mathtt{c}} ,
\end{align*}
then
\begin{align*}
& E \hat{\bmA} +  \hat{\bmomega} + (\check{\mathtt{a}} -1) \, \hat{\bmomega} - T_{E} \hat{\bmA} - \mathcal{T}(\hat{\bmA}) - T_{ \check{\mathtt{a}} -1} \hat{\bmomega} - T_{ \hat{\bmomega} } (\check{\mathtt{a}}-1)  
\end{align*}
belongs to $( C^0([-h,0];H^{2n-4}(\mathbb{R}^2)) )^3$.

Next, we substitute $\hat{\bmA} = \hat{\bmB} + T_{\partial_w^{\varrho} \hat{\bmA}} \eta$ in the above interior equation. Recall that $E \hat{\bmA} = -\check{\mathtt{a}} \, \hat{\bmomega}$, and that $\partial_w^k \hat{\bmA} \in ( C^0([-h,0];H^{n-k}(\mathbb{R}^2) )^3$ for $k \leq 3$. 

From the definition of $\hat{\bmB}$, we get (up to a smooth remainder)
\begin{align*}
\partial_w^2 \hat{\bmB} &= \partial_w^2 \hat{\bmA} - \partial_w^2 \left( T_{\partial_w^{\varrho} \hat{\bmA}} \eta \right) =  \partial_w^2 \hat{\bmA} - T_{\partial_w^2 \partial_w^{\varrho} \hat{\bmA}} \eta , \\
\Delta \hat{\bmB} &= \Delta \hat{\bmA} - T_{\Delta \partial_w^{\varrho} \hat{\bmA}} \eta -2 T_{ \nabla \partial_w^{\varrho} \hat{\bmA}} \cdot \nabla\eta - T_{\partial_w^{\varrho} \hat{\bmA}} \Delta\eta , \\
\nabla \partial_w \hat{\bmB} &= \nabla \partial_w \hat{\bmA} - T_{\nabla \partial_w \partial_w^{\varrho} \hat{\bmA}} \eta - T_{  \partial_w \partial_w^{\varrho} \hat{\bmA}} \nabla\eta  , \\
\partial_w \hat{\bmB} &= \partial_w \hat{\bmA} - T_{\partial_w \partial_w^{\varrho} \hat{\bmA}} \eta .
\end{align*}
By Lemma \ref{lem:Paraprod2} we can deduce from the interior equation $E\hat{\bmA} + \check{\mathtt{a}} \, \hat{\bmomega} = \bmzero$ that
\begin{align*}
& T_{ E \, \partial_w^{\varrho} \hat{\bmA}  } \eta + T_{  \partial_w^{\varrho}( \check{\mathtt{a}} \, \hat{\bmomega} ) } \eta + T_{  [\partial_w^{\varrho} ,E] \hat{\bmA} } \eta \in ( C^0( [-h,0]; H^{2n-4}(\mathbb{R}^2) ) )^3 .
\end{align*}

Therefore, we have (up to a smooth remainder)
\begin{align*}
T_{E}\hat{\bmA} - T_{E}\hat{\bmB} &= T_{\partial_w^2 \partial_w^{\varrho} \hat{\bmA}} \eta  + ( Id + T_{ \check{\mathtt{a}} -1 } ) \, \left[ T_{\Delta \partial_w^{\varrho} \hat{\bmA}} \eta + 2 T_{ \nabla \partial_w^{\varrho} \hat{\bmA}} \cdot \nabla\eta + T_{\partial_w^{\varrho} \hat{\bmA}} \Delta\eta  \right] \\
&\qquad + T_{ \check{\bmb} } \cdot \left[  T_{\nabla \partial_w \partial_w^{\varrho} \hat{\bmA}} \eta + T_{  \partial_w \partial_w^{\varrho} \hat{\bmA}} \nabla\eta  \right] - T_{ \check{\mathtt{c}} } \,  T_{\partial_w \partial_w^{\varrho} \hat{\bmA}} \eta  ,
\end{align*}
which, up to a remainder in $( C^0( [-h,0]; H^{2n-4}(\mathbb{R}^2) ) )^3$, it implies that
\begin{align*}
 T_{E}\hat{\bmA} - T_{E}\hat{\bmB} &= - T_{  \partial_w^{\varrho}( \check{\mathtt{a}} \, \hat{\bmomega} ) } \eta - T_{ [\partial_w^{\varrho} ,E] \hat{\bmA} } \eta  + ( Id + T_{ \check{\mathtt{a}} -1 } ) \, \left[ T_{\Delta \partial_w^{\varrho} \hat{\bmA}} \eta + 2 T_{ \nabla \partial_w^{\varrho} \hat{\bmA}} \cdot \nabla\eta   \right] \\
&\qquad + T_{ \check{\bmb} } \cdot T_{  \partial_w \partial_w^{\varrho} \hat{\bmA}} \nabla\eta .
\end{align*}

If we plug in the above formula in the paralinearized interior equation we obtain that, up to a remainder in $( C^0( [-h,0]; H^{2n-4}(\mathbb{R}^2) ) )^3$, 
\begin{align*}
 T_{E}\hat{\bmB} &=   T_{  \partial_w^{\varrho}( \check{\mathtt{a}} \, \hat{\bmomega} ) } \eta + T_{ [\partial_w^{\varrho} ,E] \hat{\bmA} } \eta  - ( Id + T_{ \check{\mathtt{a}} -1 } ) \, \left[ T_{\Delta \partial_w^{\varrho} \hat{\bmA}} \eta + 2 T_{ \nabla \partial_w^{\varrho} \hat{\bmA}} \cdot \nabla\eta   \right] \\
&\qquad - T_{ \check{\bmb} } \cdot T_{  \partial_w \partial_w^{\varrho} \hat{\bmA}} \nabla\eta - \mathcal{T}(\hat{\bmA}) - T_{ \check{\mathtt{a}}-1 } \hat{\bmomega} - T_{\hat{\bmomega}} (\check{\mathtt{a}}-1) .
\end{align*}

\end{proof}

Next we consider the reduction to the boundary. We want to find two matrix-valued symbols $\mathtt{M}$, $\mathtt{N}$ such that (up to a smoothing remainder)
\begin{align*}
T_{E} \, \mathbb{I}_3 &= \left( \partial_w \mathbb{I}_3 - T_{ \mathtt{N} } \right) \left( \partial_w \mathbb{I}_3 - T_{ \mathtt{M} } \right),
\end{align*}
see Sec. 3.4.2 of \cite{alazard2011water} and Sec. 4.3 of \cite{alazard2009paralinearization} for a corresponding argument for the irrotational case, and Sec. 3.1 of \cite{groves2024analytical} for the case of Beltrami flows.

\begin{lemma} \label{lem:factorisation}
There exist operators $M, N$ with associated symbols 
\begin{equation*}
\mathtt{M},\mathtt{N} \in M_3 \left(  \Sigma^1_{n-2}({\mathbb R}^2)  \right)
\end{equation*}
such that
\begin{itemize}
\item[(i)]
$T_{E} \, \mathbb{I}_3 - (\partial_w \mathbb{I}_3 - T_{ \mathtt{N} } )(\partial_w \mathbb{I}_3 - T_{ \mathtt{M} } ) \in M_3 \left(  \Sigma^{1-(n-4)}_{n-2}(\mathbb{R}^2)  \right)$,
\item[(ii)]
the principal symbols $\mathtt{M}^{(1)}$, $\mathtt{N}^{(1)}$ of $M$, $N$ take the form $\mathtt{M}^{(1)} =  \mathtt{m}^{(1)} \mathbb{I}_3$, $\mathtt{N}^{(1)} = \mathtt{n}^{(1)} \mathbb{I}_3$, where the scalar-valued symbols $\mathtt{m}^{(1)}$, $-\mathtt{n}^{(1)} \in  \Sigma^1_{n-2}({\mathbb R}^2) $ are strongly elliptic.
\end{itemize}
\end{lemma}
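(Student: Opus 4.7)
The plan is to exploit the fact that $T_E \, \mathbb{I}_3$ is diagonal with identical scalar entries, so the factorisation reduces to the standard scalar Alazard--M\'etivier scheme (see Section 3.4.2 of \cite{alazard2011water}, Section 4.3 of \cite{alazard2009paralinearization}, and the adaptation to Beltrami flows in Section 3.1 of \cite{groves2024analytical}). I look for $\mathtt{M}, \mathtt{N}$ in the form
\begin{equation*}
\mathtt{M} = \sum_{j \geq 0} \mathtt{m}^{(1-j)} \, \mathbb{I}_3, \qquad \mathtt{N} = \sum_{j \geq 0} \mathtt{n}^{(1-j)} \, \mathbb{I}_3,
\end{equation*}
with scalar-valued symbols $\mathtt{m}^{(1-j)}, \mathtt{n}^{(1-j)}$ of decreasing order, and I expand
\begin{equation*}
(\partial_w - T_{\mathtt{N}})(\partial_w - T_{\mathtt{M}}) = \partial_w^2 - T_{\mathtt{M}+\mathtt{N}} \partial_w - T_{\partial_w \mathtt{M}} + T_{\mathtt{N}} T_{\mathtt{M}}
\end{equation*}
via the paradifferential symbolic calculus. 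Matching against the symbol of $T_E$ order by order in $\bmxi$ produces an algebraic identity at the principal level and a recursive linear system for the symbols of lower order.

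At the principal level the matching yields
\begin{equation*}
\mathtt{m}^{(1)} + \mathtt{n}^{(1)} = - \mathrm{i} \, \check{\bmb} \cdot \bmxi, \qquad \mathtt{n}^{(1)} \mathtt{m}^{(1)} = - \check{\mathtt{a}} \, |\bmxi|^2,
\end{equation*}
so $\mathtt{m}^{(1)}$ and $\mathtt{n}^{(1)}$ are the two roots of the quadratic $X^2 + \mathrm{i} (\check{\bmb} \cdot \bmxi) X - \check{\mathtt{a}} |\bmxi|^2 = 0$. Using the explicit formulae $\check{\mathtt{a}} = \delta^2/(1 + |\nabla \eta|^2)$ and $\check{\bmb} = -2 \delta \nabla \eta/(1 + |\nabla \eta|^2)$ coming from \eqref{eq:CoeffFlatLap}, the Cauchy--Schwarz inequality $|\nabla\eta \cdot \bmxi|^2 \leq |\nabla\eta|^2 |\bmxi|^2$ gives
\begin{equation*}
4 \check{\mathtt{a}} |\bmxi|^2 - (\check{\bmb} \cdot \bmxi)^2 \geq \frac{ 4 \delta^2 |\bmxi|^2 }{ (1 + |\nabla \eta|^2)^2 },
\end{equation*}
so the discriminant of the quadratic is strictly positive. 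Selecting the two branches with $\Re \mathtt{m}^{(1)} > 0 > \Re \mathtt{n}^{(1)}$ produces strongly elliptic scalar symbols in $\Sigma^1_{n-2}(\mathbb{R}^2)$, and in particular the difference $\mathtt{m}^{(1)} - \mathtt{n}^{(1)}$ is itself elliptic of order $1$.

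The lower order symbols $\mathtt{m}^{(1-k)}, \mathtt{n}^{(1-k)}$ for $k \geq 1$ are then constructed inductively: at step $k$, the matching of the terms of order $1-k$ produces a $2 \times 2$ linear system of the shape
\begin{equation*}
\mathtt{m}^{(1-k)} + \mathtt{n}^{(1-k)} = F_k, \qquad \mathtt{m}^{(1)} \mathtt{n}^{(1-k)} + \mathtt{n}^{(1)} \mathtt{m}^{(1-k)} = G_k,
\end{equation*}
with determinant $\mathtt{m}^{(1)} - \mathtt{n}^{(1)}$ elliptic by the previous step. The right-hand sides $F_k, G_k$ depend explicitly on the already constructed symbols $\mathtt{m}^{(1-j)}, \mathtt{n}^{(1-j)}$ with $j < k$ and on their $\bmx$- and $\bmxi$-derivatives through the asymptotic expansion of the composition $T_{\mathtt{N}} T_{\mathtt{M}}$ and through $T_{\partial_w \mathtt{M}}$, so the recursion can be propagated as long as the current symbols stay in a paradifferential class with enough regularity in $\bmx$. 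Iterating until the available regularity is exhausted leaves a remainder which, by the standard remainder estimates of the paradifferential calculus, belongs to $M_3 \left( \Sigma^{1-(n-4)}_{n-2}(\mathbb{R}^2) \right)$. The step I expect to require most care is the bookkeeping: identifying precisely which terms from the expansion of $T_{\mathtt{N}} T_{\mathtt{M}}$ contribute at each matching order $1-k$, and verifying that each recursion step costs only one unit of $\bmx$-regularity, so that the construction stays within the expected symbol classes.
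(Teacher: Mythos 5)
Your proposal is correct and follows essentially the same route as the paper: reduce to scalar symbols times $\mathbb{I}_3$, solve the quadratic $X^2+\mathrm{i}(\check{\bmb}\cdot\bmxi)X-\check{\mathtt{a}}|\bmxi|^2=0$ at principal order with the Cauchy--Schwarz lower bound on the discriminant giving strong ellipticity, and construct the lower-order symbols inductively by dividing by the elliptic quantity $\mathtt{m}^{(1)}-\mathtt{n}^{(1)}=2\mathtt{m}^{(1)}+\mathrm{i}\check{\bmb}\cdot\bmxi$, leaving a remainder in $M_3(\Sigma^{1-(n-4)}_{n-2}(\mathbb{R}^2))$. The only organizational difference is that the paper fixes $N=-L_1-M$ exactly at the outset and runs a single scalar recursion for $M$, whereas you solve a coupled $2\times 2$ system for $(\mathtt{m}^{(1-k)},\mathtt{n}^{(1-k)})$ at each order; since the determinant of your system coincides with the paper's elliptic divisor, the two schemes are equivalent.
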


\begin{proof}

We first set $ L_1 \coloneqq [ \check{ \bmb } \cdot \nabla -  \check{\mathtt{c}}  ] \, \mathbb{I}_3$, $L_0 \coloneqq  \check{\mathtt{a}}   \, \Delta$, so that $E = \partial_w^2 \, \mathbb{I}_3 + L_1 \, \partial_w + L_0$.  Recall that $\eta \in H^n(\mathbb{R}^2)$. Since
\begin{align*}
E -  (\partial_w \mathbb{I}_3 - N)(\partial_w \mathbb{I}_3 - M ) &= (L_1+  (M +N)) \partial_w + (L_0 -  N M)
\end{align*}
we set
\begin{align} \label{eq:Defn of N}
N &= -  L_1 - M
\end{align}
and, by following Proposition \ref{prop:AsympParadiffOp}, we seek $M$ such that
\begin{align*}
L_0 + L_1 M +  M^2 &= \mathbb{O}_3 
\end{align*}
by constructing a symbol $\mathtt{M} \in M_3( \Sigma^1_{n-2}(\mathbb{R}^2) )$ such that 
\begin{align*}
&  - \check{\mathtt{a}} \, |\bmxi|^2 \, \mathbb{I}_3 - \sum_{\alpha=(\alpha_1,\alpha_2) \in \mathbb{N}_0^2 } \left[  \partial_{\xi_1}^{\alpha_1} \partial_{\xi_2}^{\alpha_2} |\bmxi|^2 \, D_{x}^{\alpha_1} D_{y}^{\alpha_2}  \check{\mathtt{a}}  \right] \, \mathbb{I}_3 + \left[ \mathrm{i} \, \check{\bmb}  \cdot \bmxi - \check{\mathtt{c}} \right] \, \mathtt{M}   \\
&\qquad + ( \check{\bmb} \cdot \nabla ) \, \mathtt{M}  + \sum_{\alpha=(\alpha_1,\alpha_2) \in \mathbb{N}^2 } \partial_{\xi_1}^{\alpha_1} \partial_{\xi_2}^{\alpha_2} \mathtt{M} \, D_{x}^{\alpha_1} D_{y}^{\alpha_2} \mathtt{M} = \mathbb{O}_3 ,
\end{align*}
up to a remainder in $M_3\left(  \Sigma^{1-(n-4)}_{n-2}(\mathbb{R}^2)  \right)$, and such that
\begin{align*}
\mathtt{M} &= \sum_{ 1-(n-4) < j \leq 1} \mathtt{M}^{(j)}, \; \; \mathtt{M}^{(j)} \in M_3 \left( \Gamma^j_{n-3+j}({\mathbb R}^2)  \right) .
\end{align*}

We proceed by computing the terms in the aysmptotic expansion of $\mathtt{M}$ inductively.

\begin{itemize}
\item
At leading order we have 
\begin{align*}
- \check{\mathtt{a}} \, |\bmxi|^2 {\mathbb I}_3 + \mathrm{i} \check{\bmb} \cdot \bmxi \mathtt{M}^{(1)} +  (\mathtt{M}^{(1)})^2 &= \mathbb{O}_3 ,
\end{align*}
so that
\begin{align*}
\mathtt{M}^{(1)} &= \mathtt{m}^{(1)} \mathbb{I}_3,
\end{align*}
where 
\begin{align*}
\mathtt{m}^{(1)}\coloneqq \frac{1}{2} \left[  - \mathrm{i} \check{\bmb} \cdot \bmxi +  \sqrt{ - ( \check{\bmb} \cdot \bmxi )^2 + 4 \check{\mathtt{a}} \, |\bmxi|^2  } \right] .
\end{align*}
Note that 
\begin{align*}
\mathtt{m}^{(1)}  &= \delta \, \frac{ \mathrm{i} \, \bmxi \cdot \nabla\eta + \lambda^{(1)} }{ 1+|\nabla\eta|^2 }, \;\; \lambda^{(1)} \, \coloneqq \, \sqrt{ (1+|\nabla\eta|^2)|\bmxi|^2 - (\bmxi \cdot \nabla\eta)^2 } ,
\end{align*}
where $\lambda^{(1)}$ is the leading order symbol of the classical Dirichlet--Neumann operator.

\item
The subprincipal symbol is found from the equation
\begin{align*}
&\mathrm{i} \, ( \nabla \check{\mathtt{a}} ) \cdot 2\bmxi \, \mathbb{I}_3 - \check{\mathtt{c}} \, \mathtt{M}^{(1)}  +\mathrm{i} \check{\bmb}\cdot\bmxi \mathtt{M}^{(0)} +  \mathtt{M}^{(0)}\mathtt{M}^{(1)} +  \mathtt{M}^{(1)}\mathtt{M}^{(0)} \\
&\;\;\;\; +( \check{\bmb}\cdot\grad)\mathtt{M}^{(1)} - \mathrm{i}  \partial_{\xi_1} \mathtt{M}^{(1)} \partial_x \mathtt{M}^{(1)} -\mathrm{i} \partial_{\xi_2} \mathtt{M}^{(1)} \partial_y \mathtt{M}^{(1)} = \mathbb{O}_3,
\end{align*}
which yields
\begin{align*}
\mathtt{M}^{(0)} &= \mathtt{m}^{(0)}\mathbb{I}_3 ,
\end{align*}
where
\begin{align*}
\mathtt{m}^{(0)} &= \frac{1}{ \mathrm{i} \check{\bmb} \cdot \bmxi + 2\mathtt{m}^{(1)} } \left[ \mathrm{i} \, \nabla_{\bmxi} \mathtt{m}^{(1)} \cdot \nabla\mathtt{m}^{(1)} - ( \check{\bmb} \cdot \nabla ) \mathtt{m}^{(1)} + \check{\mathtt{c}} \, \mathtt{m}^{(1)} - \mathrm{i} \, (\nabla \check{\mathtt{a}}) \cdot 2 \bmxi  \right] .
\end{align*}

\item
Suppose that $\mathtt{M}^{(j)}$ has been calculated for $j=1,0,\ldots -J$ for some $0 \leq J < 1-(n-4)$. The term $\mathtt{M}^{(-J-1)}$ can be found from the equation
$$(2 \mathtt{m}^{(1)} + \mathrm{i} \check{\bmb} \cdot \bmxi)\mathtt{M}^{(-J-1)}=\mathtt{G}^{(-J)},$$
where $\mathtt{G}^{(-J)} \in M_3( \Sigma^{(-J)}_{n-2}({\mathbb R}^2) )$ depends on $\check{\mathtt{a}}$, $\check{\bmb}$, $\check{\mathtt{c}}$ and $(\mathtt{M}^{(j)})_{j=1,\ldots,-J}$.
%is given by
%\begin{align*}
%& \mathtt{G}^{(-J)} \\
%& =  \sum_{|\alpha|-2=J} \hspace{-3mm} \partial_{\xi_1}^{\alpha_1}\partial_{\xi_2}^{\alpha_2}(|\bmxi|^2) D_x^{\alpha_1}D_y^{\alpha_2} \check{\mathtt{a}}  + \check{\mathtt{c}} \, \mathtt{M}^{(-J)} - \check{\bmb} \cdot\grad M^{(-J)} \\
%& \quad \mbox{}- \sum_{j_1,j_2 \leq 0 \atop |\alpha|+j_1+j_2=J} \hspace{-3mm} \partial_{\xi_1}^{\alpha_1}\partial_{\xi_2}^{\alpha_2}\mathtt{M}^{(-j_1)}D_x^{\alpha_1}D_y^{\alpha_2}\mathtt{M}^{(-j_2)}
%- \sum_{|\alpha|=J+2} \partial_{\xi_1}^{\alpha_1}\partial_{\xi_2}^{\alpha_2}\mathtt{M}^{(1)}D_x^{\alpha_1}D_y^{\alpha_2}\mathtt{M}^{(1)}\\
%& \quad \mbox{}- \sum_{|\alpha|=J+1} \partial_{\xi_1}^{\alpha_1}\partial_{\xi_2}^{\alpha_2}\mathtt{M}^{(1)}D_x^{\alpha_1}D_y^{\alpha_2}\mathtt{M}^{(0)}
%- \sum_{|\alpha|=J+1} \partial_{\xi_1}^{\alpha_1}\partial_{\xi_2}^{\alpha_2}\mathtt{M}^{(0)}D_x^{\alpha_1}D_y^{\alpha_2}\mathtt{M}^{(1)} .
%\end{align*}
\end{itemize}
The construction is completed by noting that there exists a symbol
\begin{equation*}
\mathtt{M} \in M_3( \Sigma^1_{n-2} ({\mathbb R}^2) )
\end{equation*}
such that
\begin{equation*}
\mathtt{M} = \sum_{ 1-(n-4) < j \leq 1 } \mathtt{M}^{(j)}
\end{equation*}
(see Shubin \cite[\S3.3]{shubin2001pseudodifferential}). Defining $N$ by equation \eqref{eq:Defn of N}, we can compute the asymptotic expansion 
$$\mathtt{N} = \sum_{ 1-(n-4) < j \leq 1 } \mathtt{N}^{(j)}$$
of the operator $N$; in particular, we find that
\begin{align*}
\mathtt{N}^{(1)} =  \mathtt{n}^{(1)} \mathbb{I}_3, &\qquad \mathtt{n}^{(1)}\coloneqq \frac{1}{2} \left[  - \mathrm{i} \check{\bmb} \cdot \bmxi -  \sqrt{ - ( \check{\bmb} \cdot \bmxi )^2 + 4 \check{\mathtt{a}} \, |\bmxi|^2  } \right] ,
\end{align*}
so that
\begin{align*}
\mathtt{n}^{(1)}  &= \delta \frac{ \mathrm{i} \bmxi \cdot \grad\eta - \lambda^{(1)} }{1+|\grad\eta|^2}.
\end{align*}
Finally, note that
$$\mathrm{Re}\, \mathtt{m}^{(1)} = - \mathrm{Re}\, \mathtt{n}^{(1)}  = \delta \frac{ \lambda^{(1)}}{1+|\grad \eta|^2}
\geq \delta \frac{ |\bmxi|}{1+\max |\grad \eta|^2} \gtrsim \delta \langle \bmxi \rangle$$
for sufficiently large $|\bmxi|$, so that $\mathtt{m}^{(1)}$ and $-\mathtt{n}^{(1)} $ are strongly elliptic.

\end{proof}

Using Lemma \ref{lem:factorisation} and arguing as in the proof of Lemma 4.13 of \cite{alazard2009paralinearization} we can deduce the following result.

\begin{lemma} \label{lem:ExpFactorization}

There exist two matrix-valued symbols
\begin{equation*}
\mathtt{M}, \mathtt{N} \in M_3(  \Sigma^1_{n-2}(\mathbb{R}^2) ) 
\end{equation*}
such that
\begin{align}
\left( \partial_w \mathbb{I}_3 - T_{ \mathtt{N} } \right) \left( \partial_w \mathbb{I}_3 - T_{ \mathtt{M} } \right) \hat{\bmB} = \hat{\bmF}_0 + \hat{\bmF}_1 ,
\end{align}
where $\hat{\bmF}_0$ is given by \eqref{eq:F0formula}, and where $\hat{\bmF}_1 \in ( C^0([-h,0];H^{2n-4}(\mathbb{R}^2)) )^3$ .
\end{lemma}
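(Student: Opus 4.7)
The proof should be a direct combination of \textbf{Proposition \ref{prop:paraGood}} and \textbf{Lemma \ref{lem:factorisation}}. The plan is to choose $\mathtt{M}$ and $\mathtt{N}$ to be exactly the symbols constructed in Lemma \ref{lem:factorisation}, so that in operator form
\begin{align*}
(\partial_w \mathbb{I}_3 - T_{ \mathtt{N} })(\partial_w \mathbb{I}_3 - T_{ \mathtt{M} }) &= T_E \, \mathbb{I}_3 - R ,
\end{align*}
where $R \in M_3( \Sigma^{1-(n-4)}_{n-2}(\mathbb{R}^2) )$ is the remainder provided by item (i) of that lemma. Applying this identity to $\hat{\bmB}$ and substituting the relation \eqref{eq:ParaIntEq} from Proposition \ref{prop:paraGood}, I would obtain
\begin{align*}
(\partial_w \mathbb{I}_3 - T_{ \mathtt{N} })(\partial_w \mathbb{I}_3 - T_{ \mathtt{M} }) \hat{\bmB} &= T_E \hat{\bmB} - R \hat{\bmB} \, = \, \hat{\bmF}_0 + \hat{\bmF}_1' - R \hat{\bmB} ,
\end{align*}
where $\hat{\bmF}_1' \in ( C^0( [-h,0] ; H^{2n-4}(\mathbb{R}^2) ) )^3$ is the remainder from Proposition \ref{prop:paraGood}. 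The proof then reduces to defining $\hat{\bmF}_1 := \hat{\bmF}_1' - R \hat{\bmB}$ and verifying that the new contribution $R \hat{\bmB}$ is absorbed at the same level of regularity.

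To control $R \hat{\bmB}$, I would first check that the good unknown \eqref{eq:paraGood} satisfies $\hat{\bmB} \in ( C^0( [-h,0] ; H^n(\mathbb{R}^2) ) )^3$: by the assumption \eqref{eq:AssParalinGDNOp} the field $\hat{\bmA}$ already lies in that space, while $T_{ \partial_w^{\varrho} \hat{\bmA} } \eta$ inherits the regularity of $\eta \in H^n(\mathbb{R}^2)$ from the paraproduct estimates of Appendix \ref{sec:Paradiff}, once we note (via Sobolev embedding applied to $\partial_w \hat{\bmA} \in C^0([-h,0]; H^{n-1}(\mathbb{R}^2))$ with $n \geq 4$) that $\partial_w^{\varrho} \hat{\bmA}$ is bounded in $\bmx$. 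The standard mapping property of a paradifferential operator of order $1-(n-4)$ and regularity index $n-2$ then places $R \hat{\bmB}$ in $( C^0( [-h,0] ; H^{n + (n-4) - 1}(\mathbb{R}^2) ) )^3 = ( C^0( [-h,0] ; H^{2n-5}(\mathbb{R}^2) ) )^3$, which already has a high degree of smoothness for $n \geq 4$.

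The main technical step — and the one I expect to be the real obstacle — is sharpening this bound so as to absorb $R \hat{\bmB}$ into a remainder of the same order $H^{2n-4}$ as in Proposition \ref{prop:paraGood}. This should follow from a careful inspection of the asymptotic expansion in the proof of Lemma \ref{lem:factorisation}: every principal contribution to the symbol of $R$ is built from derivatives of the coefficients $\check{\mathtt{a}}$, $\check{\bmb}$, $\check{\mathtt{c}}$, each of which is controlled by $\eta \in H^n(\mathbb{R}^2)$, so the effective remainder carries an extra factor of $\nabla \eta$ (or a commutator of paradifferential operators) whose contribution is estimated sharply by Lemma \ref{lem:Paraprod2} together with the composition formula of Proposition \ref{prop:AsympParadiffOp}. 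Once this improvement is established, $\hat{\bmF}_1 := \hat{\bmF}_1' - R \hat{\bmB}$ belongs to $( C^0( [-h,0] ; H^{2n-4}(\mathbb{R}^2) ) )^3$, and the lemma follows.
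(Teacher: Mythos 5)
Your overall skeleton is the right one and matches the paper's: combine the paralinearized interior equation \eqref{eq:ParaIntEq} of Proposition \ref{prop:paraGood} with a paradifferential factorization of $T_E$, and absorb the factorization error applied to $\hat{\bmB}$ into $\hat{\bmF}_1$. The computation that $\hat{\bmB}\in (C^0([-h,0];H^n(\mathbb{R}^2)))^3$ is also fine.

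The gap is at exactly the point you flag as "the real obstacle", and it is not closed by your suggested mechanism. If you fix $\mathtt{M},\mathtt{N}$ to be \emph{exactly} the symbols of Lemma \ref{lem:factorisation}, the factorization error $R$ is only of order $1-(n-4)=5-n$, so $R\hat{\bmB}\in (C^0([-h,0];H^{2n-5}(\mathbb{R}^2)))^3$, one derivative short of the claimed $H^{2n-4}$. The deficit is in the \emph{operator order} of $R$ (growth in $\bmxi$), not in the regularity of its coefficients, so observing that the remainder symbol carries derivatives of $\check{\mathtt{a}},\check{\bmb},\check{\mathtt{c}}$ (an "extra factor of $\nabla\eta$") and invoking Lemma \ref{lem:Paraprod2} cannot recover the missing derivative: with those fixed symbols the bound $H^{2n-5}$ is in general sharp. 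The paper's proof does something genuinely different at this step: it does not quote Lemma \ref{lem:factorisation} as a black box but reruns its inductive construction with the asymptotic expansions of $\mathtt{M}$ and $\mathtt{N}$ carried one order deeper, including terms down to order $1-(n-3)$ (i.e.\ $\mathtt{M}=\mathtt{M}^{(1)}+\cdots+\mathtt{M}^{(1-(n-3))}$, $\mathtt{M}^{(j)}\in M_3(\Gamma^j_{n-3+j}(\mathbb{R}^2))$, and likewise for $\mathtt{N}$). This is permitted because the coefficients have $C^{n-2}$ spatial regularity, so Proposition \ref{prop:AsympParadiffOp} allows the symbolic composition to be expanded until the error has order $1+1-(n-2)=4-n$; with these refined symbols the factorization error applied to $\hat{\bmB}\in (C^0([-h,0];H^n))^3$ lands in $(C^0([-h,0];H^{2n-4}(\mathbb{R}^2)))^3$ and can be absorbed into $\hat{\bmF}_1$. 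To complete your argument you must therefore modify the symbols (add the next term of the recursion), not merely re-examine the remainder produced by Lemma \ref{lem:factorisation}.
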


Recalling the definition \eqref{eq:paraGood} of the good unknown $\hat{\bmB}$ and the regularity assumption on $\hat{\bmA}$, we have that 
\begin{equation*}
\partial_w \hat{\bmB} - T_{\mathtt{M}} \hat{\bmB}|_{w=0}  \in \left(  H^{n-1}(\mathbb{R}^2) \right)^3 .
\end{equation*}

For the irrotational case the elliptic regularity result given by Proposition 4.13 in \cite{alazard2009paralinearization} can improve the regularity in the right-hand side of the (analogue of the) above equation up to $H^{2n-3-\varepsilon}(\mathbb{R}^2)$ for any fixed $\varepsilon >0$ (see Corollary 4.14 of \cite{alazard2009paralinearization}). However, this is not true if $\hat{\bmomega} \neq \bmzero$. 

\begin{corollary} \label{cor:EqGoodUnknown}

Let $\varepsilon >0$. Introduce $\hat{\bmW} \coloneqq (\partial_w-T_{\mathtt{M}})\hat{\bmB}$ and we write $N = N^{(1)} + N^{(\leq 0)}$, where $N^{(1)}$ is the principal part of $N$ obtained in Lemma \ref{lem:ExpFactorization}, and $ N^{(\leq 0)} \in M_3 \left(  \Gamma^0_0(\mathbb{R}^2)  \right)$. Then 
\begin{align*}
\partial_w \hat{\bmW} - T_{ N^{(1)} } \hat{\bmW} &= T_{ N^{(\leq 0)} } \hat{\bmW} +\hat{\bmF}_0 + \hat{\bmF}_1 ,
\end{align*}
where $\hat{\bmF}_0$ is given by \eqref{eq:F0formula}, and $\hat{\bmF}_1 \in ( C^0([-h,0];H^{2n-4}(\mathbb{R}^2)) )^3$ . 

Moreover, up to terms in $( H^{2n-3-\varepsilon}(\mathbb{R}^2) )^3$, the following equality holds true,
\begin{align}
&\partial_w \hat{\bmB} - T_{\mathtt{M}} \hat{\bmB}|_{w=0} = \int_{-h}^0 (T_{\mathscr{E} } \hat{\bmF}_0)(z)  \, \mathrm{d}z , \label{eq:ParaGoodEq} 
\end{align}
where
\begin{align*}
& \mathscr{E}(z;\bmx,\bmxi) = \exp(z \, N^{(1)}(\bmx,\bmxi))  - \mathrm{i} \,  \exp(z \, N^{(1)}(\bmx,\bmxi)) \, \frac{z^2}{2} \, \nabla_{\bmxi} N^{(1)}(\bmx,\bmxi) \cdot \nabla_{\bmx} N^{(1)}(\bmx,\bmxi) . \nonumber
\end{align*}

\end{corollary}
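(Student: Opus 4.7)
The plan is to derive both identities as consequences of Lemma~\ref{lem:ExpFactorization} together with a parametrix construction for the first-order paradifferential ODE $\partial_w - T_{N^{(1)}}$. The first identity is essentially tautological: starting from the factorization $(\partial_w \mathbb{I}_3 - T_{\mathtt{N}})\hat{\bmW} = \hat{\bmF}_0 + \hat{\bmF}_1$, which is just Lemma~\ref{lem:ExpFactorization} rewritten after setting $\hat{\bmW} \coloneqq (\partial_w - T_{\mathtt{M}})\hat{\bmB}$, and splitting $\mathtt{N} = N^{(1)} + N^{(\leq 0)}$, the stated equation for $\hat{\bmW}$ follows by moving the lower-order paradifferential term to the right-hand side.

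For \eqref{eq:ParaGoodEq}, the plan is to build an approximate fundamental solution $\mathscr{E}(z;\bmx,\bmxi)$ for $\partial_z - T_{N^{(1)}}$ on the strip $[-h,0]$ and then to apply Duhamel. Seeking $\mathscr{E} = \mathscr{E}_0 + \mathscr{E}_1 + \cdots$ with $\mathscr{E}_j$ of decreasing order in $\bmxi$, the composition asymptotics of Proposition~\ref{prop:AsympParadiffOp},
\[
N^{(1)} \# \mathscr{E} = N^{(1)}\,\mathscr{E} + \tfrac{1}{i}\,\nabla_{\bmxi} N^{(1)} \cdot \nabla_{\bmx} \mathscr{E} + \cdots ,
\]
force $\mathscr{E}_0(z) = \exp(z\,N^{(1)})$ at principal order. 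Because $N^{(1)} = \mathtt{n}^{(1)}\mathbb{I}_3$ is a scalar multiple of the identity, derivatives commute so that $\nabla_{\bmx}\mathscr{E}_0 = z\,(\nabla_{\bmx} N^{(1)})\,\mathscr{E}_0$, and solving the resulting ODE for $\mathscr{E}_1$ with vanishing initial data yields the subprincipal correction
\[
\mathscr{E}_1(z) = -i\,\exp(z\,N^{(1)})\,\tfrac{z^2}{2}\,\nabla_{\bmxi} N^{(1)} \cdot \nabla_{\bmx} N^{(1)} ,
\]
matching exactly the formula in the statement. Because $N^{(1)}$ has order one while the target remainder is $H^{2n-3-\varepsilon}$, truncating the symbol expansion at this subprincipal order should suffice, since the remaining symbolic error produces an operator smoothing of sufficiently high order.

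With $\mathscr{E}$ in hand, the next step is to apply $T_{\mathscr{E}(z)}$ to the evolution equation for $\hat{\bmW}$ and integrate by parts in $z$ over $[-h,0]$ to recover $\hat{\bmW}|_{w=0}$. This should produce the identity
\[
\hat{\bmW}|_{w=0} = T_{\mathscr{E}(-h)}\hat{\bmW}|_{w=-h} + \int_{-h}^{0} T_{\mathscr{E}(z)}\bigl( T_{N^{(\leq 0)}}\hat{\bmW} + \hat{\bmF}_0 + \hat{\bmF}_1 \bigr)(z)\,\di z + R ,
\]
where $R$ collects the parametrix remainder. The strong ellipticity of $-\mathtt{n}^{(1)}$ makes $T_{\mathscr{E}(-h)}$ smoothing of infinite order, so the boundary contribution at $w=-h$ is harmless, and the integral involving $\hat{\bmF}_1 \in (C^0([-h,0];H^{2n-4}(\mathbb{R}^2)))^3$ is directly absorbed into the $H^{2n-3-\varepsilon}$ remainder.

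The main obstacle will be the term $\int_{-h}^{0} T_{\mathscr{E}(z)}\,T_{N^{(\leq 0)}}\hat{\bmW}(z)\,\di z$. Since $T_{N^{(\leq 0)}}$ has order zero, this contribution is \emph{a priori} only as regular as $\hat{\bmW}$ itself, which sits in $(H^{n-1}(\mathbb{R}^2))^3$ thanks to the input regularity of $\hat{\bmA}$. The cure will be a bootstrap: feeding the evolution equation for $\hat{\bmW}$ back into the Duhamel representation, each iteration should gain one degree of regularity, so that after finitely many iterations the remainder reaches $H^{2n-3-\varepsilon}(\mathbb{R}^2)$. Making this iteration rigorous while keeping the bookkeeping of paradifferential symbol classes compatible with Lemma~\ref{lem:Paraprod2} and with the range of $n$ prescribed in the hypothesis is the delicate part of the argument.
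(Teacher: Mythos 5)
Your route is the same as the paper's: the paper proves this corollary simply by running the argument of Proposition \ref{prop:MaxRegularity} (i.e.\ Proposition 4.13 of \cite{alazard2009paralinearization}), which is exactly your construction — the two‑term symbol $\mathscr{E}=\mathscr{E}_0+\mathscr{E}_{-1}$ obtained from the composition asymptotics of Proposition \ref{prop:AsympParadiffOp}, followed by a Duhamel representation — and your first identity is indeed just Lemma \ref{lem:ExpFactorization} rewritten with $\hat{\bmW}=(\partial_w-T_{\mathtt{M}})\hat{\bmB}$ and $\mathtt{N}=N^{(1)}+N^{(\leq 0)}$.

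Two justifications in your sketch would fail as written. First, the assertion that truncating $\mathscr{E}$ at the subprincipal term leaves a ``symbolic error of sufficiently high smoothing order'' is not correct: the parametrix only cancels the $|\alpha|=1$ term of the composition expansion, so the uncancelled terms (those with $|\alpha|\geq 2$, together with the corrections generated by $\mathscr{E}_{-1}$, and limited in any case by the $C^{n-2}$ regularity of the symbols in $\bmx$) form an operator of order about $-1$. Acting on $\hat{\bmW}\in \left( C^0([-h,0];H^{n-1}(\mathbb{R}^2)) \right)^3$ and integrating in $z$, this yields terms of roughly $H^{n+1}(\mathbb{R}^2)$ regularity, which falls short of $H^{2n-3-\varepsilon}(\mathbb{R}^2)$ as soon as $n>4+\varepsilon$. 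These terms are of the same nature as $\int T_{\mathscr{E}}T_{N^{(\leq 0)}}\hat{\bmW}\,\di z$ and must be absorbed by the same iteration you propose for that term (each further pass through the decaying propagator, integrated in $z$, gains one derivative, and finitely many passes suffice because $\hat{\bmF}_0$ has fixed regularity); this is precisely the role of the references to Proposition 4.10 of \cite{alazard2009paralinearization} and Lemma 2.2.7 of \cite{alazard2015sobolev} in the proof of Proposition \ref{prop:MaxRegularity}. Second, your disposal of the boundary term has the sign backwards relative to the displayed symbol: since $\mathrm{Re}\,\mathtt{n}^{(1)}\leq -c|\bmxi|$, the exponentially decaying (hence infinitely smoothing) factor transporting data from $w=-h$ to $w=0$ is $\exp(h\,\mathtt{n}^{(1)})$, whereas $\mathscr{E}(-h)=\exp(-h\,N^{(1)})$ is exponentially growing in $|\bmxi|$ and is not even an admissible symbol; either track the orientation of the propagator carefully in the Duhamel formula, or, as in the proof of Proposition \ref{prop:MaxRegularity}, insert a cut-off in $w$ so that no contribution from $w=-h$ appears at all.
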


The above corollary can be proved by following the proof of Proposition 4.13 in \cite{alazard2009paralinearization}. 

Now, in order to simplify the notation we omit until the end of the section the fact that all functions depending on $w$ are evaluated on $w=0$.

First, from \eqref{eq:bdry1_vec} we have
\begin{align} \label{eq:bdry1_new}
\hat{A}_{3} &= \eta_x  \hat{A}_1 + \eta_y \hat{A}_2.
\end{align}

Next, we rewrite the boundary condition \eqref{eq:bdry2_vec} in the following way,
\begin{align}
& \hat{A}_{3y} - \frac{1}{\delta} \eta_y \hat{A}_{3w} -  \frac{1}{\delta} \hat{A}_{2w} + \eta_x \left(  \hat{A}_{2x} - \hat{A}_{1y} -  \frac{1}{\delta} \eta_x \hat{A}_{2w} +  \frac{1}{\delta} \eta_y \hat{A}_{1w} \right) =  -  \partial_y \Delta^{-1} ( \hat{\bmomega} \cdot \bmN ), \label{eq:bdry2_comp1} \\
& -\hat{A}_{3x} +  \frac{1}{\delta}  \eta_x \hat{A}_{3w} +  \frac{1}{\delta} \hat{A}_{1w} + \eta_y \left(  \hat{A}_{2x} - \hat{A}_{1y} -   \frac{1}{\delta} \eta_x \hat{A}_{2w} +  \frac{1}{\delta} \eta_y \hat{A}_{1w} \right) = \partial_x \Delta^{-1} ( \hat{\bmomega} \cdot \bmN ). \label{eq:bdry2_comp2}
\end{align}

Then if we multiply \eqref{eq:bdry2_comp1} by $\eta_x$ and \eqref{eq:bdry2_comp2} by $\eta_y$ and we sum them up, we obtain
\begin{align*}
& -  \frac{1}{\delta} \eta_x (1+|\nabla\eta|^2) \hat{A}_{2w} +  \frac{1}{\delta} \eta_y (1+|\nabla\eta|^2) \hat{A}_{1w} + (\eta_x \eta_{xy} - \eta_y \eta_{xx}) \hat{A}_1   \\
&\qquad + (\eta_x \eta_{yy} - \eta_y \eta_{xy}) \hat{A}_2 + \eta_x \eta_y \hat{A}_{2y} + \eta_x^2 \hat{A}_{2x} - \eta_x \eta_y \hat{A}_{1x} - \eta_y^2 \hat{A}_{1y} \\
& =   -  \nabla\eta \cdot \gradp \Delta^{-1} ( \hat{\bmomega} \cdot \bmN ),
\end{align*}
and if we add $\curl^{\varrho}\hat{\bmA} \cdot \bmN$ to both sides of the above equation we obtain
\begin{align} 
( \curl^{\varrho} \hat{\bmA} )_3 &= \partial_x^{\varrho} \hat{A}_2 - \partial_y^{\varrho} \hat{A}_1 .   \label{eq:curlA3new}
\end{align}

Moreover, from the incompressibility we have
\begin{align} \label{eq:IncomprSigma}
 \frac{1}{\delta} \hat{A}_{3w} &= - \partial_x^{\varrho} \hat{A}_1 - \partial_y^{\varrho} \hat{A}_2.
\end{align}

Moreover, plugging \eqref{eq:bdry1_new} into \eqref{eq:bdry2_comp1}-\eqref{eq:bdry2_comp2} we obtain
\begin{align}
 \frac{1}{\delta} ( 1+|\nabla\eta|^2 ) \hat{A}_{1w} &= \eta_{xx} \hat{A}_1 + \eta_{xy} \hat{A}_2 + 2 \eta_x \hat{A}_{1x} + \eta_y \hat{A}_{1y} + (\eta_x + \eta_y) \hat{A}_{2y}  \nonumber \\
&\qquad - \eta_y \hat{A}_{2x} +   \partial_x \Delta^{-1} ( \hat{\bmomega} \cdot \bmN ) , \label{eq:bdry3_comp1} \\
\frac{1}{\delta} ( 1+|\nabla\eta|^2 ) \hat{A}_{2w} &= \eta_{xy} \hat{A}_1 + \eta_{yy} \hat{A}_2 + (\eta_x + \eta_y) \hat{A}_{1x} - \eta_x \hat{A}_{1y} + 2 \eta_y \hat{A}_{2y}  \nonumber \\
&\qquad + \eta_x \hat{A}_{2x} + \partial_y \Delta^{-1} ( \hat{\bmomega} \cdot \bmN ) . \label{eq:bdry3_comp2}
\end{align}

Consider the formula \eqref{eq:rotGDNOdef} for the rotational part $\hat{\mathcal{G}}_{gen,II}[\eta]\hat{\bmomega}$ of the straightened version of the generalized Dirichlet--Neumann operator. If we plug the boundary condition \eqref{eq:bdry1_new} into \eqref{eq:rotGDNOdef} we get
\begin{align}
\hat{\mathcal{G}}_{gen,II}[\eta]\hat{\bmomega} &= (1+ \eta_y^2) \hat{A}_{2x} - (1+ \eta_x^2) \hat{A}_{1y} + (\eta_y \eta_{xx} - \eta_x \eta_{xy} ) \hat{A}_1 + \eta_x \eta_y \hat{A}_{1x} \nonumber \\
&\qquad + (\eta_y \eta_{xy} - \eta_x \eta_{yy}) \hat{A}_2 - \eta_x \eta_y \hat{A}_{2y}, \;\; \text{at} \; \; w=0 . \label{eq:GDNO2} 
\end{align}
Finally, we conclude with the paralinearization of the generalized Dirichlet--Neumann operator. By \eqref{eq:curlA3new}, we have 
\begin{align*}
\hat{\mathcal{G}}_{gen,II}[\eta]\hat{\bmomega} &= (1+|\nabla\eta|^2) ( \partial_x^{\varrho} \hat{A}_2 - \partial_y^{\varrho} \hat{A}_1 ) - \nabla\eta \cdot (\curl^{\varrho}\hat{\bmA})_{\rmh}  - (\curl^{\varrho}\hat{\bmA})_3 \, |\nabla\eta|^2 \;\; \text{at} \; \; w=0 ,
\end{align*}
and if we paralinearize the above expression we get
\begin{align}
&  T_{1+|\nabla\eta|^2} ( \partial_x^{\varrho} \hat{A}_2 - \partial_y^{\varrho} \hat{A}_1 ) +  T_{ \partial_x^{\varrho} \hat{A}_2 - \partial_y^{\varrho} \hat{A}_1 } \, |\nabla\eta|^2  - T_{\nabla\eta} \cdot ( \underline{ \hat{\mathbb{V}} }_{II}[\eta]\hat{\bmomega} ) \nonumber \\
&\qquad - T_{ \underline{ \hat{\mathbb{V}} }_{II}[\eta]\hat{\bmomega} } \cdot \nabla\eta   - T_{ \underline{ \hat{\mathbb{W}} }_{II}[\eta]\hat{\bmomega} } |\nabla\eta|^2 - T_{ |\nabla\eta|^2 } \, ( \underline{ \hat{\mathbb{W}} }_{II}[\eta]\hat{\bmomega} )  - \hat{\mathcal{G}}_{gen,II}[\eta]\hat{\bmomega} \in H^{2n-3}(\mathbb{R}^2),  \label{eq:ParalinGDNO}
\end{align}
where we recall that $\underline{ \hat{\mathbb{V}} }_{II}[\eta]\hat{\bmomega}=(\curl \hat\bmA)_\rmh |_{w=0}$ and $\underline{ \hat{\mathbb{W}} }_{II}[\eta]\hat{\bmomega}=(\curl \hat{\bmA})_3 |_{w=0}$.

Now, observe that $\partial_y^{\varrho} \hat{A}_1 = \hat{A}_{1y} -  \frac{1}{\delta} \eta_y \hat{A}_{1w}$ and $\partial_x^{\varrho} \hat{A}_2 = \hat{A}_{2x} -  \frac{1}{\delta} \eta_x \hat{A}_{2w}$, hence the good unknown $\hat{\bmB}$ given by \eqref{eq:paraGood} satisfies
\begin{align*}
\hat{B}_{1y} +  \frac{1}{\delta} T_{ \hat{A}_{1yw} } \eta -  \frac{1}{\delta} T_{\eta_y} \hat{B}_{1w} -  \frac{1}{\delta^2} T_{\eta_y} T_{ \hat{A}_{1ww} } \eta - \partial_y^{\varrho} \hat{A}_1   &\in H^{2n-3}(\mathbb{R}^2), \\
\hat{B}_{2x} +  \frac{1}{\delta} T_{ \hat{A}_{2xw} } \eta -  \frac{1}{\delta} T_{\eta_x} \hat{B}_{2w} - \frac{1}{\delta^2} T_{\eta_x} T_{ \hat{A}_{2ww} } \eta - \partial_x^{\varrho} \hat{A}_2   &\in H^{2n-3}(\mathbb{R}^2).
\end{align*}
If we insert the above formulae in \eqref{eq:ParalinGDNO} we get
\begin{align}
\hat{\mathcal{G}}_{gen,II}[\eta]\hat{\bmomega} &=  T_{ 1+|\nabla\eta|^2 } \bigg( \hat{B}_{2x} - \hat{B}_{1y} + \frac{1}{\delta} T_{ \hat{A}_{2xw} } \eta - \frac{1}{\delta} T_{ \hat{A}_{1yw} } \eta - \frac{1}{\delta}  T_{\eta_x} \hat{B}_{2w} + \frac{1}{\delta}  T_{\eta_y} \hat{B}_{1w} \nonumber \\
&\qquad \qquad \quad - \frac{1}{\delta^2}  T_{\eta_x} T_{ \hat{A}_{2ww} } \eta + \frac{1}{\delta^2} T_{\eta_y} T_{ \hat{A}_{1ww} } \eta \bigg)  \nonumber \\
&\qquad - T_{\nabla\eta} \cdot ( \underline{ \hat{\mathbb{V}} }_{II}[\eta]\hat{\bmomega} )  - T_{ \underline{ \hat{\mathbb{V}} }_{II}[\eta]\hat{\bmomega} } \cdot \nabla\eta - T_{ |\nabla\eta|^2 } \, ( \underline{ \hat{\mathbb{W}} }_{II}[\eta]\hat{\bmomega} )  + \mathcal{R}_{II}, \nonumber \\
&\phantom{} \label{eq:ParalinGDNO2} \\
\mathcal{R}_{II} &\in \;  H^{2n-3}(\mathbb{R}^2), \nonumber 
\end{align}
where by evaluating the interior equation \eqref{eq:main_eq_sys} at the boundary we have
\begin{align*}
\hat{A}_{1ww}  &= \frac{\delta^2}{1+|\nabla\eta|^2} \, \left[ - \Delta \hat{A}_1 + \frac{2}{\delta} \nabla\eta \cdot \nabla \hat{A}_{1w} + \frac{1}{\delta}  \Delta\eta \, \nabla \hat{A}_{1w} - \hat{\omega}_1 \right]  , \\
\hat{A}_{2ww}  &= \frac{\delta^2}{1+|\nabla\eta|^2} \, \left[ - \Delta \hat{A}_2 + \frac{2}{\delta} \nabla\eta \cdot \nabla \hat{A}_{2w} + \frac{1}{\delta} \Delta\eta \, \nabla \hat{A}_{2w} - \hat{\omega}_2 \right]  . 
\end{align*}

Now, recall that Corollary \ref{cor:EqGoodUnknown} implies that for $\varepsilon >0$
\begin{equation*}
\begin{cases}
\hat{B}_{1w} - \left( T_{M_{11}} \hat{B}_1 + T_{M_{12}} \hat{B}_2  + T_{M_{13}} \hat{B}_3  \right) - \int_{-h}^0 (T_{\mathscr{E} } \hat{\bmF}_0)_1(z)  \, \mathrm{d}z   \in H^{2n-3-\varepsilon}(\mathbb{R}^2) , \\ \\
\hat{B}_{2w} - \left( T_{M_{21}} \hat{B}_1 + T_{M_{22}} \hat{B}_2  + T_{M_{23}} \hat{B}_3  \right) - \int_{-h}^0 (T_{\mathscr{E} } \hat{\bmF}_0)_2(z)  \, \mathrm{d}z  \in H^{2n-3-\varepsilon}(\mathbb{R}^2) ,
\end{cases}
\end{equation*}
so that if we introduce $\lambda_{II} \in M_{1,3}(\Sigma^1_{n-2}(\mathbb{R}^2))$ as
\begin{align} 
\lambda_{II} &\coloneqq -
\begin{pmatrix}
1 & 1 & 0
\end{pmatrix} \; \; (1+|\nabla\eta|^2) \times \nonumber \\
&\qquad \times \; \left[ 
\begin{pmatrix}
0 & \mathrm{i} \xi_1 & 0 \\
-\mathrm{i} \xi_2 & 0 & 0 \\
0 & 0 & 0
\end{pmatrix}
+ \frac{1}{\delta}
\begin{pmatrix}
T_{\eta_y} T_{M_{11}} &  T_{\eta_y} T_{M_{12}} &  T_{\eta_y} T_{M_{13}} \\
- T_{\eta_x} T_{M_{21}} & -  T_{\eta_x} T_{M_{22}} & -  T_{\eta_x} T_{M_{23}} \\
0 & 0 & 0
\end{pmatrix}
\right]  
, \label{eq:lambdaII}
\end{align}
we obtain from \eqref{eq:ParalinGDNO2} that
\begin{align}
&\hat{\mathcal{G}}_{gen,II}[\eta]\hat{\bmomega} \nonumber \\
&=  T_{\lambda_{II}} \hat{\bmB} + \frac{1}{\delta} T_{1+|\nabla\eta|^2} \, T_{ \gradp\eta } \cdot \left[ \int_{-h}^0 (T_{\mathscr{E} } \hat{\bmF}_0)_{\rmh}(z)  \, \mathrm{d}z \right] \nonumber \\ 
&\;\;\;\; + T_{1+|\nabla\eta|^2} \left( \frac{1}{\delta}  T_{\hat{A}_{2xw}}\eta - \frac{1}{\delta} T_{\hat{A}_{1yw}}\eta - \frac{1}{\delta^2} T_{\eta_x} T_{\hat{A}_{2ww}}\eta + \frac{1}{\delta^2} T_{\eta_y} T_{\hat{A}_{1ww}}\eta \right) \nonumber \\
&\;\;\;\; - T_{\nabla\eta} \cdot ( \underline{ \hat{\mathbb{V}} }_{II}[\eta]\hat{\bmomega} ) - T_{ \underline{ \hat{\mathbb{V}} }_{II}[\eta]\hat{\bmomega} } \cdot \nabla\eta - T_{ |\nabla\eta|^2 } \, ( \underline{ \hat{\mathbb{W}} }_{II}[\eta]\hat{\bmomega} )  + \mathcal{R}_{II}, \nonumber \\
&\phantom{} \label{eq:ParalinGDNO3}  
\end{align}
where $\mathcal{R}_{II} \in \;  H^{2n-3-\varepsilon}(\mathbb{R}^2) $.

\section*{Acknowledgements}

The author would like to warmly thank Thomas Alazard and Erik Wahl\'en for useful comments and suggestions. This research was supported by the Swedish Research Council under grant no.\ 2021-06594 while the author was in residence at Institut Mittag-Leffler in Djursholm, Sweden in Autumn 2023. S. Pasquali is supported by PRIN 2022 ``Turbulent effects vs Stability in Equations from Oceanography" (TESEO), project number: 2022HSSYPN. S. Pasquali would like to thank INdAM-GNAMPA.

\begin{appendix}

\section[Proof of Proposition $\ref{prop:CLSolBVP}$]{Proof of Proposition \ref{prop:CLSolBVP} } \label{sec:proofDivCurl}

We now prove Proposition \ref{prop:CLSolBVP}. In order to solve the div-curl problem \eqref{eq:BVP}, recall that
\begin{align*}
H(\Div,D_\eta) &\coloneqq \{ \bmF \in (L^{2}(D_\eta))^3 : \Div \bmF \in L^{2}(D_\eta)   \} ,  \\
H(\curl,D_\eta) &\coloneqq \{ \bmF \in (L^{2}(D_\eta))^3 : \curl \bmF \in (L^{2}(D_\eta))^3   \} . 
\end{align*}
It follows that if $\bmomega \in (L^{2}(D_\eta)^3$ and $\bmU$ solves the first two equations of \eqref{eq:BVP}, one can take the normal and the tangential traces at the boundary, from which we derive the boundary conditions of \eqref{eq:BVP}.

Without loss of generality, we can reduce \eqref{eq:BVP} to the case $\Phi=0$. Indeed, let $\varphi \in \dot{H}^2(D_\eta)$ be the solution of the boundary value problem
\begin{equation} \label{eq:BVPups}
\begin{cases}
\Delta_{\bmx,z} \varphi = 0 , & \text{in} \; \; D_\eta, \\
\varphi = \Phi , & \text{at} \; \; z=\eta, \\
\partial_n \varphi = 0 , & \text{at} \; \; z=-h, \\
\end{cases}
\end{equation}
(its existence is guaranteed by the results in Sec. 2.1.3 and Sec. 2.5.2 of \cite{lannes2013water}), then, if we define $\bmv = \bmU - \nabla_{\bmx,z} \varphi$, we have
\begin{equation} \label{eq:BVPv}
\begin{cases}
\curl \bmv = \bmomega, & \text{in} \; \; D_\eta, \\
\Div \bmv = 0 & \text{in} \; \; D_\eta, \\
\bmv_{\parallel} = \gradp\Psi & \text{at} \; \; z=\eta, \\
\bmv \cdot \bme_3 =0 , & \text{at} \; \; z=-h,
\end{cases}
\end{equation}
which is equivalent to \eqref{eq:BVP} with $\Phi=0$. We look for a solution of \eqref{eq:BVPv} of the form $\bmv=\curl \bmA$, where the vector potential $\bmA$ satisfies
\begin{equation} \label{eq:BVPA}
\begin{cases}
\curl \, \curl \bmA = \bmomega , & \text{in} \; \; D_\eta , \\
\bmA \times \bme_3 = \bmzero , & \text{at} \; \; z=-h ,  \\
\bmA \cdot \bmN = 0 , & \text{at} \; \; z=\eta ,  \\
(\curl \bmA)_{\parallel} = -\gradp \Delta^{-1} (\bmomega \cdot \bmN) , & \text{at} \; \; z=\eta .
\end{cases}
\end{equation}
Before proving the existence of a vector potential $\bmA$ solving \eqref{eq:BVPA}, we prove some technical lemmas.

\begin{lemma} \label{lem:TecLemma}

Let $k > \frac{5}{2}$, let $\eta \in H^{k}(\mathbb{R}^2)$ satisfies condition \eqref{eq:StrConnected}. Let also $\bmA \in H(\Div,D_\eta) \cap H(\curl,D_\eta)$ be such that
\begin{equation*}
\begin{cases}
\bmA \times \bme_3 = \bmzero , & \text{at} \; \; z=-h ,  \\
\bmA \cdot \bmN = 0 , & \text{at} \; \; z=\eta .
\end{cases}
\end{equation*}
Then there exists $C>0$ such that
\begin{align*}
& \| \bmA \|_{(L^2(D_\eta))^3} + \|\nabla_{\bmx,z}  \bmA \|_{ (L^2(D_\eta))^{3 \times 3} }^2 \\
&\leq C( h,\|\eta\|_{H^{k}(\mathbb{R}^2)} ) \, \left( \| \curl\bmA \|_{(L^2(D_\eta))^3}^2 + \| \Div\bmA \|_{L^2(D_\eta)}^2  \right) .
\end{align*}

\end{lemma}

\begin{proof}

We just prove the estimate
\begin{align*}
\|\nabla_{\bmx,z}  \bmA \|_{(L^2(D_\eta))^{3 \times 3} }^2 &\leq \| \curl\bmA \|_{(L^2(D_\eta))^3}^2 + \| \Div\bmA \|_{L^2(D_\eta)}^2 +C \; \|\eta\|_{H^{k}(\mathbb{R}^2)  } \; \| \underline{\bmA} \|_{(L^2( \mathbb{R}^2 ))^3}^2 ,
\end{align*}
since the thesis can be deduced by combining Lemma 3.1 of \cite{castro2015well}, together with the above estimate.

Here we adopt the convention of summing over repeated indices and we write $(\partial_i)_{i=1,2,3} =: \nabla_{\bmx,z}$. By arguing as in the proof of Lemma 3.2 of  \cite{castro2015well}, we obtain
\begin{align} 
 \| \nabla_{\bmx,z}\bmA \|_{( L^2(D_{\eta}) )^{3 \times 3} }^2 &=  \sum_{i,j=1}^3 \, \int_{D_\eta} |\partial_j A_i|^2 = \| \curl\bmA \|_{ (L^2(D_\eta))^3 }^2 +  \int_{ D_{\eta} } \partial_j A_i \, \partial_i A_j . \label{eq:nablaA}
\end{align}

Following Sec. 3.1.1 of \cite{alazard2014cauchy}, we rewrite the last integral exploiting a regularizing diffeomorphism in \eqref{eq:straight},
\begin{align*}
\int_{ D_{0} } \partial_j^{\Sigma} \tilde{A}_i \, \partial_i^{\Sigma} \tilde{A}_j \ (1+\sigma_w)  &= -  \int_{ D_{0} }  \tilde{A}_i \, \partial_j^{\Sigma} \partial_i^{\Sigma} \tilde{A}_j \, (1+\sigma_w)   - \sigma_{wj} \, \tilde{A}_i \, \partial_i^{\Sigma} \tilde{A}_j \\
&\;\; + \int_{ \partial D_0 } \left[   n_j \, \tilde{A}_i \, \partial_i^{\Sigma}\tilde{A}_j - \frac{ \sigma_j }{ 1+\sigma_w } \, \tilde{A}_i \, \partial^{\Sigma}_i \tilde{A}_j \right] (1+\sigma_w) ,
\end{align*}
where we are using the notation of Sec. \ref{sec:strBVP}. Notice that
\begin{align*}
\left| \int_{ D_{0} }  \tilde{A}_i \, \partial_j^{\Sigma} \partial_i^{\Sigma} \tilde{A}_j \ (1+\sigma_w)  \right| &\stackrel{ \eqref{eq:smoothing} }{\leq} \| \Div\bmA \|_{L^2(D_\eta)}^2 + \|\eta\|_{H^k(\mathbb{R}^2)} \, \|  \partial_j^{\Sigma} \tilde{A}_j \|_{ L^2(D_0) } \, \| \tilde{A}_i \|_{ L^2(D_0) } \\
&\qquad + C( \|\sigma\|_{C^1(D_0)} ) \left[ \left| \int_{\partial D_0 } n_i \tilde{A}_i \partial_j^{\Sigma} \tilde{A}_j \right| + \| \tilde{\bmA} \|_{ ( L^2(D_0) )^3 }^2 \right] .
\end{align*}
Hence,
\begin{align*}
&\| \nabla_{\bmx,z}\bmA \|_{( L^2(D_{\eta}) )^{3 \times 3} }^2 \\
&\leq  \| \curl\bmA \|_{ (L^2(D_\eta))^3 }^2 + \| \Div\bmA \|_{L^2(D_\eta)}^2  \\
&\;\; + 2 \|\eta\|_{H^k(\mathbb{R}^2)} \, \|  \partial_j^{\Sigma} \tilde{A}_j \|_{ L^2(D_0) } \, \| \tilde{A}_i \|_{ L^2(D_0) } \\
&\;\; + C( \|\sigma\|_{C^1(D_0)} ) \left[ \left| \int_{\partial D_0 } n_i \tilde{A}_i \partial_j^{\Sigma} \tilde{A}_j \right| + \left| \int_{ \partial D_0 } n_j \, \tilde{A}_i \, \partial_i^{\Sigma}\tilde{A}_j  \right| +  \| \tilde{\bmA} \|_{ ( L^2(D_0) )^3 }^2 \right]
\end{align*}

Therefore, by exploiting that $\bmA \in H(\Div,D_{\eta})$ and Lemma 3.1 in \cite{castro2015well} 
\begin{align*}
\| \nabla_{\bmx,z}\bmA \|_{( L^2(D_{\eta}) )^{3 \times 3} }^2 &\leq \left( 1+ C( h,  \|\sigma\|_{C^1(D_0)} ) \right) \times \\
&\times  \bigg\{  \| \curl\bmA \|_{ (L^2(D_\eta))^3 }^2 + \| \Div\bmA \|_{L^2(D_\eta)}^2 \\
&\qquad   + C( \|\sigma\|_{C^1(D_0)} ) \left[ \left| \int_{\partial D_0 } n_i \tilde{A}_i \partial_j^{\Sigma} \tilde{A}_j \right| + \left| \int_{ \partial D_0 } n_j \, \tilde{A}_i \, \partial_i^{\Sigma}\tilde{A}_j  \right| \right] \bigg\} .
\end{align*}

Now we evaluate the boundary integral on the right-hand side of \eqref{eq:nablaA}. When we consider the bottom contribution, we observe that 
\begin{align*}
(\bmn \cdot \tilde{\bmA}) \; \Div^{\Sigma} \tilde{\bmA} |_{w=-h} &= \tilde{\bmA} \cdot \partial_n^{\Sigma} \tilde{\bmA} ,
\end{align*}
because the bottom is flat. Since
\begin{align*}
\int_{ \{ w=-h \} } n_i \,\tilde{A}_j \, (\partial_j^{\Sigma} \tilde{A}_i) &= \int_{ \{ w=-h \} } \tilde{\bmA} \cdot \partial_n^{\Sigma} \tilde{\bmA }
\end{align*}
because $\tilde{\bmA}$ is normal to the bottom, we obtain that 
\begin{align*}
\int_{ \{w=-h\}   } n_i \, \tilde{A}_j \, (\partial_j^{\Sigma} \tilde{A}_{i}) - (\bmn \cdot \tilde{\bmA}) \, \Div^{\Sigma} \tilde{\bmA} &= 0 .
\end{align*}
If we consider the contribution at the free surface, by using \eqref{eq:smoothing} we have that there exists $C>0$ such that
\begin{align*}
\int_{ \{ w=0 \} } n_j \, \tilde{A}_i \, (\partial_i^{\Sigma} \tilde{A}_j) &\leq C \, \|\eta\|_{H^{k}(\mathbb{R}^2)} \, \| \tilde{\bmA} |_{w=0} \|_{(L^2( \mathbb{R}^2 ))^3}^2 .
\end{align*}

Hence, we can deduce that there exists $C>0$ such that
\begin{align*}
\int_{ \{w=0\}   } n_i \, \tilde{A}_j \, (\partial_j^{\Sigma} \tilde{A}_{i}) - (\bmn \cdot \tilde{\bmA}) \, \Div^{\Sigma} \tilde{\bmA} &\leq C \;  \, \|\eta\|_{H^{k}(\mathbb{R}^2)} \, \| \tilde{\bmA} |_{w=0} \|_{(L^2( \mathbb{R}^2 ))^3}^2 .
\end{align*}

\end{proof}

We now construct a solution to \eqref{eq:BVPA}, imposing in addition that $\bmA$ is divergence free. More explicitly, we study the boundary value problem
\begin{equation} \label{eq:newBVPA}
\begin{cases}
\curl \, \curl \bmA = \bmomega , & \text{in} \; \; D_\eta , \\
\Div \, \bmA = 0 , & \text{in} \; \; D_\eta , \\
\bmA \times \bme_3 = \bmzero , & \text{at} \; \; z=-h ,  \\
\bmA \cdot \bmN = 0 , & \text{at} \; \; z=\eta ,  \\
(\curl \bmA)_{\parallel} = \gradp \Psi , & \text{at} \; \; z=\eta ,
\end{cases}
\end{equation}
with $\Psi \in \dot{H}^{1/2}(\mathbb{R}^2)$. Instead of adopting an argument analogous to Definition 3.4 - Lemma 3.6 in \cite{castro2015well},  we restate the boundary value problem \eqref{eq:systA} in an equivalent way, along the lines of the approach outlined in Sec. 4 of \cite{groves2020variational} for Beltrami flows; this will allow us to obtain a solution of \eqref{eq:systA} by solving (the flattened version of) an equivalent problem, whose solution can be explicitly written in terms of a Green matrix. \\

Let $k> \frac{5}{2}$, and let $\eta \in H^{k}(\mathbb{R}^2)$ be such that \eqref{eq:StrConnected} holds true. Let us assume that $\bmomega \in H_b( \Div_0,D_{\eta})$ and that $\Phi \in \dot{H}^{3/2}(\mathbb{R}^2)$; we introduce the following subspace of $( H^1(D_{\eta}) )^3$,
\begin{align} \label{eq:Xeta}
\mathfrak{X}_{\eta} &\coloneqq \{ \bmF \in ( H^1(D_{\eta}) )^3 : \bmF \times \bme_3|_{z=-h} = \bmzero, \bmF \cdot \bmn = 0 \}.
\end{align}
By Corollary \ref{cor:Xeta} we introduce the following notion: we call a \emph{weak solution} of \eqref{eq:systA} a function $\bmA \in \mathfrak{X}_{\eta}$  such that
\begin{align} \label{eq:WeakSol}
\int_{D_{\eta}} \curl\bmA \cdot \curl\bmB + \Div\bmA \, \Div\bmB &= \int_{D_{\eta}} \bmomega \cdot \bmB + \int_{\mathbb{R}^2}  \nabla \, \Delta^{-1}(\underline{\bmomega} \cdot \bmN)  \cdot \bmB_{\parallel}, \;\;  \forall \bmB \in \mathfrak{X}_{\eta}, 
\end{align}
while we call a \emph{strong solution} of \eqref{eq:systA} a weak solution such that $\bmA \in ( H^2(D_{\eta}) )^3$ is solenoidal and satisfies 
\begin{align*}
\curl \curl \bmA &= \bmomega , \; \; \text{in} \; \; D_{\eta}, 
\end{align*}
in $(L^2(D_{\eta}))^3$, and
\begin{align*}
(\curl\bmA)_{\parallel} &= - \gradp \, \Delta^{-1} (\underline{\bmomega} \cdot \bmN) 
\end{align*}
in $(H^{1/2}(\mathbb{R}^2))^2$. \\

We mention an immediate consequence of Lemma \ref{lem:TecLemma} regarding a characterization of a closed subspace of $( H^1(D_{\eta}) )^3$; such result will be used in Sec. \ref{sec:GDNOhom} for the homogeneous expansion of the generalized Dirichlet--Neumann operator (see also Proposition 4.2 in \cite{groves2020variational} for an analogous result for Beltrami flows).

\begin{corollary} \label{cor:Xeta}

Let $k > \frac{5}{2}$, and let $\eta \in H^{k}(\mathbb{R}^2)$ be such that \eqref{eq:StrConnected} holds true. Then
\begin{align*}
\mathfrak{X}_{\eta} &= \big\{ \bmF \in ( L^2(D_{\eta}) )^3 : \curl \bmF \in ( L^2(D_{\eta}) )^3, \Div\bmF \in L^2(D_{\eta}), \\
&\qquad \qquad \qquad \qquad \quad \bmF \times \bme_3|_{z=-h} = \bmzero, \bmF \cdot \bmn = 0 \big\} ,
\end{align*}
and the function $\bmF \mapsto \left( \| \curl \bmF \|_{( L^2(D_{\eta}) )^3 } + \| \Div \bmF \|_{ L^2(D_{\eta})  } \right)^{1/2}$ is equivalent to its usual norm.
\end{corollary}

Now we prove state two result about the existence of weak and strong solutions for \eqref{eq:systA}. These results can be proved as Lemma 4.5 and Lemma 4.7 of \cite{groves2020variational}.

\begin{proposition} \label{prop:ExistWeak}

Let $\bmomega \in H_b( \Div_0,D_{\eta})$ and $\Phi \in \dot{H}^{1/2}(\mathbb{R}^2)$, then the boundary value problem \eqref{eq:systA} admits a weak solution $\bmA$. Moreover, $\bmA$ is solenoidal, and it satisfies
\begin{align*}
\curl \curl \bmA &= \bmomega , \; \; \text{in} \; \; D_{\eta}, 
\end{align*}
in the sense of distributions, and it satisfies
\begin{align*}
(\curl\bmA)_{\parallel} &= - \gradp \, \Delta^{-1} (\underline{\bmomega} \cdot \bmN)
\end{align*}
in $(H^{-1/2}(\mathbb{R}^2))^2$.
\end{proposition}

\begin{proposition} \label{prop:WeakStrong}

Let $\bmomega \in H_b( \Div_0,D_{\eta})$ and $\Phi \in \dot{H}^{3/2}(\mathbb{R}^2)$, then any weak solution $\bmA$ of the boundary value problem \eqref{eq:systA} is a strong solution. 
\end{proposition}

Finally, we state a more quantitative reformulation of Proposition \ref{prop:ExistWeak} (for the proof see Lemma 3.5 in \cite{castro2015well}).

\begin{lemma} \label{lem:TecLemma41}

Let $k > \frac{5}{2}$, and assume that $\eta \in H^{k}(\mathbb{R}^2)$ satisfies condition \eqref{eq:StrConnected}, and let $\bmomega \in H_b( \Div_0,D_{\eta})$ and $\Psi \in \dot{H}^{1/2}(\mathbb{R}^2)$. Then there exists a unique $\bmA \in \mathfrak{X}_{\eta}$ to \eqref{eq:newBVPA}. Moreover, there exists $C>0$ be such that 
\begin{align*}
\| \curl\bmA \|_{(L^2(D_\eta))^3} &\leq C( h,\|\eta\|_{H^{k}(\mathbb{R}^2)} ) \, \left( \| \bmomega \|_{(L^2(D_\eta))^3} + \| \nabla\Psi \|_{ ( H^{-1/2}(\mathbb{R}^2) )^2 }  \right) .
\end{align*}

\end{lemma}

Now consider $\bmA \in \mathfrak{X}_\eta$ as the solution given by Proposition \ref{prop:ExistWeak} and Proposition \ref{prop:WeakStrong}, and set $\bmv \coloneqq \curl\bmA$. We have that $\bmv \in H(\Div,D_{\eta}) \cap H(\curl,D_{\eta})$, and from Lemma \ref{lem:TecLemma41} we get
\begin{align} \label{eq:bfvEst}
\| \bmv \|_{ (L^2(D_\eta))^3 } &\leq C(h,\|\eta\|_{H^{k}(\mathbb{R}^2)} ) \, \left[ \|\bmomega\|_{(L^2(D_\eta))^3} + \|\nabla\Psi\|_{ ( H^{-1/2}(\mathbb{R}^2) )^2 }  \right] .
\end{align}

We conclude the proof of Proposition \ref{prop:CLSolBVP} by setting $\bmU = \bmv + \nabla_{\bmx,z} \varphi$, where $\varphi$ solves \eqref{eq:BVPups}, and arguing as in Sec. 3 of \cite{castro2015well}. The uniqueness of the solution $\bmU$ follows from Lemma 3.8 of \cite{castro2015well}, which also leads to the estimate
\begin{align*}
\| \nabla_{\bmx,z} \varphi \|_{( H^1(D_{\eta}) )^3} &\leq C( h_0^{-1} , \|\eta\|_{H^{k}(\mathbb{R}^2)} ) \, \| \nabla\Phi \|_{ ( H^{1/2}(\mathbb{R}^2) )^2 } .
\end{align*}

\section{Paradifferential calculus} \label{sec:Paradiff}

Here we recall some standard rules of paradifferential calculus (see Sec. 5 of \cite{metivier2008paradifferential} and Sec. 4 of \cite{alazard2009paralinearization}). The paradifferential calculus was introduced by Bony in order to deal with the quantization of symbols $a(\bmx,\bmxi)$ of degree $m$ with respect to $\bmxi$ and limited regularity in the $\bmx$-variable, to which are associated operators of order $\leq m$ denoted by $T_a$.

For any $u \in \mathcal{S}'(\mathbb{R}^d)$ we denote its Fourier transform by $\mathscr{F}u$. If $p: \mathcal{S}(\mathbb{R}^d) \to \mathcal{S}'(\mathbb{R}^d)$, then 
\begin{align*}
p(\bfD)u &:= \mathscr{F}^{-1}(p \; \mathscr{F}u).
\end{align*}
In the following we also write $\mathbb{R}^d_{\neq} := \mathbb{R}^d\setminus\{ \bmzero \}$.

\begin{definition} \label{def:symbolsPara}
Let $\varrho \geq 0$ and $m \in \mathbb{R}$, we denote by $\Gamma^m_\varrho(\mathbb{R}^d)$ the space of locally bounded functions $a(\bmx,\bmxi) \in \mathbb{R}^d \times \mathbb{R}^d_{\neq} $ which are of class $C^\infty$ with respect to $\bmxi$ for $\bmxi \neq \bmzero$ and such that for all $\alpha \in \mathbb{N}^2$ and all $\bmxi \neq \bmzero$ the map $\bmx \mapsto \pd_{\bmxi}^\alpha a(\bmx,\bmxi)$ belongs to $C^\varrho(\mathbb{R}^d)$ and there exists $C_\alpha >0$ such that
\begin{align}
\forall |\bmxi| \geq 1/2, \; \; \; \|\pd_{\bmxi}^\alpha a(\cdot,\bmxi)\|_{ C^\varrho(\mathbb{R}^d) } &\leq C_\alpha (1 + |\bmxi|)^{m-|\alpha|}. \label{eq:symbolsEst}
\end{align}
\end{definition}

Note that in the above definition we considered symbols which are not necessarily smooth for $\bmxi = \bmzero$. The motivation comes from the principal symbol $\lambda^{(1)}(\bmx,\bmxi)$ of the classical Dirichlet--Neumann operator. One can notice that this symbol is not of class $C^\infty$ with respect to the variable $\bmxi$; moreover, if $\eta \in C^s(\mathbb{R}^d)$, then $\lambda^{(1)} \in \Gamma^1_{s-1}(\mathbb{R}^d)$. 

We now introduce the space of pluri-homogeneous symbols.

\begin{definition} \label{def:symbolsHomPara}
Let $\varrho \geq 1$ and $m \in \mathbb{R}$, we denote by $\Sigma^m_\varrho(\mathbb{R}^d)$ the space of symbols of the form
\begin{align*}
a(\bmx,\bmxi) &= \sum_{0 \leq j < \varrho} a_{m-j}(\bmx,\bmxi),
\end{align*}
where $a_{m-j} \in \Gamma^{m-j}_{\varrho-j}(\mathbb{R}^d)$ is homogeneous of degree $m-j$ in the variable $\bmxi$ and is of class $C^\infty$ in the variable $\bmxi$ for $\bmxi \neq \bmzero$, and with regularity $C^{\varrho -j}$ in the variable $\bmx$. We say that $a_m$ is the principal symbol of $a$.
\end{definition}

In order to define rigorously paradifferential operators we first introduce a fixed cutoff functions $\chi$. Fix $\varepsilon_1$, $\varepsilon_2$ such that $0 < 2 \varepsilon_1 < \varepsilon_2 < 1/2$ and a function $f \in C^{\infty}_0(\mathbb{R}^d)$ satisfying
$f (t) = f (-t)$, $f (t) = 1$ for $|t| \leq 2 \varepsilon_1$ and $f (t) = 0$ for $|t| \geq \varepsilon_2$; then set
\begin{align*}
\chi(\bmxi_1 , \bmxi_2 ) = (1 - f (\bmxi_2 )) \, f( |\bmxi_1| / |\bmxi_2| ) .
\end{align*}
Observe that the cutoff function satisfies the following properties:
\begin{itemize}
\item[i.] there exists $\varepsilon_1$, $\varepsilon_2$ satisfying $0 < 2 \varepsilon_1 < \varepsilon_2 < 1/2$ such that
\begin{align*}
\chi(\bmxi_1 , \bmxi_2 ) &= 1, \; \; \text{if} \; \; |\bmxi_1 | \leq \varepsilon_1 |\bmxi_2 | \; \; \text{and} \; \; |\bmxi_2 | \geq 2, \\
\chi(\bmxi_1 , \bmxi_2 ) &= 0, \; \; \text{if} \; \; |\bmxi_1 | \geq \varepsilon_2 |\bmxi_2 | \; \; \text{or} \; \; |\bmxi_2 | \geq 1 ;
\end{align*}
\item[ii.] for all $(\alpha, \beta) \in \mathbb{N}^d$, there exists $C_{\alpha,\beta}>0$ such that
\begin{align*}
| \partial_{\bmxi_1}^{\alpha} \partial_{\bmxi_2}^{\beta}  \chi(\bmxi_1 , \bmxi_2 ) | &\leq C_{\alpha,\beta} \, (1 + |\bmxi_2 |)^{ -|\alpha|-|\beta|} ;
\end{align*}
\item[iii.] $\chi$ satisfies the following symmetry condition,
\begin{align*}
\chi(\bmxi_1,\bmxi_2) &= \chi(-\bmxi_1,-\bmxi_2) \, = \, \chi(-\bmxi_1,\bmxi_2) .
\end{align*}
\end{itemize}

Given a symbol $a \in \Sigma^m_\varrho(\mathbb{R}^d)$, we define the paradifferential operator $T_a$ by
\begin{align}
\mathscr{F}(T_au)(\bmxi) &:= (2\pi)^{-d} \int_{\mathbb{R}^d} \chi(\bmxi-\bmxi',\bmxi') \; \; \mathscr{F}a(\bmxi-\bmxi',\bmxi') \; \; \mathscr{F}u(\bmxi') \; \mathrm{d}\bmxi', \label{eq:ParadiffOp}
\end{align}
where $\mathscr{F}a$ is the Fourier transform of $a$ with respect to the first variable.  \\

We notice that if $Q(D)$ is a Fourier multiplier with symbol $q(\bmxi)$, we do not have $Q(D) = T_q$, because of the cut-off function used in the definition \eqref{eq:ParadiffOp}; however, we have $Q(D) = T_q + R$, where $R: H^t(\mathbb{R}^d) \to H^\infty(\mathbb{R}^d)$ for all $t \in \mathbb{R}$.

\begin{definition} \label{eq:OrdOp}
Let $m \in \mathbb{R}$, then an operator $T$ is of order $m$ if, for all $s \in \mathbb{R}$, it is bounded from $H^{s+m}(\mathbb{R}^d)$ to $H^{s}(\mathbb{R}^d)$.
\end{definition}

\begin{proposition} \label{prop:OrdParadiffOp}
Let $m \in \mathbb{R}$ and let $a \in \Gamma^m_0(\mathbb{R}^d)$, then $T_a$ is of order $m$.
\end{proposition}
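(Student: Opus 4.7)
\textbf{Proof plan for Proposition \ref{prop:OrdParadiffOp}.}

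The plan is to realise $T_a$ as a pseudodifferential operator with a smoothed symbol, establish symbol estimates of Hörmander type $S^m_{1,1}$, and then exploit the spectral localisation built into the cutoff $\chi$ to pass from the symbol estimates to boundedness via Littlewood--Paley decomposition. First I would introduce
\begin{equation*}
\sigma_a(\bmx,\bmxi) := \mathscr{F}_{\bmeta}^{-1}\!\bigl[\,\chi(\bmeta,\bmxi)\,\mathscr{F}_{\bmx}a(\bmeta,\bmxi)\,\bigr](\bmx),
\end{equation*}
so that, unwinding the definition \eqref{eq:ParadiffOp}, one has $T_a u = \sigma_a(\bmx,\bfD)u$.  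The point is that although $a$ is merely $L^\infty$ in $\bmx$, the function $\sigma_a$ is smooth in $\bmx$ because convolution with $\mathscr{F}_{\bmeta}^{-1}\chi(\cdot,\bmxi)$ (a bump localised at frequency $\lesssim \varepsilon_2|\bmxi|$) is smoothing.

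Next I would verify the symbol estimates
\begin{equation*}
\bigl|\partial_{\bmx}^{\alpha}\partial_{\bmxi}^{\beta}\sigma_a(\bmx,\bmxi)\bigr| \leq C_{\alpha,\beta}\,(1+|\bmxi|)^{m+|\alpha|-|\beta|},
\end{equation*}
which is the $S^m_{1,1}$ class. The $\bmx$-estimates come from writing $\partial_{\bmx}^{\alpha}\sigma_a(\cdot,\bmxi) = a(\cdot,\bmxi) \ast \partial_{\bmx}^{\alpha}\mathscr{F}_{\bmeta}^{-1}\chi(\cdot,\bmxi)$ and observing that the $L^1$-norm of the latter scales like $(1+|\bmxi|)^{|\alpha|}$ (by a rescaling argument, since $\chi(\bmeta,\bmxi)$ lives on a ball of radius $\sim|\bmxi|$). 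The $\bmxi$-estimates use the symbol bound \eqref{eq:symbolsEst} on $a$ together with the homogeneity properties of $\chi$ away from $\bmxi=\bmzero$.

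The third step uses the spectral property inherited from $\chi$: since $\mathrm{supp}\,\chi(\cdot,\bmxi) \subset \{|\bmeta|\leq \varepsilon_2|\bmxi|\}$, the Fourier transform of $\sigma_a(\cdot,\bmxi)$ is supported in $\{|\bmeta|\leq \varepsilon_2|\bmxi|\}$ as well. Consequently, if $\widehat{u}$ is supported in the dyadic shell $\{|\bmxi'|\sim 2^q\}$, then $\widehat{T_a u}$ is supported in $\{|\bmxi|\sim 2^q\}$ (up to a bounded number of neighbouring shells). Applying Plancherel together with the $L^\infty$-bound on $\sigma_a$ restricted to $|\bmxi|\sim 2^q$, one obtains $\|T_a \Delta_q u\|_{L^2} \leq C\,2^{qm}\,\|\Delta_q u\|_{L^2}$, and summing
\begin{equation*}
\|T_a u\|_{H^s}^2 \sim \sum_q 2^{2qs}\|\Delta_q T_a u\|_{L^2}^2 \leq C\sum_q 2^{2q(s+m)}\|\Delta_q u\|_{L^2}^2 \sim \|u\|_{H^{s+m}}^2
\end{equation*}
yields the claim.

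The main obstacle is the second step: because $a$ lies only in $\Gamma^m_0(\mathbb{R}^d)$, no $\bmx$-regularity of $a$ is available, so derivatives in $\bmx$ must be absorbed by the convolution kernel $\mathscr{F}_{\bmeta}^{-1}\chi(\cdot,\bmxi)$ at the cost of powers of $(1+|\bmxi|)$. This lands one in the delicate $S^m_{1,1}$ class, which is \emph{not} automatically $L^2$-bounded in general; boundedness here relies crucially on the spectral localisation of step three, which is why the specific paradifferential cutoff $\chi$ is used rather than a generic smoothing. Once this point is clear, the remaining estimates are standard manipulations with Littlewood--Paley.
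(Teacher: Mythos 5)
The paper does not actually prove Proposition \ref{prop:OrdParadiffOp}: it is recalled as a standard fact, with the proof delegated to the cited references (Sec.~5 of M\'etivier's notes and Sec.~4 of Alazard--M\'etivier). So there is no internal argument to compare with; your proposal reconstructs exactly the classical proof from those references, and its architecture --- writing $T_a=\sigma_a(\bmx,\bfD)$ with the smoothed symbol $\sigma_a$, establishing estimates of type $S^m_{1,1}$ (derivatives in $\bmx$ cost powers of $(1+|\bmxi|)$, derivatives in $\bmxi$ gain them), and then using the spectral localization $\mathrm{supp}\,\mathscr{F}_{\bmx}\sigma_a(\cdot,\bmxi)\subset\{|\bmxi-\bmxi'|\leq\varepsilon_2|\bmxi'|\}$ to sum dyadic blocks --- is the correct and standard one. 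Note also that since $\chi(\bmxi_1,\bmxi_2)$ vanishes for $|\bmxi_2|$ small, the possible lack of smoothness of $a$ at $\bmxi=\bmzero$ never enters, which quietly justifies your second step.

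The one place that needs tightening is your third step: ``Plancherel together with the $L^\infty$-bound on $\sigma_a$'' is not by itself enough to conclude $\|T_a\Delta_q u\|_{L^2}\leq C\,2^{qm}\|\Delta_q u\|_{L^2}$, because $\sigma_a$ depends on $\bmx$ and Plancherel does not diagonalize an $\bmx$-dependent symbol. The standard repair uses precisely the $\bmxi$-derivative bounds you proved in step two: for the block symbol $a_q(\bmx,\bmxi)=\sigma_a(\bmx,\bmxi)\tilde\psi(2^{-q}\bmxi)$ (with $\tilde\psi$ a fattened shell cutoff), derivatives in $\bmxi$ up to order $d+1$ are bounded by $C\,2^{qm}2^{-q|\beta|}$, so the kernel $K_q(\bmx,\bmz)=(2\pi)^{-d}\int e^{\mathrm{i}\bmz\cdot\bmxi}a_q(\bmx,\bmxi)\,\mathrm{d}\bmxi$ satisfies $|K_q(\bmx,\bmz)|\leq C\,2^{q(m+d)}(1+2^q|\bmz|)^{-d-1}$, and Schur's test then gives the block bound uniformly in $\bmx$. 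Combined with the fact that each output block meets only a fixed number of neighbouring input shells (your almost-orthogonality observation, valid because $\varepsilon_2<1/2$), the summation yields boundedness $H^{s+m}\to H^s$ for every $s\in\mathbb{R}$, including negative $s$. This is a matter of supplying the missing mechanism rather than a flaw in the strategy; with that substitution your argument is complete.
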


Recall the following properties
\begin{proposition} \label{prop:CompParadiffOp}
Let $m_1,m_2 \in \mathbb{R}$ and let $a \in \Gamma^{m_1}_1(\mathbb{R}^d)$, $b \in \Gamma^{m_2}_1(\mathbb{R}^d)$, then $T_a T_b - T_{ab}$ is of order $m_1+m_2-1$.
\end{proposition}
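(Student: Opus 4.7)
The plan is to derive Proposition \ref{prop:CompParadiffOp} as a direct specialisation of the general asymptotic composition formula for paradifferential operators (Proposition \ref{prop:AsympParadiffOp} invoked in the proof of Lemma \ref{lem:factorisation}), which asserts that for $a \in \Gamma^{m_1}_\varrho(\mathbb{R}^d)$ and $b \in \Gamma^{m_2}_\varrho(\mathbb{R}^d)$ with $\varrho > 0$, one has
\begin{equation*}
T_a \, T_b \,-\, T_{ a \#_\varrho b } \;\;\text{is of order}\;\; m_1+m_2-\varrho, \qquad a \#_\varrho b \, = \, \sum_{|\alpha| < \varrho} \frac{(-\mathrm{i})^{|\alpha|}}{\alpha!}\, \partial_{\bmxi}^\alpha a \; \partial_{\bmx}^\alpha b .
\end{equation*}
Specialising to $\varrho = 1$, the summation defining $a \#_1 b$ collapses to the single index $\alpha = \bmzero$, giving $a \#_1 b = a \, b$, and the statement follows immediately. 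This reduces the task to checking that $ab \in \Gamma^{m_1+m_2}_1(\mathbb{R}^d)$, which is a consequence of the Leibniz rule applied to the symbol estimates \eqref{eq:symbolsEst} and of the fact that $C^1(\mathbb{R}^d)$ is a Banach algebra.

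If one instead wishes to prove the statement from first principles, my plan would be to work on the Fourier side. Using the defining formula \eqref{eq:ParadiffOp} twice, one writes
\begin{equation*}
\mathscr{F}(T_a T_b u)(\bmxi) \; = \; (2\pi)^{-2d} \iint \chi(\bmxi-\bmxi',\bmxi') \, \chi(\bmxi'-\bmxi'',\bmxi'') \, \mathscr{F}a(\bmxi-\bmxi',\bmxi') \, \mathscr{F}b(\bmxi'-\bmxi'',\bmxi'') \, \mathscr{F}u(\bmxi'') \, \mathrm{d}\bmxi' \, \mathrm{d}\bmxi'' ,
\end{equation*}
and, after the change of variables $\bmzeta = \bmxi' - \bmxi''$ inside the convolution representing $\mathscr{F}(ab)$,
\begin{equation*}
\mathscr{F}(T_{ab}u)(\bmxi) \; = \; (2\pi)^{-2d} \iint \chi(\bmxi-\bmxi'',\bmxi'') \, \mathscr{F}a(\bmxi-\bmxi',\bmxi'') \, \mathscr{F}b(\bmxi'-\bmxi'',\bmxi'') \, \mathscr{F}u(\bmxi'') \, \mathrm{d}\bmxi' \, \mathrm{d}\bmxi''.
\end{equation*}
Subtracting, the integrand is supported where the cutoffs force $|\bmxi - \bmxi'| \lesssim |\bmxi'|$ and $|\bmxi' - \bmxi''| \lesssim |\bmxi''|$, hence $\bmxi' \approx \bmxi''$ and $\bmxi \approx \bmxi''$. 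One then exploits this proximity in two steps: a first-order Taylor expansion of $\mathscr{F}a(\bmxi-\bmxi',\cdot)$ in the second variable at $\bmxi''$, and a comparison of the products $\chi(\bmxi-\bmxi',\bmxi')\chi(\bmxi'-\bmxi'',\bmxi'')$ and $\chi(\bmxi-\bmxi'',\bmxi'')$; each of these produces a factor of $|\bmxi' - \bmxi''|/\langle\bmxi''\rangle$ combined with one $\bmxi$-derivative of the symbols, which precisely trades one order of decay in $\bmxi$ for one order of $\bmx$-regularity. Since $a,b \in \Gamma^{\bullet}_1$ are only Lipschitz in $\bmx$, one cannot iterate further; this is exactly where the loss is $1$.

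The main obstacle is then the final kernel estimate, which I would close with a Schur-type bilinear bound in the Fourier variables. The symbol estimates \eqref{eq:symbolsEst} for $a$ and $b$ with $\varrho = 1$, together with the frequency-localisation properties of $\chi$ (points (i)--(iii) after Definition \ref{def:symbolsHomPara}) and a dyadic decomposition of the frequency annuli, provide the integrability needed to deduce the $H^{s+m_1+m_2-1}(\mathbb{R}^d) \to H^s(\mathbb{R}^d)$ boundedness of the remainder. The bookkeeping is purely technical but careful counting is required to confirm that the number of $\bmxi$-derivatives absorbed by the Taylor remainder and the $\chi$-discrepancy, plus the one unit of $\bmx$-regularity delivered by the $C^1$ symbol seminorms, add up exactly to the claimed gain of $1$.
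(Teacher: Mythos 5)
Your proposal is correct. The paper does not prove this proposition at all — it is recalled in the appendix as a standard fact, with references to M\'etivier and Alazard--M\'etivier — and your main argument is exactly the standard derivation: specialising Proposition \ref{prop:AsympParadiffOp} to $\varrho=1$, the sum defining $a\sharp b$ reduces to the single term $\alpha=\bmzero$, i.e.\ $a\sharp b=ab$, so $T_aT_b-T_{ab}$ is of order $m_1+m_2-1$; the only point that needs checking, namely that $ab\in\Gamma^{m_1+m_2}_1(\mathbb{R}^d)$ (Leibniz rule in $\bmxi$ together with the estimates \eqref{eq:symbolsEst} and the algebra property of $C^1$), you have. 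There is no circularity in quoting Proposition \ref{prop:AsympParadiffOp}, since it is itself an independently cited general theorem whose proof does not pass through the $\varrho=1$ case. Your alternative ``first principles'' Fourier-side argument is only an outline (the Taylor expansion of $\mathscr{F}a$ in its second argument, the comparison of the two cutoff products, and the final Schur/dyadic kernel estimate are all left at the level of a plan), but it is not needed once the reduction to the composition theorem is in place.
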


\begin{proposition} \label{prop:AsympParadiffOp}
Let $\varrho >0$, $m_1,m_2 \in \mathbb{R}$ and let $a \in \Gamma^{m_1}_\varrho(\mathbb{R}^d)$, $b \in \Gamma^{m_2}_\varrho(\mathbb{R}^d)$. Set
\begin{align*}
a \sharp b (\bmx,\bmxi) &:= \sum_{|\alpha| < \varrho} \frac{ (-\mathrm{i})^\alpha }{\alpha!} \; \; \pd_{\bmxi}^\alpha a(\bmx,\bmxi) \; \; \pd_{\bmx}^\alpha b(\bmx,\bmxi) \in \sum_{j < \varrho} \Gamma^{m_1+m_2-j}_{\varrho-j}(\mathbb{R}^d),
\end{align*}
then $T_a T_b - T_{a \sharp b}$ is of order $\leq m_1+m_2-\varrho$.
\end{proposition}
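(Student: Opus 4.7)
\medskip

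\textbf{Plan of proof.} This is the standard Bony--Meyer symbolic calculus result; the strategy is to compute the composition $T_a T_b$ directly from the Fourier-side definition \eqref{eq:ParadiffOp}, expand the symbol of $a$ in a Taylor series in the frequency variable attached to $b$, identify the resulting terms with $T_{a \sharp b}$, and control the remainder by combining the H\"older regularity of the symbols in $\bmx$ with the spectral localization built into the cutoff $\chi$.

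\medskip

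\textbf{First step: the composition as a double integral.} Applying \eqref{eq:ParadiffOp} twice, I would write
\begin{align*}
\mathscr{F}(T_a T_b u)(\bmxi)
 &= (2\pi)^{-2d} \!\iint \chi(\bmxi-\bmeta,\bmeta)\, (\mathscr{F}_1 a)(\bmxi-\bmeta,\bmeta) \\
 &\qquad\qquad \times \chi(\bmeta-\bmzeta,\bmzeta)\,(\mathscr{F}_1 b)(\bmeta-\bmzeta,\bmzeta)\, \mathscr{F} u(\bmzeta)\,\di\bmeta\,\di\bmzeta,
\end{align*}
where $\mathscr{F}_1$ denotes the Fourier transform in $\bmx$. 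Setting
$\Phi(\bmxi,\bmeta) \coloneqq \chi(\bmxi-\bmeta,\bmeta)(\mathscr{F}_1 a)(\bmxi-\bmeta,\bmeta)$,
I would then Taylor-expand $\bmeta \mapsto \Phi(\bmxi,\bmeta)$ around $\bmeta=\bmzeta$ to order $N=\lfloor\varrho\rfloor$:
\begin{align*}
\Phi(\bmxi,\bmeta) = \sum_{|\alpha|<\varrho} \frac{(\bmeta-\bmzeta)^\alpha}{\alpha!}\, \partial_{\bmeta}^{\alpha} \Phi(\bmxi,\bmzeta) + R_N(\bmxi,\bmeta,\bmzeta).
\end{align*}
The key observation is that each factor $(\bmeta-\bmzeta)^\alpha$ inside the integral, once combined with $(\mathscr{F}_1 b)(\bmeta-\bmzeta,\bmzeta)$, is the Fourier symbol of $(-\mathrm{i}\partial_{\bmx})^\alpha b(\bmx,\bmzeta)$. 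Writing $\partial_{\bmeta}^{\alpha}\Phi$ in terms of $\partial_{\bmxi}^{\alpha} a$ and lower-order contributions coming from derivatives of the cutoff, the principal contribution reproduces exactly $T_{a\sharp b}u$ with the prefactors $(-\mathrm{i})^{|\alpha|}/\alpha!$ of the statement.

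\medskip

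\textbf{Second step: remainder and cutoff analysis.} The cross terms arising when derivatives $\partial_{\bmeta}^{\alpha}$ hit the cutoff $\chi(\bmxi-\bmeta,\bmeta)$ rather than the symbol $a$ produce factors $\partial_{\bmxi_1}^{\beta}\partial_{\bmxi_2}^{\gamma}\chi$ supported in the annular region $\varepsilon_1 |\bmeta| \leq |\bmxi-\bmeta| \leq \varepsilon_2|\bmeta|$, i.e.\ away from the diagonal frequency cone where $\chi\equiv 1$. I would exploit this spectral separation, together with the decay estimates \eqref{eq:symbolsEst} applied to $a$, to show that each such cross term defines an operator that gains regularity of order $+\infty$ (smoothing), so it is \emph{a fortiori} of order $m_1+m_2-\varrho$. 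For the genuine Taylor remainder $R_N$, I would use the integral form of Taylor's theorem and bound
\begin{align*}
|\partial_{\bmeta}^{\alpha} \Phi(\bmxi,\bmeta)| \leq C_\alpha \|\partial_{\bmxi}^{\alpha} a(\cdot,\bmxi)\|_{C^{\varrho-|\alpha|}} (1+|\bmxi|)^{m_1-|\alpha|}, \quad |\alpha|=N,
\end{align*}
using \eqref{eq:symbolsEst} and the fact that, on the support of the cutoffs, $|\bmeta|\sim|\bmzeta|\sim|\bmxi|$. The H\"older gain $\varrho-N\in(0,1]$, converted via Bernstein-type inequalities in the frequency variable $\bmxi-\bmeta$, provides the missing $(1+|\bmxi|)^{-(\varrho-N)}$ decay. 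Together with the boundedness criterion from Proposition \ref{prop:OrdParadiffOp} applied to the symbol of $R_N$ (which is of order $m_1+m_2-\varrho$ in the class $\Gamma^{m_1+m_2-\varrho}_0$), this shows that $T_a T_b - T_{a\sharp b}$ acts from $H^{s+m_1+m_2-\varrho}(\mathbb{R}^d)$ to $H^s(\mathbb{R}^d)$ for every $s\in\mathbb{R}$.

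\medskip

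\textbf{Main obstacle.} The delicate point is not the Taylor expansion itself but the careful bookkeeping of the spectral cutoffs: one must verify that the combined cutoff $\chi(\bmxi-\bmeta,\bmeta)\chi(\bmeta-\bmzeta,\bmzeta)$ effectively localizes to the single regime $|\bmxi-\bmzeta|\ll|\bmzeta|$ that appears in the defining integral of $T_{a\sharp b}$, modulo smoothing errors. This is where the symmetry and support properties (i), (ii), (iii) of $\chi$ listed before \eqref{eq:ParadiffOp} are crucial, and this is the technical step whose details I would need to implement most carefully, since everything else reduces to routine Taylor estimation in weighted H\"older--Besov norms.
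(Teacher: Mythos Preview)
The paper does not actually prove this proposition: it is stated in Appendix \ref{sec:Paradiff} as one of several ``standard rules of paradifferential calculus'' recalled from \cite{metivier2008paradifferential} and \cite{alazard2009paralinearization}, with no proof given. Your outline is the standard Bony--Meyer argument and is essentially what one finds in those references, so there is nothing to compare against in the paper itself.

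One minor point worth tightening: you assert that the cross terms where derivatives hit the cutoff $\chi$ are smoothing ``of order $+\infty$''. This is not quite right in general --- those terms are genuinely of order $m_1+m_2-\varrho$, not infinitely smoothing, because the symbol $a$ still has only $C^\varrho$ regularity in $\bmx$. The correct statement is that such terms can be absorbed into the remainder class $\Gamma^{m_1+m_2-\varrho}_0$ by the same Bernstein-type argument you use for $R_N$; the gain comes from the extra decay in $\bmxi$ produced by differentiating $\chi$, not from any infinite-order smoothing. Apart from this overstatement, the strategy is sound.
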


If $a=a(\bmx)$ does not depend on the variable $\bmxi$, then the paradifferential operator $T_a$ is called \emph{paraproduct}. From Proposition \ref{prop:OrdParadiffOp} we have that if $b \in H^\beta(\mathbb{R}^d)$ with $\beta > \frac{d}{2}$, then $T_b$ is of order $0$. Moreover, we recall the following properties

\begin{lemma} \label{lem:Paraprod1}
Let $m_1 \in \mathbb{R}$, $m_2 < d/2$, $a \in H^{m_1}(\mathbb{R}^d)$, $b \in H^{m_2}(\mathbb{R}^d)$. Then $T_b a \in H^{m_1+m_2-d/2}(\mathbb{R}^d)$.

Moreover, if $a \in L^\infty(\mathbb{R}^d)$, then $T_a$ is an operator of order $\leq 0$, and there exists $C>0$ such that
\begin{align*}
\| T_a u \|_{ H^s(\mathbb{R}^d) } &\leq C \, \|a\|_{L^\infty(\mathbb{R}^d)} \, \|  u \|_{ H^s(\mathbb{R}^d) } , \; \; \forall u \in H^s(\mathbb{R}^d), \; \; \forall s \in \mathbb{R}.
\end{align*}
Similarly, if $a \in L^\infty(\mathbb{R}^d)$, then for all $s \in (0,+\infty) \setminus \mathbb{N}$ there exists $C>0$ such that
\begin{align*}
\| T_a u \|_{ C^s(\mathbb{R}^d) } &\leq C \, \|a\|_{L^\infty(\mathbb{R}^d)} \, \|  u \|_{ C^s(\mathbb{R}^d) } , \forall u \in C^s(\mathbb{R}^d)  .
\end{align*}
\end{lemma}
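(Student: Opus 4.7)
The plan is to reduce the paraproduct to its standard Littlewood–Paley form. Writing $S_{j-N} := \Gamma(2^{-(j-N)}\bfD)$ for some fixed integer $N$ depending only on the cutoff $\chi$, the support properties of $\chi$ in the definition \eqref{eq:ParadiffOp} give
\begin{equation*}
T_b a \;=\; \sum_{j \geq 1}\, (S_{j-N} b)\,(\Delta_j a) \;+\; R(b,a),
\end{equation*}
where the remainder $R(b,a)$ comes from the region $|\bmxi'| \lesssim 1$ (where $\chi \equiv 0$), and is a smoothing operator bounded by $C\,\|b\|_{H^{m_2}}\|a\|_{H^{m_1}}$ in every Sobolev norm. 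Each dyadic block $(S_{j-N}b)(\Delta_j a)$ has spectrum in an annulus $\{|\bmxi| \sim 2^j\}$, again by the choice of $\chi$, so an almost-orthogonality argument reduces the analysis to estimating the individual blocks in $L^2$ (for the Sobolev bound) or $L^\infty$ (for the Hölder bound).

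For the first assertion I will apply Bernstein's inequality to $b$: since $m_2 < d/2$,
\begin{equation*}
\|\Delta_k b\|_{L^\infty} \;\lesssim\; 2^{k d/2}\|\Delta_k b\|_{L^2} \;\lesssim\; 2^{k(d/2 - m_2)}\, c_k\, \|b\|_{H^{m_2}}, \qquad \{c_k\} \in \ell^2,
\end{equation*}
and summing the resulting geometric series yields $\|S_{j-N}b\|_{L^\infty} \lesssim 2^{j(d/2-m_2)}\|b\|_{H^{m_2}}$. Combined with the almost-orthogonality identity
\begin{equation*}
\|T_b a\|_{H^s}^2 \;\sim\; \sum_{j}\, 2^{2js}\, \bigl\|(S_{j-N}b)(\Delta_j a)\bigr\|_{L^2}^2
\;\leq\; \sum_{j}\, 2^{2js}\, \|S_{j-N}b\|_{L^\infty}^2\,\|\Delta_j a\|_{L^2}^2,
\end{equation*}
this gives $\|T_b a\|_{H^s}^2 \lesssim \|b\|_{H^{m_2}}^2 \sum_j 2^{2j(s + d/2 - m_2)} \|\Delta_j a\|_{L^2}^2$. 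Choosing $s = m_1 + m_2 - d/2$ makes the last sum equivalent to $\|a\|_{H^{m_1}}^2$, and the remainder $R(b,a)$ is absorbed into this bound.

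For the second assertion, when $a \in L^\infty$ the uniform estimate $\|S_{j-N}a\|_{L^\infty} \leq C\|a\|_{L^\infty}$ (from boundedness of $\Gamma(\bfD)$ on $L^\infty$) plus the same almost-orthogonality argument yields $\|T_a u\|_{H^s} \lesssim \|a\|_{L^\infty}\|u\|_{H^s}$ for every $s \in \mathbb{R}$. The Zygmund/Hölder bound follows along the same lines via the characterization $\|u\|_{C^s_\ast} \sim \sup_j 2^{js}\|\Delta_j u\|_{L^\infty}$: using the finite-band property of each block $(S_{j-N}a)(\Delta_j u)$ I will estimate
\begin{equation*}
\|\Delta_j(T_a u)\|_{L^\infty} \;\lesssim\; \sum_{|k-j|\leq N_0} \|S_{k-N}a\|_{L^\infty}\,\|\Delta_k u\|_{L^\infty} \;\lesssim\; \|a\|_{L^\infty}\,\sup_{|k-j|\leq N_0}\|\Delta_k u\|_{L^\infty},
\end{equation*}
and multiplying by $2^{js}$ and taking the sup over $j$ gives the claimed estimate. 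The only real bookkeeping step—and the closest thing to an obstacle—is the reduction of \eqref{eq:ParadiffOp} to the Littlewood–Paley expansion: one must verify that the discrepancy between the original cutoff $\chi$ and the sharp spectral cutoffs defining $S_{j-N}$ and $\Delta_j$ produces only a smoothing remainder, which follows from the rapid decay of the Fourier transforms of $\Gamma$ and $\gamma$ and is harmless in every estimate above.
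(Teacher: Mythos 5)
The paper offers no proof of this lemma: it is recalled as a standard fact, with pointers to Sec.~5 of M\'etivier's notes and Sec.~4 of Alazard--M\'etivier, so there is nothing internal to compare with. Your Littlewood--Paley argument is the classical proof of these paraproduct estimates, and its main steps are sound: spectral localization of each block in an annulus $|\bmxi|\sim 2^j$, Bernstein's inequality for the low-frequency factor (this is exactly where $m_2<d/2$ enters), almost orthogonality valid for every $s\in\mathbb{R}$ (only the inequality $\|\sum_j u_j\|_{H^s}^2\lesssim\sum_j 2^{2js}\|u_j\|_{L^2}^2$ is needed, not the two-sided equivalence you wrote), and the dyadic characterization of $C^s_{\ast}$ together with $C^s_{\ast}=C^s$ for $s\in(0,+\infty)\setminus\mathbb{N}$ for the H\"older bound.

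One claim does need repair. The difference between $T_b a$ as defined in \eqref{eq:ParadiffOp} and the model paraproduct $\sum_j (S_{j-N}b)(\Delta_j a)$ is \emph{not} a smoothing remainder in general: for $|\bmxi_2|$ large, $\chi(\bmxi_1,\bmxi_2)$ depends only on the ratio $|\bmxi_1|/|\bmxi_2|$, whereas $\sum_j\Gamma(2^{-(j-N)}\bmxi_1)\gamma(2^{-j}\bmxi_2)$ does not, so the two cutoffs disagree on a conical region $c\,2^{-N}|\bmxi_2|\lesssim|\bmxi_1|\lesssim\varepsilon_2|\bmxi_2|$ and not only on $|\bmxi_2|\lesssim 1$; with $b$ merely in $H^{m_2}$ (or $a$ merely in $L^\infty$) the corresponding bilinear term gains no derivatives, and the rapid decay of the kernels of $\Gamma(\bfD)$, $\gamma(\bfD)$ is beside the point, since the issue is frequency support, not kernel decay. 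The discrepancy is nevertheless harmless, for a different reason: on its support the frequency of the $b$-factor is still at most $\varepsilon_2<1/2$ times that of the $a$-factor, so each block of the difference is again spectrally supported in an annulus $|\bmxi|\sim 2^j$ and satisfies exactly the estimates you prove for the main term, namely $\lesssim 2^{j(d/2-m_2)}\|b\|_{H^{m_2}}\|\Delta_j a\|_{L^2}$ in the Sobolev case and $\lesssim\|a\|_{L^\infty}\|\Delta_j u\|_{L^2}$ (or $L^\infty$) in the other two. Only the genuinely low-frequency piece, coming from $|\bmxi'|\lesssim 1$ and having compact output spectrum, is smoothing. Replace ``smoothing remainder'' by ``a term with the same frequency structure as the main one, estimated identically'', and the proof is complete.
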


\begin{lemma} \label{lem:Paraprod2}
Let $m > d/2$, $a \in H^{m}(\mathbb{R}^d)$, $F \in C^{\infty}(\mathbb{R})$. Then $F(a)-T_{F'(a)} a \in H^{2m-d/2}(\mathbb{R}^d)$. 

Moreover, let $m_1,m_2 \in \mathbb{R}$ be such that $m_1 + m_2 >0$, and let $a \in H^{m_1}(\mathbb{R}^d)$, $b \in H^{m_2}(\mathbb{R}^d)$. Then 
\begin{align*}
R(a,b) &:= ab - T_a b - T_b a \in H^{m_1 + m_2 - d/2}(\mathbb{R}^d) ,
\end{align*}
and there exists $K>0$ such that
\begin{align*}
\| R(a,b) \|_{ H^{m_1 + m_2 - d/2}(\mathbb{R}^d) } &\leq K \, \| a \|_{H^{m_1}(\mathbb{R}^d)} \, \| b \|_{H^{m_2}(\mathbb{R}^d)} .
\end{align*}
Similarly, let $m_1 \in \mathbb{R}_+$, $m_2 \in \mathbb{R}$ be such that $m_1 + m_2 >0$, and let $a \in C^{m_1}(\mathbb{R}^d)$, $b \in H^{m_2}(\mathbb{R}^d)$. Then 
\begin{align*}
R(a,b) &= ab - T_a b - T_b a \in H^{m_1 + m_2}(\mathbb{R}^d) ,
\end{align*}
and there exists $K>0$ such that
\begin{align*}
\| R(a,b) \|_{ H^{m_1 + m_2 }(\mathbb{R}^d) } &\leq K \, \| a \|_{C^{m_1}(\mathbb{R}^d)} \, \| b \|_{H^{m_2}(\mathbb{R}^d)} .
\end{align*}

\end{lemma}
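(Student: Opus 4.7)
The plan is to prove all three statements via the standard Littlewood--Paley / Bony paraproduct machinery, starting from the dyadic decomposition $u = S_0 u + \sum_{j \geq 0} \Delta_j u$ introduced in the Notation section. The first observation is that, up to a smoothing operator (the mismatch between the cutoff $\chi$ in \eqref{eq:ParadiffOp} and the spectrally-sharp Littlewood--Paley cutoffs is of order $-\infty$ by Proposition \ref{prop:OrdParadiffOp}), the paraproduct $T_a b$ agrees with Bony's version $\sum_j S_{j-N} a \cdot \Delta_j b$ for some fixed $N$, so it suffices to prove the three statements with this identification.

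Next I would prove the \emph{remainder estimate} $R(a,b) = ab - T_a b - T_b a \in H^{m_1+m_2-d/2}(\mathbb{R}^d)$. After Bony's decomposition one is reduced to
$$R(a,b) = \sum_{|j-k|\leq N_0} \Delta_j a \,\Delta_k b,$$
where each summand has Fourier support in a ball of radius $\lesssim 2^{\max(j,k)}$. Using the spectral localization lemma together with H\"older's inequality $\|\Delta_j a \,\Delta_k b\|_{L^1} \leq \|\Delta_j a\|_{L^2}\|\Delta_k b\|_{L^2}$ and Bernstein (losing a factor $2^{jd/2}$ to pass from $L^1$ to $L^2$), one obtains $\|\Delta_j a \,\Delta_k b\|_{L^2} \lesssim 2^{jd/2} 2^{-jm_1} 2^{-km_2} \|a\|_{H^{m_1}}\|b\|_{H^{m_2}}$. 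Multiplying by $2^{s j}$ with $s = m_1+m_2 - d/2$ and summing over $j \sim k$ by Cauchy--Schwarz produces the required bound; the hypothesis $m_1 + m_2 > 0$ is exactly what is needed for the geometric series in $j$ to converge. The H\"older--Sobolev variant is proved identically, except that $\|\Delta_j a\|_{L^\infty} \lesssim 2^{-j m_1}\|a\|_{C^{m_1}_\ast}$ is used directly, avoiding one Bernstein step and thereby saving $d/2$ derivatives, yielding the improved target $H^{m_1+m_2}(\mathbb{R}^d)$.

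For the \emph{paralinearization} $F(a) - T_{F'(a)}a \in H^{2m-d/2}(\mathbb{R}^d)$, I would use the telescoping identity
$$F(a) - F(S_0 a) = \sum_{j\geq 0}\bigl[F(S_{j+1} a) - F(S_j a)\bigr] = \sum_{j\geq 0} m_j\,\Delta_j a, \qquad m_j := \int_0^1 F'(S_j a + t\,\Delta_j a)\,dt.$$
Writing $m_j = F'(a) + r_j$, a first-order Taylor expansion combined with the embedding $H^m \hookrightarrow L^\infty$ (which uses $m > d/2$) gives $\|r_j\|_{L^\infty} \lesssim \|S_j a - a\|_{L^\infty} + \|\Delta_j a\|_{L^\infty} \lesssim 2^{-j(m-d/2)}\|a\|_{H^m}$. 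The main sum $\sum_j F'(a)\,\Delta_j a = F'(a)(a - S_0 a)$ reconstructs $T_{F'(a)} a$ up to a smoothing remainder, while the error $\sum_j r_j\,\Delta_j a$ is estimated in $H^{2m-d/2}$ by spectral localization: each $r_j\,\Delta_j a$ has essentially the same frequency support as $\Delta_j a$ (frequencies of size $\sim 2^j$), and the weighted norm $2^{j(2m-d/2)} \|r_j \Delta_j a\|_{L^2} \lesssim 2^{jm}\|\Delta_j a\|_{L^2} \|a\|_{H^m}$ is square-summable in $j$.

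The main obstacle I anticipate is not any single estimate but the careful bookkeeping of the discrepancy between the paradifferential cutoff $\chi$ used in the definition \eqref{eq:ParadiffOp} and the Littlewood--Paley truncation used in the decomposition above; the assertion that this discrepancy is of arbitrarily negative order must be verified once and for all using Proposition \ref{prop:OrdParadiffOp} and the support properties of $\chi$ listed in the Paradifferential calculus appendix. Once this reduction is made, each of the three estimates reduces to a routine summation over dyadic frequency blocks of the type sketched above.
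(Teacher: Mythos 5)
Your overall strategy (Littlewood--Paley decomposition, Bony's analysis of the remainder, Meyer's telescoping identity for $F(a)$) is the right family of techniques; note that the paper itself gives no proof of this lemma --- it is recalled from the standard references (M\'etivier; Alazard--M\'etivier) --- so the comparison is with the classical proofs there. However, your sketch of the paralinearization part has a genuine gap: the splitting $m_j = F'(a)+r_j$ is too crude, and neither of your two intermediate claims is true. First, $\sum_j F'(a)\Delta_j a = F'(a)(a-S_0a)$ does \emph{not} agree with $T_{F'(a)}a$ up to an $H^{2m-d/2}$ error: the difference is $\sum_j \bigl((I-S_{j-N})F'(a)\bigr)\Delta_j a$, which resums to a paraproduct of type $T_a F'(a)$ plus a remainder, and $T_aF'(a)$ is in general only in $H^{m}$. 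Second, $\sum_j r_j\Delta_j a$ is \emph{not} in $H^{2m-d/2}$ either, because $r_j$ is not band-limited: its leading part is $\approx -F''(a)(a-S_ja)$, which carries frequencies $\gtrsim 2^j$ all the way up, so $r_j\Delta_j a$ does not have ``essentially the same frequency support as $\Delta_j a$''. A concrete test is $F(a)=a^2$: then $m_j = S_ja+S_{j+1}a$, $r_j = -(a-S_ja)-(a-S_{j+1}a)$, and $\sum_j r_j\Delta_j a = -(a-S_0a)^2$, which contains the paraproduct piece $T_{a-S_0a}(a-S_0a)$ and hence is in general only $H^m$, not $H^{2m-d/2}$; the lemma survives only because this piece cancels against the $T_aF'(a)$-type piece coming from your ``main sum''. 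The classical argument avoids this by comparing $m_j$ with the truncation $S_{j-N}\bigl(F'(a)\bigr)$, so that the paraproduct is reconstructed exactly, and by proving symbol-type derivative bounds $\|\partial^\alpha\bigl(m_j-S_{j-N}F'(a)\bigr)\|_{L^\infty}\lesssim 2^{\,j(|\alpha|-(m-d/2))}$ for sufficiently many $\alpha$, which substitute for the missing frequency localization when summing $\sum_j\bigl(m_j-S_{j-N}F'(a)\bigr)\Delta_j a$.

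There is also a smaller imprecision in the Sobolev--Sobolev remainder estimate. After applying Bernstein at the \emph{input} scale $2^j$, the pieces $\Delta_j a\,\Delta_k b$ ($|j-k|\le N_0$) are only supported in balls of radius $\sim 2^{j}$, and the ball-support summation lemma gives $H^{s}$ only for $s>0$; here $s=m_1+m_2-d/2$ may be $\le 0$ under the hypothesis $m_1+m_2>0$. In that regime one applies Bernstein at the \emph{output} frequency, $\|\Delta_q(\Delta_j a\,\Delta_j b)\|_{L^2}\lesssim 2^{qd/2}\|\Delta_j a\|_{L^2}\|\Delta_j b\|_{L^2}$, and sums the tail $\sum_{j\ge q-C}2^{(q-j)(m_1+m_2)}c_jd_j$; this is where $m_1+m_2>0$ actually enters, not in the computation you wrote, where the powers of $2^{j}$ cancel identically and no geometric series appears. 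The H\"older--Sobolev variant is fine as you set it up, since there the target index $m_1+m_2$ is positive.
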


\begin{lemma} \label{lem:Paraprod3}
Let $m > \frac{d}{2}$ be such that $m - \frac{d}{2} \notin \mathbb{N}$. If $a \in H^{m}(\mathbb{R}^d)$, $b \in H^{m}(\mathbb{R}^d)$, then $T_a T_b - T_{ab}$ is of order $ - \left( m - \frac{d}{2} \right)$.
\end{lemma}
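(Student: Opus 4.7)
The plan is to reduce this to the symbolic calculus for paradifferential operators stated in Proposition \ref{prop:AsympParadiffOp}, viewing $a$ and $b$ as symbols of order $0$ that happen not to depend on the frequency variable $\bmxi$. The key observation is that for such symbols the composition formula $a\sharp b$ collapses to the pointwise product $ab$, since $\partial_{\bmxi}^{\alpha} a = 0$ for every multi-index $\alpha \neq \bmzero$ (and similarly for $b$); hence the only term surviving in the sum defining $a\sharp b$ is the one with $\alpha = \bmzero$.

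First I would invoke the Sobolev embedding into Zygmund spaces recalled in Section \ref{sec:notation}, together with the hypothesis $m - d/2 \notin \mathbb{N}$, in order to conclude that $a, b \in C^{m-d/2}_{\ast}(\mathbb{R}^d) = C^{m-d/2}(\mathbb{R}^d)$. This is precisely where the integrality exclusion on $m - d/2$ is used: outside the integer values, Zygmund and H\"older spaces coincide with equivalent norms, so $a$ and $b$ qualify as elements of $\Gamma^{0}_{m-d/2}(\mathbb{R}^d)$ (interpreted as symbols independent of $\bmxi$, the bounds \eqref{eq:symbolsEst} are trivially satisfied for $|\alpha| \geq 1$ and follow from the H\"older regularity for $\alpha = \bmzero$).

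Next I would apply Proposition \ref{prop:AsympParadiffOp} with $m_1 = m_2 = 0$ and $\varrho = m - d/2 > 0$. The proposition then yields that $T_a T_b - T_{a \sharp b}$ is of order $\leq m_1 + m_2 - \varrho = -(m - d/2)$. As noted above, the composition symbol reduces to
\begin{equation*}
a \sharp b (\bmx, \bmxi) = \sum_{|\alpha| < \varrho} \frac{(-\mathrm{i})^{\alpha}}{\alpha!} \, \partial_{\bmxi}^{\alpha} a(\bmx) \, \partial_{\bmx}^{\alpha} b(\bmx) = a(\bmx) \, b(\bmx),
\end{equation*}
because the $\bmxi$-derivatives annihilate $a$ for $|\alpha| \geq 1$. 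Substituting this identity into the conclusion of Proposition \ref{prop:AsympParadiffOp} gives the claim that $T_a T_b - T_{ab}$ is of order $-(m-d/2)$.

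I do not anticipate a serious obstacle: once the Sobolev-to-Zygmund embedding is applied and the integrality exclusion is used to identify Zygmund with H\"older, the statement becomes a direct corollary of the abstract symbolic calculus. The only point that deserves some care is checking that the cutoff $\chi(\bmxi_1, \bmxi_2)$ built into the definition \eqref{eq:ParadiffOp} of paradifferential operators is compatible with the trivial $\bmxi$-dependence of $a, b$; this is automatic since the localization occurs only on the Fourier side in the $\bmx$-variable, which is exactly the regularity being exploited.
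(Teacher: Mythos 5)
Your proposal is correct. The paper states Lemma \ref{lem:Paraprod3} without proof, as a standard fact recalled from the paradifferential-calculus references cited in Appendix \ref{sec:Paradiff}, and your derivation is precisely the standard argument behind it: the embedding $H^{m}(\mathbb{R}^d)\subset C^{m-d/2}_{\ast}(\mathbb{R}^d)$ together with the hypothesis $m-\frac{d}{2}\notin\mathbb{N}$ (so that the Zygmund and H\"older spaces coincide) places $a,b$ in $\Gamma^{0}_{m-d/2}(\mathbb{R}^d)$ as $\bmxi$-independent symbols, and Proposition \ref{prop:AsympParadiffOp} with $m_1=m_2=0$, $\varrho=m-\frac{d}{2}$ then gives the claim, since $a\sharp b=ab$ when the left symbol carries no $\bmxi$-dependence. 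Your use of the non-integrality assumption is exactly where it is needed, and the remark about the cutoff $\chi$ is indeed a non-issue.
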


Next, we mention some product estimates (see  (A.120)-(A.121) of \cite{alazard2015sobolev} for the case $d=2$; see Proposition 9.10 of \cite{taylor2011partial3} for \eqref{eq:ADProduct} and Proposition 8.6.8 of \cite{hormander1997lectures} for \eqref{eq:ZygProduct}).

\begin{lemma} \label{lem:Product}

For any $(r,s,s') \in \mathbb{R}_+^3$ such that $s' > s \geq r$, then there exists a constant $C>0$ such that
\begin{align*}
\| T_a u \|_{ H^{s-r}(\mathbb{R}^2) } &\leq C \, \|a\|_{H^{-r}(\mathbb{R}^2)} \, \|  u \|_{ C^{s'}(\mathbb{R}^2) } , \; \; \forall a \in H^{-r}(\mathbb{R}^2), \; \; \forall u \in C^{s'}(\mathbb{R}^2) .
\end{align*}
Moreover, let $d \geq 1$, $s \in \mathbb{R}$ and $s' \in \mathbb{R}_+$ be such that $s' > |s|$, then there exists a constant $C>0$ such that
\begin{align} \label{eq:ADProduct}
\| f \, g \|_{ H^{s}(\mathbb{R}^d) } &\leq C \, \|f\|_{H^{s}(\mathbb{R}^d)} \, \|  g \|_{ C^{s'}(\mathbb{R}^d) } , \; \; \forall f \in H^{s}(\mathbb{R}^d), \; \; \forall g \in C^{s'}(\mathbb{R}^d) .
\end{align}
Finally, let $d \geq 1$, $s_1,s_2 \in \mathbb{R}$ be such that $s_1 + s_2 >0$, and set $s := \min(s_1,s_2)$; then there exists a constant $C=C(s_1,s_2)>0$ such that
\begin{align} \label{eq:ZygProduct}
\| f \, g \|_{ C^{s}(\mathbb{R}^d) } &\leq C \, \|f \|_{C^{s_1}(\mathbb{R}^d)} \, \|  g \|_{ C^{s_2}(\mathbb{R}^d) } , \; \; \forall f \in C^{s_1}(\mathbb{R}^d), \; \; \forall g \in C^{s_2}(\mathbb{R}^d) .
\end{align}

\end{lemma}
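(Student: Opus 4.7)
Plan. All three inequalities follow from Bony's dyadic decomposition. Using the projectors $\Delta_j$ and $S_j$ defined in Sec.~\ref{sec:notation}, I write $T_a u = \sum_{j \geq 0} S_{j-N}(a)\,\Delta_j u$ (up to a smoothing contribution) with $N$ fixed so that each summand is Fourier-localized in an annulus of size $\sim 2^j$, and I exploit the standard characterizations $\|u\|_{H^s}^2 \sim \|S_0u\|_{L^2}^2 + \sum_j 2^{2js}\|\Delta_j u\|_{L^2}^2$ and $\|u\|_{C^s_\ast} \sim \|S_0 u\|_{L^\infty} + \sup_j 2^{js}\|\Delta_j u\|_{L^\infty}$.

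Step 1 (paraproduct estimate). By almost-orthogonality of dyadic blocks and H\"older,
\begin{equation*}
\|T_a u\|_{H^{s-r}}^2 \lesssim \sum_{j\geq 0} 2^{2j(s-r)}\|S_{j-N}(a)\|_{L^2}^2\,\|\Delta_j u\|_{L^\infty}^2.
\end{equation*}
Since $r \geq 0$, I use $\|S_{j-N}(a)\|_{L^2}^2 \leq \sum_{k \leq j-N} 2^{2kr}\cdot 2^{-2kr}\|\Delta_k a\|_{L^2}^2 \lesssim 2^{2jr}\|a\|_{H^{-r}}^2$, and since $s' > s \geq 0$ I use $\|\Delta_j u\|_{L^\infty} \lesssim 2^{-js'}\|u\|_{C^{s'}_\ast}$. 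Each summand is then bounded by $2^{2j(s-s')}\|a\|_{H^{-r}}^2\|u\|_{C^{s'}}^2$, which is summable because $s' > s$.

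Step 2 (Sobolev product \eqref{eq:ADProduct}). Use Bony's decomposition $fg = T_f g + T_g f + R(f,g)$. Since $s' > |s| \geq 0$, we have $g \in C^{s'} \subset L^\infty$, so Lemma~\ref{lem:Paraprod1} yields $\|T_g f\|_{H^s} \lesssim \|g\|_{L^\infty}\|f\|_{H^s} \lesssim \|g\|_{C^{s'}}\|f\|_{H^s}$. For $T_f g$ I repeat the argument of Step~1 with $\|S_{j-N}f\|_{L^2} \lesssim 2^{j\max(-s,0)}\|f\|_{H^s}$ (treating the two signs of $s$ separately using the same telescoping bound as in Step~1); the exponent in the dyadic sum becomes $2^{j(s-s')}$ when $s \geq 0$ and $2^{-js'}$ when $s < 0$, both summable thanks to $s' > |s|$. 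For the remainder, $R(f,g) = \sum_{|j-k|\leq N}\Delta_j f\,\Delta_k g$ has each summand Fourier-supported in a ball of radius $\lesssim 2^j$, so $\Delta_\ell$ picks only contributions with $j \geq \ell - N$ and
\begin{equation*}
\|\Delta_\ell R(f,g)\|_{L^2} \lesssim \sum_{j \geq \ell-N}\|\Delta_j f\|_{L^2}\|\Delta_j g\|_{L^\infty} \lesssim \|f\|_{H^s}\|g\|_{C^{s'}_\ast}\sum_{j \geq \ell-N}2^{-j(s+s')}.
\end{equation*}
The geometric series converges because $s + s' > 0$ (a consequence of $s' > |s|$), and multiplying by $2^{\ell s}$ followed by $\ell^2$-summation yields the required $H^s$ bound.

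Step 3 (Zygmund product \eqref{eq:ZygProduct}). Same Bony decomposition; without loss of generality $s = s_1 \leq s_2$, which together with $s_1 + s_2 > 0$ forces $s_2 > 0$, hence $g \in L^\infty$. Then $\|T_g f\|_{C^{s_1}_\ast} \lesssim \|g\|_{L^\infty}\|f\|_{C^{s_1}_\ast}$ by Lemma~\ref{lem:Paraprod1}, while $\|T_f g\|_{C^{s_1}_\ast}$ is handled by $\|S_{\ell-N}f\|_{L^\infty} \lesssim 2^{\ell\max(-s_1,0)}\|f\|_{C^{s_1}_\ast}$ and $\|\Delta_\ell g\|_{L^\infty} \lesssim 2^{-\ell s_2}\|g\|_{C^{s_2}_\ast}$, which after multiplication by $2^{\ell s_1}$ give $2^{\ell(\max(0,s_1)-s_2)} \leq 1$. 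The remainder satisfies $2^{\ell s_1}\|\Delta_\ell R(f,g)\|_{L^\infty} \lesssim 2^{-\ell s_2}\|f\|_{C^{s_1}_\ast}\|g\|_{C^{s_2}_\ast}$, uniformly bounded in $\ell$ since $s_2 > 0$. Main obstacle: the subtle part is the high–high remainder $R(f,g)$ in (2) and (3), where both factors contribute: the conditions $s+s'>0$ and $s_1+s_2>0$ are precisely what guarantee convergence of the resulting dyadic geometric sum; for (2) one additionally has to split into signs of $s$ when estimating $\|S_{j-N}f\|_{L^2}$, since $f \in H^s$ is not a priori in $L^2$ when $s < 0$.
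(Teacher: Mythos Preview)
The paper does not actually prove Lemma~\ref{lem:Product}; it states the three estimates and cites (A.120)--(A.121) of \cite{alazard2015sobolev}, Proposition~9.10 of \cite{taylor2011partial3}, and Proposition~8.6.8 of \cite{hormander1997lectures} for the respective parts. Your Bony-decomposition argument is exactly the standard proof found in those references, and the details you supply are correct---including the pointwise bound $\|\Delta_j f\|_{L^2} \leq 2^{-js}\|f\|_{H^s}$ used in the remainder estimate of Step~2, which is legitimate since each term of the $\ell^2$-sum defining $\|f\|_{H^s}^2$ is dominated by the total.
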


\begin{remark} \label{rem:ProductRem}

We recall the following product estimates (see Theorem 8.3.1 in \cite{hormander1997lectures}). 

Let $d \geq 1$, and let $r, r_1, r_2 \in \mathbb{R}$ be such that $r_1+r_2 \geq 0$, $r \leq \min(r_1,r_2)$, $r \leq r_1 +r_2 - d/2$ ($r <  r_1 + r_2 -d/2$ if either $r_1$, $r_2$ or $-r$ is equal to $d/2$), then 
\begin{align} \label{eq:SobProduct}
\| f \, g \|_{ H^{r}(\mathbb{R}^d) } &\leq C \, \|f\|_{H^{r_1}(\mathbb{R}^d)} \, \|  g \|_{ H^{r_2}(\mathbb{R}^d) } , \; \; \forall f \in H^{r_1}(\mathbb{R}^d), \; \; \forall g \in H^{r_2}(\mathbb{R}^d) .
\end{align}

Moreover, let $d \geq 1$, $r>0$ and $f , g \in H^{r}(\mathbb{R}^d) \cap L^{\infty}(\mathbb{R}^d)$, then
\begin{align} \label{eq:TameProduct}
\| f \, g \|_{ H^{r}(\mathbb{R}^d) } &\leq C \, \left( \|f\|_{H^{r}(\mathbb{R}^d)} \, \|  g \|_{ L^{\infty}(\mathbb{R}^d) } + \|f\|_{L^{\infty}(\mathbb{R}^d)} \, \|  g \|_{ H^{r}(\mathbb{R}^d) }\right) .
\end{align}

\end{remark}

We also mention a result concerning the action of pseudodifferential operators on H\"older spaces, see (1.1) and Proposition 1.1 in \cite{taylor2015commutator}.

\begin{lemma} \label{lem:TaylorHolder}

Let $d \geq 1$, $m,r \in \mathbb{R}$, $P \in OPS^m_{1,0}(\mathbb{R}^d)$, then $P : C^r_{\ast}(\mathbb{R}^d) \to C^{r-m}_{\ast}(\mathbb{R}^d)$.

\end{lemma}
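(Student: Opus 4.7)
The plan is to reduce the mapping property $P : C^r_*(\mathbb{R}^d) \to C^{r-m}_*(\mathbb{R}^d)$ to dyadic block estimates via a Littlewood--Paley decomposition. With the convention
\begin{equation*}
\|u\|_{C^r_*(\mathbb{R}^d)} = \|S_0 u\|_{L^\infty} + \sup_{j \in \mathbb{N}} 2^{jr}\|\Delta_j u\|_{L^\infty}
\end{equation*}
recalled in Section~\ref{sec:notation}, it suffices to prove
\begin{equation*}
\|S_0(Pu)\|_{L^\infty} \lesssim \|u\|_{C^r_*}, \qquad \|\Delta_j(Pu)\|_{L^\infty} \lesssim 2^{j(m-r)}\|u\|_{C^r_*} \quad (j \in \mathbb{N}).
\end{equation*}

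Writing $\Delta_{-1} := S_0$ for brevity and decomposing $u = \sum_{k \geq -1}\Delta_k u$, the core of the argument is the following pointwise kernel estimate for $P \in OPS^m_{1,0}$: for every $N \in \mathbb{N}$ there exists $C_N > 0$ such that, for all $j, k \geq -1$ and all admissible $v$,
\begin{equation*}
\|\Delta_j P \Delta_k v\|_{L^\infty} \leq C_N\, 2^{\max(j,k)\,m}\, 2^{-N|j-k|}\, \|\Delta_k v\|_{L^\infty}.
\end{equation*}
I would obtain this by viewing $\Delta_j P \Delta_k$ as a pseudodifferential operator with symbol $\gamma(2^{-j}\xi)\,p(x,\xi)\,\gamma(2^{-k}\xi)$ (with $\gamma(2\cdot)$ replaced by $\Gamma$ when $k=-1$). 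When $|j-k|$ is bounded, a Calder\'on--Vaillancourt-type estimate produces the $2^{jm}$ growth that encodes the order of $P$; when $|j-k|$ is large, repeated integration by parts in $\xi$ in the oscillatory kernel representation exploits the separation between the supporting annuli of the two cutoffs and yields arbitrary decay $2^{-N|j-k|}$.

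Granted this estimate, inserting the decomposition of $u$ and using $\|\Delta_k u\|_{L^\infty} \leq 2^{-kr}\|u\|_{C^r_*}$ for $k \geq 0$ (together with $\|\Delta_{-1}u\|_{L^\infty} \leq \|u\|_{C^r_*}$) bounds $\|\Delta_j(Pu)\|_{L^\infty}$ by a geometric-type sum
\begin{equation*}
\sum_{k \geq -1} 2^{\max(j,k)m - N|j-k| - k_+ r}\, \|u\|_{C^r_*},
\end{equation*}
with $k_+ = \max(k,0)$. Splitting into $k \leq j$ and $k > j$ and taking $N > |m| + |r| + 1$, both subsums are geometric in $|j-k|$ and give a total bounded by $C\,2^{j(m-r)}\|u\|_{C^r_*}$.

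The main obstacle is the kernel estimate itself, which is the classical technical input underlying the action of $OPS^m_{1,0}$ on Zygmund scales; once it is in hand, the rest is routine summation and the low-frequency piece ($j=-1$) is handled uniformly within the same framework. Since both the lemma and this proof strategy are standard (the full statement is Proposition~1.1 of \cite{taylor2015commutator}), the cleanest presentation is simply to invoke that reference, as the author does.
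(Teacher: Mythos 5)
The paper never proves this lemma: it is quoted directly from the literature (Proposition~1.1 of \cite{taylor2015commutator}), so your sketch is being compared against a citation rather than an in-paper argument. Your Littlewood--Paley reduction is the standard proof of that cited result and is essentially correct: the entire content is the almost-orthogonality estimate $\|\Delta_j P\Delta_k v\|_{L^\infty}\lesssim_N 2^{\max(j,k)m}\,2^{-N|j-k|}\,\|\Delta_k v\|_{L^\infty}$, and once this is granted the summation you describe does yield $\|\Delta_j(Pu)\|_{L^\infty}\lesssim 2^{j(m-r)}\|u\|_{C^r_*}$ for every real $r$, low-frequency block included. Two remarks on the technical core. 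First, ``Calder\'on--Vaillancourt'' is the wrong name for the near-diagonal blocks: what you need there is an $L^\infty\to L^\infty$ bound for an operator whose symbol is frequency-localized at $|\xi|\sim 2^k$, and this follows from the kernel bound $|K(x,y)|\lesssim 2^{k(d+m)}(1+2^k|x-y|)^{-d-1}$ obtained by integration by parts in $\xi$ --- the same mechanism as your far-off-diagonal case --- not from an $L^2$ theorem. Second, the natural size factor is $2^{km}$ rather than $2^{\max(j,k)m}$, since the symbol is only evaluated on the support of $\gamma(2^{-k}\cdot)$; your weaker form still suffices because the arbitrary decay $2^{-N|j-k|}$ absorbs the difference, but obtaining that decay genuinely requires an additional dyadic decomposition of the symbol in the $x$-variable (or an equivalent integration-by-parts argument exploiting the separation of the two annuli), which you gesture at and would need to make explicit in a full write-up. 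With these caveats your route buys a self-contained proof where the paper simply defers to \cite{taylor2015commutator}; for the paper's purposes the citation is of course sufficient, as you note.
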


Next, we mention the following result (see  Lemma 2.19 in \cite{alazard2014cauchy} for a slightly more general statement; see also Proposition 2.12 in \cite{lannes2013water}).

\begin{lemma} \label{lem:trace}

Let $h>0$, $s \in \mathbb{R}$, and let $\tilde{f}:D_0 \to \mathbb{R}$ be such that 
\begin{equation*}
\tilde{f} \in L^2([-h,0];H^{s+1/2}(\mathbb{R}^2)) , \;\; \partial_w \tilde{f} \in L^2([-h,0];H^{s-1/2}(\mathbb{R}^2)) .
\end{equation*}
Then $\tilde{f} \in C^{0}([-h,0];H^{s}(\mathbb{R}^2))$, and there exists a constant $C > 0$ such that
\begin{align*}
\| \tilde{f} \|_{ C^{0}([-h,0];H^{s}(\mathbb{R}^2)) } &\leq C \left[ \| \tilde{f} \|_{ L^{2}([-h,0];H^{s+1/2}(\mathbb{R}^2)) } + \| \partial_w \tilde{f} \|_{ L^{2}([-h,0];H^{s-1/2}(\mathbb{R}^2)) }  \right] .
\end{align*}

\end{lemma}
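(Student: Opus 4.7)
My plan is to reduce the statement to a one–dimensional problem in the vertical variable $w$ by taking the Fourier transform in $\bmx$, then establish a pointwise-in-$w$ bound at the level of each Fourier mode, and finally integrate. Write $\hat{f}(\bmxi,w):=\mathscr{F}_{\bmx}\tilde{f}(\bmxi,w)$ and $g(\bmxi,w):=\langle\bmxi\rangle^{s}\hat{f}(\bmxi,w)$, where $\langle\bmxi\rangle := (1+|\bmxi|^2)^{1/2}$, so that
\begin{equation*}
\|\tilde{f}(\cdot,w)\|_{H^s(\mathbb{R}^2)}^2 = \int_{\mathbb{R}^2}|g(\bmxi,w)|^2\,\mathrm{d}\bmxi.
\end{equation*}
The hypotheses translate into $\langle\bmxi\rangle^{1/2}g \in L^2([-h,0];L^2_{\bmxi})$ and $\langle\bmxi\rangle^{-1/2}\partial_w g \in L^2([-h,0];L^2_{\bmxi})$.

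The core estimate would come from the fundamental theorem of calculus applied to $|g(\bmxi,\cdot)|^2$: for a.e.\ $w,w_0\in[-h,0]$,
\begin{equation*}
|g(\bmxi,w)|^2 = |g(\bmxi,w_0)|^2 + 2\mathrm{Re}\int_{w_0}^{w}\overline{g(\bmxi,z)}\,\partial_w g(\bmxi,z)\,\mathrm{d}z.
\end{equation*}
Averaging over $w_0\in[-h,0]$ and using $|\int_{w_0}^{w}\cdot| \leq \int_{-h}^{0}|\cdot|$, integrating in $\bmxi$, and then exploiting the weight splitting $1 = \langle\bmxi\rangle^{1/2}\cdot\langle\bmxi\rangle^{-1/2}$ with Cauchy--Schwarz in $\bmxi$ yields
\begin{equation*}
\|\tilde{f}(\cdot,w)\|_{H^s(\mathbb{R}^2)}^2 \leq \frac{1}{h}\|\tilde{f}\|_{L^2([-h,0];H^s)}^2 + 2\int_{-h}^{0}\|\tilde{f}(\cdot,z)\|_{H^{s+1/2}}\,\|\partial_w\tilde{f}(\cdot,z)\|_{H^{s-1/2}}\,\mathrm{d}z.
\end{equation*}
A further application of Cauchy--Schwarz in $z$ (and controlling $L^2([-h,0];H^s)$ by $L^2([-h,0];H^{s+1/2})$) gives the desired bound on $\sup_{w\in[-h,0]}\|\tilde{f}(\cdot,w)\|_{H^s}$ with a constant depending only on $h$.

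For the continuity in $w$ I would approximate $\tilde{f}$ by a sequence of smooth functions $\tilde{f}_n\in C^\infty([-h,0];\mathcal{S}(\mathbb{R}^2))$ converging to $\tilde{f}$ in the anisotropic norm $\|\cdot\|_{L^2(H^{s+1/2})}+\|\partial_w\cdot\|_{L^2(H^{s-1/2})}$; such density follows by first extending $\tilde{f}$ to $w\in\mathbb{R}$ by reflection across the endpoints (which preserves both norms up to a universal factor) and then convolving with a Friedrichs mollifier in $w$ together with a standard Fourier cutoff in $\bmxi$. The trace-type estimate proved above, applied to the differences $\tilde{f}_n-\tilde{f}_m$, shows that $(\tilde{f}_n)$ is a Cauchy sequence in $C^0([-h,0];H^s(\mathbb{R}^2))$, so the limit belongs to that space. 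The only delicate point is the reflection/regularization step near the boundaries $w=-h,\,0$, but this is routine since the class of functions considered is stable under reflection; the real substance of the proof lies in the weighted Cauchy--Schwarz argument above.
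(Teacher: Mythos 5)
Your proposal is correct. Note that the paper does not actually prove Lemma \ref{lem:trace}: it only cites Lemma 2.19 of Alazard--Burq--Zuily and Proposition 2.12 of Lannes, so there is no in-paper argument to compare against; your proof is the standard self-contained one, and it is essentially the argument used in those references. The Fourier-side reduction, the fundamental-theorem-of-calculus identity for $|g(\bmxi,\cdot)|^2$ averaged over $w_0$, and the weight splitting $1=\langle\bmxi\rangle^{1/2}\langle\bmxi\rangle^{-1/2}$ with Cauchy--Schwarz in $\bmxi$ and then in $z$ give exactly the claimed bound, with a constant depending on $h$ (which the statement permits, since $h$ is fixed). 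Two small points you should make explicit to be fully rigorous: first, the pointwise identity requires that for a.e.\ $\bmxi$ the function $g(\bmxi,\cdot)$ lies in $H^1([-h,0])$, hence is absolutely continuous; this does follow from your hypotheses, since $\langle\bmxi\rangle^{1/2}g\in L^2_{w,\bmxi}$ and $\langle\bmxi\rangle^{-1/2}\partial_w g\in L^2_{w,\bmxi}$ imply $g(\bmxi,\cdot),\,\partial_w g(\bmxi,\cdot)\in L^2([-h,0])$ for a.e.\ $\bmxi$, but it deserves a sentence. Second, in the density step you should record that the $C^0([-h,0];H^s)$ limit of the smooth approximants coincides with $\tilde f$ as an element of $L^2([-h,0];H^s)$, so that $\tilde f$ itself (after modification on a null set in $w$) has the continuous representative; this is routine but completes the continuity claim. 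With these remarks your argument is complete.
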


Finally, we state the following elliptic regularity result.

\begin{proposition} \label{prop:MaxRegularity}

Let $\tau>0$, $0 \leq r < 1$, $A \in M_3( \Gamma^1_{1+r}(\mathbb{R}^2)  )$, and $B \in M_3(  \Gamma^0_0(\mathbb{R}^2) )$. Assume that there exists $c>0$ such that
\begin{align*}
\forall (\bmx, \bmxi) \in \mathbb{R}^2 \times \mathbb{R}^2, &\; \; |Re \, A(\bmx,\bmxi) | \geq c |\bmxi| .
\end{align*}
If $\bmU \in C^1([-\tau,0]; ( H^{-\infty}(\mathbb{R}^2) )^3 )$ solves 
\begin{align*}
\partial_w \bmU + T_A \bmU &= T_B \bmU + \bmF,
\end{align*}
where $\bmF \in C^0([-\tau,0]; ( H^{s}(\mathbb{R}^2) )^3 )$ for some $s \in \mathbb{R}$, then 
\begin{equation*}
\bmU(0) \in ( H^{s+r}(\mathbb{R}^2) )^3 . 
\end{equation*}

Moreover, if $\bmU \in C^1([-\tau,0]; ( H^{s}(\mathbb{R}^2) )^3 ) \cap C^0([-\tau,0]; (H^{s-1}(\mathbb{R}^2))^3  )$, then 
\begin{equation*}
\bmU \in C^0([-\tau,0]; ( H^{s+r}(\mathbb{R}^2) )^3 ) ,
\end{equation*}
and there exists $K>0$ such that
\begin{align*}
& \| \bmU \|_{ L^{\infty}([-\tau,0];H^{s+r}(\mathbb{R}^2,\mathbb{R}^3))  } \leq K \times \\
&\times \left[   \| \bmU(0) \|_{( H^{s+r}(\mathbb{R}^2) )^3  } +  \| \bmF \|_{ L^{\infty}([-\tau,0]; ( H^{s}(\mathbb{R}^2) )^3 )  } + \| \bmU \|_{ L^{\infty}([-\tau,0]; ( H^{s}(\mathbb{R}^2) )^3 )  } \right].
\end{align*}
\end{proposition}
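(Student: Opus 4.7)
The plan is to construct a parametrix (approximate backward evolution) for the paradifferential operator $\partial_w + T_A$ and combine it with a Duhamel representation. Since $A$ is first-order strongly elliptic with $|\mathrm{Re}\,A| \geq c|\bmxi|$, the operator $\partial_w + T_A$ behaves as a backward parabolic operator on $[-\tau,0]$: propagation from $w=-\tau$ towards $w=0$ is smoothing, and this smoothing is the mechanism that produces the $r$-gain in regularity.

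First I would construct, by the standard symbolic calculus of \cite{metivier2008paradifferential} adapted to paradifferential symbols of limited smoothness, a family $\{\Sigma(w_2,w_1)\}_{-\tau \leq w_1 \leq w_2 \leq 0}$ whose principal symbol is $\exp\bigl(-\int_{w_1}^{w_2} A(w',\bmx,\bmxi)\, \di w'\bigr)$ and that satisfies
\begin{align*}
\bigl(\partial_{w_2} + T_{A(w_2)}\bigr)\Sigma(w_2,w_1) &= R(w_2,w_1), \qquad \Sigma(w_1,w_1) = \mathrm{I},
\end{align*}
where $R$ is a remainder whose order is controlled by the $C^{1+r}$-regularity of $A$. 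The strong ellipticity guarantees the uniform parabolic-type bound
\begin{align*}
\| \Sigma(w_2,w_1) \|_{ H^s(\mathbb{R}^2) \to H^{s+r}(\mathbb{R}^2) } &\leq C\, (w_2-w_1)^{-r},
\end{align*}
which is integrable in $w_1$ on $[-\tau,0)$. The matrix-valued nature is handled by using $\mathrm{Re}\,A$ as a symmetrizer and invoking the sharp Gårding inequality at each dyadic frequency scale, a step which requires that $\mathrm{Re}\,A(\bmx,\bmxi)$ be uniformly positive of order $1$, as assumed.

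Next I would apply Duhamel's formula on $[-\tau,0]$:
\begin{align*}
\bmU(0) &= \Sigma(0,-\tau)\,\bmU(-\tau) + \int_{-\tau}^{0} \Sigma(0,w)\,\bigl( T_B \bmU(w) + \bmF(w) + \tilde{R}(w) \bigr)\, \di w,
\end{align*}
where $\tilde{R}$ absorbs the smoothing remainder from the parametrix equation. The boundary term $\Sigma(0,-\tau)\bmU(-\tau)$ lies in $H^{\infty}(\mathbb{R}^2)$ since the kernel is smoothing at fixed positive distance $\tau$; for the integral, $T_B \bmU$ is of order $\leq 0$ relative to $\bmU$ (hence in $C^0_w H^{s'}$ for some $s'$) and $\bmF$ is in $C^0_w H^s$, so the singular factor $(0-w)^{-r}$ is integrable and yields $\bmU(0) \in H^{s+r}(\mathbb{R}^2)$.

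For the quantitative estimate under the stronger hypothesis $\bmU \in C^1([-\tau,0];(H^s)^3) \cap C^0([-\tau,0];(H^{s-1})^3)$, I would apply the same Duhamel representation with $0$ replaced by an arbitrary $w^\ast \in [-\tau,0]$, take the supremum over $w^\ast$, and absorb the endpoint contribution using $\bmU(0) \in H^{s+r}$ from the first stage (a bootstrap on each subinterval yields uniform control). The main obstacle will be the rigorous parametrix construction with sharp $r$-gain: the $C^{1+r}$-regularity of $A$ in $\bmx$ restricts the number of terms available in the symbolic asymptotic expansion and thus the order of the remainder, and one must carefully track at which step the expansion terminates to obtain exactly the $r$-gain and no more. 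Once that bookkeeping is settled, the remaining estimates are standard continuity bounds for paradifferential operators acting on Sobolev spaces (Proposition \ref{prop:OrdParadiffOp} and Lemma \ref{lem:Paraprod1}).
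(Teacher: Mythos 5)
Your proposal follows essentially the same route as the paper: the paper's proof (after Alazard--M\'etivier) takes the approximate backward flow $T_{\mathscr{E}}$ with symbol $\mathscr{E}(w)=e^{wA}$ corrected by a lower-order term to compensate for the limited $C^{1+r}$ regularity in $\bmx$, shows via Proposition \ref{prop:AsympParadiffOp} that the resulting remainder is of order $-r$, and integrates $\partial_w(T_{\mathscr{E}}\bmU)$ in $w$ --- which is precisely your parametrix-plus-Duhamel argument, with the smoothing of the elliptic exponential supplying the gain and the truncated symbolic expansion limiting it to $r$. Your additional appeal to the sharp G\r{a}rding inequality for the matrix structure is not needed (in the intended application the principal symbol is a scalar multiple of $\mathbb{I}_3$, and the paper simply uses the matrix exponential symbol directly), but otherwise the strategy and bookkeeping coincide with the paper's.
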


\begin{proof}

The proof is analogous to the one of Proposition 4.10 of \cite{alazard2009paralinearization} (see also Lemma 2.2.7 of \cite{alazard2015sobolev}).
\end{proof}

\end{appendix}

% Create the reference section using BibTeX:
\bibliography{Water_Waves_Vorticity}

\newcommand{\etalchar}[1]{$^{#1}$}
\begin{thebibliography}{GNPW24}

\bibitem[AB15]{alazard2015gravity}
Thomas Alazard and Pietro Baldi.
\newblock {Gravity capillary standing water waves}.
\newblock {\em Archive for Rational Mechanics and Analysis}, 217(3):741--830,
  2015.

\bibitem[ABZ11]{alazard2011water}
Thomas Alazard, Nicolas Burq, and Claude Zuily.
\newblock {On the water-wave equations with surface tension}.
\newblock {\em Duke Mathematical Journal}, 158(3):413--499, 2011.

\bibitem[ABZ14]{alazard2014cauchy}
Thomas Alazard, Nicolas Burq, and Claude Zuily.
\newblock {On the Cauchy problem for gravity water waves}.
\newblock {\em Inventiones mathematicae}, 198(1):71--163, 2014.

\bibitem[ABZ16]{alazard2016cauchy}
Thomas Alazard, Nicolas Burq, and Claude Zuily.
\newblock {Cauchy theory for the gravity water waves system with non-localized
  initial data}.
\newblock {\em Annales de l'Institut Henri Poincar{\'e} C, Analyse non
  lin{\'e}aire}, 33(2):337--395, 2016.

\bibitem[AD15]{alazard2015sobolev}
Thomas Alazard and Jean-Marc Delort.
\newblock {Sobolev estimates for two dimensional gravity water waves}.
\newblock {\em Ast\'erisque}, 374:viii--241, 2015.

\bibitem[AIT22]{ai2022two}
Albert Ai, Mihaela Ifrim, and Daniel Tataru.
\newblock {Two-dimensional gravity waves at low regularity II: Global
  solutions}.
\newblock {\em Annales de l'Institut Henri Poincar{\'e} C}, 39(4):819--884,
  2022.

\bibitem[AM09]{alazard2009paralinearization}
Thomas Alazard and Guy M{\'e}tivier.
\newblock {Paralinearization of the Dirichlet to Neumann operator, and
  regularity of three-dimensional water waves}.
\newblock {\em Communications in Partial Differential Equations},
  34(12):1632--1704, 2009.

\bibitem[Bam21]{bambusi2021hamiltonian}
Dario Bambusi.
\newblock {Hamiltonian studies on counter-propagating water waves}.
\newblock {\em Water Waves}, 3(1):49--83, 2021.

\bibitem[BBHM18]{baldi2018time}
Pietro Baldi, Massimiliano Berti, Emanuele Haus, and Riccardo Montalto.
\newblock {Time quasi-periodic gravity water waves in finite depth}.
\newblock {\em Inventiones mathematicae}, 214(2):739--911, 2018.

\bibitem[BCD11]{bahouri2011fourier}
Hajer Bahouri, Jean-Yves Chemin, and Rapha{\"e}l Danchin.
\newblock {\em Fourier analysis and nonlinear partial differential equations}.
\newblock Springer, 2011.

\bibitem[BD18]{berti2018amost}
Massimiliano Berti and Jean-Marc Delort.
\newblock {\em {Almost Global Solutions of Capillary-Gravity Water Waves
  Equations on the Circle}}.
\newblock Springer, 2018.

\bibitem[Ben84]{benjamin1984impulse}
T~Brooke Benjamin.
\newblock {Impulse, flow force and variational principles}.
\newblock {\em IMA Journal of applied Mathematics}, 32(1-3):3--68, 1984.

\bibitem[BFM21]{berti2021traveling}
Massimiliano Berti, Luca Franzoi, and Alberto Maspero.
\newblock {Traveling quasi-periodic water waves with constant vorticity}.
\newblock {\em Archive for Rational Mechanics and Analysis}, 240(1):99--202,
  2021.

\bibitem[BFM24]{berti2024pure}
Massimiliano Berti, Luca Franzoi, and Alberto Maspero.
\newblock {Pure gravity traveling quasi-periodic water waves with constant
  vorticity}.
\newblock {\em Communications on Pure and Applied Mathematics},
  77(2):990--1064, 2024.

\bibitem[BGW18]{buffoni2018variational}
Boris Buffoni, Mark~D Groves, and Erik Wahl{\'e}n.
\newblock {A variational reduction and the existence of a fully localised
  solitary wave for the three-dimensional water-wave problem with weak surface
  tension}.
\newblock {\em Archive for Rational Mechanics and Analysis}, 228:773--820,
  2018.

\bibitem[BGW22]{buffoni2022fully}
Boris Buffoni, Mark~D Groves, and Erik Wahl{\'e}n.
\newblock {Fully localised three-dimensional gravity-capillary solitary waves
  on water of infinite depth}.
\newblock {\em Journal of Mathematical Fluid Mechanics}, 24(2):55, 2022.

\bibitem[CG94]{craig1994hamiltonian}
Walter Craig and Mark~D Groves.
\newblock {Hamiltonian long-wave approximations to the water-wave problem}.
\newblock {\em Wave motion}, 19(4):367--389, 1994.

\bibitem[CL00]{christodoulou2000motion}
Demetrios Christodoulou and Hans Lindblad.
\newblock {On the motion of the free surface of a liquid}.
\newblock {\em Communications on Pure and Applied Mathematics: A Journal Issued
  by the Courant Institute of Mathematical Sciences}, 53(12):1536--1602, 2000.

\bibitem[CL15]{castro2015well}
Angel Castro and David Lannes.
\newblock {Well-posedness and shallow-water stability for a new Hamiltonian
  formulation of the water waves equations with vorticity}.
\newblock {\em Indiana University Mathematics Journal}, pages 1169--1270, 2015.

\bibitem[CN00]{craig2000traveling}
Walter Craig and David~P Nicholls.
\newblock {Traveling two and three dimensional capillary gravity water waves}.
\newblock {\em SIAM Journal on Mathematical Analysis}, 32(2):323--359, 2000.

\bibitem[CN02]{craig2002traveling}
Walter Craig and David~P Nicholls.
\newblock {Traveling gravity water waves in two and three dimensions}.
\newblock {\em European Journal of Mechanics-B/Fluids}, 21(6):615--641, 2002.

\bibitem[CS93]{craig1993numerical}
Walter Craig and Catherine Sulem.
\newblock {Numerical simulation of gravity waves}.
\newblock {\em Journal of computational physics}, 108(1):73--83, 1993.

\bibitem[CS04]{constantin2004exact}
Adrian Constantin and Walter Strauss.
\newblock {Exact steady periodic water waves with vorticity}.
\newblock {\em Communications on Pure and Applied Mathematics}, 57(4):481--527,
  2004.

\bibitem[CS07]{coutand2007well}
Daniel Coutand and Steve Shkoller.
\newblock {Well-posedness of the free-surface incompressible Euler equations
  with or without surface tension}.
\newblock {\em Journal of the American Mathematical Society}, 20(3):829--930,
  2007.

\bibitem[DIPP17]{deng2017global}
Yu~Deng, Alexandru~D Ionescu, Beno{\^\i}t Pausader, and Fabio Pusateri.
\newblock Global solutions of the gravity-capillary water-wave system in three
  dimensions.
\newblock {\em Acta Mathematica}, 219:213–--402, 2017.

\bibitem[EWZ22]{ehrnstrom2022smooth}
Mats Ehrnstr{\"o}m, Samuel Walsh, and Chongchun Zeng.
\newblock {Smooth stationary water waves with exponentially localized
  vorticity}.
\newblock {\em Journal of the European Mathematical Society}, 25(3):1045--1090,
  2022.

\bibitem[FG24]{feola2024quasi}
Roberto Feola and Filippo Giuliani.
\newblock {\em {Quasi-periodic traveling waves on an infinitely deep perfect
  fluid under gravity}}, volume 295.
\newblock American Mathematical Society, 2024.

\bibitem[Ger09]{gerstner1809theorie}
Franz Gerstner.
\newblock {Theorie der wellen}.
\newblock {\em Annalen der Physik}, 32(8):412--445, 1809.

\bibitem[GH20]{groves2020variational}
Mark~D Groves and J~Horn.
\newblock {A variational formulation for steady surface water waves on a
  Beltrami flow}.
\newblock {\em Proceedings of the Royal Society A}, 476(2234):20190495, 2020.

\bibitem[GMS12]{germain2012global}
Pierre Germain, Nader Masmoudi, and Jalal Shatah.
\newblock {Global solutions for the gravity water waves equation in dimension
  3}.
\newblock {\em Annals of Mathematics}, pages 691--754, 2012.

\bibitem[GNPW24]{groves2024analytical}
Mark~D Groves, Dag Nilsson, Stefano Pasquali, and Erik Wahl{\'e}n.
\newblock {Analytical study of a generalised Dirichlet--Neumann operator and
  application to three-dimensional water waves on Beltrami flows}.
\newblock {\em Journal of Differential Equations}, 413:129--189, 2024.

\bibitem[GP24]{ginsberg2024long}
Daniel Ginsberg and Fabio Pusateri.
\newblock {Long time regularity for 3d gravity waves with vorticity}.
\newblock {\em arXiv preprint arXiv:2401.10096}, 2024.

\bibitem[HHS{\etalchar{+}}22]{haziot2022traveling}
Susanna Haziot, Vera Hur, Walter Strauss, John Toland, Erik Wahl{\'e}n, Samuel
  Walsh, and Miles Wheeler.
\newblock {Traveling water waves--the ebb and flow of two centuries}.
\newblock {\em Quarterly of applied mathematics}, 80(2):317--401, 2022.

\bibitem[H{\"o}r97]{hormander1997lectures}
Lars H{\"o}rmander.
\newblock {\em {Lectures on nonlinear hyperbolic differential equations}},
  volume~26.
\newblock Springer Science \& Business Media, 1997.

\bibitem[IP09]{iooss2009small}
G{\'e}rard Iooss and Pavel~I Plotnikov.
\newblock {\em {Small divisor problem in the theory of three-dimensional water
  gravity waves}}.
\newblock American Mathematical Soc., 2009.

\bibitem[IP11]{iooss2011asymmetrical}
G{\'e}rard Iooss and Pavel Plotnikov.
\newblock {Asymmetrical three-dimensional travelling gravity waves}.
\newblock {\em Archive for rational mechanics and analysis}, 200(3):789--880,
  2011.

\bibitem[IP15]{ionescu2015global}
Alexandru~D Ionescu and Fabio Pusateri.
\newblock {Global solutions for the gravity water waves system in 2d}.
\newblock {\em Inventiones mathematicae}, 199:653--804, 2015.

\bibitem[IP19]{ionescu2019long}
Alexandru~D Ionescu and Fabio Pusateri.
\newblock {Long-time existence for multi-dimensional periodic water waves}.
\newblock {\em Geometric and Functional Analysis}, 29(3):811--870, 2019.

\bibitem[IPT05]{iooss2005standing}
G{\'e}rard Iooss, Pavel~I Plotnikov, and John~F Toland.
\newblock {Standing waves on an infinitely deep perfect fluid under gravity}.
\newblock {\em Archive for rational mechanics and analysis}, 177(3):367--478,
  2005.

\bibitem[IPTT25]{ifrim2025sharp}
Mihaela Ifrim, Ben Pineau, Daniel Tataru, and Mitchell~A Taylor.
\newblock Sharp hadamard local well-posedness, enhanced uniqueness and
  pointwise continuation criterion for the incompressible free boundary euler
  equations.
\newblock {\em Annals of PDE}, 11(1):16, 2025.

\bibitem[IT18]{ifrim2018two}
Mihaela Ifrim and Daniel Tataru.
\newblock {Two-dimensional gravity water waves with constant vorticity, I:
  Cubic lifespan}.
\newblock {\em Analysis \& PDE}, 12(4):903--967, 2018.

\bibitem[Lan05]{lannes2005well}
David Lannes.
\newblock {Well-posedness of the water-waves equations}.
\newblock {\em Journal of the American Mathematical Society}, 18(3):605--654,
  2005.

\bibitem[Lan13]{lannes2013water}
David Lannes.
\newblock {\em {The water waves problem: mathematical analysis and
  asymptotics}}, volume 188.
\newblock American Mathematical Soc., 2013.

\bibitem[LC25]{levi1925determination}
Tullio Levi-Civita.
\newblock {D{\'e}termination rigoureuse des ondes permanentes d'ampleur finie}.
\newblock {\em Mathematische Annalen}, 93(1):264--314, 1925.

\bibitem[Lin05]{lindblad2005well}
Hans Lindblad.
\newblock {Well-posedness for the motion of an incompressible liquid with free
  surface boundary}.
\newblock {\em Annals of mathematics}, pages 109--194, 2005.

\bibitem[LSW20]{lokharu2020existence}
Evgeniy Lokharu, DS~Seth, and E~Wahl{\'e}n.
\newblock {An Existence Theory for Small-Amplitude Doubly Periodic Water Waves
  with Vorticity}.
\newblock {\em Archive for Rational Mechanics and Analysis}, 238(2):607--637,
  2020.

\bibitem[LW19]{lokharu2019variational}
Evgeniy Lokharu and Erik Wahl{\'e}n.
\newblock {A variational principle for three-dimensional water waves over
  Beltrami flows}.
\newblock {\em Nonlinear Analysis}, 184:193--209, 2019.

\bibitem[M{\'e}t08]{metivier2008paradifferential}
Guy M{\'e}tivier.
\newblock {Paradifferential Calculus and Application to the Cauchy Problem for
  Nonlinear Systems}.
\newblock {\em CRM Series, Edizioni della Scuola Normale Superiore}, 2008.

\bibitem[NR01]{nicholls2001new}
David~P Nicholls and Fernando Reitich.
\newblock {A new approach to analyticity of Dirichlet-Neumann operators}.
\newblock {\em Proceedings of the Royal Society of Edinburgh Section A:
  Mathematics}, 131(6):1411--1433, 2001.

\bibitem[RS81]{reeder1981three}
John Reeder and Marvin Shinbrot.
\newblock {Three-dimensional, nonlinear wave interaction in water of constant
  depth}.
\newblock {\em Nonlinear Analysis: Theory, Methods \& Applications},
  5(3):303--323, 1981.

\bibitem[Shu01]{shubin2001pseudodifferential}
Mikhail~Aleksandrovich Shubin.
\newblock {\em Pseudodifferential operators and spectral theory}, volume~57.
\newblock Springer, 2001.

\bibitem[Sto80]{stokes1880theory}
George~Gabriel Stokes.
\newblock {On the theory of oscillatory waves}.
\newblock {\em Transactions of the Cambridge philosophical society}, 1880.

\bibitem[SVW24]{seth2024symmetric}
Douglas~S Seth, Kristoffer Varholm, and Erik Wahl{\'e}n.
\newblock {Symmetric doubly periodic gravity-capillary waves with small
  vorticity}.
\newblock {\em Advances in Mathematics}, 447:109683, 2024.

\bibitem[SZ11]{shatah2011local}
Jalal Shatah and Chongchun Zeng.
\newblock {Local well-posedness for fluid interface problems}.
\newblock {\em Archive for rational mechanics and analysis}, 199(2):653--705,
  2011.

\bibitem[Tay11]{taylor2011partial3}
Michael Taylor.
\newblock {\em {Partial differential equations III: Nonlinear Equations}},
  volume 117.
\newblock Springer Science \& Business Media, 2011.

\bibitem[Tay15]{taylor2015commutator}
Michael Taylor.
\newblock {Commutator estimates for H{\"o}lder continuous and bmo-Sobolev
  multipliers}.
\newblock {\em Proceedings of the American Mathematical Society},
  143(12):5265--5274, 2015.

\bibitem[Wah06]{wahlen2006steady}
Erik Wahl{\'e}n.
\newblock {Steady periodic capillary-gravity waves with vorticity}.
\newblock {\em SIAM journal on mathematical analysis}, 38(3):921--943, 2006.

\bibitem[Wah07]{wahlen2007hamiltonian}
Erik Wahl{\'e}n.
\newblock {A Hamiltonian formulation of water waves with constant vorticity}.
\newblock {\em Letters in Mathematical Physics}, 79(3):303--315, 2007.

\bibitem[Wu09]{wu2009almost}
Sijue Wu.
\newblock {Almost global wellposedness of the 2-D full water wave problem}.
\newblock {\em Inventiones mathematicae}, 177(1):45--135, 2009.

\bibitem[Wu11]{wu2011global}
Sijue Wu.
\newblock {Global wellposedness of the 3-D full water wave problem}.
\newblock {\em Inventiones mathematicae}, 184(1):125--220, 2011.

\bibitem[WW23]{wahlen2023global}
Erik Wahl{\'e}n and J{\"o}rg Weber.
\newblock {Global bifurcation of capillary-gravity water waves with overhanging
  profiles and arbitrary vorticity}.
\newblock {\em International Mathematics Research Notices},
  2023(20):17377--17410, 2023.

\bibitem[WZZZ21]{wang2021local}
Chao Wang, Zhifei Zhang, Weiren Zhao, and Yunrui Zheng.
\newblock {Local well-posedness and break-down criterion of the incompressible
  Euler equations with free boundary}.
\newblock {\em Memoirs of the American Mathematical Society}, 270(1318), 2021.

\end{thebibliography}
\bibliographystyle{alpha}

\end{document}